\newcounter{subtheorem}
\newcommand*{\vcenteredhbox}[1]{\begingroup
\setbox0=\hbox{#1}\parbox{\wd0}{\box0}\endgroup}
\numberwithin{equation}{section}
\title{Quadrature Domains and the Faber Transform}
\author{Andrew J. Graven and Nikolai G. Makarov}
\date{\vspace{-4ex}}
\begin{document}

\maketitle

\begin{abstract}
We present a framework for reconstructing any simply connected, bounded or unbounded, quadrature domain $\Omega$ from its quadrature function $h$. Using the Faber transform, we derive formulae directly relating $h$ to the Riemann map for $\Omega$. Through this approach, we obtain a complete classification of one point quadrature domains with complex charge. We proceed to develop a theory of weighted quadrature domains with respect to weights of the form $\rho_a(w)=|w|^{2(a-1)}$ when $a > 0$ (``power-weighted'' quadrature domains) and the limiting case of when $a=0$ (``log-weighted'' quadrature domains). Furthermore, we obtain Faber transform formulae for reconstructing weighted quadrature domains from their respective quadrature functions. Several examples are presented throughout to illustrate this approach both in the simply connected setting and in the presence of rotational symmetry. 
\end{abstract}

\section{Introduction}\label{sec:Introduction}
The purpose of this article is to construct and analyze certain classes of quadrature domains (QDs) using the Faber transform. In particular, we will provide explicit representations for the Riemann map associated to simply connected quadrature domains and weighted quadrature domains. We will also cover several applications of this approach to questions of the existence and uniqueness for QDs and weighted QDs.

Quadrature domains are an important class of planar domains, characterized by their admission of exact quadrature identities. They arise naturally across a wide array of topics in analysis and mathematical physics, including moment problems, approximation theory, random matrix theory, potential theory, Hele-Shaw flow, the Schwarz reflection, and conformal dynamics \cite{Richardson_1972,Putinar2002,Mineev_Weinstein_2008,GustafssonQDsInvProb_1990,DropOrderFour,davis_1974,LEE_2018}. Several classes of quadrature domains have been known for over a century, \cite{CNeumann} however the work of systematizing these didn't begin en force until the early 1970s, with the work of Richardson, Davis, and Aharonov \& Shapiro \cite{Richardson_1972,davis_1974,AharonovShapiro}.

We shall begin with what is undoubtedly the most well-known example of a quadrature domain: the disk. Recall the mean value property for functions analytic in the disk asserts that
\begin{equation}\label{eqn:ClassicMVP}
    \int_{\D_{r}(w_0)}fdA=r^2f(w_0),
\end{equation}
where $dA=\frac{dxdy}{\pi}$. Far deeper however is the converse statement: disks are the {\it only} finitely connected bounded domains which satisfy the mean value property \cite{AharonovShapiro}. Addressing this question of uniqueness for quadrature domains more generally will be central to our discussion.\\

The paper is organized as follows: The first two sections are concerned with the theory of classical quadrature domains. In particular, in Section \ref{sec:Introduction} we review the basic theory of quadrature domains, introduce the Faber transform, and cover several Faber transform formulae which relate QDs to their Riemann map. In Section \ref{sec:OnePtQDClass}, we apply the results of the prior section to obtain a complete classification of one point quadrature domains, including unbounded domains.

The remaining two sections of the paper are concerned with weighted quadrature domains and extending our results for classical QDs to the weighted setting. In Section \ref{sec:PQDs}, we define and establish various properties of ``power-weighted'' quadrature domains: domains admitting a quadrature identity in which the integral over the domain is taken with respect to the weight $\rho_a(w)=|w|^{2(a-1)}$ for some $a>0$. A key observation, necessary for much of the analysis in this section is that a simply connected domain is a power-weighted QD iff its outer part is the $a$th root of a rational function (Theorem \ref{thm:SCPQDCharacterization}). From this, we conclude that if $\varphi$ is the Riemann map associated to a power-weighted QD, then it admits a representation of the form
$$\varphi(z)=\varphi_{\rm in}(z)r^{\#}(z)^{\frac{1}{a}},$$
where both $\varphi_{\rm in}$ and $r$ are rational functions. We proceed by deriving formulae which relate the Riemann map of a power-weighted QD to its quadrature function, and we use these formulae to characterize several families of power-weighted QDs.

Finally, in Section \ref{sec:LQDs}, we define and establish various properties of ``log-weighted'' quadrature domains: domains admitting a quadrature identity in which the integral over the domain is taken with respect to the weight $\rho_0(w)=|w|^{-2}$. Similar to the situation for power-weighted QDs, we observe that if $\varphi$ is the Riemann map associated to a log-weighted QD, then it admits a representation of the form
$$\varphi(z)=\varphi_{\rm in}(z)e^{r^{\#}(z)},$$
where both $\varphi_{\rm in}$ and $r$ are rational functions (Theorem \ref{theorem:logweightedAQDRiemannMaps}). After fixing some conventions and general notation, we will start with a discussion of classical quadrature domains.\\

{\it General notation:} A domain $\Omega$ is an open connected subset of the Riemann sphere $\Ch$. $\text{Cl}(\Omega)$ denotes the closure, $\Omega^c$ the complement in $\Ch$, $\Omega\IntComp:=\text{Cl}(\Omega)^c$, and $f\dEquals g$ denotes equality of $f$ and $g$ on $\partial\Omega$ (absent risk of ambiguity). Finally, $\D_r(a)$ denotes the disk of radius $r$ centered at $a$, $\D_r:=\D_r(0)$, and $\overline{(\cdot)}$ denotes complex conjugation. 

$\A(\Omega)$ denotes the space of functions analytic in $\Omega\subseteq\Ch$, $\A_0(\Omega)$ those which are $0$ at $\infty$, $\M(\Omega)=$ the space of functions meromorphic in $\Omega$, $\Rat(\Omega)$ the space of rational functions with poles only in $\Omega$, and $\Rat_0(\Omega)$ those which are $0$ at $\infty$. We also impose the requirement that $f$ extends continuously to $\partial\Omega$ in each of these function spaces. Contour integrals are normalized such that the standard factor of $2\pi i$ is suppressed. E.g when $\Gamma$ is piecewise $C^1$,
$$\oint_{\Gamma}f(w)dw:=\dfrac{1}{2\pi i}\int_{0}^{1}f\circ\gamma (t)\gamma'(t)dt.$$

\subsection{Bounded Quadrature Domains}\label{subsec:ClassicBQDs}
We refer to a bounded domain $\Omega\subset\C$ as a classical bounded quadrature domain with respect to the analytic test class $\mathcal{F}$ if there exists a finite collection of $c_k\in\C$, $a_k\in\Omega$, and $m_k\in\N_{0}$ for which
\begin{equation}\label{eqn:ClassicPtQID}
\int_{\Omega}fdA=\sum_{k}c_kf^{(m_k)}(a_k)
\end{equation}
for all $f\in\mathcal{F}$. This is equivalent, via the residue theorem, to the existence of $h\in\Rat(\Omega)$ for which
\begin{equation}\label{eqn:ClassicContourQID}
\int_{\Omega}fdA=\oint_{\partial\Omega}f(w)h(w)dw
\end{equation}
for all $f\in\mathcal{F}$. In particular, taking $h(w)=\sum_{k}\frac{c_km_k!}{(w-a_k)^{m_k+1}}$ reproduces Equation \ref{eqn:ClassicPtQID}. Such an $h$ is referred to as a {\it quadrature function} for $\Omega$. $h$ may or may not be unique when it exists - this depends crucially on the choice of test class. For example, $\mathcal{F}=\A(\Omega)$ then $\Omega$ has a unique quadrature function $h\in\Rat_0(\Omega)$, provided that one exists. This motivates the following definition.
\begin{definition}[Bounded quadrature domain]\label{def:BQD}
A bounded domain $\Omega\subseteq\C$, equal to the interior of its closure, is called a {\it bounded quadrature domain} (BQD) if there exists an $h\in\Rat_0(\Omega)$ such that Equation \ref{eqn:ClassicContourQID} holds for all $f\in\A(\Omega)$. This is denoted by $\Omega\in\QD(h)$.
\end{definition}

The simplest non-trivial example of a bounded QD is the cardioid $\Omega=\left\{z+\frac{z^2}{2}:z\in\D\right\}$, which admits a quadrature identity involving both $f$ and its first derivative:
\begin{equation}\label{eqn:CardioidQI}
    \int_{\Omega}fdA=\dfrac{3}{2}f(0)+\dfrac{1}{2}f'(0).
\end{equation}

The formula just below Equation \ref{eqn:ClassicContourQID} combined with the quadrature identity (\ref{eqn:CardioidQI}) suggests that $\Omega\in\QD\left(h\right)$ for $h(w)=\frac{3}{2}w^{-1}+\frac{1}{2}w^{-2}$. This is readily verified via the residue theorem. See \ref{subsubsec:FaberCardioid} for a succinct derivation of this QI using the associated Riemann map.\\
\begin{figure}[ht]
  \centering
\includegraphics[height=0.19\linewidth,width=1\linewidth]{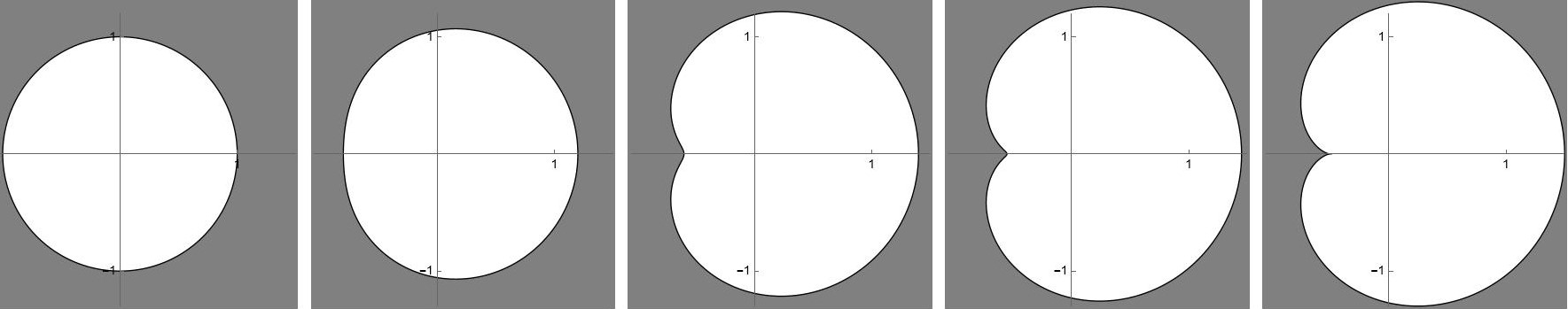}
    \caption{$\Omega\in\QD\left(w^{-1}+\alpha_1w^{-2}\right)$ (the complement of the shaded region) for $\alpha_1\in\{0,.2,.4,.45,.5\}$.}\vspace{.5em}\label{fig:BinomialCardioidQDFamily}
\end{figure}
\FloatBarrier

\subsection{Unbounded Quadrature Domains}\label{subsec:ClassicUQDs}
Parallel to the theory of bounded quadrature domains, is that of unbounded quadrature domains. It turns out that Equation \ref{eqn:ClassicContourQID}, where the area integral is understood in the sense of its {\it Cauchy principal value}:
$$\int_{\Omega}fdA:=\lim_{r\to\infty}\int_{\Omega\cap\D_r}fdA,$$
is a much more natural starting point in this case. A QI in the sense of Equation \ref{eqn:ClassicPtQID} still exists, but includes a linear combination of the Laurent coefficients of $f$ at $\infty$.

More care is required when it comes to the choice of test class $\mathcal{F}$ for unbounded quadrature domains. On the one hand, $\A(\Omega)$ cannot be used because it isn't integrable; on the other hand, the integrable test class $L_a^1(\Omega)=L^1(\Omega)\cap\A(\Omega)$ is too small to ensure uniqueness of the quadrature function: If $f\in L^1(\Omega)$, then $f\in O(w^{-3})$ about $\infty$. Hence if $h\in\Rat(\Omega)$ is a quadrature function for $\Omega$, then so is $h+\alpha$ for each $\alpha\in\C$ because
$$\oint_{\partial\Omega}f(w)(h(w)+\alpha)dw=\oint_{\partial\Omega}f(w)h(w)dw+\alpha\Res{\infty}(f)=\oint_{\partial\Omega}f(w)h(w)dw.$$

We can recover uniqueness in this sense, however, if we consider the intermediate test class $\mathcal{F}=\A_0(\Omega)$ ($L^1(\Omega)\cap\A(\Omega)\subset\A_0(\Omega)\subset \A(\Omega)$). In particular, if $\Omega$ is an unbounded domain for which there exists an $h\in\Rat(\Omega)$ satisfying Equation \ref{eqn:ClassicContourQID} for all $f\in\A_0(\Omega)$, then $h$ is unique (apply the quadrature identity to the Cauchy kernel $\xi\mapsto(\xi-w)^{-1}$). This motivates the following definition.

\begin{definition}[Unbounded quadrature domain]\label{def:UQD}
An unbounded domain $\Omega\subseteq\Ch$, equal to the interior of its closure, is called an {\it unbounded quadrature domain} (UQD) if there exists an $h\in\Rat(\Omega)$ such that Equation \ref{eqn:ClassicContourQID} holds for all $f\in\A_0(\Omega)$. This is denoted by $\Omega\in\QD(h)$.
\end{definition}

\begin{example}[The ellipse]\label{ex:ellipse}
If $\Omega\in\QD\left(\alpha w\right)$ is unbounded and simply connected and $|\alpha|<1$, then $\varphi(z)=c(z+\overline{\alpha}z^{-1})$ is a Riemann map for $\Omega$ (where $c$ is the conformal radius of $\Omega$). In particular, $\Omega$ is the complement of an ellipse centered at the origin of eccentricity $\frac{2\sqrt{|\alpha|}}{|\alpha|+1}$.\\
\begin{figure}[ht]
  \centering
    \includegraphics[scale=.55]{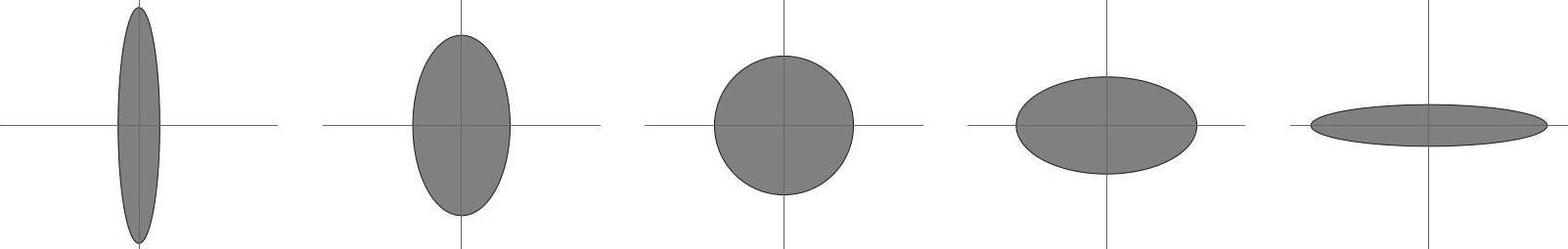}
    \caption{$\Omega\in\QD(\alpha w)$ (the complement of the shaded region) for $\alpha \in\{-.7,-.3,0,.3,.7\}$.}\vspace{1.2em}\label{fig:UQDEllipseEx1}
\end{figure}
\FloatBarrier
In particular, if $f\in\A_0(\Omega)$, then
$$\int_{\Omega}fdA=\alpha\oint_{\partial\Omega}f(w)wdw=-\alpha f_2,$$
where $f_2$ is the second Laurent coefficient of $f$ at $\infty$. The above example also raises the question of the existence of quadrature domains with a given quadrature function. One can show that no such domain exists when $|\alpha|\geq1$ for instance.
\end{example}

\subsection{Quadrature Domains}\label{subsec:ClassicQDs}

We refer to UQDs and BQDs collectively as quadrature domains (QDs), characterized by the existence of a quadrature function. In particular,
\begin{definition}[Quadrature domain]\label{def:QD}
A domain $\Omega\subseteq\Ch$ is called a {\it quadrature domain} if there exists a rational function $h$ such that $\Omega\in\QD(h)$. This is denoted by $\Omega\in\QD$. We furthermore denote by $d_\Omega:=\deg(h)$ the {\it order} of $\Omega$.
\end{definition}

As previously discussed, the uniqueness of the quadrature function associated to a given quadrature domain is well established. However, the inverse problem of existence and uniqueness for quadrature domains associated to a given quadrature function isn't so straightforward. Non-uniqueness has been exhibited for multiply connected regions: consider the disk and the complement of an annulus with the same inner radius (Varchenko \& Etingof (\cite{DropOrderFour}, \S 2.3) provide another example). On the other hand, it is conjectured that simply connected QDs are indeed uniquely associated to their quadrature function (modulo conformal radius). Aharonov \& Shapiro (1976) established that disks are the unique one point QDs among all bounded finitely connected domains \cite{AharonovShapiro}. They established in the same paper a similar result for the cardioid. See \cite{Ameur_2021}, Theorem 1.1, for a discussion of additional known uniqueness results.

\subsubsection{The Schwarz Function}

A useful tool for approaching these questions arises from a characteristic property of quadrature domains: they admit a {\it Schwarz function} $S$. In particular, a domain $\Omega$ is a QD if and only if there exists a function $S\in\M(\Omega)$ such that $S(w)\dEquals\overline{w}$ (recall $\dEquals$ denotes equality on $\partial\Omega$). $S$ is unique when it exists and admits a unique representation is in terms of the quadrature function and the {\it Cauchy transform} of $\Omega\IntComp$, $C^{\Omega\IntComp}$. The Schwarz function associated to a quadrature domain $\Omega\in\QD(h)$ can be expressed as \cite{davis_1974}
\begin{equation}\label{eqn:QuadCoincidence}
S(w)=C^{\Omega\IntComp}(w)+h(w)\dEquals\overline{w}.
\end{equation}
$C^{U}$ is defined for measurable $U\subset\Ch$ with compact boundary and is given by
\begin{equation}
C^{U}(w)=\int_{U}\dfrac{dA(\xi)}{w-\xi}.
\end{equation}

A handful of equivalent characterizations of quadrature domains, tied together by the Schwarz function equation \ref{eqn:QuadCoincidence}, are given in Lemma \ref{lemma:QDChars}.
\begin{lemma}\label{lemma:QDChars}
Let $\Omega\subset\Ch$ be a domain equal to the interior of its closure. Then the following are equivalent
\begin{enumerate}
    \item there exists $h\in\Rat(\Omega)$ for which $\Omega\in\QD(h)$,
    \item there exists $h\in\Rat(\Omega)$ for which $\left.C^{\Omega}\right\vert_{\Omega\IntComp}=h$,
    \item there exists $S\in\M(\Omega)$ for which $S(w)\dEquals\overline{w}$.
\end{enumerate}
\end{lemma}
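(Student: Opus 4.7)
The plan is to establish the cyclic chain of implications $(1) \Rightarrow (2) \Rightarrow (3) \Rightarrow (1)$. The three steps are driven by three classical ingredients: the residue theorem applied to a Cauchy kernel, the Cauchy--Pompeiu decomposition underlying \eqref{eqn:QuadCoincidence}, and the complex form of Green's theorem.

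For $(1) \Rightarrow (2)$, I test the quadrature identity against the Cauchy kernel $f_{w_0}(\xi) := (w_0 - \xi)^{-1}$ for fixed $w_0 \in \Omega\IntComp$. This kernel lies in $\A(\Omega)$, and in the unbounded case also in $\A_0(\Omega)$ since $f_{w_0}(\infty) = 0$. The left-hand side of the QI is precisely $C^{\Omega}(w_0)$, while the right-hand side $\oint_{\partial\Omega} h(\xi)/(w_0 - \xi)\,d\xi$ equals $h(w_0)$ by residue analysis in $\Omega\IntComp$: the poles of $h$ lie in $\Omega$, so they make no contribution, and the residue at infinity vanishes because $h \in \Rat_0(\Omega)$ in the bounded case (with the analogous cancellation built into the $\A_0$ convention in the unbounded case).

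For $(2) \Rightarrow (3)$, I follow \eqref{eqn:QuadCoincidence} and set $S(w) := C^{\Omega\IntComp}(w) + h(w)$ for $w \in \Omega$. Since $(w-\xi)^{-1}$ is holomorphic in $w \in \Omega$ when $\xi \in \Omega\IntComp$, the transform $C^{\Omega\IntComp}$ is holomorphic in $\Omega$, so $S \in \M(\Omega)$ with the same poles as $h$. To verify $S \dEquals \bar w$, I first establish the global identity $C^\Omega(w) + C^{\Omega\IntComp}(w) = \bar w$ by applying Cauchy--Pompeiu to $\phi(\xi) = \bar\xi$ on a disk $D_R \supset \partial\Omega$ and sending $R \to \infty$; the boundary term $\oint_{\partial D_R} \bar\xi/(\xi - w)\, d\xi$ vanishes using $\bar\xi = R^2/\xi$ on $\partial D_R$. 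Continuity of area Cauchy transforms across $\partial\Omega$ then computes the boundary value of $S$ from the exterior as $\bar w - C^\Omega(w) + h(w) = \bar w$ by (2). For $(3) \Rightarrow (1)$, I apply the complex Green identity $\oint_{\partial\Omega} g\, d\xi = \int_\Omega \bar\partial g\, dA$ to $g = f\bar\xi$, so that $\bar\partial g = f$, yielding $\int_\Omega f\, dA = \oint_{\partial\Omega} f(\xi) \bar\xi\, d\xi = \oint_{\partial\Omega} f(\xi) S(\xi)\,d\xi$ after substituting the Schwarz condition on $\partial\Omega$. Taking $h$ to be the sum of principal parts of $S$ at its finitely many poles in $\Omega$ (including $\infty$ if $\infty \in \Omega$) gives $h \in \Rat(\Omega)$ with $S - h$ holomorphic in $\Omega$, so $\oint f(S - h)\, d\xi = 0$, leaving the desired $\oint f h\, d\xi$.

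The main technical obstacle is the handling of behavior at infinity. The Cauchy transform $C^{\Omega\IntComp}$ appearing in \eqref{eqn:QuadCoincidence} requires principal-value interpretation since $\Omega\IntComp$ is unbounded in the bounded case (and $\Omega$ is itself unbounded in the other case), and the identity $C^\Omega + C^{\Omega\IntComp} = \bar w$ must be justified by careful truncation and residue-at-infinity bookkeeping. A secondary point is the finiteness of the pole set of $S$ used in $(3) \Rightarrow (1)$, which follows from $S$ extending continuously to the compact boundary $\partial\Omega$ (boundedness near $\partial\Omega$ precludes pole accumulation) together with the hypothesis that $\Omega$ equals the interior of its closure.
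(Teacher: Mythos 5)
The paper itself does not prove this lemma---it defers to \cite{avci1977quadrature,Lee_2015}---so there is no in-house argument to compare against. Your cyclic scheme $(1)\Rightarrow(2)\Rightarrow(3)\Rightarrow(1)$ is the standard one, and its three ingredients (the Cauchy kernel as a test function, the decomposition $C^{\Omega}+C^{\Omega\IntComp}=\overline{w}$ underlying Equation \ref{eqn:QuadCoincidence}, and the complex Green identity) are all deployed correctly. Steps $(1)\Rightarrow(2)$ and $(2)\Rightarrow(3)$ go through in both the bounded and unbounded cases, and your observation that continuity of $S$ up to the compact boundary forbids accumulation of poles is the right way to get finiteness of the pole set.

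The one step that fails as literally written is $(3)\Rightarrow(1)$ for unbounded $\Omega$. If $h$ is the sum of principal parts of $S$ in the usual sense (positive powers of $w$ at $\infty$, negative powers of $w-a_k$ at finite poles), then $S-h$ is holomorphic in $\Omega$ but only \emph{bounded} at $\infty$, say $(S-h)(\infty)=s_0$. For $f\in\A_0(\Omega)$ one then has only $f\cdot(S-h)=O(w^{-1})$, and $\oint_{\partial\Omega}f(\xi)(S(\xi)-h(\xi))\,d\xi=s_0\Res{\infty}(f)$, which is nonzero whenever $f$ has a nonzero residue at infinity --- so the quadrature identity you derive is off by exactly this term. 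The repair is to absorb the constant $s_0$ into $h$ (still an element of $\Rat(\Omega)$, though no longer of $\Rat_0$), so that $S-h\in\A_0(\Omega)$ and the product is $O(w^{-2})$. This is precisely the phenomenon the paper isolates in \S\ref{subsec:ClassicUQDs}: over the test class $\A_0(\Omega)$ the constant term of the quadrature function is not a free parameter, so it cannot be dropped from the principal-part decomposition of $S$. With that one-line fix, and the tacit assumption of enough boundary rectifiability to apply Green's theorem (which the paper also takes for granted here), your proof is complete.
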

\noindent The proof of this equivalence is standard \cite{avci1977quadrature,Lee_2015}.\\

The statement $\left.C^{\Omega}\right\vert_{\Omega\IntComp}=h$ has a natural physical interpretation. $\overline{C^{\Omega}}$ corresponds to the logarithmic electrostatic field due to a uniform charge distribution on $\Omega$, whereas $\overline{h}$ corresponds to the electrostatic field due to a collection of point charges (and multipoles) at the poles of $h$. Thus the above equality tells us that the electrostatic field in $\Omega\IntComp$ due to a uniform charge distribution on $\Omega$ is equal to that of a finite collection of point charges (and multipoles) in $\Omega$. Figure \ref{fig:QDPotentialInterp} illustrates this phenomenon in the case of the cardioid.
\begin{figure}[ht]
  \centering
    \includegraphics[width=.9\linewidth]{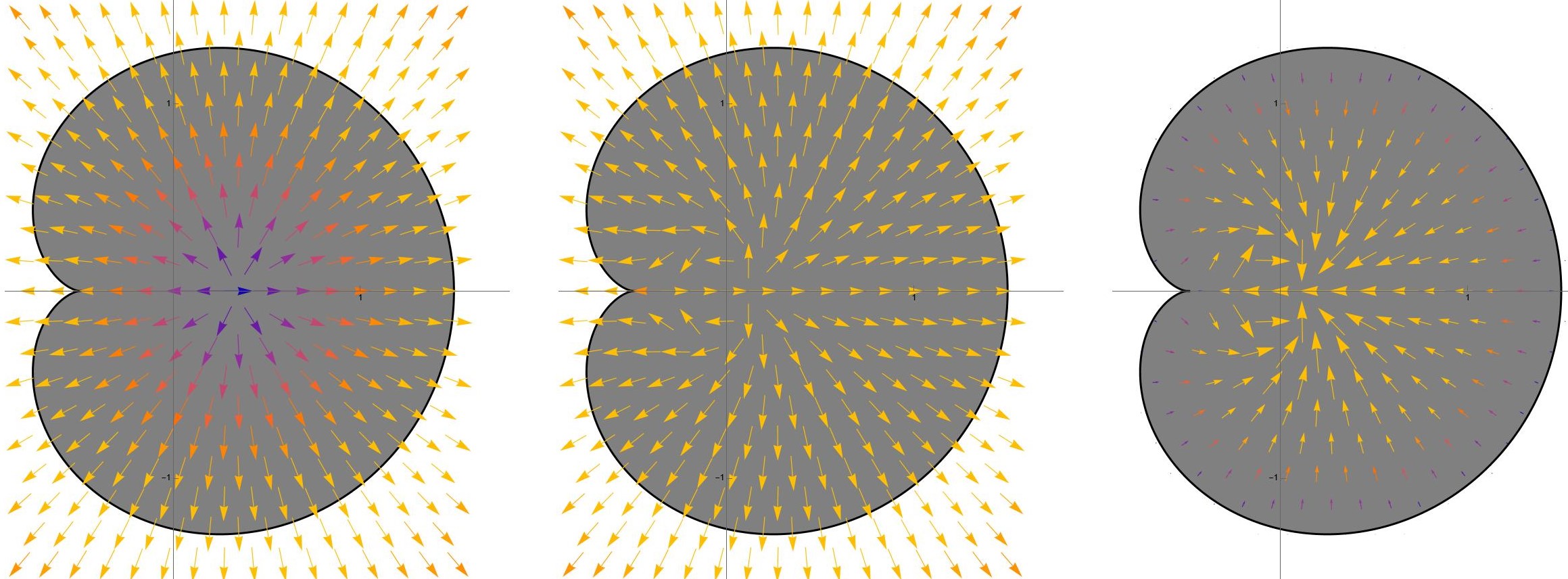}
    \caption{Illustration of the potential-theoretic interpretation of quadrature domains for the cardioid, $\Omega\in\QD\left(\frac{1}{w}+\frac{1}{2w^2}\right)$. $\overline{C^{\Omega}}$ (left), $\overline{h}$ (center), $\overline{C^{\Omega}}-\overline{h}$ (right). }\vspace{1.2em}\label{fig:QDPotentialInterp}
\end{figure}
\FloatBarrier

\subsubsection{Boundary Regularity of QDs}\label{subsubsec:QDBoundaryRegularity}
It turns out that the regularity of the boundary of QDs is essentially determined by the associated Schwarz function. This is a direct consequence of the celebrated Sakai regularity theorem (\cite{SakaiRegularity} Theorem 5.2 and \cite{Lee_2015} \S3.2). Fix a domain $\Omega$ and $w_0\in\partial\Omega$. $w_0$ is called a {\it regular} point if there exists $\epsilon>0$ for which $\Omega\cap\D_{\epsilon}(w_0)$ is a Jordan domain and $\partial\Omega\cap\D_{\epsilon}(w_0)$ is a simple real analytic arc. A point which is not regular is called a {\it singular} point.

\begin{theorem}\label{theorem:SakaiRegularity}
    If $S$ is a {\it local Schwarz function} at a singular point $w_0$, then $w_0$ is either a (conformal) {\it cusp}, a {\it double point}, or a {\it degenerate point}.
\end{theorem}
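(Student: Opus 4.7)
The plan is to exploit the identity $S(w) = \overline{w}$ on $\partial\Omega \cap U$ to embed the local boundary into a real-analytic variety $V := \{w \in U : S(w) = \overline{w}\}$ and then classify the allowable local structures of $V$ near $w_0$. Since $S$ is meromorphic in $U \cap \Omega$ and extends continuously to $\partial\Omega \cap U$ with boundary values $\overline{w}$, we have $\partial\Omega \cap U \subseteq V$, so any local classification of $V$ automatically constrains the boundary.

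First I would introduce a local coordinate $\zeta = w - w_0$ and treat $\zeta$ and $\eta := \overline{\zeta}$ as independent complex variables, so that the equation defining $V$ becomes a genuinely holomorphic equation $S(w_0 + \zeta) - \overline{w_0} - \eta = 0$ in two complex variables. By the Weierstrass preparation theorem, its zero set decomposes into irreducible analytic branches, each of which projects under the reality constraint $\eta = \overline{\zeta}$ to a real-analytic curve near $w_0$. A regular boundary point then corresponds to exactly one such branch meeting the anti-holomorphic diagonal transversally, so a singular point corresponds either to multiple branches meeting at $w_0$ or to a single branch with higher-order tangency to that diagonal.

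Combined with the standing hypothesis that $\Omega$ equals the interior of its closure, this restricts the admissible local configurations to three types: two smooth branches crossing transversely (double point), a single branch with cusp-type tangency to the diagonal (conformal cusp), or a configuration in which $V$ fails to locally separate $U$ into two open regions with $w_0$ on their common boundary (degenerate point). In each case I would verify directly from the normal form given by Weierstrass preparation that $\partial\Omega$ inherits the claimed geometry.

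The main obstacle is ruling out all intermediate algebraic configurations, since in principle one could imagine branches with arbitrary Puiseux exponents, higher multiplicities, or mixed geometries consistent with $V$ being analytic. Excluding these requires Sakai's harmonic-measure and subharmonic-function estimates from \cite{SakaiRegularity}, which convert the topological hypothesis ``$\Omega$ is the interior of its closure'' into a quantitative capacity bound near $w_0$ incompatible with such intermediate singularities. For this step I would invoke Sakai's argument rather than reproduce it, as the enumeration of allowable singularity types is the technical core of his work.
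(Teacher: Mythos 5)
The paper does not prove this statement at all: it is quoted verbatim as Sakai's regularity theorem, with the proof deferred entirely to \cite{SakaiRegularity} (Theorem 5.2) and to the exposition in \cite{Lee_2015}, \S 3.2. So there is no internal argument to compare yours against; the only question is whether your sketch would stand on its own, and it has a genuine gap.

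The gap is in the very first technical step. You define $V:=\{w\in U: S(w)=\overline{w}\}$ and propose to complexify, writing $S(w_0+\zeta)-\overline{w_0}-\eta=0$ as a holomorphic equation in two variables and invoking Weierstrass preparation. But a local Schwarz function is only \emph{one-sided}: by definition $S$ is analytic in $\Omega\cap\D_\epsilon(w_0)$ and merely continuous up to $\partial\Omega\cap\D_\epsilon(w_0)$. It is not holomorphic on any full two-sided neighborhood of $w_0$, so $\zeta\mapsto S(w_0+\zeta)$ is not a holomorphic function on a disk about $0$, the complexified equation is not a holomorphic equation near the origin in $(\zeta,\eta)$-space, and Weierstrass preparation does not apply. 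If $S$ \emph{were} known to extend holomorphically across $\partial\Omega$ near $w_0$, the boundary would sit inside a genuine analytic variety and the branch classification you describe would be essentially routine; the entire difficulty of Sakai's theorem is that no a priori regularity of $\partial\Omega$ is assumed, so one cannot reflect or extend $S$ and cannot even assert at the outset that $\partial\Omega$ is locally contained in an analytic arc. Your closing paragraph defers to Sakai's harmonic-measure and subharmonic-function estimates only to ``rule out intermediate algebraic configurations,'' but those estimates are in fact what establish the analytic structure in the first place; without them the Weierstrass scaffolding never gets off the ground. As written, the proposal is a citation of Sakai's theorem dressed as a proof, with an intermediate step that is not available under the stated hypotheses. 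Given that the paper itself simply cites the result, the honest course is either to do the same or to reproduce Sakai's actual argument; the hybrid here does not work.
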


A function $S:\Cl(\Omega)\cap\D_{\epsilon}(w_0)\rightarrow\C$ is {\it local Schwarz function} at $w_0\in\partial\Omega$ if $S\in\A(\Omega\cap\D_{\epsilon}(w_0))$ and $S(w)=\overline{w}$ on $\partial\Omega\cap\D_{\epsilon}(w_0)$. A {\it degenerate point} is a point $w_0$ for which $\Omega^c\cap\D_{\epsilon}(w_0)$ is a proper subset of a real analytic curve. See \cite{Lee_2015}, \S3.2 for the definitions of cusps and double points.

As we require quadrature domains to equal to the interior of their closure, we may neglect the degenerate case. Combining this with the fact that a Schwarz function restricts to a local Schwarz function about every boundary point, we obtain the following regularity theorem for QDs,
\begin{theorem}\label{thm:QDBoundaryRegularity}
    If $\Omega$ is a quadrature domain then $\partial\Omega$ has finitely many singular points, each of which is either a cusp or a double point.
\end{theorem}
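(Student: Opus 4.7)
The plan is to apply Sakai's regularity theorem (Theorem \ref{theorem:SakaiRegularity}) pointwise along $\partial\Omega$, eliminate the degenerate case by hand using $\Omega = \text{int}(\text{Cl}(\Omega))$, and then conclude finiteness of the singular set from compactness of $\partial\Omega$ together with local isolation of cusps and double points.

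First I would verify that the global Schwarz function $S \in \M(\Omega)$ supplied by Lemma \ref{lemma:QDChars} restricts, at each $w_0 \in \partial\Omega$, to a local Schwarz function on $\text{Cl}(\Omega) \cap \D_\epsilon(w_0)$ for some $\epsilon > 0$. This is read directly off the decomposition $S = C^{\Omega\IntComp} + h$ in Equation \ref{eqn:QuadCoincidence}: $C^{\Omega\IntComp}$ is analytic throughout $\Omega$, and $h \in \Rat(\Omega)$ has only finitely many poles, so a sufficiently small $\epsilon$ makes $S$ analytic on $\Omega \cap \D_\epsilon(w_0)$, while $S(w) = \overline{w}$ on $\partial\Omega \cap \D_\epsilon(w_0)$ is inherited from the global Schwarz identity. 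With this local Schwarz function in hand, Theorem \ref{theorem:SakaiRegularity} immediately forces every singular $w_0$ to be a cusp, a double point, or a degenerate point. The degenerate case is ruled out by the standing assumption $\Omega = \text{int}(\text{Cl}(\Omega))$: if $\Omega\IntComp \cap \D_\epsilon(w_0)$ were a proper subset of a real analytic arc $\gamma$, then $\D_\epsilon(w_0) \setminus \gamma \subseteq \Omega$ and hence $\D_\epsilon(w_0) \subseteq \text{Cl}(\Omega)$, placing $w_0$ in $\text{int}(\text{Cl}(\Omega)) = \Omega$, contradicting $w_0 \in \partial\Omega$.

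For finiteness I would observe that $\partial\Omega$ is a compact subset of $\C$ (in the unbounded case, openness in $\Ch$ at $\infty$ forces $\Omega^c$ to be compact), that the singular set $\text{Sing} \subseteq \partial\Omega$ is closed (the regular set is manifestly open by definition), and that each cusp and each double point is an isolated element of $\text{Sing}$. This isolation claim is what I expect to be the main obstacle in the plan: it requires unpacking the local models for cusps and double points from \cite{Lee_2015}, \S3.2, to confirm that after shrinking $\epsilon$ further, $\partial\Omega \cap \D_\epsilon(w_0)$ consists of one or two simple real analytic arcs whose only non-regularity is located at $w_0$ itself, so that every other boundary point in $\D_\epsilon(w_0)$ is regular. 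Granted this, $\text{Sing}$ is a closed discrete subset of the compact set $\partial\Omega$, hence finite, completing the proof.
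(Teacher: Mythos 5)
Your proposal is correct and follows essentially the same route as the paper: restrict the global Schwarz function $S=C^{\Omega\IntComp}+h$ to a local Schwarz function at each boundary point, invoke Sakai's theorem, and exclude the degenerate alternative via the standing assumption $\Omega=\mathrm{int}(\text{Cl}(\Omega))$. The only difference is that you spell out the compactness-plus-isolation argument for finiteness of the singular set, which the paper leaves implicit in its citation of Sakai's theorem; that is a filled-in detail rather than a divergence.
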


\subsection{The Inverse and Direct Problems for Simply Connected QDs}\label{subsec:SCQDs}
A central problem in the theory of quadrature domains is that of reconstructing a QD given its quadrature function. This is referred to as the {\it inverse problem} for quadrature domains. Complementary to this is the {\it direct problem}, which is concerned with determining the quadrature function associated to a given quadrature domain.

For simply connected domains, these problems may be reduced to relating the quadrature function of the domain and its Riemann map. A classic result in the theory of quadrature domains provides a characterization of simply connected QDs in terms of the associated Riemann map.

\begin{theorem}[\cite{AharonovShapiro}]\label{thm:QDIffRationalRiemann}
    A simply connected domain is a quadrature domain iff it has a rational Riemann map.
\end{theorem}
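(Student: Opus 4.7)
The plan is to leverage the Schwarz function characterization from Lemma \ref{lemma:QDChars}: $\Omega\in\QD$ iff there exists $S\in\M(\Omega)$ with $S(w)\dEquals\overline{w}$. I work in the bounded case with a Riemann map $\varphi:\D\to\Omega$; the unbounded case follows analogously, with $\D$ replaced by $\Ch\setminus\Cl(\D)$.

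For the $(\Leftarrow)$ direction, suppose $\varphi$ is rational. Set $\varphi^{\#}(z):=\overline{\varphi(1/\overline{z})}$, again a rational function of $z$, and define
$$S(w):=\varphi^{\#}\circ\varphi^{-1}(w),\qquad w\in\Omega.$$
As a composition of the rational $\varphi^{\#}$ with the holomorphic $\varphi^{-1}:\Omega\to\D$, we have $S\in\M(\Omega)$. For $w=\varphi(e^{i\theta})\in\partial\Omega$, $\varphi^{-1}(w)=e^{i\theta}$ and $1/\overline{e^{i\theta}}=e^{i\theta}$, so $S(w)=\overline{\varphi(e^{i\theta})}=\overline{w}$; Lemma \ref{lemma:QDChars} then yields $\Omega\in\QD$.

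For the $(\Rightarrow)$ direction, suppose $\Omega\in\QD$ has Schwarz function $S$, and let $\varphi:\D\to\Omega$ be a Riemann map. Schwarz-reflect $\varphi$ across $\partial\D$ via $S$ by setting
$$\tilde\varphi(z):=\overline{S\circ\varphi(1/\overline{z})},\qquad z\in\Ch\setminus\Cl(\D).$$
This is meromorphic on $\Ch\setminus\Cl(\D)$, since $z\mapsto 1/\overline{z}$ is an anti-holomorphic bijection onto $\D$ and $S\circ\varphi$ is meromorphic on $\D$. On $|z|=1$ one has $1/\overline{z}=z$ and $S(\varphi(z))=\overline{\varphi(z)}$, so $\tilde\varphi(z)=\varphi(z)$ there. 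By Theorem \ref{thm:QDBoundaryRegularity}, $\partial\Omega$ has only finitely many singular points, so $\varphi$ extends continuously and real-analytically across each preimage arc of $\partial\D$ corresponding to a regular arc of $\partial\Omega$; the Schwarz reflection principle then glues $\varphi$ and $\tilde\varphi$ along these arcs into a single function $\Phi$ meromorphic on $\Ch$ minus the finitely many exceptional points of $\partial\D$.

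The main obstacle is showing that each exceptional point (the preimage of a cusp or double point) is a non-essential singularity of $\Phi$. The key input is that $\Phi|_{\D}=\varphi$ is bounded, combined with the algebraicity of the Schwarz function of a QD (the pair $(w,S(w))$ traces out an algebraic curve), which forces $\Phi$ to satisfy an algebraic relation over $\C(z)$ globally and thereby precludes essential singularities at the isolated exceptional points. Once each such singularity is verified to be removable or polar, $\Phi$ extends meromorphically to all of $\Ch$ -- i.e. is rational -- and its restriction to $\D$ recovers $\varphi$.
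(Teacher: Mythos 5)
Your argument is correct in outline but takes a genuinely different route from the paper's. The paper proves this equivalence through the Faber transform: writing $S=C^{\Omega\IntComp}+h$ (Equation \ref{eqn:QuadCoincidence}), composing with $\varphi$, and applying the Cauchy projection onto $\A(\D)$ kills the $C^{\Omega\IntComp}\circ\varphi$ term and yields $\varphi=cz+\Phi_{\varphi}^{-1}(h)^{\#}$ (resp.\ $\varphi=\varphi(0)+\Phi_{\varphi}^{-1}(h)^{\#}$ in the bounded case); rationality of $\varphi$ then follows because $\Phi_{\varphi}^{-1}$ carries $\Rat(\Omega)$ into rational functions via a finite residue computation, and the converse direction is the companion identity $h=\Phi_{\varphi}\left(\AnalyticIn{\varphi^{\#}}{\D}\right)$ (Theorems \ref{thm:ClassicBQDRationalRiemannIffQD} and \ref{thm:ClassicUQDRationalRiemannIffQD}). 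Your route is the classical Aharonov--Shapiro reflection argument: it is more elementary, and your $(\Leftarrow)$ direction, $S=\varphi^{\#}\circ\varphi^{-1}$, is exactly right. What the reflection argument does not buy you is the explicit correspondence between the poles and coefficients of $h$ and those of $\varphi$, which is the content the rest of the paper actually uses.

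The last paragraph of your $(\Rightarrow)$ direction is the weak point. The ``main obstacle'' you identify --- possible essential singularities of $\Phi$ at preimages of cusps and double points --- does not in fact arise, and the argument you sketch to dispose of it is not a proof as written: the algebraicity of the pair $(w,S(w))$ is itself a separate, nontrivial theorem of Aharonov--Shapiro, and the inference that $\Phi$ ``satisfies an algebraic relation over $\C(z)$ globally'' which then ``precludes essential singularities'' is left entirely unjustified. Fortunately none of this is needed. The version of the reflection principle that applies here is the continuity version (Morera/Painlev\'e): if a function is continuous on a neighborhood of an arc and holomorphic off the arc, it is holomorphic across it; no real-analyticity of the boundary data is required. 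Since $S$ extends continuously to $\partial\Omega$ away from its finitely many interior poles, and $\varphi$ extends continuously to $\Cl(\D)$ (by Theorem \ref{thm:QDBoundaryRegularity} together with Carath\'eodory's theorem, which you have already invoked), the glued function $\Phi$ is continuous across \emph{every} point of $\partial\D$ --- including the preimages of the singular boundary points --- and holomorphic on both sides, hence meromorphic on all of $\Ch$ with finitely many poles, i.e.\ rational. Replacing your final paragraph with this observation closes the argument and in fact simplifies it.
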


Hence, for simply connected QDs, the inverse and direct problems reduce to the determination of the relationship between the finitely many poles and coefficients of the quadrature function and the finitely many poles and coefficients of the Riemann map. Formulae relating the Riemann map of a given simply connected QD with its quadrature identity have appeared in various forms going as far back as Davis \cite{davis_1974} (1974) in the case of bounded classical QDs. More recently, Ameur, Helmer, and Tellander (2021) apply a form of this identity (\cite{Ameur_2021} Equation 2), referred to as the ``master formula'', to the uniqueness problems for bounded classical QDs. This ``master formula'' represents the Riemann map in terms of the quadrature measure $\mu=\overline{\partial}h$:
\begin{equation}
\varphi(z)=\overline{\varphi_{\ast}\mu}\left(\dfrac{z}{\overline{\varphi'(\lambda)}(1-z\overline{\lambda})}\right),
\end{equation}
where $\varphi_{\ast}\mu$ is the pushforward measure.

Chang \cite{Chang_2013} considers an essentially equivalent formula in terms of the {\it Faber transform}, an isomorphism of analytic functions on the disk with those on another bounded simply connected domain. We provide generalizations of this approach to unbounded domains (\ref{thm:ClassicUQDRationalRiemannIffQD}) and some classes of weighted domains (\S\ref{sec:PQDs}, \S\ref{sec:LQDs}). After fixing some conventions and notation regarding the Riemann map, we will proceed with a brief proof of Theorem \ref{thm:QDIffRationalRiemann}, followed by an exposition of the method for classical BQDs and UQDs.\\

Let $\Omega$ be a bounded simply connected domain. The Riemann mapping theorem asserts that $\Omega$ is conformally equivalent to the disk, and that the associated conformal map is unique up to a rotation and recentering. In particular, for each $w_0\in\Omega$ there exists a unique biholomorphism $\varphi:\D\rightarrow\Omega$ for which $\varphi(0)=w_0$ and $\varphi'(0)>0$, so that
$$\varphi(z)=w_0+\varphi'(0)z+O(z^2)$$
We refer to $\varphi$ as {\it the} Riemann map associated to $\Omega$ and $w_0$. On the other hand, when $\Omega$ is unbounded, there exists a unique biholomorphism $\varphi:\D\IntComp\rightarrow\Omega$ for which $\varphi(\infty)=\infty$ and $\varphi'(\infty)>0$, so that
$$\varphi(z)=cz+f_0+\left<z^{-1}\right>.$$
In this case, too, we refer to $\varphi$ as {\it the} Riemann map associated to $\Omega$.

\subsection{Solving the Inverse and Direct Problems via the Faber Transform}
A central goal of this paper is to demonstrate a method of reconstructing the Riemann map associated to a simply connected QD from its quadrature function and vice-versa (i.e. to solve the inverse and direct problems for simply connected QDs). Theorems \ref{thm:ClassicBQDRationalRiemannIffQD} and \ref{thm:ClassicUQDRationalRiemannIffQD} provide characterizations of this method for bounded and unbounded domains respectively. We denote by $\Phi_{\varphi}$ the {\it Faber transform} associated to the Riemann map $\varphi$.

\begin{theorem}\label{thm:ClassicBQDRationalRiemannIffQD}
Let $\Omega$ be a bounded and simply connected domain with quadrature function $h\in\Rat_0(\Omega)$ and Riemann map $\varphi:\D\rightarrow\Omega$. In this case,
\begin{equation}\label{eqn:BoundedFaberTransformHFormula}
h=\Phi_{\varphi}\left(\AnalyticIn{\varphi^{\#}}{\D\IntComp}\right)
\end{equation}
and
\begin{equation}\label{eqn:BoundedFaberTransformPhiFormula}
\varphi=\varphi(0)+\Phi_{\varphi}^{-1}(h)^{\#}.
\end{equation}
\end{theorem}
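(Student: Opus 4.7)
The core identity to establish is
$$\Phi_{\varphi}^{-1}(h)=\AnalyticIn{\varphi^{\#}}{\D\IntComp},$$
from which Equations~\ref{eqn:BoundedFaberTransformHFormula} and~\ref{eqn:BoundedFaberTransformPhiFormula} both follow at once. The first drops out by applying $\Phi_{\varphi}$ to both sides. The second drops out by applying the involution $(\cdot)^{\#}$: writing $\AnalyticIn{\varphi^{\#}}{\D\IntComp}=\varphi^{\#}-\overline{\varphi(0)}$, the relation becomes $(\Phi_{\varphi}^{-1}(h))^{\#}=\varphi-\varphi(0)$, which rearranges to Equation~\ref{eqn:BoundedFaberTransformPhiFormula}. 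My strategy is to pull the Schwarz function identity back to the unit disk through $\varphi$ and then recognize the result as the defining relation of $\Phi_{\varphi}^{-1}$.

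First I would invoke Lemma~\ref{lemma:QDChars} and Equation~\ref{eqn:QuadCoincidence} to obtain a Schwarz function $S\in\M(\Omega)$ with $S=\overline{w}$ on $\partial\Omega$, and for which $G:=S-h$ is analytic on $\Omega$ (the poles of $S$ inside $\Omega$ are exactly those of $h$). Precomposing with $\varphi$ and restricting to $\partial\D$, where $\overline{\varphi(z)}=\varphi^{\#}(z)$, we obtain
$$\varphi^{\#}(z)\;=\;(h\circ\varphi)(z)\;+\;(G\circ\varphi)(z),\qquad z\in\partial\D,$$
with $G\circ\varphi\in\A(\D)$. So on $\partial\D$, $\varphi^{\#}$ and $h\circ\varphi$ agree modulo an $\A(\D)$-function.

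Next I would appeal to the defining property of the Faber transform. In the bounded setting, $\Phi_{\varphi}$ is the isomorphism $\A_0(\D\IntComp)\to\A_0(\Omega\IntComp)$ characterized by the condition that, for $g\in\A_0(\D\IntComp)$, $\Phi_{\varphi}(g)\circ\varphi-g$ extends analytically from $\partial\D$ into $\D$; equivalently, $\Phi_{\varphi}^{-1}(h)$ is the $\A_0(\D\IntComp)$-component of $(h\circ\varphi)\big|_{\partial\D}$ in the Cauchy decomposition of boundary values on $\partial\D$ into $\A(\D)\oplus\A_0(\D\IntComp)$. Under this decomposition $G\circ\varphi$ contributes entirely to $\A(\D)$, while $\varphi^{\#}$ splits as $\overline{\varphi(0)}+\AnalyticIn{\varphi^{\#}}{\D\IntComp}$, with the constant $\overline{\varphi(0)}=\varphi^{\#}(\infty)$ absorbed on the $\A(\D)$-side by the $\infty$-vanishing normalization built into $\AnalyticIn{\cdot}{\D\IntComp}$. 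Taking $\A_0(\D\IntComp)$-parts of the displayed identity then gives the core identity.

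The main obstacle is that third step: certifying that $\Phi_{\varphi}^{-1}$ really is realized by the $\A_0(\D\IntComp)$-projection of boundary values, that $\Phi_{\varphi}(\AnalyticIn{\varphi^{\#}}{\D\IntComp})$ lands in $\Rat_0(\Omega)$ rather than just $\A_0(\Omega\IntComp)$ (this uses rationality of $\varphi$ from Theorem~\ref{thm:QDIffRationalRiemann}, which upgrades the decomposition of $h\circ\varphi$ into an honest rational decomposition), and that the constant term is correctly sorted onto the $\A(\D)$-side of the splitting. The continuity of boundary traces required to make any of this meaningful is supplied by the QD boundary regularity result, Theorem~\ref{thm:QDBoundaryRegularity}.
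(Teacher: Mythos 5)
Your proposal is correct and follows essentially the same route as the paper's proof of the unbounded analogue (Theorem \ref{thm:ClassicUQDRationalRiemannIffQD}), which the paper says to transpose to the bounded case: decompose the Schwarz function as $h$ plus a term analytic in $\Omega$ via Equation \ref{eqn:QuadCoincidence}, pull back through $\varphi$ to $\partial\D$, take the $\A_0(\D\IntComp)$-projection so that the analytic remainder drops out and the constant $\overline{\varphi(0)}$ is sorted onto the $\A(\D)$ side, and identify the result with $\Phi_{\varphi}^{-1}(h)=\AnalyticIn{h\circ\varphi}{\D\IntComp}$. The only cosmetic difference is that you organize the argument around the single core identity $\Phi_{\varphi}^{-1}(h)=\AnalyticIn{\varphi^{\#}}{\D\IntComp}$ and derive both displayed formulae from it, whereas the paper derives the $\varphi$-formula first and then applies $\Phi_{\varphi}$.
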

In particular, Equations \ref{eqn:BoundedFaberTransformHFormula} and \ref{eqn:BoundedFaberTransformPhiFormula} respectively solve the direct and inverse problems for BQDs. Note that $\AnalyticInNoBracket{\varphi^{\#}}{\D\IntComp}=\varphi^{\#}-\overline{\varphi(0)}$, so $h=\Phi_{\varphi}\left(\varphi^{\#}-\overline{\varphi(0)}\right)$ is an equivalent form of Equation \ref{eqn:BoundedFaberTransformHFormula}. Theorem \ref{thm:ClassicUQDRationalRiemannIffQD} covers the analogous result for UQDs.
\begin{theorem}\label{thm:ClassicUQDRationalRiemannIffQD}
Let $\Omega$ be an unbounded and simply connected domain with quadrature function $h\in\Rat(\Omega)$ and Riemann map $\varphi:\D\IntComp\rightarrow\Omega$. In this case,
\begin{equation}\label{eqn:UnboundedFaberTransformHFormula}
h=\Phi_{\varphi}\left(\AnalyticIn{\varphi^{\#}}{\D}\right)
\end{equation}
and
\begin{equation}\label{eqn:UnboundedFaberTransformPhiFormula}
\varphi(z)=cz+\Phi_{\varphi}^{-1}(h)^{\#}(z).
\end{equation}
\end{theorem}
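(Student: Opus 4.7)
The plan is to carry out the argument of Theorem \ref{thm:ClassicBQDRationalRiemannIffQD} essentially verbatim, with two modifications forced by the presence of $\infty$ in $\Omega$: the Cauchy transform $C^\Omega$ must be interpreted as a principal value, and the exterior Riemann map $\varphi:\D\IntComp\to\Omega$ carries the opposite orientation on $\partial\D$ relative to the interior map of the bounded setup. I would first use Lemma \ref{lemma:QDChars}(2) to reduce the task to showing that $h(w)=C^\Omega(w)$ for $w\in\Omega\IntComp$ is given by the Faber-type formula. To convert the area integral into a boundary integral I would apply Stokes' theorem on $\Omega\cap\D_R$, whose oriented boundary consists of $\partial\Omega$ together with $\{|\xi|=R\}$. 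A direct residue computation gives $\oint_{|\xi|=R}\bar\xi\,d\xi/(w-\xi)=0$ for all $R>|w|$, so letting $R\to\infty$ yields
\[
h(w) \;=\; \oint_{\partial\Omega}\frac{\bar\xi\,d\xi}{w-\xi}, \qquad w\in\Omega\IntComp,
\]
the same contour representation as in the bounded case.

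Next I would parametrize $\partial\Omega$ via $\xi=\varphi(z)$ with $z\in\partial\D$, using the relation $\bar\xi=\varphi^{\#}(z)$ on the unit circle. Tracking the orientation flip of $\varphi$ as the exterior map yields
\[
h(w) \;=\; \oint_{\partial\D}\frac{\varphi^{\#}(z)\,\varphi'(z)}{\varphi(z)-w}\,dz \;=\; \Phi_\varphi(\varphi^{\#})(w), \qquad w\in\Omega\IntComp.
\]
Decomposing $\varphi^{\#}(z)=\bar c/z + \AnalyticIn{\varphi^{\#}}{\D}(z)$, Equation \ref{eqn:UnboundedFaberTransformHFormula} reduces to verifying that $\Phi_\varphi(\bar c/z)$ vanishes on $\Omega\IntComp$. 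I would establish this by deforming $\partial\D$ outward to $|z|=R$: for $w\in\Omega\IntComp$ the integrand $\varphi'(z)/[z(\varphi(z)-w)]$ is analytic throughout $\D\IntComp$ (the denominator $\varphi-w$ never vanishes there), and the normalization $\varphi(z)=cz+f_0+\left<z^{-1}\right>$ forces the integrand to decay like $1/z^2$ at infinity, so the deformed integral tends to $0$ as $R\to\infty$. Thus Equation \ref{eqn:UnboundedFaberTransformHFormula} holds on $\Omega\IntComp$, and since both sides are rational functions with poles in $\Omega$ (using Theorem \ref{thm:QDIffRationalRiemann} to conclude $\varphi$ is rational), the identity extends by analytic continuation to all of $\Ch$.

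The reconstruction formula \ref{eqn:UnboundedFaberTransformPhiFormula} then follows by applying $\Phi_\varphi^{-1}$, whose existence in this setting is a direct analogue of the bounded case. The resulting identity $\Phi_\varphi^{-1}(h)=\varphi^{\#}-\bar c/z$, together with the involution $(\cdot)^{\#}$ (which sends $\varphi^{\#}$ back to $\varphi$ and $\bar c/z$ to $cz$), rearranges to $\varphi(z)=cz+\Phi_\varphi^{-1}(h)^{\#}(z)$. The principal obstacle relative to the bounded proof is the careful bookkeeping at infinity: the vanishing of the outer contour in Stokes, the vanishing of $\Phi_\varphi(\bar c/z)$, and the emergence of the leading $cz$ term in the reconstruction all trace back to the same $O(1/z^2)$ decay estimate, and the orientation flip of $\varphi:\D\IntComp\to\Omega$ threads through every step and must be tracked consistently.
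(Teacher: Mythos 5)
Your argument is correct, but it is the dual of the paper's proof rather than a transcription of it. The paper starts from the Schwarz-function identity (Equation \ref{eqn:QuadCoincidence}), $S=C^{\Omega\IntComp}+h\dEquals\overline{w}$, composes with $\varphi$, and projects onto $\A(\D)$: the term $\AnalyticIn{C^{\Omega\IntComp}\circ\varphi}{\D}$ vanishes because $C^{\Omega\IntComp}\circ\varphi\in\A_0(\D\IntComp)$, which yields the reconstruction formula \ref{eqn:UnboundedFaberTransformPhiFormula} first and then Equation \ref{eqn:UnboundedFaberTransformHFormula} by applying $\Phi_{\varphi}$. You instead start from Lemma \ref{lemma:QDChars}(2), $h=\left.C^{\Omega}\right\vert_{\Omega\IntComp}$, compute $C^{\Omega}$ explicitly as a boundary integral via Green's theorem (checking that the outer circle contributes nothing), pull back to $\partial\D$, and kill the $cz^{-1}$ piece of $\varphi^{\#}$ by a contour deformation in $\D\IntComp$ --- obtaining Equation \ref{eqn:UnboundedFaberTransformHFormula} first and \ref{eqn:UnboundedFaberTransformPhiFormula} afterward by inverting $\Phi_{\varphi}$. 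Both routes ultimately rest on the same two facts, the splitting $\AnalyticIn{\varphi^{\#}}{\D}=\varphi^{\#}-cz^{-1}$ and the $O(z^{-2})$ decay at $\infty$ that eliminates the extraneous term; yours trades the paper's appeal to the pre-established analyticity of $C^{\Omega\IntComp}\circ\varphi$ and to Equation \ref{eqn:ExteriorFaberTransformProjectionExtension} for a self-contained residue and deformation computation, at the cost of the orientation bookkeeping you rightly flag. One cosmetic remark: the detour through rationality of $\varphi$ (Theorem \ref{thm:QDIffRationalRiemann}) to extend the identity beyond $\Omega\IntComp$ is unnecessary, since both sides of Equation \ref{eqn:UnboundedFaberTransformHFormula} live naturally in $\A(\Omega\IntComp)$ and $h$, being rational, is determined by its restriction there.
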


To make sense of these formulae, a formal definition of the Faber transform is needed. This, along with a discussion of the various properties of the Faber transform is the topic of the next section.

\subsubsection{The Faber Transform}\label{subsubsec:FTMethod}
Recall that if $\Omega$ is a bounded simply connected domain and $w_0\in\Omega$, then by ``the Riemann map associated to $\Omega$ and $w_0$'' we refer to the unique biholomorphism $\varphi:\D\rightarrow\Omega$ for which $\varphi'(0)>0$ and $\varphi(0)=w_0$. We furthermore denote by $\psi$ the inverse of $\varphi$.

If $\Omega$ is a domain with a rectifiable boundary on which a function $f$ integrable, then we denote by $\AnalyticIn{f}{\Omega}$ the {\it Cauchy projection} of $f$ onto $\Omega$,
\begin{equation}\label{eqn:analyticprojection}
\AnalyticIn{f}{\Omega}(w):=\oint_{\partial\Omega}\dfrac{f(\xi)}{\xi-w}d\xi.
\end{equation}
It follows from the residue theorem that if $f\in\M(\Omega\IntComp)$, then $\AnalyticIn{f}{\Omega}$ is a rational function with the same poles as $f$. The Cauchy projection, $\AnalyticInNoBracket{f}{\Omega}$, is also referred to as the ``analytic part'' of $f$ in $\Omega$.

\begin{definition}[Interior Faber transform]
The {\it interior Faber transform} associated to $\Omega$ and $\varphi$ (as above) is an isomorphism $\Phi_{\varphi}:\A_0(\D\IntComp)\rightarrow\A_0(\Omega\IntComp)$ given by
\begin{equation}\label{eqn:InteriorTransformFormula}
\Phi_{\varphi}(f)(w):=\AnalyticIn{f\circ\psi}{\Omega\IntComp}(w)=\oint_{\partial\Omega\IntComp}\dfrac{f\circ\psi(\xi)}{\xi-w}d\xi.
\end{equation}
\end{definition}
If $f\in C^0(\partial\D)$ then the Sokhotski-Plemelj theorem implies $f=\AnalyticIn{f}{\D}+\AnalyticIn{f}{\D\IntComp}$, so
\begin{equation}\label{eqn:InteriorFaberTransformProjectionExtension}
    \AnalyticIn{f\circ\psi}{\Omega\IntComp}=\AnalyticIn{\AnalyticIn{f}{\D}\circ\psi+\AnalyticIn{f}{\D\IntComp}\circ\psi}{\Omega\IntComp}=\AnalyticIn{\AnalyticIn{f}{\D\IntComp}\circ\psi}{\Omega\IntComp}=\Phi_\varphi\left(\AnalyticInNoBracket{f}{\D\IntComp}\right).
\end{equation}
The analogous result holds for the inverse transform, $\AnalyticIn{f\circ\varphi}{\D\IntComp}=\Phi_{\varphi}^{-1}\left(\AnalyticInNoBracket{f}{\Omega\IntComp}\right)$, under the additional assumption that $\partial\Omega$ is piecewise $C^1$.\\

If the domain under consideration is unbounded, then it has an associated {\it exterior Faber transform}. Recall that if $\Omega$ is an unbounded simply connected domain, then by ``the Riemann map associated to $\Omega$'' we refer to the unique biholomorphism $\varphi:\D\IntComp\rightarrow\Omega$ for which $\varphi(\infty)=\infty$ and $\varphi'(\infty)=c>0$, where $c>0$ is the conformal radius of $\Omega$. 
\begin{definition}[Exterior Faber transform]
The exterior Faber transform associated to $\Omega$ and $\varphi$ (as above) is an isomorphism $\Phi_{\varphi}:\A(\D)\rightarrow\A(\Omega\IntComp)$ defined precisely as in Equation \ref{eqn:InteriorTransformFormula}. The inverse transform is given by $\Phi_{\varphi}^{-1}(f)=\AnalyticIn{f\circ\varphi}{\D}$.
\end{definition}
Moreover note that because $\partial\D$ is $C^1$, the Sokhotski-Plemelj theorem implies that for each $f\in C^0(\partial\D)$,
\begin{equation}\label{eqn:ExteriorFaberTransformProjectionExtension}
    \AnalyticIn{f\circ\psi}{\Omega\IntComp}=\AnalyticIn{\AnalyticIn{f}{\D}\circ\psi+\AnalyticIn{f}{\D\IntComp}\circ\psi}{\Omega\IntComp}=\AnalyticIn{\AnalyticIn{f}{\D}\circ\psi}{\Omega\IntComp}=\Phi_\varphi\left(\AnalyticInNoBracket{f}{\D}\right).
\end{equation}
The analogous result holds for the inverse transform, $\AnalyticIn{f\circ\varphi}{\D}=\Phi_{\varphi}^{-1}\left(\AnalyticInNoBracket{f}{\Omega\IntComp}\right)$, under the additional assumption that $\partial\Omega$ is piecewise $C^1$.

Note that, in contrast to the interior case, $\C[z]\subseteq\A(\D)$ so one can consider the Faber transform of polynomials. The {\it Faber polynomials} associated to $\Omega$ (or $\varphi$) are the polynomial basis $\{F_n\}_{n=0}^{\infty}$ given by $F_n=\Phi_{\varphi}(z^n)=\AnalyticIn{\psi^n}{\Omega\IntComp}$. The inverse Faber polynomials, $\{W_n\}_{n=0}^{\infty}$ are defined analogously. $F_n$ and $W_n$ are given by the polynomial parts of $\psi^n$ and $\varphi^n$ respectively. In fact the exterior transform is an automorphism of $\C[z]$ and an isomorphism $\Rat(\D\IntComp)\leftrightarrow\Rat(\Omega)$.

We are now prepared to prove Theorem \ref{thm:ClassicUQDRationalRiemannIffQD}. We omit the proof of Theorem \ref{thm:ClassicBQDRationalRiemannIffQD} because it is largely analogous (the result can also be obtained from Theorem \ref{thm:GenBPQDLefflerFTFormula}).

\begin{proof}[Proof of Theorem \ref{thm:ClassicUQDRationalRiemannIffQD}]

If $\Omega\in\QD(h)$ with Riemann map $\varphi:\D\IntComp\rightarrow\Omega$, then $\varphi(z)=cz+f_0+\left<z^{-1}\right>$, which implies $\varphi^{\#}(z)-cz^{-1}\in\A(\D)$. Hence,
$$\varphi^{\#}(z)-cz^{-1}=\AnalyticIn{\varphi^{\#}(z)-cz^{-1}}{\D}=\AnalyticIn{\varphi^{\#}(z)}{\D}-c \AnalyticIn{z^{-1}}{\D}=\AnalyticIn{\overline{\varphi(z)}}{\D}.$$
as, $\AnalyticIn{z^{-1}}{\D}=0$. Equation \ref{eqn:QuadCoincidence} tells us that the Schwarz function $S(w)\dEquals\overline{w}$ is given by $S=C^{\Omega\IntComp}+h$, so
\begin{align*}
    \varphi^{\#}(z)-cz^{-1}&=\AnalyticIn{\overline{\varphi(z)}}{\D}=\AnalyticIn{S\circ\varphi(z)}{\D}=\AnalyticIn{(C^{\Omega\IntComp}+h)\circ\varphi(z)}{\D}=\AnalyticIn{C^{\Omega\IntComp}\circ\varphi(z)}{\D}+\AnalyticIn{h\circ\varphi(z)}{\D}.
\end{align*}
However $C^{\Omega\IntComp}\circ\varphi\in\A_0(\D\IntComp)$, so we obtain
\begin{equation}\label{eqn:UQDPreFTFormula}
    \varphi(z)=cz+\AnalyticIn{h\circ\varphi(z)}{\D}^{\#}
\end{equation}

by Equation \ref{eqn:UQDPreFTFormula}, $\varphi(z)=cz+\AnalyticIn{h\circ\varphi(z)}{\D}^{\#}$. As $h\in\A(\Omega\IntComp)$, we see that $\AnalyticIn{h\circ\varphi(z)}{\D}=\Phi_{\varphi}^{-1}\left(h\right)$ by the definition of the exterior Faber transform. Hence,
$$\varphi(z)=cz+\Phi_{\varphi}^{-1}\left(h\right)^{\#}(z).$$
Rearranging yields $\varphi^{\#}(z)-cz^{-1}=\Phi_{\varphi}^{-1}\left(h\right)(z)$. Taking the Faber transform of both sides, we obtain
$$h=\Phi_{\varphi}\left(\varphi^{\#}(z)-cz^{-1}\right)=\Phi_{\varphi}\left(\AnalyticIn{\varphi^{\#}}{\D}\right).$$
\end{proof}

Explicit formulae for the Faber transforms of rational functions can be obtained by calculus of residues. Let $w_0\in\Omega$ and $z_0\in\D$ ($\D\IntComp$ for the exterior transform). Then
\begin{equation}\label{eqn:FTFormulae}
\begin{alignedat}{1}
\Phi_{\varphi}\left(\dfrac{1}{(z-z_0)^{n}}\right)(w)&=\frac{1}{(n-1)!}\left.\left(\frac{\varphi'(\xi)}{w-\varphi(\xi)}\right)^{(n-1)}\right\vert_{\xi=z_0}\\
\Phi_{\varphi}^{-1}\left(\dfrac{1}{(w-w_0)^{n}}\right)(z)&=\frac{1}{(n-1)!}\left.\left(\frac{\psi'(\xi)}{z-\psi(\xi)}\right)^{(n-1)}\right\vert_{\xi=w_0}
\end{alignedat}
\end{equation}
The specific formulae for $n=1$ and $n=2$ are provided below.

\begin{equation}\label{eqn:FaberPolyFormulae}
\begin{alignedat}{2}
\Phi_{\varphi}\left(\dfrac{1}{z-z_0}\right)(w)&=\dfrac{\varphi'(z_0)}{w-\varphi(z_0)},\;\;\;&\Phi_{\varphi}\left(\dfrac{1}{(z-z_0)^{2}}\right)(w)&=\dfrac{\varphi''(z_0)}{w-\varphi(z_0)}+\dfrac{\varphi'(z_0)^2}{(w-\varphi(z_0))^2}\\
\Phi_{\varphi}^{-1}\left(\dfrac{1}{w-w_0}\right)(z)&=\dfrac{\psi'(w_0)}{z-\psi(w_0)},\;\;\;&\Phi_{\varphi}^{-1}\left(\dfrac{1}{(w-w_0)^{2}}\right)(z)&=\dfrac{\psi''(w_0)}{z-\psi(w_0)}+\dfrac{\psi'(w_0)^2}{(z-\psi(w_0))^2}\\
& & &\\
F_1(w)&=\dfrac{w}{c}-\dfrac{f_0}{c}, &F_2(w)&=\dfrac{w^2}{c^2}-\dfrac{2f_0}{c^2}w+\dfrac{f_0^2-2cf_1}{c^2}\\
W_1(z)&=cz+f_0, &W_2(z)&=c^2z^2+2cf_0z+f_0^2+2cf_1,
\end{alignedat}
\end{equation}
where the $f_j$ are the coefficients in the Laurent expansion of $\varphi$ at $\infty$, $\varphi(z)=cz+f_0+\left<z^{-1}\right>$.

\noindent The basic theory of the Faber transform having been established, we are now prepared to discuss our first major example of this approach: the cardioid.

\subsubsection{The Cardioid}\label{subsubsec:FaberCardioid}
Using the Faber transform, we may directly verify the cardioid example depicted in Figure \ref{fig:BinomialCardioidQDFamily}. Suppose $\Omega=\varphi(\D)$ for some $\varphi(z)=az+bz^2$ with $a>0$ and $b\in\C$ such that $\varphi$ is univalent in $\D$ (this is precisely when $2|b|\leq a$). Then, by Theorem \ref{thm:ClassicBQDRationalRiemannIffQD}, $\Omega$ is a quadrature domain with quadrature function
\begin{align*}
h(w)&=\Phi_{\varphi}\left(az^{-1}+\overline{b}z^{-2}\right)(w)=a\dfrac{\varphi'(0)}{w-\varphi(0)}+\overline{b}\dfrac{\varphi''(0)}{w-\varphi(0)}+\overline{b}\dfrac{\varphi'(0)^2}{(w-\varphi(0))^2}=\dfrac{a^2+2|b|^2}{w}+\dfrac{\overline{b}a^2}{w^2}.
\end{align*}
On the other hand if we assume knowledge only of $h(w)=\alpha_0w^{-1}+\alpha_1w^{-2}$ ($\alpha_0>0$) and that $\Omega\in\QD(h)$ exists and is simply connected, then applying Equations \ref{eqn:BoundedFaberTransformPhiFormula} and \ref{eqn:FaberPolyFormulae} (WLOG setting $\varphi(0)=0$),
\begin{align*}
\varphi(z)&=\varphi(0)+\Phi_{\varphi}^{-1}\left(\alpha_0w^{-1}+\alpha_1w^{-2}\right)^{\#}(z)=\left(\dfrac{\alpha_0\psi'(0)}{z-\psi(0)}+\dfrac{\alpha_1\psi''(0)}{z-\psi(0)}+\dfrac{\alpha_1\psi'(0)^2}{(z-\psi(0))^2}\right)^{\#}=az+bz^{2},
\end{align*}
for some $a,b\in\C$. In particular, if $\Omega$ is a simply connected domain, then there exists an $h$ of the above form for which $\Omega\in\QD(h)$ if and only if $\Omega$ admits a quadratic Riemann map (c.f. \cite{AharonovShapiro}). Moreover, the coefficients satisfy the system of algebraic equations: $\alpha_0=a^2+2|b|^2$, $\alpha_1=\overline{b}a^2$. Thus this approach provides a way of converting the inverse problem for quadrature domains into a problem of solving finite systems of algebraic equations.

We now take a brief detour into logarithmic potential theory. We will see that solutions to the associated ``obstacle problem'' are, in some sense, dual to quadrature domains. Applying this duality, along with the prior results, we obtain a classification of simply connected one point QDs - both bounded and unbounded.

\subsection{Potential Theory and Hele-Shaw Flow}\label{sec:PotentialTheoryHeleShaw}
Let $\mu$ be a non-negative measure on $\C$ with compact support, $K_\mu=\text{supp}(\mu)$. The logarithmic potential and logarithmic energy associated to $\mu$ are respectively given by
\begin{equation*}
\begin{aligned}[c]
   U^\mu(w)=\int_{\C}\ln\dfrac{1}{|w-\xi|}d\mu(\xi),
\end{aligned}
\qquad\qquad
\begin{aligned}[c]
    I(\mu)=\int_{\C}U^\mu d\mu.
\end{aligned}
\end{equation*}

Now suppose that we have an {\it admissible} external field $Q$. By admissible, we mean $Q:\C\rightarrow(-\infty,\infty]$ is lower-semicontinuous and satisfies the growth condition
\begin{equation*}
    \lim_{|w|\to\infty}(Q(w)-\ln|w|)=\infty.
\end{equation*}
In this case, the $Q-$potential and $Q-$energy (also referred to as the weighted logarithmic capacity of $\mu$) are respectively given by
\begin{equation*}
\begin{aligned}[c]
   U_{Q}^{\mu}(w)=\int_{\C}\left(\ln\dfrac{1}{|w-\xi|}+Q(\xi)+Q(w)\right)d\mu(\xi),
\end{aligned}
\qquad\qquad
\begin{aligned}[c]
    I_Q(\mu)=\int_{\C}U_{Q}^{\mu}d\mu.
\end{aligned}
\end{equation*}
If $Q$ is admissible, then there exists a unique $Q-$energy minimizing measure $\mu$, $\mu(1)=1$, called the {\it equilibrium measure} associated to $Q$. This is stated rigorously in the following theorem due to Frostman \cite{OFrostman}.
\begin{lemma}[Frostman]\label{lemma:Frostman}
If an external field $Q$ is admissible then there exists a measure $\muhat$ such that
\begin{equation*}
    I_{Q}(\muhat)=\gamma:=\underset{\mu:\mu(1)=1}{\inf}I_Q(\mu).
\end{equation*}
Moreover, $\widehat{\mu}$ is the unique compactly supported measure for which $U_{Q}^{\muhat}=\gamma$ on $K_\muhat$ and $U_{Q}^{\muhat}\geq\gamma$ q.e. (up to a set of capacity zero) in $\C$.\footnote{Intuitively, the equilibrium charge distribution must induce a constant potential on its support because otherwise a current would be induced, redistributing the charge.} We refer to the support of the equilibrium measure $K_{\muhat}$ as a ``droplet''.
\end{lemma}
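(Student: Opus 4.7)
The plan is to establish existence by the direct method in the calculus of variations, to extract the equilibrium conditions from a variational inequality, and to obtain uniqueness via strict convexity of the logarithmic energy on signed measures of zero total mass.

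First I would exploit admissibility to reduce the minimization to a weak-$\ast$ compact family. The growth condition $Q(w)-\ln|w|\to\infty$ forces the kernel $k(w,\xi)=\ln|w-\xi|^{-1}+Q(w)+Q(\xi)$ of $I_Q$ to be bounded below on $\C\times\C$ and ensures that any probability measure placing significant mass outside some closed disk $\overline{\D_R}$ has $I_Q$-value exceeding $\gamma+1$. Minimizing sequences may therefore be assumed supported in $\overline{\D_R}$. Since $\ln|\cdot|^{-1}$ and $Q$ are both lower semicontinuous, so is $k$; approximating $k$ from below by an increasing sequence of bounded continuous kernels and applying monotone convergence gives weak-$\ast$ lower semicontinuity of $I_Q$. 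Helly's selection theorem then produces a weak-$\ast$ cluster point $\muhat$ of the minimizing sequence, and lower semicontinuity forces $I_Q(\muhat)\leq\gamma$, so $\muhat$ attains the infimum.

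Next I would extract the equilibrium conditions by testing against competitors. For any probability measure $\nu$ of finite $Q$-energy with compact support, the convex combination $\mu_t=(1-t)\muhat+t\nu$ is a probability measure and a direct expansion of $I_Q(\mu_t)$, using the symmetry of $k$, yields
\begin{equation*}
\left.\tfrac{d}{dt}I_Q(\mu_t)\right|_{t=0^{+}}=2\!\int\! U_Q^{\muhat}\,d\nu-2\gamma\,\geq\,0.
\end{equation*}
Choosing $\nu$ to be a normalized weighted equilibrium measure on a candidate set $\{U_Q^{\muhat}<\gamma-\epsilon\}$ of positive capacity would contradict this inequality, giving $U_Q^{\muhat}\geq\gamma$ quasi-everywhere. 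Specializing $\nu=\muhat$ gives $\int U_Q^{\muhat}\,d\muhat=\gamma$; combined with the q.e.\ inequality (and the fact that $\muhat$ charges no set of capacity zero, since $I_Q(\muhat)<\infty$), this forces $U_Q^{\muhat}=\gamma$ $\muhat$-a.e., hence on $K_\muhat$ by lower semicontinuity of $U_Q^{\muhat}$.

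For uniqueness, suppose $\mu_1,\mu_2$ both satisfy the equilibrium conditions. Integrating $U_Q^{\mu_i}=\gamma$ against $\mu_i$ gives $I_Q(\mu_i)=\gamma$, so both are minimizers. The parallelogram identity
\begin{equation*}
I_Q\!\left(\tfrac{\mu_1+\mu_2}{2}\right)=\tfrac{1}{2}\bigl(I_Q(\mu_1)+I_Q(\mu_2)\bigr)-\tfrac{1}{4}I(\mu_1-\mu_2)
\end{equation*}
combined with minimality of the midpoint measure gives $I(\mu_1-\mu_2)\leq 0$. The hardest step of the proof is the complementary inequality $I(\sigma)\geq 0$, with equality only when $\sigma=0$, for every compactly supported signed measure $\sigma$ of total mass zero. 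This positive-definiteness of the logarithmic kernel is typically established via a Fourier-analytic representation of $\ln|z|^{-1}$, or equivalently by writing $I(\sigma)$ as the $L^2$ norm of $\nabla U^{\sigma}$ through Green's identity after mollification. Granted this, $I(\mu_1-\mu_2)=0$ forces $\mu_1=\mu_2$, establishing uniqueness.
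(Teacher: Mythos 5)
The paper does not actually prove this lemma; it is quoted as a classical result and the reader is pointed to \cite{OFrostman} and \cite{SaffTotik,Lee_2015}. Your argument is exactly the standard proof from those references --- direct method (weak-$\ast$ compactness plus lower semicontinuity of the energy) for existence, the one-sided derivative along convex combinations for the two Frostman conditions, and positive-definiteness of the logarithmic energy on compactly supported signed measures of total mass zero for uniqueness --- and it is sound. The one point worth stating more carefully is the final step of the second paragraph: lower semicontinuity of $U_Q^{\muhat}$ gives $U_Q^{\muhat}\leq\gamma$ \emph{everywhere} on $K_{\muhat}$ (an open set where $U_Q^{\muhat}>\gamma$ that met the support would carry positive $\muhat$-mass, contradicting the $\muhat$-a.e.\ equality), whereas the reverse inequality is only quasi-everywhere; so the equality ``on $K_{\muhat}$'' in the statement should be read, as in Saff--Totik, as the pair of inequalities $U_Q^{\muhat}\geq\gamma$ q.e.\ in $\C$ and $U_Q^{\muhat}\leq\gamma$ on $K_{\muhat}$.
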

Note that even if $Q$ fails to satisfy the desired growth condition at $\infty$, we can still consider the obstacle problem {\it locally}. In particular, if $X$ is a compact subset of $\C$, then we can localize to $X$ by instead considering the obstacle problem associated to the potential $Q_X(w)=Q(w)\1_{X}+\infty\1_{X^c}$. We refer to the support of the equilibrium measure in this case as a ``\hypertarget{text:localdropletdef}{local droplet}''.

Now suppose that $Q$ satisfies the stronger growth condition,
\begin{equation*}
    \lim_{|w|\to\infty}\left(Q(w)-t\ln|w|\right)=\infty,\;\forall t\in\R.
\end{equation*}

By Frostman's theorem, there exists a unique equilibrium measure $\muhat$ associated to $Q$. The Gauss variational (or ``obstacle'') problem is to find
\begin{equation*}
    \nu_t(w)=\sup\left\{\nu(w):\nu\leq Q\text{ q.e., }\nu\text{ is subharmonic in }\C\text{, and }\sim t\ln|w|\right\}.
\end{equation*}
A direct consequence of this definition is that $\nu_t(w)=U_{Q}^{\mu_t}(w)+c(t)$, where $\mu_t=t\muhat$, for $\muhat$ the equilibrium measure associated to $t^{-1}Q$, and $c(t)$ is a constant. Furthermore if $Q\in W^{2,p},\;p>1$ in an open nbhd of $K_t:=\text{supp}(\mu_t)$, then
\begin{equation*}
    d\mu_t=\1_{K_t}\dfrac{\Delta Q}{2}dA.
\end{equation*}
In particular, with the above assumptions, solving the obstacle problem is equivalent to finding the support of the equilibrium measure associated to the $Q-$potential. This motivates the following addendum to Frostman's theorem.
\begin{lemma}[Frostman, part 2]\label{lemma:FrostmanPt2}
Let $Q$ be an admissible external field, $K$ a compact subset of $\C$, and $d\mu=\1_{K}\frac{\Delta Q}{2}dA$. If $U^{\mu}+Q$ is constant q.e. on $K$ then $K$ is a local droplet of $Q$. \cite{SaffTotik,Lee_2015} (lemma 3.2)
\end{lemma}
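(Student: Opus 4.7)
The plan is to reduce the lemma to an application of Frostman's characterization (Lemma \ref{lemma:Frostman}) to a normalized, localized version of the external field. First I would set $t := \mu(\C) = \int_K \tfrac{\Delta Q}{2}\,dA$ (if $t = 0$ there is nothing to prove, so assume $t > 0$), and introduce the probability measure $\nu := \mu/t$ together with the rescaled field $\widetilde{Q} := Q/t$. Lower semicontinuity and the required growth at infinity for $\widetilde{Q}$ follow from those of $Q$, so $\widetilde{Q}$ is still admissible.

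Next I would localize to $K$ by setting $\widetilde{Q}_K := \widetilde{Q}\,\1_K + \infty\,\1_{K^c}$. This remains admissible: the growth condition at $\infty$ is trivial, and the jump to $+\infty$ off the compact set $K$ only strengthens the $\liminf$ at boundary points of $K$. The role of this localization is exactly to furnish, at no cost, the half of Frostman's condition that the hypothesis does not supply directly.

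The heart of the proof is then a short verification that $\nu$ meets the two conditions characterizing the equilibrium measure for $\widetilde{Q}_K$. Since $U^{t\nu} = tU^{\nu}$, the hypothesis $U^\mu + Q = c$ q.e.\ on $K$ rescales to
\begin{equation*}
U^{\nu}(w) + \widetilde{Q}(w) = c/t \qquad \text{q.e.\ on } K.
\end{equation*}
Unwinding the paper's definition of the weighted potential gives $U_{\widetilde{Q}_K}^{\nu}(w) = U^{\nu}(w) + \widetilde{Q}_K(w) + \int \widetilde{Q}_K\,d\nu$, which is the constant $c/t + \int \widetilde{Q}\,d\nu$ q.e.\ on $\operatorname{supp}(\nu) \subseteq K$, and is identically $+\infty$ off $K$. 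Both Frostman conditions ($U_{\widetilde{Q}_K}^{\nu} = \gamma$ on the support and $\geq \gamma$ q.e.\ in $\C$) therefore hold, and the uniqueness clause of Lemma \ref{lemma:Frostman} forces $\nu$ to be the equilibrium measure associated to $\widetilde{Q}_K$. Its support is by definition a local droplet of $\widetilde{Q}$, and local droplets transform trivially under the positive rescaling $Q \mapsto Q/t$, so $K$ (up to the null-set subtlety noted below) is a local droplet of $Q$.

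I expect the main obstacle to be bookkeeping rather than conceptual: one must carefully track the three constants $c$, $\int \widetilde{Q}\,d\nu$, and the $Q(w)\mu(\C)$ term built into the paper's $U_Q^{\mu}$ convention, and confirm that none of them interferes with applying Frostman on $\widetilde{Q}_K$. A smaller subtlety is that the argument actually identifies $\operatorname{supp}(\nu)$ as the local droplet, so to read off $K$ itself one needs $\tfrac{\Delta Q}{2}>0$ a.e.\ on $K$; this is the standard (strict) subharmonicity assumption on the potential, implicit in the setup that produced $d\mu = \1_K \tfrac{\Delta Q}{2}\,dA$ in the first place.
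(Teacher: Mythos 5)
Your argument is correct, and it is essentially the argument behind the result as cited: the paper does not prove this lemma itself (it defers to Saff--Totik and to Lemma 3.2 of the Lee reference), and the standard proof there is precisely your reduction --- normalize $\mu$ to the probability measure $\nu=\mu/t$ with field $Q/t$, localize to $K$ so that admissibility and the growth condition become automatic, check the two Frostman conditions ($U^{\nu}_{\widetilde{Q}_K}$ equals a constant q.e.\ on $K\supseteq\text{supp}(\nu)$ and is $+\infty$ off $K$), and invoke the uniqueness clause of Lemma \ref{lemma:Frostman}. Your closing caveat --- that the argument literally identifies $\text{supp}(\mu)$, not $K$, as the local droplet, so one needs $\Delta Q>0$ a.e.\ on $K$ (true for every weight appearing in this paper) --- is a correctly diagnosed fine point rather than a gap.
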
 

Lemma \ref{lemma:FrostmanPt2} can be applied to obtain a useful relation for solving the obstacle problem, the {\it coincidence equation} (Equation \ref{eqn:ceq}). To this end, we define the coincidence set $K_t^\ast=\left\{w\in\C:\nu_t(w)=-Q(w)\right\}$. $K_t^\ast$ is compact, nonempty, and $K_t\subseteq K_t^\ast$. In addition, $\nu_t$ is harmonic in $\C\setminus K_t^\ast$. As a consequence, we have that
\begin{equation*}
    -Q(w)=\nu_t(w)=U^{\mu_t}(w)+c(t)
\end{equation*}
on $K_t^\ast$. Differentiating with respect to $w$, we find that
\begin{equation}\label{eqn:ceq}
    \frac{\partial Q}{\partial w}\dEquals C^{K_t}_Q(w),
\end{equation}
where $\dEquals$ denotes equality on $\partial K_t$, and
\begin{equation*}
    C^{K_t}_Q(w):=\dfrac{1}{4}\int_{K_t}\dfrac{\Delta Q}{w-\xi}dA(\xi)
\end{equation*}
is $\frac{1}{2}$ the weighted Cauchy transform associated to $\mu_t$. Note that $C^{K_t}_Q$ is analytic in $K_t^c$ and $C^{K_t}_Q(w)=O(w^{-1})$ about $\infty$.\\

The obstacle problem is quite difficult to solve explicitly in general. However, there are a handful of cases in the literature for which this has been accomplished in the algebraic case.\cite{DragnevLeggSaff} Lemma \ref{lemma:ClassicQDDroplet} asserts a certain kind of duality between local droplets and quadrature domains: to solve the obstacle problem, it is sufficient to determine the quadrature domains associated to the local droplet. In \ref{subsec:SCQDs} we described a method of recovering simply connected QDs from their quadrature function via the Faber transform. Thus this method may also be used to tackle the obstacle problem. We will also demonstrate how this duality can be applied to the uniqueness problem for quadrature domains.

\subsubsection{Duality of Local Droplets and Quadrature Domains}

For {\it Hele-Shaw} potentials ($\Delta Q$ is constant), we may correspond the complements of certain droplets with quadrature domains. In particular,
\begin{lemma}\label{lemma:ClassicQDDroplet}
Let $K$ be a local droplet corresponding to a Hele-Shaw potential, $Q(w)=|w|^2-2\Re(H(w))$, where $h=H'$ is a rational function. Then $K^c$ is a disjoint union of QDs, with $h=$ the sum of their quadrature functions.

Conversely, if the complement of some compact set $K$ is a disjoint union of QDs with $h=$ the sum of their quadrature functions, then $K$ is a local droplet of a potential of the form
\begin{equation*}
Q(w)=|w|^2-2\Re\left(\sum_{l}\1_{K_l}H_l(w)\right),\;\;\;H_l(w)=c_l+\int_{w_l}^{w}h(\xi)d\xi,
\end{equation*}
where the $\{K_l\}_l$ are the connected components of $K$, $w_l$ is some point in $K_l$, and $c_l$ is a real constant. \cite{Lee_2015}
\end{lemma}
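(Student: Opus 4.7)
The strategy for both directions is to couple the coincidence equation (\ref{eqn:ceq}) with the Schwarz-function characterization of quadrature domains (Lemma \ref{lemma:QDChars}). The pivotal identity underlying both is
\begin{equation*}
C^K(w) + h(w) \dEquals \overline{w} \qquad \text{on } \partial K,
\end{equation*}
which, for Hele--Shaw $Q$, is precisely the coincidence equation once one computes $\partial_w Q = \overline{w} - h$ and $\Delta Q = 4$ (so the equilibrium measure is $d\mu = 2\cdot\1_K\,dA$).

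\textit{Forward direction.} Let $\{\Omega_j\}$ denote the connected components of $K^c$ in $\Ch$, with $\Omega_0$ the one containing $\infty$. Decompose $h = \sum_j h_j$ by partial fractions, with $h_j \in \Rat(\Omega_j)$ collecting the principal parts of $h$ at poles in $\Omega_j$ (and $h_0$ additionally carrying the polynomial part). For each $j$ the function
$$
S_j(w) := h_j(w) + C^K(w) + \sum_{l \neq j} h_l(w)
$$
is meromorphic on $\Omega_j$ (the Cauchy transform $C^K$ is analytic off $K$, and the $h_l$ with $l \neq j$ have no poles there) and matches $\overline{w}$ on $\partial \Omega_j \subseteq \partial K$ by the pivot. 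Hence $\Omega_j \in \QD$ with Schwarz function $S_j$, by Lemma \ref{lemma:QDChars}(3). To identify the associated quadrature function as exactly $h_j$, I would decompose $\Omega_j\IntComp = K \cup \bigcup_{l \neq j}\Omega_l$ (up to boundary sets of measure zero), apply Lemma \ref{lemma:QDChars}(2) to each $\Omega_l$ with $l \neq j$ to obtain $C^{\Omega_j\IntComp}(w) = C^K(w) + \sum_{l \neq j} h_l^{\mathrm{QD}}(w)$ on $\Omega_j$, and compare with the canonical decomposition $S_j = C^{\Omega_j\IntComp} + h_j^{\mathrm{QD}}$. The difference $h_j^{\mathrm{QD}} - h_j$ is then forced to be entire with pole structures supported in disjoint components and decaying at $\infty$, hence identically zero.

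\textit{Converse direction.} Assume $K^c = \bigsqcup_j \Omega_j$ with $\Omega_j \in \QD(h_j)$ and $h = \sum_j h_j$, and define $Q$ as in the statement. Then $\Delta Q = 4$ almost everywhere and $d\mu = 2\cdot\1_K\,dA$. A direct computation gives $\partial_w(U^\mu + Q)(w) = -C^K(w) + \overline{w} - h(w)$ on the interior of $K$. Re-deriving the pivot in reverse---writing $\overline{w} = C^{\D_R}(w) = C^K(w) + \sum_{j \geq 1} C^{\Omega_j}(w) + C^{\D_R \cap \Omega_0}(w)$ for $R$ large enough that $K \subset \D_R$, then using Lemma \ref{lemma:QDChars}(2) on each bounded $\Omega_j$ and its principal-value version on $\Omega_0$---yields $\partial_w(U^\mu + Q) = 0$, and by conjugation $\partial_{\overline{w}}(U^\mu + Q) = 0$ as well, on the interior of $K$. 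Thus $U^\mu + Q$ is locally constant on $K$; the real parameters $c_l$ in $H_l$ absorb the freedom to equalize these constants across the components $K_l$. Lemma \ref{lemma:FrostmanPt2} then identifies $K$ as a local droplet of $Q$.

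\textit{Main obstacle.} The crux lies in handling $C^{\Omega_0}$ for the unbounded component $\Omega_0$, which requires a principal-value interpretation: one needs $C^{\D_R \cap \Omega_0}(w) \to h_0(w)$ as $R \to \infty$, uniformly on compact subsets of $K$. The approach is to use the elementary computation $C^{\D_R}(w) = \overline{w}$ on $\D_R$ (verified via $\partial_{\overline{w}} C^{\D_R} = \1_{\D_R}$ and a rotational-symmetry argument), the vanishing of the Cauchy transform of rotationally symmetric annuli at interior points, and the unbounded-QD form of Lemma \ref{lemma:QDChars}(2). A secondary technicality is the possible multi-valuedness of $\int_{w_l}^w h\,d\xi$ when $K_l$ is not simply connected, which affects $\Re H_l$ only through residues of $h$ at poles in the holes of $K_l$, and can be controlled by restricting to configurations where these residues contribute a single-valued real part.
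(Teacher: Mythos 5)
Your proposal follows essentially the same route as the paper, which does not prove this lemma directly (it is cited to Lee--Makarov) but proves its weighted generalization in Lemmas \ref{lemma:GenComplementDropletQuad} and \ref{lemma:GenQuadComplementDroplet}, of which this is the $a=1$ case: the forward direction is the coincidence equation (\ref{eqn:ceq}) combined with the Schwarz-function characterization of Lemma \ref{lemma:QDChars}, and the converse verifies the Frostman condition of Lemma \ref{lemma:FrostmanPt2} after computing $\overline{w}$ as the principal-value Cauchy transform of the whole plane over $\D_R$, exactly as you outline.

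The one place you hedge --- ``restricting to configurations where these residues contribute a single-valued real part'' --- is unnecessary, and leaving it in would weaken the converse to a conditional statement. For a loop $\gamma$ in a component $K_l$, the period of $\int h$ reduces to the residues of the $h_j$ attached to the enclosed bounded components $\Omega_j$, and the quadrature identity applied to $f\equiv 1$ gives $\oint_{\partial\Omega_j}h_j(w)\,dw=\int_{\Omega_j}dA>0$, which is real; with the factor of $2\pi i$ restored, every period of $\int_{w_l}^{w}h(\xi)\,d\xi$ is therefore purely imaginary and $\Re H_l$ is automatically single-valued. This is precisely how the paper's proof of Lemma \ref{lemma:GenQuadComplementDroplet} disposes of the issue, via the computation showing $2\Re\bigl(\oint_{\gamma}h(w)\,dw\bigr)=\oint_{\gamma}d\bigl(|w|^{2}+U^{\mu}(w)\bigr)=0$ for loops $\gamma$ in $K$. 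With that one-line replacement your argument is complete at the same level of rigor as the paper's.
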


If we consider a family of these droplets parameterized with respect to $t$, we arrive at the concept of a {\it \hypertarget{text:HSChain}{backward Hele-Shaw chain}}, or simply {\it Hele-Shaw chain}. Let $K_{t_0}$ be a local droplet of charge $t_0>0$ of an admissible potential, $Q$. The backward backward Hele-Shaw chain associated to $K_{t_0}$ is the family of compact sets,
$$K_t:=S_t[Q_{K_{t_0}}],\;\;0<t\leq t_0,$$
where $Q_{K_{t_0}}$ is the localization of $Q$ to $K_{t_0}$ and $S_t[Q_{K_{t_0}}]$ is the support of the equilibrium measure of mass $t$ associated to $Q_{K_{t_0}}$. Several properties of the chain $\{K_t\}_t$ are enumerated below.
\begin{lemma}\label{lemma:HSChains}
If $K_{t_0}$ is a local droplet of charge $t_0$ associated to an admissible potential $Q$. Then the backward Hele-Shaw chain $\{K_t\}_{0<t\leq t_0}$ associated to $K_{t_0}$ exists and satisfies the following
\begin{enumerate}
\item Each $K_t$ is a local algebraic droplet of $Q$.
\item The chain $\{K_t\}_t$ is monotone increasing.
\item The chain is left-continuous, $K_{t_0}=\text{Cl}\left(\bigcup_{0<t<t_0}K_t\right)$.
\item If $K_{t_0}$ has no double points, then $\exists\epsilon>0$ such that the outer boundary of $K_t$ has no singular points for all $t\in(t_0-\epsilon,t_0)$.
\end{enumerate}
\end{lemma}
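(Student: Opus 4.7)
The plan is to build the chain by applying Frostman's theorem to the localization $Q_{K_{t_0}}$, then verify (1)--(4) in turn: algebraicity from the QD duality of Lemma \ref{lemma:ClassicQDDroplet}, monotonicity and left-continuity from a comparison of obstacle functions together with vague continuity of equilibrium measures, and the regularity statement (4) from Sakai regularity combined with an analyticity argument in $t$. The setup is that $Q_{K_{t_0}}$ is lower-semicontinuous and finite only on the compact set $K_{t_0}$, so Frostman's theorem (Lemma \ref{lemma:Frostman}) applied to $t^{-1}Q_{K_{t_0}}$ produces a unique unit-mass minimizer $\widehat{\mu}_t$; I would define $\mu_t := t\widehat{\mu}_t$ and $K_t := S_t[Q_{K_{t_0}}] = \mathrm{supp}(\mu_t)$, noting that the Hele-Shaw assumption gives $d\mu_t = \1_{K_t} \frac{\Delta Q}{2} dA$.

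For (1), the coincidence equation $\frac{\partial Q}{\partial w} \dEquals C^{K_t}_Q$ on $\partial K_t$, combined with Lemma \ref{lemma:ClassicQDDroplet}, identifies the components of $K_t^c$ as quadrature domains whose quadrature functions sum to $h=H'$; each such component admits a rational Schwarz function, so $\partial K_t$ is algebraic. For (2), I would compare obstacle functions for $t < t' \leq t_0$: the difference $\nu_{t'} - \nu_t$ is subharmonic in $\C$, grows like $(t'-t)\ln|w|$ at infinity, and is nonnegative on the common coincidence set, so the maximum principle applied outside $K_{t'}^\ast$ forces $K_t^\ast \subseteq K_{t'}^\ast$, hence $K_t \subseteq K_{t'}$. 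For (3), set $K^\infty := \text{Cl}(\bigcup_{t<t_0}K_t) \subseteq K_{t_0}$; if $w_0$ lies in the interior of $K_{t_0} \setminus K^\infty$ with $\Delta Q(w_0) > 0$, a neighborhood $U$ of $w_0$ would satisfy $\mu_t(U) = 0$ for all $t < t_0$ yet $\mu_{t_0}(U) > 0$, contradicting the vague convergence $\mu_t \rightharpoonup \mu_{t_0}$ as $t \to t_0^-$ (itself following from uniqueness of the minimizer and weak-$\ast$ compactness of probability measures on $K_{t_0}$).

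For (4), Theorem \ref{thm:QDBoundaryRegularity} restricts the singular points of $\partial K_t$ to cusps and double points. Absence of double points on $\partial K_{t_0}$ means the components of $K_{t_0}^c$ have pairwise positive distance and no self-tangencies, a configuration preserved in Hausdorff distance on a left-neighborhood of $t_0$ by left-continuity, ruling out double points on the outer boundary of $K_t$ for $t$ near $t_0$. The main obstacle is excluding cusps: I would parametrize the Riemann map of the unbounded component of $K_t^c$ via Theorem \ref{thm:ClassicUQDRationalRiemannIffQD}, obtaining a rational map $\varphi_t$ depending real-analytically on $t$ along the chain (the analyticity coming from Richardson's moment conservation together with the fixed rational structure of $h$). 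Outer-boundary cusps correspond to zeros of $\varphi_t'$ on $\partial \D$, a real-analytic condition in $t$, so the cusp locus is either discrete in $(0,t_0]$ or all of $(0,t_0]$; the latter is excluded by cusp-freeness of $\partial K_{t_0}$, and a cusp-free interval immediately below $t_0$ results.
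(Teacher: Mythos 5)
The paper does not actually prove this lemma---it is imported from the obstacle-problem literature with a pointer to \cite{Lee_2015,Hedenmalm_Makarov_2004,Hedenmalm_Makarov_2011}---so your sketch has to stand on its own, and two of its steps do not. In part (2), the opening claim that $\nu_{t'}-\nu_t$ is subharmonic in $\C$ is circular: the distributional Laplacian of this difference is (up to normalization) $(\1_{K_{t'}}-\1_{K_t})\Delta Q$, so asserting its sign is asserting the very containment you are trying to prove. The argument that actually works is the mirror image of what you wrote: $\nu_{t'}-\nu_t$ is \emph{super}harmonic on $\C\setminus K_{t'}^\ast$ (there $\nu_{t'}$ is harmonic and $-\nu_t$ is superharmonic), it is $\geq 0$ on $\partial K_{t'}^\ast$ (because $\nu_t\leq -Q=\nu_{t'}$ there, using that $\nu_t$ lies below the obstacle \emph{everywhere}, not just on the ``common coincidence set''), and it tends to $+\infty$ at infinity, so the \emph{minimum} principle gives $\nu_{t'}\geq\nu_t$ globally and hence $K_t^\ast\subseteq K_{t'}^\ast$. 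As written---maximum principle, subharmonicity on all of $\C$, boundary estimate on the wrong set---the pieces do not assemble into a proof. Parts (1) and (3) are essentially fine, though in (3) you should replace ``interior point of $K_{t_0}\setminus K^\infty$ with $\Delta Q>0$'' by the observation that any nonempty relatively open subset of $\mathrm{supp}(\mu_{t_0})$ carries positive $\mu_{t_0}$-mass; and note that (1), both as you argue it and via Lemma \ref{lemma:ClassicQDDroplet}, implicitly requires $Q$ to be of Hele-Shaw type.

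The decisive gap is in (4). Your dichotomy ``the cusp locus is discrete in $(0,t_0]$ or all of $(0,t_0]$'' requires a single real-analytic function of $t$ (a resultant detecting zeros of $\varphi_t'$ on $\partial\D$) that is not identically zero near $t_0$, and you exclude the degenerate case by ``cusp-freeness of $\partial K_{t_0}$.'' But the hypothesis only forbids \emph{double points} at $t_0$, not cusps; the situation the lemma is designed for---and the one that actually occurs in this paper, e.g.\ in Theorem \ref{thm:OnePtQDClassification}, where the chain terminates in a $(3,2)$-cusp at $t=t_\ast$---is precisely the one where $\partial K_{t_0}$ \emph{does} carry a cusp. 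The degenerate case must instead be excluded by the fact (quoted in the paper from \cite{Chang_2013}) that a droplet in a backward chain carrying a cusp is maximal, so no $K_t$ with $t<t_0$ can have one. Moreover, the real-analytic dependence of $\varphi_t$ on $t$ with a rational structure of fixed degree is itself a substantial unproved claim (it presupposes that the poles of $\varphi_t$ and their orders do not jump, i.e.\ well-posedness of the Polubarinova--Galin dynamics), Theorem \ref{thm:ClassicUQDRationalRiemannIffQD} is unavailable when $Q$ is not Hele-Shaw, and your Hausdorff-continuity argument against double points does not rule out a pinch of the outer boundary at time $t<t_0$ that is absorbed into the interior of $K_{t_0}$ by time $t_0$. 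As it stands, (4) needs a different mechanism, not just more detail.
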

See \cite{Lee_2015,Hedenmalm_Makarov_2004,Hedenmalm_Makarov_2011} for a more detailed discussion.

If $\{K_t\}_{0<t\leq t_0}$ is a backward Hele-Shaw chain associated to a potential $Q$, then we can extend the chain to $t=0$ by defining $K_0=\emptyset$ and $K_0^\ast$ the set where the global minimum of $Q$ is attained. That is,
\begin{lemma}\label{lemma:HSChainZeroCoincidence}
If $\{K_t\}_{0<t\leq t_0}$ is a backward Hele-Shaw chain associated to some potential $Q$, then we can extend the chain to $t=0$ by defining $K_0=\emptyset$ and $K_0^\ast$ to be the set where the global minimum of $Q$ is attained. Moreover, $K_0^\ast\subset K_{t}$ for each $0<t<t_0$. (see remark 4.20 and corr. 4.23 in \cite{Hedenmalm_Makarov_2011})
\end{lemma}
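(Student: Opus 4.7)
The assertion that we may extend the chain by setting $K_0 := \emptyset$ is purely definitional; the substantive content is the inclusion $K_0^* \subseteq K_t$ for every $0 < t < t_0$. First, by the monotonicity of the backward Hele-Shaw chain (Lemma \ref{lemma:HSChains}(2)), $K_{t'} \subseteq K_t$ whenever $t' \leq t$, so it suffices to prove $K_0^* \subseteq K_t$ for arbitrarily small $t > 0$; the inclusion then propagates to all $t \in (0, t_0)$.

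The plan is an energy-comparison argument. For small $t > 0$, the equilibrium measure $\mu_t$ of mass $t$ must concentrate near the global minimum set $K_0^*$ of $Q$. This is visible from the splitting $I_Q(\mu) = I(\mu) + 2t\int Q\,d\mu$ valid for $\mu$ of mass $t$: the logarithmic self-energy $I(\mu)$ scales as $t^2 \log(1/\operatorname{diam}\operatorname{supp}\mu)$, while the linear-in-$Q$ term forces vanishing mass outside arbitrarily small neighborhoods of $K_0^*$ as $t \to 0^+$. To execute the contradiction: fix $w_0 \in K_0^*$ and suppose $w_0 \notin K_s$ for some small $s > 0$, so there exists $\epsilon > 0$ with $D_\epsilon(w_0) \cap K_s = \emptyset$. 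I would construct a competitor $\tilde\mu$ of mass $s$ by transferring a mass $\eta$ of $\mu_s$ from a portion on which $Q \geq Q_{\min} + \delta$ (for a fixed $\delta > 0$) to a uniform distribution on $D_\epsilon(w_0)$. A direct estimate of $I_Q(\tilde\mu) - I_Q(\mu_s)$ shows that the decrease $-2\delta\eta s$ in the weighted term dominates the increase in logarithmic energy for appropriate $\eta = \eta(s)$ and $\epsilon$, contradicting the minimality of $\mu_s$.

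The main obstacle is making this comparison quantitative, specifically controlling the cross-interaction terms $\int\!\!\int \log|w-\xi|^{-1}\,d\mu_s(\xi)\,d\tilde\mu(w)$ in the energy change, since $\mu_s$ can have complicated support. A cleaner alternative avoiding explicit energy estimates uses Lemma \ref{lemma:FrostmanPt2}: the coincidence set $K_t^*$ is characterized by $U^{\mu_t} + Q$ attaining its minimum value $\gamma_t$ there. Since $U^{\mu_t}$ is subharmonic and $Q(w_0) = Q_{\min}$, a maximum-principle argument applied to $U^{\mu_t} + Q - \gamma_t$ (which is $\geq 0$ quasi-everywhere and vanishes on $K_t$) should force $w_0 \in K_t^*$. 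The remaining step is to upgrade $w_0 \in K_t^*$ to $w_0 \in K_t$: one invokes $\nabla Q(w_0) = 0$ (automatic at a global minimum of a smooth $Q$) together with the coincidence equation \ref{eqn:ceq}, which excludes shallow coincidence geometry at critical points of $Q$. This is where I would expect the most care to be required, since ruling out shallow points in full generality may need additional regularity hypotheses on $Q$ that should be tracked and, if needed, imported from the standing assumptions of the Hele-Shaw framework in \cite{Hedenmalm_Makarov_2011}.
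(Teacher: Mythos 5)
First, a point of comparison: the paper does not actually prove this lemma --- it is imported wholesale from \cite{Hedenmalm_Makarov_2011} (Remark 4.20, Cor.\ 4.23), so there is no in-paper argument to measure you against. Judged on its own terms, your proposal contains the right idea (the second, maximum-principle route) but both of your two routes have gaps as written. The energy-comparison argument has a genuine hole: your competitor construction transfers mass ``from a portion on which $Q\geq Q_{\min}+\delta$'' for a fixed $\delta>0$, but for small $s$ the measure $\mu_s$ concentrates precisely where $Q$ is within $o(1)$ of $Q_{\min}$ --- for instance near a \emph{different} component of $K_0^\ast$ than the one containing $w_0$ --- so no such portion with mass bounded below need exist, and the claimed gain $-2\delta\eta s$ in the weighted term evaporates. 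Any rescue would have to extract the gain from the logarithmic self-energy instead, which is a much more delicate estimate and is not what you sketched. (Concentration of $\mu_s$ near $K_0^\ast$ also only shows the support lies \emph{near} the minimum set, not that it \emph{contains} it, which is the actual claim.)

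Your second route is the standard and correct one, but the closing step is handled with the wrong tool. Here is how it actually closes. By Lemma \ref{lemma:Frostman}, $U^{\mu_t}+Q=\gamma_t$ on $K_t$ and $U^{\mu_t}+Q\geq\gamma_t$ q.e.\ (hence everywhere, by superharmonicity of $U^{\mu_t}$ and continuity of $Q$). Since $Q(w_0)=\min Q\leq Q(w_1)$ for every $w_1\in K_t$, subtracting the two Frostman relations gives $U^{\mu_t}(w_0)\geq U^{\mu_t}(w_1)$ on $K_t$; by the domination (maximum) principle for logarithmic potentials, $\sup_{\C}U^{\mu_t}=\sup_{K_t}U^{\mu_t}$, so $U^{\mu_t}$ attains its \emph{global} maximum at $w_0$. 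If $w_0\notin K_t$, then $U^{\mu_t}$ is harmonic near $w_0$ and hence constant, say $\equiv c$, on the component $D$ of $K_t^c$ containing $w_0$; $D$ must be bounded since $U^{\mu_t}\to-\infty$ at infinity. The inequality $U^{\mu_t}+Q\geq\gamma_t$ on $D$ then gives $Q\geq\gamma_t-c$ on $D$ with equality on $\partial D\subset K_t$ and at $w_0$ (since $Q(w_0)=\min Q\leq Q|_{\partial D}=\gamma_t-c$), so the subharmonic function $Q$ is constant on $\overline{D}$ --- contradicting $\Delta Q>0$, which holds for the Hele-Shaw potentials in play. This is the mechanism that rules out $w_0$ being a shallow point; your proposed alternative, combining $\nabla Q(w_0)=0$ with the coincidence equation \ref{eqn:ceq}, does not do this job (the coincidence equation holds on $\partial K_t$ and says nothing that excludes an isolated coincidence point at a critical point of $Q$), and it is exactly the step you flagged as needing the most care. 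Note also that the strict subharmonicity of $Q$ is not a removable technicality: for a merely admissible $Q$ that is constant on a closed disc at its minimum value, the equilibrium measure can charge only the bounding circle and the centre is a global minimum outside $K_t$, so the lemma genuinely requires $\Delta Q>0$ near the droplet.
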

This also implies that if $K$ is a droplet of an admissible potential $Q$, then $K$ contains the set where the global minimum of $Q$ is attained.

\section{Classification of One Point QDs}\label{sec:OnePtQDClass}
We are now prepared to consider the problem of classifying simply connected one point quadrature domains. That is, simply connected $\Omega\in\QD\left(\frac{\alpha}{w-w_0}\right)$ for some $\alpha\in\C$ and $w_0\in\C$. Equation \ref{eqn:ClassicMVP} is the canonical example of this. That is, we're interested in classifying domains $\Omega$ admitting a quadrature identity of the form
$$\int_{\Omega}fdA=\alpha f(w_0)$$
While the case in which $\alpha\geq0$ has been extensively studied (see, e.g. \cite{DragnevLeggSaff}), the $\alpha\in\C$ case has received little attention. It is well-known that if $\alpha=0$ then $\Omega$ is an exterior disk centered at the origin (e.g. \cite{Lee_2015}). It's worth noting that, while both BQDs and UQDs generically exist when $\alpha>0$, this is not true when $\alpha\not\geq0$, in which case there are no BQDs (this follows straightforwardly from applying the quadrature identity to $1\in\A(\Omega)$). The central result of this section is the following classification theorem:

\begin{theorem}\label{thm:OnePtQDClassification}
Take $\alpha\in\C\setminus\R_{\geq0}$ and $w_0\neq0$. Then
$$\QD\left(\frac{\alpha}{w-w_0}\right)\neq\emptyset\;\;\;\;\;\iff\;\;\;\;\;|w_0|^2+2\Re\left(\alpha\right)>2|\alpha|.$$
In this case, there exists $t_\ast\in(0,\infty)$ such that
\begin{enumerate}
    \item $\QD\left(\frac{\alpha}{w-w_0}\right)=\left\{\Omega_t\right\}_{0<t\leq t_\ast}$, a continuous monotone (\ref{lemma:HSChains}) family of simply connected domains (where $t=\text{A}(\Omega_t)$);
    \item $\partial\Omega_t$ is smooth for all $0<t<t_\ast$ and the family terminates with the formation of a $(3,2)$ cusp at $t=t_\ast$.
\end{enumerate}
\end{theorem}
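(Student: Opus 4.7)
The plan is to apply Theorem \ref{thm:ClassicUQDRationalRiemannIffQD} to reduce the classification of $\Omega\in\QD(\alpha/(w-w_0))$ to an algebraic problem in one real parameter, and then to analyze when the resulting candidate Riemann map is univalent.

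Since $h(w)=\alpha/(w-w_0)$ has a single simple pole, the inverse Faber polynomial formula from Equation \ref{eqn:FaberPolyFormulae} combined with Equation \ref{eqn:UnboundedFaberTransformPhiFormula} forces any such simply connected UQD to have Riemann map of the form
$$\varphi(z)=cz+\frac{\overline{\alpha\beta}\,z}{1-\overline{z_0}z}, \qquad z_0=\psi(w_0)\in\D\IntComp,\quad \beta=\psi'(w_0).$$
The consistency conditions $\varphi(z_0)=w_0$ and $\beta\,\varphi'(z_0)=1$ give the coupled system
$$w_0=z_0\!\left(c-\frac{\overline{\alpha\beta}}{|z_0|^2-1}\right),\qquad \frac{1}{\beta}=c+\frac{\overline{\alpha\beta}}{(|z_0|^2-1)^2}.$$
First I would exploit the rotational freedom $\varphi\mapsto e^{i\theta_1}\varphi(e^{i\theta_2}z)$ to normalize $z_0>1$ real, absorbing the phases into $\alpha$ and $w_0$. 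Eliminating $c$ and $\beta$ from the system then yields a single polynomial relation between $z_0$ and the remaining free parameter (which I will identify with $c$, and ultimately with the area $t$ via the Laurent coefficients of $\varphi$).

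Next, I would show that this reduced polynomial admits a real root $z_0>1$ precisely when $|w_0|^2+2\Re(\alpha)>2|\alpha|$. The strategy is to interpret the LHS as the natural discriminant-like expression for the system: at the boundary $|w_0|^2+2\Re(\alpha)=2|\alpha|$ the candidate root $z_0$ collapses to $1$, which corresponds to $\varphi$ having a singularity on $\partial\D$ and therefore failing to give a QD. The condition $w_0\neq 0$ and $\alpha\notin\R_{\geq 0}$ is needed to rule out degenerate cases (for $\alpha\in\R_{>0}$ disks appear; $\alpha=0$, $w_0=0$ require separate inspection). Varying $c$ monotonically then produces the continuous family $\{\Omega_t\}$, with monotonicity and continuity being either directly verifiable from the explicit formulas or inherited from Lemma \ref{lemma:HSChains} by realizing $\{\Omega_t^c\}$ as the Hele-Shaw chain of the Hele-Shaw potential $Q(w)=|w|^2-2\Re H(w)$, $H'=h$, via Lemma \ref{lemma:ClassicQDDroplet}.

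Univalence must then be verified. Since
$$\varphi'(z)=c+\frac{\overline{\alpha\beta}}{(1-\overline{z_0}z)^2}$$
is a rational function of degree two, its zeros are explicit. Univalence on $\D\IntComp$ holds iff neither critical point of $\varphi'$ lies in $\D\IntComp$ and $\varphi|_{\partial\D}$ is injective. I would track the two critical points continuously in $t$, showing that for small $t$ they both lie strictly inside $\D$, and that as $t$ grows the first failure of univalence is the arrival of a critical point at $\partial\D$—rather than a boundary self-intersection. Ruling out an earlier double point requires showing $\varphi|_{\partial\D}$ remains injective throughout the smooth regime; this follows by a continuity/degree argument using Theorem \ref{thm:QDBoundaryRegularity} together with the observation that a nascent double point would force a prior critical point at $\partial\D$ by Sakai-type regularity.

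Finally, at $t=t_\ast$, let $z_c\in\partial\D$ be the incoming critical point, so $\varphi'(z_c)=0$. To certify a $(3,2)$ cusp I would directly compute $\varphi''(z_c)$ and $\varphi'''(z_c)$ from the explicit form above and show that the algebraic conditions at $t_\ast$ force $\varphi''(z_c)=0$ but $\varphi'''(z_c)\neq 0$; equivalently, the two critical points of $\varphi'$ collide at $z_c$ precisely at $t=t_\ast$, which is the characteristic signature of the $(3,2)$ cusp. The main obstacle I anticipate is Step 3: isolating the clean existence inequality from the elimination, and rigorously excluding double-point formation before cusp formation without appealing to heuristic monotonicity—this will likely require a careful continuity argument combined with the Sakai regularity theorem (Theorem \ref{theorem:SakaiRegularity}) to control the allowed singularity types.
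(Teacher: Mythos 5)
Your skeleton for the Riemann map — apply Theorem \ref{thm:ClassicUQDRationalRiemannIffQD} and Equation \ref{eqn:FaberPolyFormulae} to force $\varphi(z)=cz+\overline{\alpha\beta}z/(1-\overline{z_0}z)$ with the consistency conditions at $z_0=\psi(w_0)$ — is exactly what the paper does, and the reduction to a one-parameter algebraic family is sound. However, there are two genuine gaps.

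First, the existence criterion. You propose to read off $|w_0|^2+2\Re(\alpha)>2|\alpha|$ from the algebraic system by arguing that ``at the boundary the candidate root $z_0$ collapses to $1$.'' This conflates two different thresholds: the degeneration $|z_0|\to1$ (a singularity of $\varphi$ reaching $\partial\D$) governs the \emph{termination} of the family at $t=t_\ast$, whereas the inequality in the theorem governs whether the family can \emph{nucleate} at $t=0^+$ at all. The paper establishes the existence criterion by an entirely different mechanism: via Lemma \ref{lemma:ClassicQDDroplet} and the backward Hele-Shaw chain (Lemmas \ref{lemma:HSChains}, \ref{lemma:HSChainZeroCoincidence}), $\QD(\alpha/(w-w_0))\neq\emptyset$ iff the potential $Q(w)=|w|^2-2\Re(\alpha\ln(w-w_0))$ admits a local minimum, and a direct computation of the critical points of $Q$ and the second-order condition $|\alpha|<|w-w_0|^2$ yields the stated inequality. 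Your algebraic route would also have to deliver uniqueness of $\Omega_t$ for each $t$ (the theorem asserts $\QD(h)$ \emph{equals} the family), which the paper gets from Frostman's theorem applied to the localized potential; the Riemann-map normal form alone does not give this. Also note the rotational freedom is a single parameter: once you normalize $w_0>0$ you cannot additionally take $z_0>1$ real when $\alpha$ is complex.

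Second, the cusp analysis. Your stated certificate for a $(3,2)$ cusp — that the two zeros of $\varphi'$ collide at $z_c\in\partial\D$ so that $\varphi''(z_c)=0$ and $\varphi'''(z_c)\neq0$ — is wrong. A $(3,2)$ cusp corresponds to a \emph{simple} zero of $\varphi'$ on $\partial\D$: with $\varphi'(z_c)=0$ and $\varphi''(z_c)\neq0$ one gets $\varphi(e^{i\theta})-\varphi(z_c)=c_2\theta^2+c_3\theta^3+O(\theta^4)$ with the required transversality, which is exactly the expansion the paper verifies. A double zero of $\varphi'$ would produce a higher-order degenerate singularity. Likewise, your claim that ``a nascent double point would force a prior critical point at $\partial\D$ by Sakai-type regularity'' cannot be right in general — for $\alpha>0$ this very family terminates in a double point with no cusp. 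The paper's exclusion of a double point for $\alpha\in\C\setminus\R_{\geq0}$ is specific to the normal form: the equation $f_t(z)=f_t(w)$, $z\neq w\in\partial\D$, is \emph{linear} in $w$, so at most one self-intersection occurs and it is transversal; a transversal self-intersection turns the boundary into a figure-eight, and the Whitney--Graustein theorem then forces a critical point to appear on $\partial\D$ at the transition. You need some substitute for this transversality-plus-degree argument; Sakai regularity alone does not supply it.
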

In the course of the proof of \ref{thm:OnePtQDClassification}, we also obtain a characterization of the Riemann map associated to each $\Omega_t$,

\begin{corollary}
If $\Omega\in\QD\left(\frac{\alpha}{w-w_0}\right)$ is unbounded then it is simply connected and its Riemann map is given by Equation \ref{eqn:classicQDClassification}.\footnote{If $w_0=2$ then $c$, $\alpha$, and $z_0$ are related by Equation \ref{eqn:OnePtQDConservedQuantity}. See Fig. \ref{fig:OnePtUQDFams} for several sample families of solutions.}
\end{corollary}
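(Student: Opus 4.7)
The plan is to extract both assertions directly from the Faber-transform machinery of Section \ref{sec:Introduction}, once simple connectivity is in hand. Simple connectivity is part of the conclusion of Theorem \ref{thm:OnePtQDClassification} whose proof runs in parallel to this corollary; it can also be argued intrinsically from the Schwarz function $S = C^{\Omega\IntComp} + h$, which here has only a single simple pole in $\Omega$ (at $w_0$), forcing $\Omega\IntComp$ to be connected and hence $\Omega$ to be simply connected as a subset of $\Ch$. In particular $\Omega$ admits a Riemann map $\varphi:\D\IntComp\to\Omega$ of the form $\varphi(z)=cz+f_0+\langle z^{-1}\rangle$.

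Next, apply Theorem \ref{thm:ClassicUQDRationalRiemannIffQD}, which for a simply connected UQD gives
$$\varphi(z) = cz + \Phi_{\varphi}^{-1}(h)^{\#}(z).$$
Setting $z_0 := \psi(w_0) \in \D\IntComp$ and $a := \varphi'(z_0) = 1/\psi'(w_0)$, the $n=1$ inverse-Faber formula from Equation \ref{eqn:FaberPolyFormulae} yields
$$\Phi_{\varphi}^{-1}(h)(z) = \frac{\alpha/a}{z-z_0},$$
whose Schwarz reflection is $\bar\alpha z/\bigl(\bar a(1-\bar z_0 z)\bigr)$. Substituting produces the closed form
$$\varphi(z) = cz + \frac{\bar\alpha}{\bar a}\cdot\frac{z}{1-\bar z_0 z},$$
which is the assertion of Equation \ref{eqn:classicQDClassification}. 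The remaining parameters $(c,z_0,a)$ are pinned down in terms of the data $(\alpha,w_0)$ by the two consistency conditions $\varphi(z_0)=w_0$ and $\varphi'(z_0)=a$; a short differentiation turns these into the two scalar identities $|a|^2 - c\bar a = \bar\alpha/(1-|z_0|^2)^2$ and $w_0 = cz_0 + \bar\alpha z_0/\bigl(\bar a(1-|z_0|^2)\bigr)$, which after elimination collapse to the conserved quantity referenced in the footnote (Equation \ref{eqn:OnePtQDConservedQuantity}) once one imposes the normalization $w_0=2$.

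The main obstacle is not the algebraic derivation but the verification that the explicit $\varphi$ so produced is genuinely univalent on $\D\IntComp$ for every admissible $(\alpha,w_0)$, and that the associated $z_0$ lies strictly outside $\overline{\D}$. This reduces to analyzing the zeros of $\varphi'$, a rational function whose numerator is a polynomial of degree two in $z$: one checks that as $t = \mathrm{A}(\Omega)$ increases from $0$ the critical points of $\varphi$ stay inside $\overline{\D}$, and at the terminal $t=t_\ast$ a single critical point migrates to $\partial\D$, producing precisely the $(3,2)$ cusp predicted by Theorem \ref{thm:OnePtQDClassification}. The admissibility inequality $|w_0|^2 + 2\Re(\alpha) > 2|\alpha|$ is exactly the condition that this range of parameters be nonempty.
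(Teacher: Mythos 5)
Your derivation is correct and follows essentially the same route as the paper: simple connectivity from the order-one property of the quadrature function, then Equation \ref{eqn:UnboundedFaberTransformPhiFormula} together with the $n=1$ residue formula of Equation \ref{eqn:FaberPolyFormulae}, and the consistency condition $\varphi(z_0)=w_0$ to eliminate the remaining coefficient and arrive at Equation \ref{eqn:classicQDClassification}. Two small remarks: the paper obtains the footnote relation (Equation \ref{eqn:OnePtQDConservedQuantity}) from the direct formula $h=\Phi_{\varphi}\left(\AnalyticIn{\varphi^{\#}}{\D}\right)$ rather than from your pair of consistency conditions (both routes yield the same identity), and the univalence analysis you flag as the ``main obstacle'' is not part of this corollary's burden --- here $\varphi$ is by hypothesis the Riemann map of a domain that already exists, so univalence is automatic, and that analysis belongs instead to the existence half of Theorem \ref{thm:OnePtQDClassification}.
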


We begin by simplifying the situation via a change of variables: Note that if $a,b\in\C$ and $a\neq0$, then
\begin{equation}\label{eqn:classicQDCOV}
\begin{split}
\Omega\in\QD(h)\tab&\iff\tab a\Omega+b\in\QD\left(\overline{a}h\left(\frac{w-b}{a}\right)\right),\\
\Omega\in\QD(h)\tab&\iff\tab a\Omega+b\in\QD\left(\overline{a}h\left(\frac{w-b}{a}\right)+\overline{b}\right),
\end{split}
\end{equation}
when $\Omega$ is bounded and unbounded respectively. To see this note that if we set $g(w)=aw+b$, then

\begin{align*}
\int_{g(\Omega)}fdA=\int_{\Omega}f\circ g|g'|^2dA=|a|^{2}\oint_{\partial\Omega}f\circ g(w)h(w)dw=\overline{a}\oint_{\partial g(\Omega)}f\left(w\right)h\circ g^{-1}(w)dw.
\end{align*}
The unbounded case is analogous, except one must take into account the Cauchy principal value.\\

Applying this result (Equation \ref{eqn:classicQDCOV}) with $a=\frac{|w_0|}{w_0}$ and $b=0$, yields $\Omega\in\QD\left(\frac{\alpha}{w-w_0}\right)$ if and only if $\frac{|w_0|}{w_0}\Omega\in\QD\left(\frac{\alpha}{w-|w_0|}\right)$. So we can WLOG assume that $w_0>0$ (unless $w_0=0$, in which case $\alpha$ must be positive and $\Omega=\D_{\sqrt{\alpha}}(0)$). We begin with a discussion of the case in which $\alpha$ is real. 

\subsection{\texorpdfstring{One Point QDs for Real $\mathbf{\alpha}$}{One Point QDs for Real α}}
While the family of one point QDs with positive quadrature constant $\alpha$ is well understood, much less is known concerning the case in which $\alpha<0$. We review the positive case first.

\subsubsection{\texorpdfstring{$\mathbf{\alpha>0}$}{α>0}}

Here it is instructive to consider the problem from the perspective of Hele-Shaw flow. In particular, by Lemma \ref{lemma:ClassicQDDroplet} if $\Omega\in\QD\left(\frac{\alpha}{w-w_0}\right)$ is unbounded and simply connected, then its complement $K$ is a local droplet of the potential
$$Q(w)=|w|^2-2\Re(\alpha\ln(w-w_0)).$$

Thus one might expect a $1-$parameter family of QDs $\{\Omega_t\}_{t}$, parameterized by the measure of the droplets in the corresponding Hele-Shaw chain. This turns out to be the case: if $\alpha>0$ then there is a unique $1-$parameter family of simply connected domains $\{\Omega_t\}_{0<t\leq t_{\ast}}=\QD\left(\frac{\alpha}{w-w_0}\right)\bigcap\left\{\Omega\subset\Ch:\Omega\text{ is unbounded}\right\}$, where $t_{\ast}=w_0(w_0+2\sqrt{\alpha})$. These may be characterized in terms of their Riemann maps, $\{\varphi_t:\D\IntComp\rightarrow\Omega_t\}_t$, given by 
\begin{equation}\label{eqn:classicQDClassification}
\varphi_t(z)=cz\dfrac{z-z_0+\frac{w_0}{c}\frac{|z_0|^2-1}{|z_0|^2}}{z-\overline{z_0}^{-1}},
\end{equation}
where $c=c(t)$ is the conformal radius of $\Omega_t$, and $z_0=z_0(c)$ is the unique real number $\geq1$ for which
\begin{equation}\label{eqn:RealClassicOnePtz0Eqn}
0=c^2z_0^4-cw_0z_0^3-\alpha z_0^2-cw_0z_0+w_0^2,\;\text{ and }\;\varphi_t(1)<w_0.
\end{equation}
Such a $z_0$ can be shown to exist if and only if $0<c\leq w_0+\sqrt{\alpha}$ (corresponding to $0<t\leq w_0(w_0+2\sqrt{\alpha})$). $c$ and $t$ satisfy
\begin{equation}\label{eqn:RealClassicOnePtaImpt}
0=t^4+(2\alpha-c^2)t^3+(\alpha^2+2c^2(3w_0^2-\alpha))t^2+c^2w_0^2(4\alpha+w_0^2)t+8c^6w_0^2
\end{equation}
\begin{figure}[ht]
  \centering
    \includegraphics[scale=.315]{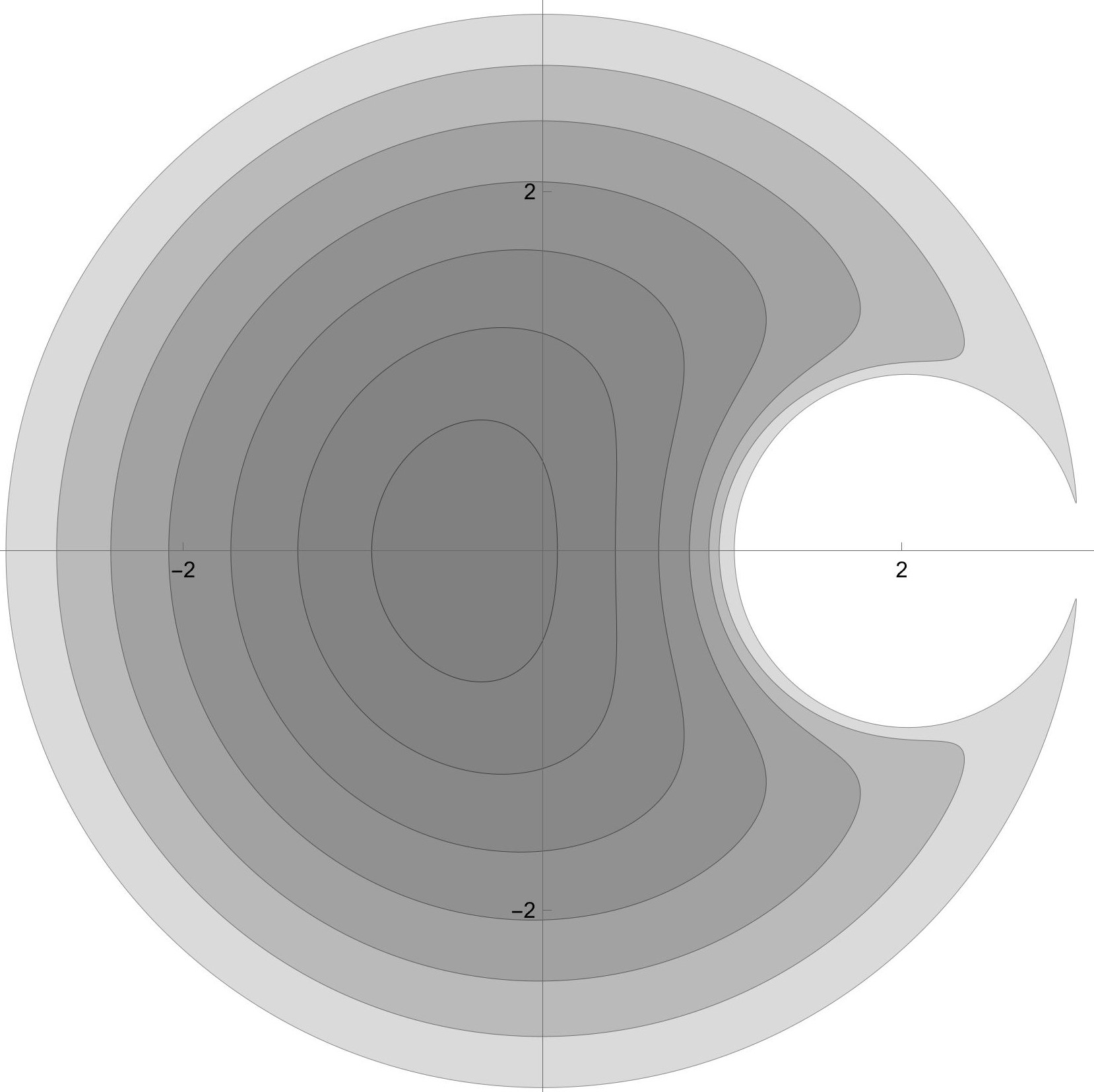}
    \includegraphics[scale=0.5768]{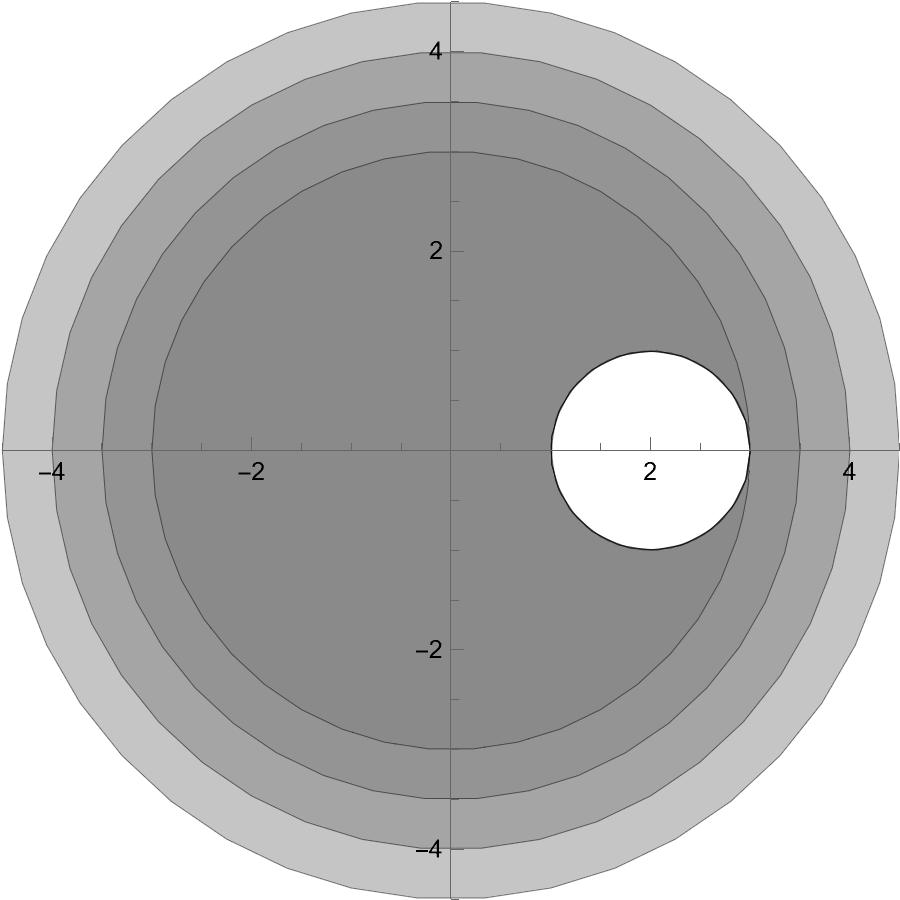}
    \caption{$\Omega_t\in\QD\left(\frac{1}{w-2}\right)$ (complements of shaded regions) for increasing $t$ up to the critical time (left) and after the critical time (right).}\label{fig:OnePtRealFam}\vspace{1.5em}
\end{figure}

On the other hand when $t>t_{\ast}$ the unique {\it bounded} family $\{\Omega_t\}_{t>t_{\ast}}$ is constant. It is well-known that $\Omega_t=\D_{\sqrt{\alpha}}(w_0)$. From the perspective Hele-Shaw flow, this can be understood as a two phase flow for which $K_t=\Omega_t^c$ is initially simply connected, before forming a double-point at some critical time $t_\ast$, after which $K_t$ becomes doubly connected of the form $\D_{r(t)}(0)\setminus\D_{\sqrt{\alpha}}(w_0)$ (see Fig. \ref{fig:OnePtRealFam}). Note that for $t\geq t_\ast$, $K_t^c$ actually contains two components, one being our one point BQD, and the other a null QD.

\subsubsection{\texorpdfstring{$\mathbf{\alpha<0}$}{α<0}}

The situation when $\alpha<0$ is in many ways the same as the $\alpha>0$ case. In particular, there exists $0<t_{\ast}<\infty$ and a simply connected $1-$parameter family $\{\Omega_t\}_{0<t\leq t_{\ast}}=\QD\left(\frac{\alpha}{w-w_0}\right)$ if and only if $-w_0^2<4\alpha<0$. They too are characterized by Equations \ref{eqn:classicQDClassification} and \ref{eqn:RealClassicOnePtz0Eqn}. The two cases are distinguished, however, by their behaviour at the critical time. In particular, while $\Omega_{t_{\ast}}$ forms a double point when $\alpha>0$, a $(3,2)-$cusp develops as $t\to t_\ast$ when $\alpha<0$, after which no such $\Omega_t$ exists. Moreover $c_{\ast}$ is the middle root of the cubic $8c^3-15w_0c^2+6(w_0^2+4\alpha)c+\frac{(w_0^2-\alpha)(w_0^2+4\alpha)}{w_0},$
which is given explicitly by
$$c_{\ast}=\dfrac{w_0}{8}\left(5-6\sqrt{1-\beta}\sin\left(\dfrac{1}{3}\sin^{-1}\left(\dfrac{\frac{3}{32}\beta^2-3\beta-1}{\left(1-\beta\right)^{\frac{3}{2}}}\right)\right)\right),\;\;\;\beta=\alpha\left(\dfrac{8}{3w_0}\right)^{2}.$$
$t_{\ast}$ can then be determined from Equation \ref{eqn:RealClassicOnePtaImpt} (see Fig \ref{fig:OnePtRealFamneg}).\\
\begin{figure}[ht]
  \centering
    \includegraphics[scale=.5]{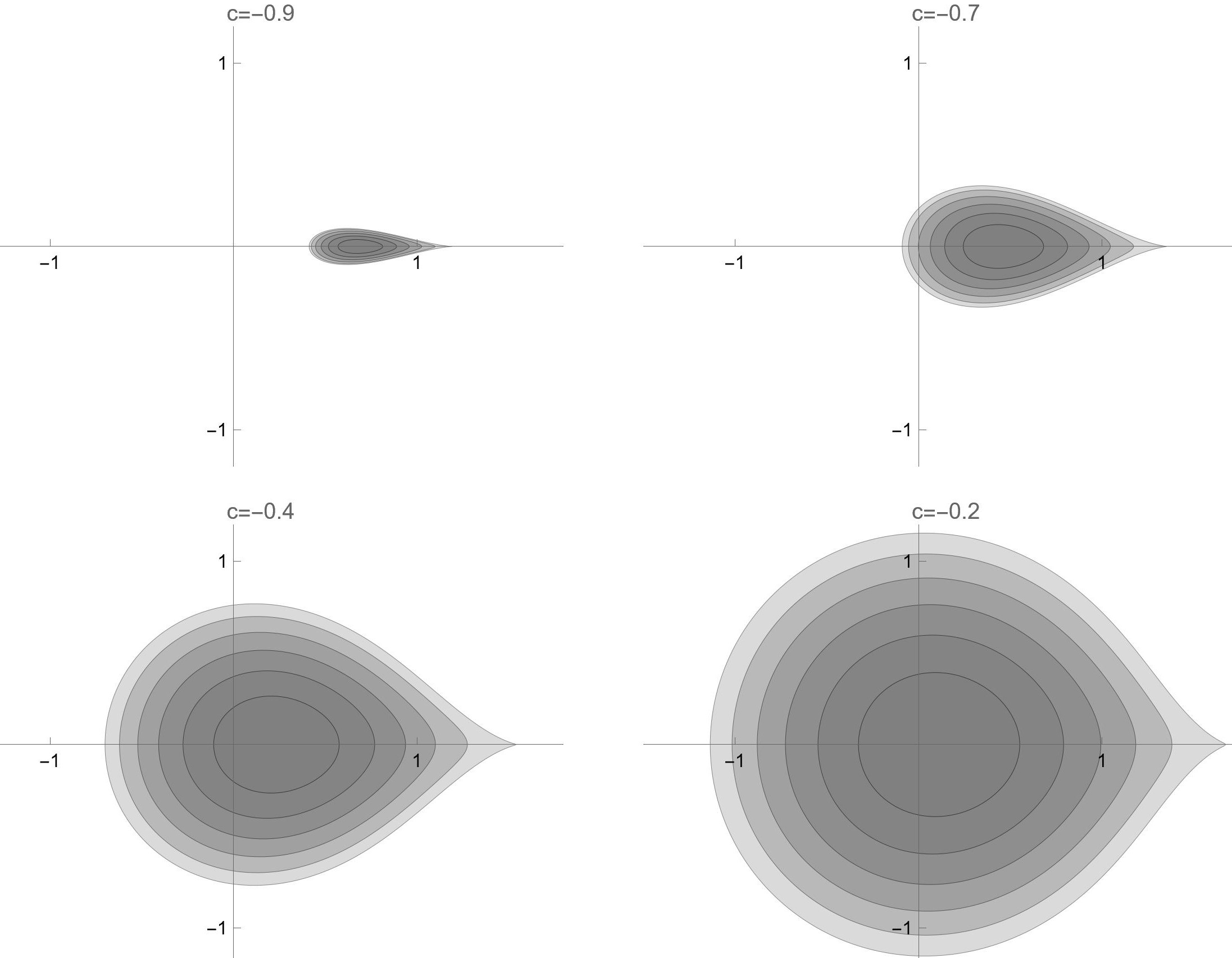}
    \caption{$\Omega_t\in\QD\left(\frac{\alpha}{w-2}\right)$ (complements of shaded) for $\alpha\in\{-.9,-.7,-.4,-.2\}$ for increasing $t$ up to the critical time.}\label{fig:OnePtRealFamneg}\vspace{1.5em}
\end{figure}

We now shift our consideration to the more general situation of $\alpha\in\C\setminus\R_{\geq0}$, in which case $\Omega$ is necessarily unbounded when it exists. Here we will see that such domains exist precisely when $\alpha$ lies within the parabolic region characterized by $|w_0|^2+2\Re(\alpha)>2|\alpha|$. In addition, we will see that $\Omega_t$ (as defined above) ceases to exist for $t$ sufficiently large. We're now prepared to prove the main theorem of the section.

\subsection{Proof of Theorem \ref{thm:OnePtQDClassification}}\label{proof:OnePtQDClassification}
\begin{figure}[ht]
  \centering
    \includegraphics[scale=.35]{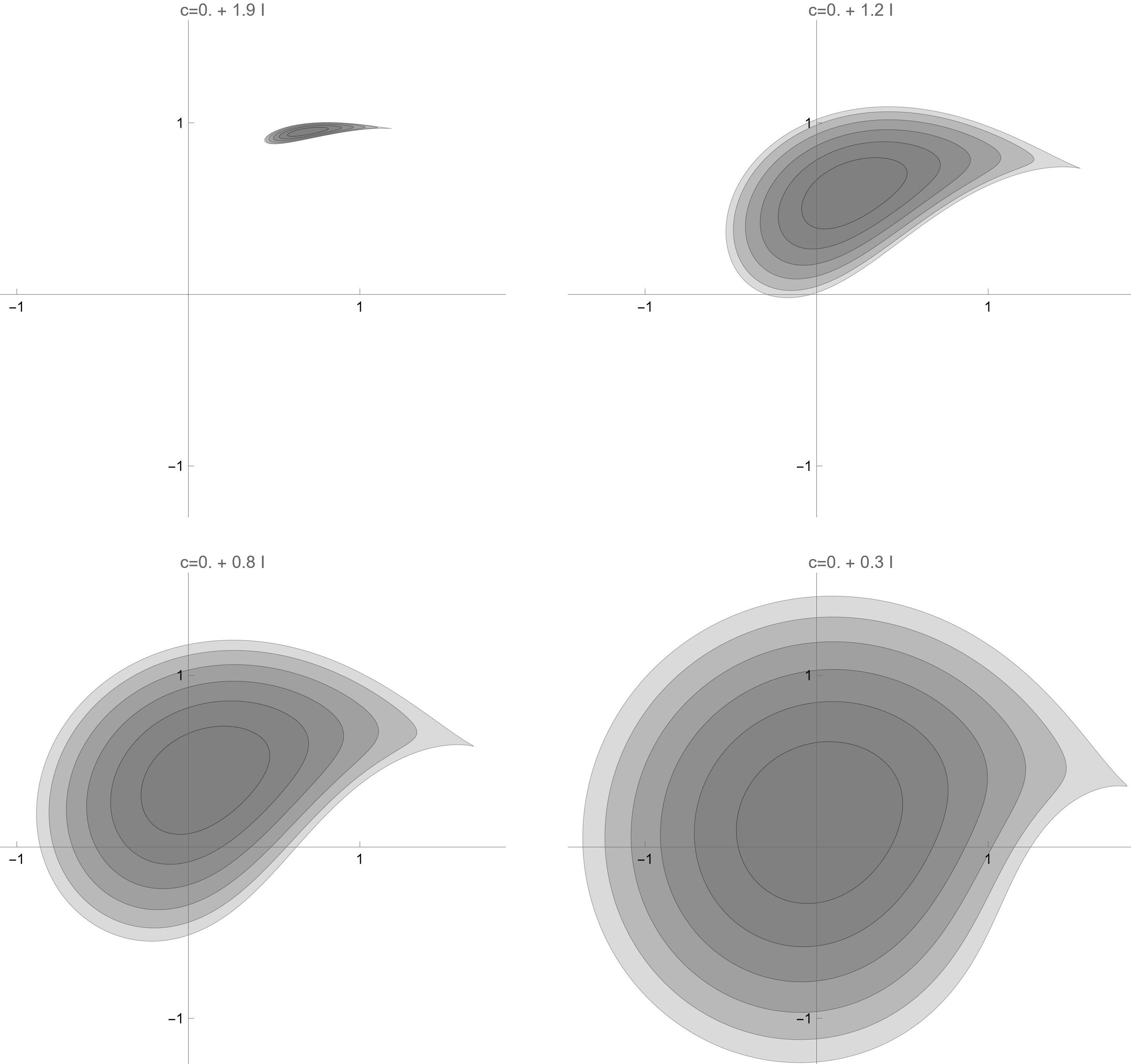}
    \caption{$\Omega_t^c$ for $\alpha\in\{1.9i,1.2i,.8i,.3i\}$ and $w_0=2$ (shaded) for increasing $t$ up to the critical time.}\label{fig:OnePtRealFamImaginary}\vspace{1.5em}
\end{figure}

First and foremost, we establish that $\Omega\in\QD\left(\frac{\alpha}{w-w_0}\right)$ is simply connected: This is an immediate consequence of the fact that $\Omega$ is a quadrature domain of order $d_\Omega=\deg\left(\frac{\alpha}{w-w_0}\right)=1$. (\cite{Lee_2015} \S 1.4) Next, by Equation \ref{eqn:classicQDCOV}, $\Omega\in\QD\left(\frac{\alpha}{w-w_0}\right)$ if and only if $\frac{2}{w_0}\Omega\in\QD\left(\frac{4\alpha|w_0|^{-2}}{w-2}\right)$. That is, it will be sufficient to consider $\Omega\in\QD\left(\frac{\alpha}{w-2}\right)$.

\subsubsection{Existence Criterion}\label{subsubsec:PnePtQDExistenceCriterion}
The general approach here is to show
\begin{enumerate}
    \item $\QD\left(\frac{\alpha}{w-2}\right)$ is non-empty $\iff$ a certain potential $Q$ has a local minimum,
    \item $Q$ has a local minimum $\iff$ $2+\Re(\alpha)>|\alpha|$.
\end{enumerate}

{\bf(1)} Suppose there exists an unbounded $\Omega\in\QD\left(\frac{\alpha}{w-2}\right)$. Then, by lemma \ref{lemma:ClassicQDDroplet}, $K:=\Omega^c$ is a local droplet of the potential
$$Q(w)=|w|^2-2\Re(\alpha\ln(w-w_0))+\sum_{j=1}^{n}c_j\1_{K^{(j)}},\;\;K=\bigsqcup_{j=1}^{n}K^{(j)},$$
where the $K^{(j)}$ are the connected components of $K$. Let $\{K_t\}_{t=0}^{\text{A}(K)}$ be the backwards Hele-Shaw chain associated to $K$, so that $K_0^{\ast}\neq\emptyset$ (\ref{lemma:HSChains}). However, $K_0^{\ast}$ is the locus of minima of $Q$ so this set, too, is non-empty. 

On the other hand, suppose $Q(w)=|w|^2-2\Re(\alpha\ln(w-2))$ has a unique local minimum, so it admits a local droplet $K_t$ for some $t>0$. Then $\Omega:=K_t^{c}\in\QD\left(\frac{\alpha}{w-2}\right)$ (by \ref{lemma:ClassicQDDroplet}).\\

{\bf(2)} We now show that $Q(w)=|w|^2-2\Re(\alpha\ln(w-2))$ admits a local minimum if and only if $2+\Re(\alpha)>|\alpha|$, and that this local minimum is unique when it exists. $Q$ may have a local minimum only if $\overline{w}=\frac{\alpha}{w-2}$. Taking this equation with its conjugate, we arrive at the following set of all candidate minima:
$$w=\frac{1}{2}\left(2+i\Im(\alpha)\pm\sqrt{4+4\Re(\alpha)-\Im^2(\alpha)}\right).$$
If $4+4\Re(\alpha)-\Im^2(\alpha)<0$, then
$$\overline{w}-\frac{\alpha}{w-2}=\pm i\sqrt{\Im^2(\alpha)-4\Re(\alpha)-4}\neq0,$$
so no solutions exist in this case. 
If $4+4\Re(\alpha)-\Im^2(\alpha)=0$, then our unique candidate solution is $w=1+i\dfrac{\Im(\alpha)}{2}$. However, expanding in series about this point demonstrates that it is not a minimum.\\
Finally, if $4+4\Re(\alpha)-\Im^2(\alpha)>0$, then its easy to show that $\frac{\partial Q}{\partial w}=0$, so we have candidate minima only in this case. Local analysis demonstrates that $w=\frac{1}{2}\left(2+i\Im(\alpha)+\sqrt{4+4\Re(\alpha)-\Im^2(\alpha)}\right)$ is not, in fact, a minimum. So, finally, we'll check that $w=\frac{1}{2}\left(2+i\Im(\alpha)-\sqrt{4+4\Re(\alpha)-\Im^2(\alpha)}\right)$ is a local minimum. For a root of $\frac{\partial Q}{\partial w}$ to be a local minimum, it's sufficient to check that $\Delta Q>4\left|\frac{\partial^2Q}{\partial w^2}\right|$ or, equivalently, $|\alpha|<|w-2|^2$. By assumption,
$$-1<\dfrac{\Re(\alpha)+1}{\sqrt{4+4\Re(\alpha)-\Im^2(\alpha)}},$$
so
$$|\alpha|^2=\Re^2(\alpha)+\Im^2(\alpha)<\left(\Re(\alpha)+2+\sqrt{4+4\Re(\alpha)-\Im^2(\alpha)}\right)^2,$$
thus, noting that $\Re(\alpha)>-1$,
$$|\alpha|<\Re(\alpha)+2+\sqrt{4+4\Re(\alpha)-\Im^2(\alpha)}=\frac{\Im^2(\alpha)+\left(2+\sqrt{4+4\Re(\alpha)-\Im^2(\alpha)}\right)^2}{4}=|w-2|^2$$
The fact that $4+4\Re(\alpha)<\Im^2(\alpha)$ $\iff$ $2+\Re(\alpha)>|\alpha|$ completes the proof of this second claim. We conclude that $\QD\left(\frac{\alpha}{w-2}\right)$ is non-empty $\iff$ $2+\Re(\alpha)>|\alpha|$.

\subsubsection{The Riemann Map}
Fix $\alpha\not\geq0$ so $\Omega$ is unbounded (and simply connected, by above), and let $\varphi$ be the Riemann map associated to $\Omega$. Then by Equation \ref{eqn:UnboundedFaberTransformPhiFormula},
\begin{align*}
\varphi(z)&=cz+\Phi_{\varphi}^{-1}\left(\dfrac{\alpha}{w-2}\right)^{\#}(z)=cz+\left(\dfrac{\alpha\psi'(2)}{z-\psi(2)}\right)^{\#}=cz+\dfrac{\overline{\alpha}}{\overline{\varphi'(z_0)}}\dfrac{1}{z^{-1}-\overline{z_0}}=cz\dfrac{z-z_1}{z-\overline{z_0}^{-1}}
\end{align*}
for some $z_1\in\C$. Now recall that $2=\varphi(z_0)$, which implies $z_1=z_0-\frac{2}{c}\frac{|z_0|^2-1}{|z_0|^2}$, so
$$\varphi(z)=cz\dfrac{z-z_0+\frac{2}{c}\frac{|z_0|^2-1}{|z_0|^2}}{z-\overline{z_0}^{-1}}.$$

In particular we have shown that if $\Omega\in\QD\left(\frac{\alpha}{w-2}\right)$ for some $\alpha\not\geq0$, then $\Omega$ may be represented by a Riemann map of the above form.

\subsubsection{Existence and Uniqueness of the One-Parameter Family}
The fact that $\QD\left(\frac{\alpha}{w-2}\right)$ is nonempty when $2+\Re(\alpha)>\alpha$ was established in \ref{subsubsec:PnePtQDExistenceCriterion}. Thus the existence of the continuous monotone family $\left\{\Omega_t\right\}_t$ follows directly from the existence of a backward Hele-Shaw chain (\ref{lemma:HSChains}). For uniqueness, suppose $\exists \Omega_t$ and $\Omega_t'$ with $A(\Omega_t)=A(\Omega_t')=t>0$. Then by lemmas \ref{lemma:ClassicQDDroplet} and \ref{lemma:HSChainZeroCoincidence}, the complements of both of these domains, $K_t$ and $K_t'$, are local droplets of $Q$, which has a unique local minimum in $K_t\cap K_t'$. If we localize to an open nbhd of $K_t\cup K_t'$ then, by Frostman's theorem \ref{lemma:Frostman}, $K_t=K_t'$ q.e, so $\Omega_t=\Omega_t'$.

Now all we need establish is the existence of such a maximal $t_\ast$. Firstly, Equation \ref{eqn:UnboundedFaberTransformHFormula} gives
$$\dfrac{\alpha}{w-2}=h(w)=\Phi_{\varphi}\left(\AnalyticInNoBracket{\varphi^{\#}}{\D}\right)(w)=\dfrac{(cz_0|z_0|^2-2)(c\overline{z_0}-2)}{|z_0|^2(w-2)},$$
(we compute using partial fractions and Equation \ref{eqn:FTFormulae}). This implies
\begin{equation}\label{eqn:OnePtQDConservedQuantity}
    \alpha|z_0|^2=(cz_0|z_0|^2-2)(c\overline{z_0}-2).
\end{equation}
Then, setting $z_0=c^{-1}re^{i\theta}$, we find that $0=r^4-2e^{i\theta}r^{3}-\alpha r^2-2c^2re^{-i\theta}+4c^2$. Then, considering the asymptotics as $c\to\infty$, we see that either $r\in O(c^{\frac{2}{3}})$ or $r\in O(1)$. Thus $|z_0|\lesssim c^{-\frac{1}{3}}$, so $z_0\xrightarrow{c\to\infty}0$ but $|z_0|>1$, so no solution exists for $c$ sufficiently large. Thus there exists a maximal $t_\ast>0$.

\subsubsection{Cusp Formation}
We would like to show that (a) $\partial\Omega_{t_\ast}$ contains a single $(3,2)$ cusp and (b) $\partial\Omega_t$ is smooth whenever $0<t<t_\ast$. Recall that $\Omega$ has a Riemann map in the form of Equation \ref{eqn:classicQDClassification}. Thus for (a) all we need show is that if $\partial\Omega_t=f_t(\partial\D)$ (where $f_t(z)=z\frac{z-z_0-\alpha}{z-\overline{z_0}^{-1}}$ for some $\alpha=\alpha(t)\in\R$ and $z_0=z_0(t)\in\D\IntComp$) is a continuous family of bounded curves and there exists some maximal $t_\ast>0$ such that $f_t$ is univalent in $\D\IntComp$, then $\partial\Omega_{t_\ast}$ contains a single $(3,2)$ cusp. In particular, for (a) it is sufficient to show that

\begin{enumerate}
    \item If $f_t'(z_\ast)=0$ for some $z_\ast\in\partial\D$, then $f_t(\partial\D)$ has a $(3,2)-$cusp at $f_t(z_\ast)$.
    \item If $f_t$ continuously transitions out of univalence at $t=t_\ast$, then there exists $z_\ast\in\partial\D$ such that $f_t'(z_\ast)=0$.
\end{enumerate}

{\bf(1)} $f_t'(z)=z_0\frac{z(z\overline{z_0}-2)+z_0+\alpha}{(z\overline{z_0}-1)^2}$, so $0=f_t'(z_\ast)=z_\ast(z_\ast\overline{z_0}-2)+z_0+\alpha$. Considering this equation along with it's conjugate, we obtain $z_0+\alpha=2-\overline{z_0}$, so $f_t(z)=z\frac{z-2+\overline{z_0}}{z-\overline{z_0}^{-1}}$. Once again setting $f_t'(z_\ast)=0$, and solving for $z_\ast$, we find that $z_\ast=1$ or $z_\ast=\frac{2-\overline{z_0}}{\overline{z_0}}$. However, in the latter case, we find that $\Re(z_0)=1$, in which case $f_t$ is not univalent (in fact $f_t(\partial\D)$ is a slit). Thus $z_\ast=1$ so, expanding $f_t(e^{i\theta})$ about $\theta=0$, we obtain $\overline{z_0}^{-1}f_t(e^{i\theta})-1=\frac{\theta^2}{1-\overline{z_0}}+\frac{i\theta^3}{(1-\overline{z_0})^2}+O(\theta^4)$, confirming that the cusp is of type $(3,2)$.\\

{\bf(2)} Suppose that $f_t$ continuously transitions out of univalence at $t=t_\ast$, so that for $t>t_\ast$, there exist $z\neq w\in\partial\D$, such that $f_t(z)-f_t(w)=0$. Then
\begin{align*}
0&=\dfrac{f_t(z)-f_t(w)}{z-w}=-\overline{z_0}\dfrac{z_0+\alpha+w(z\overline{z_0}-1)-z}{(z\overline{z_0}-1)(w\overline{z_0}-1)}\tab\implies\tab z_0+\alpha+w(z\overline{z_0}-1)-z=0.
\end{align*}
This is a linear equation in $w$, so it admits at most one solution. Thus $f_t(\partial\D)$ can have at most one self intersection point, and this is a simple root, so the intersection is transversal. That is, for each $\epsilon>0$, $f_{t_\ast+\epsilon}$ is not univalent in $\D\IntComp$ so $f_{t_\ast+\epsilon}(\partial\D\IntComp)$ is a figure-eight.

By the Whitney-Graustein theorem, which states that two regular closed curves in the plane have a regular homotopy between them iff they have the same degree, there is no regular homotopy between the figure eight and a simple closed curve.\cite{Kranjc1993} Therefore a singularity in the derivative must form on the boundary during the transition. In particular, the derivative of $f_{t_\ast}$ has a singularity at some point $z_\ast\in\partial\D$. Then, because $|z_0|\neq1$, the form of $f_t$ implies that $f_{t_\ast}'(z_\ast)\neq\infty$, so $f_{t_\ast}'(z_\ast)=0$. We conclude that the only way that $f_t$ can continuously transition out of univalence is with the formation of a $(3,2)-$cusp on $f(\partial\D)$.\\

The claim (b) of the smoothness of $\partial\Omega$ for $0<t<t_\ast$ then follows immediately from the fact that if a droplet in a Hele-Shaw chain contains a $(3,2)$ cusp then it is maximal (\cite{Chang_2013} \S4.1).
\qed

\subsubsection{Univalence Criterion}

For univalence of $\varphi$, it is sufficient to check that $\varphi(z)\neq\varphi(w)$ for all distinct $z,w\in\partial\D$. The sufficiency of injectivity on the boundary is a classic result, sometimes referred to as ``Darboux's theorem'':
\begin{lemma}\label{thm:DarbouxUnivalence}
Let $f$ be an analytic map on a Jordan domain $\Omega$ which extends continuously to the boundary. If $f$ is injective on $\partial\Omega$ then $f$ is univalent in $\Omega$.
\end{lemma}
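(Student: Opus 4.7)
The plan is to reduce univalence to a counting statement via the argument principle, using the Jordan curve theorem to identify the image region.

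First, I would verify that $\gamma := f(\partial\Omega)$ is itself a Jordan curve. Since $\partial\Omega$ is a Jordan curve, it is the continuous injective image of $\partial\D$; composing with $f\vert_{\partial\Omega}$ gives a continuous injection of $\partial\D$ into $\C$, and any such map is a homeomorphism onto its image (by compactness of $\partial\D$). So $\gamma$ is a Jordan curve, and by the Jordan curve theorem $\C\setminus\gamma$ decomposes as $D_{\mathrm{in}}\sqcup D_{\mathrm{out}}$, with $D_{\mathrm{in}}$ bounded.

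Next, for any $w_0\notin\gamma$ I would invoke the argument principle: since $f$ is analytic in $\Omega$, continuous on $\overline{\Omega}$, and $f-w_0$ does not vanish on $\partial\Omega$, the number of zeros of $f-w_0$ in $\Omega$ (with multiplicity) equals the winding number $n(\gamma,w_0)$. This winding number is constant on components of $\C\setminus\gamma$, equals $0$ on $D_{\mathrm{out}}$, and equals $\pm1$ on $D_{\mathrm{in}}$; since the zero count is non-negative, it must be $+1$ on $D_{\mathrm{in}}$ (this fixes the orientation). Hence every $w_0\in D_{\mathrm{in}}$ has exactly one preimage in $\Omega$, and that preimage is simple (multiplicity one); and no $w_0\in D_{\mathrm{out}}$ has any preimage.

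To finish, I would note that $f$ cannot be constant (else it wouldn't be injective on $\partial\Omega$), so by the open mapping theorem $f(\Omega)$ is open. The count above gives $f(\Omega)\subseteq D_{\mathrm{in}}\cup\gamma$, and since $\gamma$ has empty interior we get $f(\Omega)\subseteq D_{\mathrm{in}}$. Then for any two distinct $z_1,z_2\in\Omega$ with $f(z_1)=f(z_2)=w$, the point $w\in D_{\mathrm{in}}$ would have at least two preimages, contradicting the count. Hence $f$ is injective on $\Omega$.

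The main obstacle is the first step: making precise that $f(\partial\Omega)$ is a genuine Jordan curve and invoking the Jordan curve theorem. The rest is an essentially mechanical application of the argument principle together with the open mapping theorem. A minor subtlety worth being careful about is the definition of the contour integral when $f$ is merely continuous up to $\partial\Omega$; this is handled by approximating $\partial\Omega$ from inside by smooth curves $\Gamma_n\to\partial\Omega$, on which the integral is unambiguous, and using that $w_0\notin\gamma$ ensures $f-w_0$ remains uniformly bounded away from zero on $\Gamma_n$ for $n$ large.
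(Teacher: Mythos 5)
Your proposal is correct and follows essentially the same route as the paper's proof: the argument principle counts preimages, and the Jordan curve theorem identifies where the winding number of $f(\partial\Omega)$ is $1$ versus $0$. You are in fact somewhat more careful than the paper, which glosses over the orientation sign, the exclusion of image points on $f(\partial\Omega)$ itself, and the interpretation of the boundary integral when $f$ is only continuous up to $\partial\Omega$.
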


\begin{proof}[Proof of Lemma \ref{thm:DarbouxUnivalence}]
We shall prove the result only for bounded domains, as the proof in the unbounded case is analogous. Suppose that $f\in\A(\Omega)$ is continuous and injective on $\partial\Omega$, so $f(\partial\Omega)$ is a Jordan curve. Also let $U$ be the region enclosed by $f(\partial\Omega)$. By the argument principle, the number of solutions to $f(z)=w_0$ for each $w_0\in \Ch$ is given by
\begin{align*}
    \oint_{\partial\Omega}\dfrac{f'(z)dw}{f(z)-w_0}=\oint_{f(\partial\Omega)}\dfrac{dw}{w-w_0}=\1_{U}(w_0).
\end{align*}
That is, each $w_0\in U$ is the image under $f$ of a unique point in $\Omega$ and no point in $U^c$ is in the image of $f$. In particular, $f$ univalently maps $\Omega$ into the region enclosed by $f(\partial\Omega)$, so $f$ is univalent in $\Omega$.
\end{proof}

In our case, if $\varphi(z)=\varphi(w)$ but $z\neq w$, then one can show that
\begin{align*}
0&=\varphi(z)-\varphi(w)=\left(z-w\right)\left(c-\dfrac{\left(\overline{z_0}-z_0^{-1}\right)\left(2-c z_0\right)}{(z\overline{z_0}-1)(w\overline{z_0}-1)}\right)\;\implies\;\dfrac{z\overline{z_0}-1}{\left(|z_0|^2-1\right)\left(\frac{2}{cz_0}-1\right)}=\dfrac{1}{w\overline{z_0}-1}.
\end{align*}
But the left and right hand sides of this equation are simply Mobius transformations, so the image of $\partial\D$ under each is a circle. Thus it's necessary and sufficient to check that these circles don't intersect. A pair or circles $\partial\D_{r_1}(x_1)$ and $\partial\D_{r_2}(x_2)$ intersect iff $(r_1-r_2)^2\leq|x_1-x_2|^2\leq(r_1+r_2)^2$. In this case, $r_1=|z_0|\left(|z_0|^2-1\right)^{-1}\left|\frac{2}{cz_0}-1\right|^{-1}$, $x_1=-\left(|z_0|^2-1\right)^{-1}\left(\frac{2}{cz_0}-1\right)^{-1}$ and $r_2=|z_0|\left(|z_0|^2-1\right)^{-1}$, $x_2=\left(|z_0|^2-1\right)^{-1}$. Thus
$$|z_0|^2\dfrac{(|cz_0|-|cz_0-2|)^2}{(|z_0|^2-1)^2|cz_0-2|^2}\leq\dfrac{4}{(|z_0|^2-1)^2|cz_0-2|^2}\leq|z_0|^2\dfrac{(|cz_0|+|cz_0-2|)^2}{(|z_0|^2-1)^2|cz_0-2|^2},$$
In particular, $\varphi$ is univalent iff $|z_0|>1$ and
$$2<\left||cz_0|-|cz_0-2|\right||z_0|\;\;\text{or}\;\;\left||cz_0|+|cz_0-2|\right||z_0|<2$$
However, $\left||cz_0|+|cz_0-2|\right||z_0|<2$ implies $|z_0|<1$, and $2<\left||cz_0|-|cz_0-2|\right||z_0|$ implies $|z_0|>1$, so we may conclude that $\varphi$ is univalent iff
\begin{equation}\label{eqn:OnePtQDUnivalenceCriterion}
    2<\left||cz_0|-|cz_0-2|\right||z_0|.
\end{equation}

\;\\\\

\begin{figure}[ht]
  \centering
    \includegraphics[scale=.43]{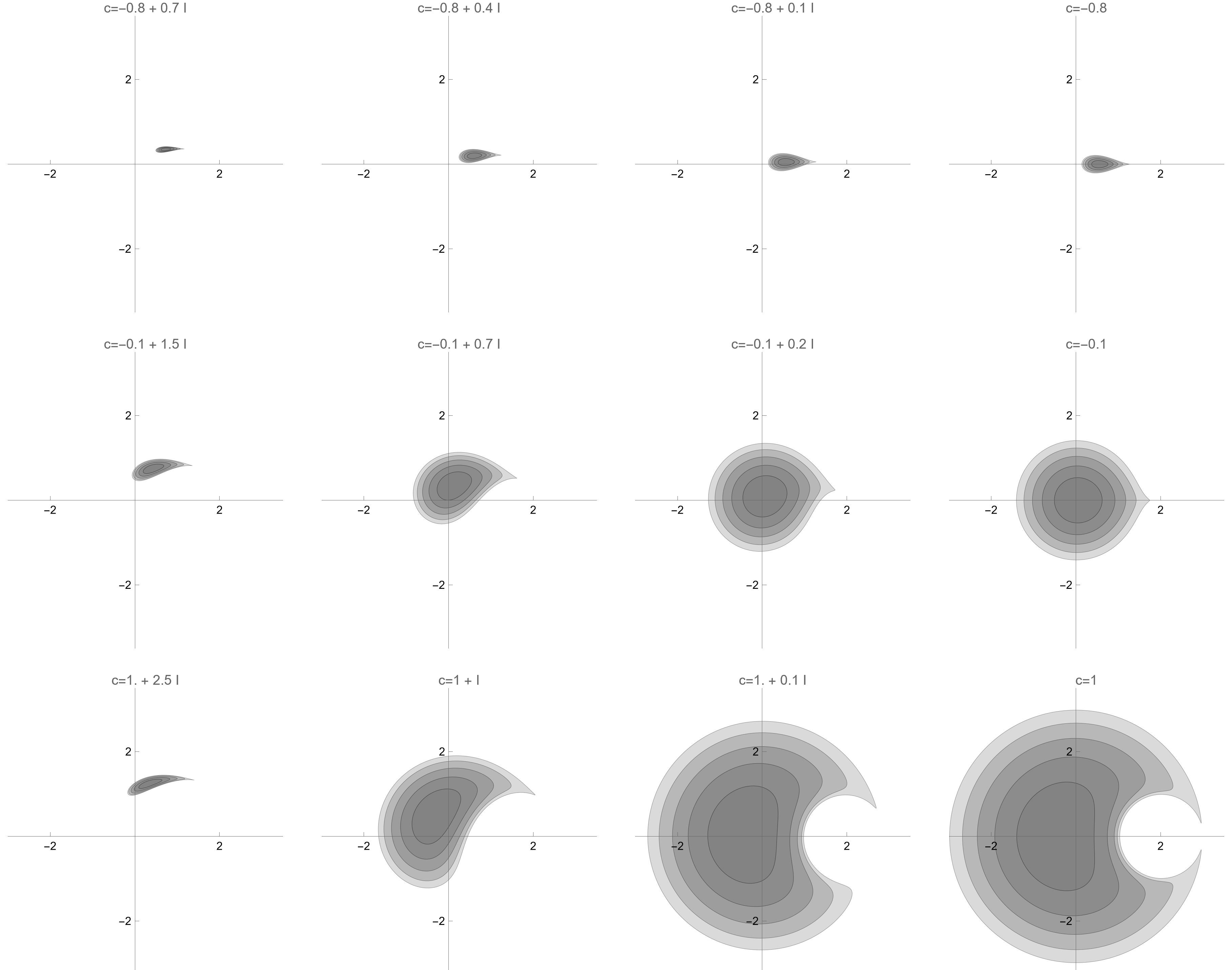}
    \caption{$\Omega_t\in\QD\left(\frac{\alpha}{w-2}\right)$ (complements of shaded) for various values of $\alpha$, and increasing $t$ up to the critical value. Note that the critical value is decreasing as $|w_0|^2+2\Re(\alpha)-2|\alpha|$ approaches $0$.}\label{fig:OnePtUQDFams}\vspace{1.5em}
\end{figure}
\FloatBarrier

\section{Power-Weighted Quadrature Domains}\label{sec:PQDs}
We will begin our study of weighted quadrature domains by discussing the instructive example of {\it power-weighted quadrature domains} (PQDs). Informally, a PQD is a domain $\Omega\subset\Ch$ if it admits a quadrature identity in which the integral over the domain is taken with respect to the weight $\rho_a(w)=|w|^{2(a-1)}$ for some $a>0$. The choice of $\rho_a$ is motivated by the correspondence between PQDs and local droplets of potentials of the form
$$Q(w)=\frac{|w|^{2a}}{a^2}-2\Re(H(w)).$$ The exact nature of this correspondence is detailed in Lemmas \ref{lemma:WQDGCE}, \ref{lemma:GenComplementDropletQuad}, and \ref{lemma:GenQuadComplementDroplet}. Before a deeper study of PQDs, we begin with a brief discussion of weighted quadrature domains more generally.

\subsection{Weighted Quadrature Domains}
As the name suggests, a $\rho-$weighted quadrature domain is a domain $\Omega\subset\Ch$ satisfying a quadrature identity in which the area integral is taken with respect to the weight $\rho$. Consider $\rho(w)=|w|^4=\frac{1}{4}\Delta\frac{|w|^{6}}{9}$ as a first example.
\begin{example}\label{ex:WQDFirstex}
If $c>0$, then the unbounded domain $\Omega_c=\{w\in\Ch:|w^3-1|>c^3\}$ is a WQD with respect to the weight $\rho(w)=|w|^4$. To see this, note that $|w^3-1|^2\dEquals c^6$, so $\overline{w}^3\dEquals\frac{c^6}{w^3-1}+1$. Thus for each $f\in\A_0(\Omega)$,
\begin{align*}
\int_{\Omega}f(w)|w|^4dA(w)&=\oint_{\partial\Omega}f(w)\frac{w^2}{3}\overline{w}^3dw\\
&=\oint_{\partial\Omega}f(w)\frac{1}{3}w^2\left(\frac{c^6}{w^3-1}+1\right)dw
\end{align*}
Applying the residue theorem and the fact that the third roots of unity are not in $\Omega$, we find
\begin{align*}
\int_{\Omega}f(w)|w|^4dA(w)&=-\frac{f_3}{3},
\end{align*}
where $f(w)=f_1w^{-1}+f_2w^{-2}+f_3w^{-3}+O(w^{-3})$.

We observe in Figure \ref{fig:3-3Droplets} that the boundary of $\Omega$ forms corners at the origin. Here, the angle is $\frac{\pi}{3}$. This is a special case of a more general phenomenon: the angle of intersection of the boundary of a PQD with the origin is observed to be an integer multiple of $\frac{\pi}{a}$ whenever $a>\frac{1}{2}$.
\begin{figure}[ht]
  \centering\vspace{.2em}
    \includegraphics[scale=.3]{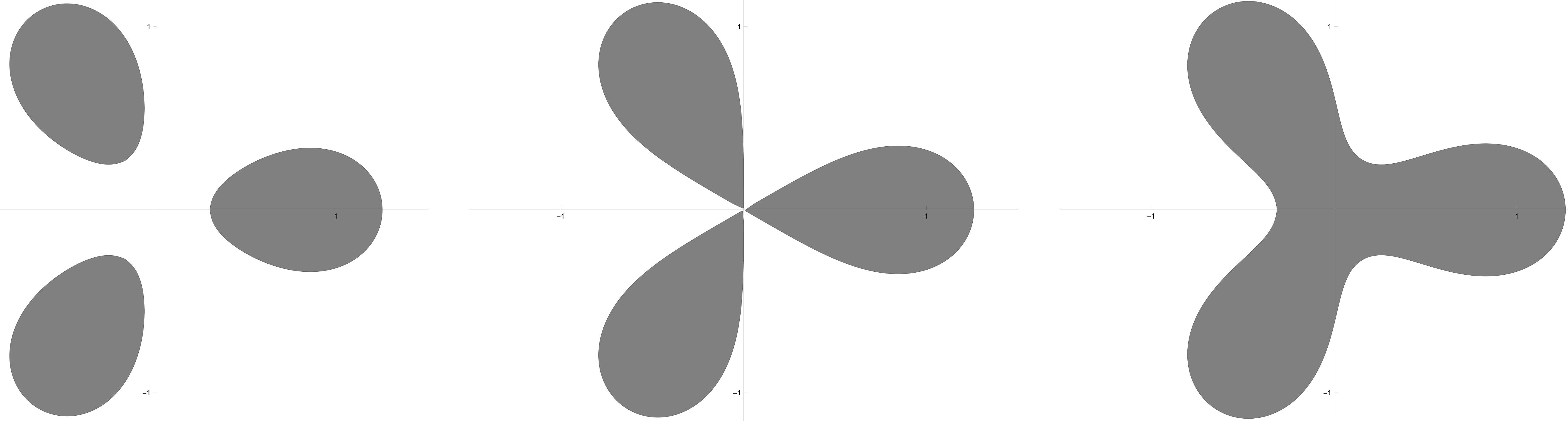}
    \caption{$\Omega\in\QD_{|w|^{4}}\left(\frac{w^{2}}{3};\A_0(\Omega)\right)$ (the complements of the shaded regions) for $c=.95,1,1.05$ transitioning from multiply connected to simply connected (left to right)}\label{fig:3-3Droplets}\vspace{1.5em}
\end{figure}
\end{example}

As for classical quadrature domains, the existence of a {\it weighted quadrature identity} is equivalent to the existence of a rational function $h$ for which Equation \ref{eqn:WQDContourQID} holds over some analytic test class $\mathcal{F}$ (e.g $h(w)=\frac{w^2}{3}$ in example \ref{ex:WQDFirstex}). This motivates the following definition.
\begin{definition}[Weighted quadrature domain]\label{def:rhoWQD}
Let $\Omega\subset\Ch$ be a domain equal to the interior of its closure and $\rho$ a weight function on $\Omega$. $\Omega$ is a $\rho-$weighted quadrature domain with respect to the $\rho-$integrable analytic test class $\mathcal{F}$ iff there exists a rational function $h$ for which
\begin{equation}\label{eqn:WQDContourQID}
    \int_{\Omega}f\rho dA=\oint_{\partial\Omega}f(w)h(w)dw
\end{equation}
holds for all $f\in\mathcal{F}$. This is denoted by $\Omega\in\QD_{\rho}(h;\mathcal{F})$.
\end{definition}
While this definition is very general, one can still recover analogs of many of the results for classical QDs - at least in certain special cases. The following lemma provides an analogue of the Schwarz function equation (\ref{eqn:QuadCoincidence}) for a broad class of radially symmetric weights.
\begin{lemma}\label{lemma:WQDGCE}
Let $\Omega\in \QD_\rho(h;\mathcal{F})$ be a bounded domain where $\rho=\frac{\partial^2F}{\partial w\partial\overline{w}}=\frac{1}{4}\Delta F$ for some real valued $F\in C^2(\Cl(\Omega))$, radially symmetric with respect to the origin. If $\A(\Omega)\subseteq\mathcal{F}$, then there exists $G\in\A(\Omega)$ such that
\begin{equation}\label{eqn:WeightedSFE}
h(w)\dEquals\dfrac{\partial F}{\partial w}+G(w).
\end{equation}
The same holds when $\Omega$ is unbounded, with each $\A(\Omega)$ above replaced by $\A_0(\Omega)$.
\end{lemma}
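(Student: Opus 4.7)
The plan is to apply the complex form of Green's theorem to rewrite the area integral $\int_\Omega f\rho\,dA$ as a boundary integral involving $\frac{\partial F}{\partial w}$, and then to extract the desired analytic $G$ via a Cauchy-kernel test against the resulting orthogonality relation.

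With the paper's normalizations ($\oint = (2\pi i)^{-1}\int$ and $dA = \frac{dx\,dy}{\pi}$), complex Green's theorem reads $\oint_{\partial\Omega} g\,dw = \int_\Omega \frac{\partial g}{\partial\overline{w}}\,dA$ for bounded $\Omega$ with piecewise $C^1$ boundary. Applied to $g = f\cdot\frac{\partial F}{\partial w}$ for $f\in\A(\Omega)$, analyticity of $f$ gives $\frac{\partial g}{\partial\overline{w}} = f\,\frac{\partial^2 F}{\partial w\partial\overline{w}} = f\rho$, and combining with the quadrature identity yields
\[
\oint_{\partial\Omega} f(w)\Bigl(\tfrac{\partial F}{\partial w}(w) - h(w)\Bigr)\,dw = 0 \qquad \forall\, f\in\A(\Omega).
\]

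For the unbounded case, I apply Green on $\Omega_r := \Omega\cap\D_r$ and pass $r\to\infty$. Radial symmetry of $F$ enters precisely here: writing $F(w)=\phi(|w|^2)$, one has $\frac{\partial F}{\partial w} = \overline{w}\,\phi'(|w|^2)$, which on the circle $|w|=r$ becomes $r^2\phi'(r^2)/w$. For $f\in\A_0(\Omega)$ with Laurent expansion $f(w) = f_1 w^{-1} + O(w^{-2})$ at $\infty$, the product $f(w)\frac{\partial F}{\partial w}(w)$ is then $O(w^{-2})$ on $|w|=r$ with no $w^{-1}$ term, so $\oint_{\partial\D_r} f\,\frac{\partial F}{\partial w}\,dw = 0$ exactly. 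Passing $r\to\infty$ (and interpreting the area integral in the principal value sense) recovers the same orthogonality relation, now valid for all $f\in\A_0(\Omega)$.

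To extract $G$, set $k := h - \frac{\partial F}{\partial w}$ on $\partial\Omega$ (continuous there since the QI forces $h$ to have no poles on $\partial\Omega$ and $F\in C^2(\Cl(\Omega))$), and test the orthogonality against Cauchy kernels $f(w)=(w-z)^{-1}$ for $z\in\Omega^c$. These lie in $\A(\Omega)$ in the bounded case and in $\A_0(\Omega)$ in the unbounded case (they vanish at $\infty$), so
\[
G(z) \;:=\; \oint_{\partial\Omega}\frac{k(\xi)}{\xi - z}\,d\xi \;\equiv\; 0 \quad \text{for } z\in\Omega^c.
\]
$G$ is analytic on $\C\setminus\partial\Omega$, and by Sokhotski-Plemelj the boundary values satisfy $G_+ - G_- = k$ on $\partial\Omega$. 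Since $G_-\equiv 0$, the interior restriction $G|_\Omega\in\A(\Omega)$ extends continuously to $\partial\Omega$ with boundary value $k$, giving $h \dEquals \frac{\partial F}{\partial w} + G$.

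The principal obstacle is justifying the vanishing of the $\partial\D_r$ contribution in the unbounded case; this is where the radial symmetry hypothesis is essential, as it produces the factor $\overline{w}=r^2/w$ on $|w|=r$ that combines with the $O(w^{-1})$ decay of $f\in\A_0(\Omega)$ to annihilate the integral on every large circle. Without radial symmetry, additional growth control on $F$ would be required. The Sokhotski-Plemelj step requires regularity of $\partial\Omega$, but for (weighted) quadrature domains this is automatic away from finitely many cusps and double points via an analog of Theorem \ref{thm:QDBoundaryRegularity}.
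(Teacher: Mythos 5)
Your proof is correct and takes essentially the same route as the paper's: Green's theorem converts $\oint_{\partial\Omega} f\,\frac{\partial F}{\partial w}\,dw$ into $\int_\Omega f\rho\,dA$, the quadrature identity then yields the orthogonality relation $\oint_{\partial\Omega} f\,(h-\frac{\partial F}{\partial w})\,dw=0$, and $G$ is the analytic continuation of $h-\frac{\partial F}{\partial w}$ into $\Omega$. You additionally supply two details the paper leaves implicit — the truncation argument showing that radial symmetry annihilates the $\partial\D_r$ contribution in the unbounded case, and the Cauchy-transform/Sokhotski--Plemelj extraction of $G$ — and both are handled correctly.
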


\begin{proof}[Proof of Lemma \ref{lemma:WQDGCE}]
Let $\Omega\in \QD_\rho(h;\mathcal{F})$ be a bounded domain with $\rho=\frac{1}{4}\Delta F$, where $F\in C^2(\Cl(\Omega))$ is radially symmetric with respect to the origin.
By assumption if $f\in\A(\Omega)$ then $f\in\mathcal{F}$, so applying the quadrature identity and Green's theorem, we obtain
\begin{align*}
    \oint_{\partial\Omega}f(w)\left(h(w)-\dfrac{\partial F}{\partial w}\right)dw&=\oint_{\partial\Omega}f(w)h(w)dw-\oint_{\partial\Omega}f(w)\dfrac{\partial F}{\partial w}dw\\
    &=\int_{\Omega}f\rho dA-\int_{\Omega}f\rho dA=0
\end{align*}
Therefore there exists $G\in\A(\Omega)$ such that $h\dEquals\frac{\partial F}{\partial w}+G$. The proof in the unbounded case is entirely analogous.
\end{proof}

This generalized Schwarz function equation naturally leads to the notion of a {\it generalized Schwarz function}, $S_{\Omega,F}:=h-G\dEquals\frac{\partial F}{\partial w}\in\M(\Omega)$. For example if $\Omega$ is a bounded domain in $\QD_{|w|^{2(a-1)}}(h;\A(\Omega))$, so that $F(w)=\frac{|w|^{2a}}{a^2}$ for some $a>0$, then the lemma implies $S_{\Omega,a^{-2}|w|^{2a}}(w):=h(w)-G(w)\dEquals a^{-1}\overline{w}|w|^{2(a-1)}$ for some $G\in\A(\Omega)$ (see Equation \ref{eqn:PQDCoincidence}). In the remainder of this section, we explore various families of PQDs and their properties.

\subsection{What is a PQD?}

A power-weighted quadrature domains behave very much like classical QDs away from the origin. However, the singularity of $\rho_a(w)=|w|^{2(a-1)}$ at the origin gives rise to behavior which is excluded in the classical case (by Sakai regularity, e.g. Theorem 3.2 of \cite{Lee_2015}): the development of ``corners'' on the boundary (e.g. in Figures \ref{fig:3-3Droplets}, \ref{fig:15_PQD_Nested_Family_PostCrit}, \ref{fig:PQDOnePtUnboundedTable}, and \ref{fig:.4-1-1o3c_Droplets_Finite_All}). It appears that this phenomenon isn't unique to PQDs, but is generic among WQDs with a singular weight.

As in the classical case, the choice of test class $\mathcal{F}$ is of great importance when it comes to the uniqueness of the quadrature function. For example if $\Omega$ were unbounded then for each $\alpha\in\C$,
$$\oint_{\partial\Omega}f(w)\left(\alpha+h(w)\right)dw=\oint_{\partial\Omega}f(w)h(w)dw$$
whenever $f\in L_a^1(\Omega;\rho_a)$ and $a\geq1$. That is, $\QD_{\rho_a}(h;L_a^1(\Omega;\rho_a))=\QD_{\rho_a}(h+\alpha;L_a^1(\Omega;\rho_a))$ for all $\alpha\in\C$. This isn't the case, however, when $\mathcal{F}=\A_0(\Omega)$
because if $w\in\Omega\IntComp$ then $\xi\mapsto(\xi-w)^{-1}\in\A_0(\Omega)$, so
$$\oint_{\partial\Omega}\dfrac{\alpha+h(\xi)}{\xi-w}d\xi=\alpha+\oint_{\partial\Omega}\dfrac{h(\xi)}{\xi-w}d\xi\neq\oint_{\partial\Omega}\dfrac{h(\xi)}{\xi-w}d\xi.$$
Applying Lemma \ref{lemma:WQDGCE} with $F(w)=\frac{|w|^{2a}}{a^2}$, we find that there exists $G\in\A(\Omega)$ ($\A_0(\Omega)$ when $\Omega$ is unbounded) for which
\begin{equation}\label{eqn:PQDCoincidence}
h(w)\dEquals \dfrac{1}{a}\overline{w}|w|^{2(a-1)}+G(w).
\end{equation}

Note this implies that when $\Omega$ is bounded (resp. unbounded), $h$ is uniquely associated to $\Omega$ if we restrict to the class $\Rat_0(\Omega)$ (resp. $\Rat(\Omega)$): If $\Omega$ is a PQD with $\mathcal{F}=\A(\Omega)$ (resp. $\A_0(\Omega)$) two such quadrature functions $h_1,h_2$ then
$$h_1(w)-h_2(w)\dEquals\left(\dfrac{1}{a}\overline{w}|w|^{2(a-1)}+G_1(w)\right)-\left(\dfrac{1}{a}\overline{w}|w|^{2(a-1)}+G_2(w)\right)=G_1(w)-G_2(w).$$
Hence $h_1-h_2$ continues analytically into $\Omega$. So when $\Omega$ is bounded, $h_1-h_2\in\Rat_0(\Omega)\cap\A(\Omega)=\{0\}$, and when $\Omega$ is unbounded, $h_1-h_2\in\Rat(\Omega)\cap\A_0(\Omega)=\{0\}$, so $h_1=h_2$. This motivates the following definition.
\begin{definition}[Power-weighted quadrature domain]
Take $a>0$ and let $\Omega$ be a bounded (resp. unbounded) domain equal to the interior of its closure. We say that $\Omega$ is a PQD iff there exists $h\in\Rat_0(\Omega)$ (resp. $\Rat(\Omega)$) for which
\begin{equation}\label{eqn:PQDContourQID}
\int_{\Omega}f\rho_a dA=\oint_{\partial\Omega}f(w)h(w)dw
\end{equation}
is satisfied for all $f\in\A(\Omega)$ (resp. $A_0(\Omega)$). This is denoted by $\Omega\in\QD_a(h)$.
\end{definition}
In the language of general weighted quadrature domains, this is equivalent to the statement that $\Omega\in\QD_{\rho_a}(h;\A(\Omega))$ with $h\in\Rat_0(\Omega)$ when $\Omega$ is bounded, and $\Omega\in\QD_{\rho_a}(h;\A_0(\Omega))$, $h\in\Rat(\Omega)$ when $\Omega$ is unbounded. We also denote by $\Omega\in\QD_a$ that there exists an $h$ for which $\Omega\in\QD_a(h)$. There are several equivalent characterizations of PQDs in the spirit of Lemma \ref{lemma:QDChars}:
\begin{theorem}\label{theorem:EquivPQDChars}
If $\Omega\subset\Ch$ is a domain equal to the interior of its closure and $a>0$ then the following are equivalent
\begin{enumerate}
    \item There exists a rational $h$ for which $\Omega\in\QD_a(h)$.
    \item There exists a $G\in\A(\Omega)$ ($\A_0$ when $\Omega$ is unbounded) and a rational $h$ for which $h(w)\dEquals\frac{1}{a}\overline{w}|w|^{2(a-1)}+G(w)$.
    \item There exists a function $S_a\in\M(\Omega)$ for which $S_a(w)\dEquals\frac{1}{a}\overline{w}|w|^{2(a-1)}$. This is the {\it generalized Schwarz function} associated to $\Omega$.
\end{enumerate}
Moreover in this case $h=\AnalyticIn{S_a}{\Omega\IntComp}$, and $G$ is minus the $\rho_a-$weighted Cauchy transform of $\Omega^c$,
\begin{equation}\label{eqn:PQDGCauchyTransFormula}
    G(w)=\int_{\Omega^c}\dfrac{|\xi|^{2(a-1)}}{\xi-w}dA(\xi).
\end{equation}
\end{theorem}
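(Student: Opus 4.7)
The plan is to establish $(1)\Rightarrow(2)$ directly from Lemma~\ref{lemma:WQDGCE}, the reverse $(2)\Rightarrow(1)$ via a Green's-theorem calculation, and $(2)\Leftrightarrow(3)$ by constructing $S_a$ from $h,G$ and conversely via Mittag--Leffler; the closed-form identities for $h$ and $G$ are then extracted using the Sokhotski--Plemelj decomposition together with a direct polar-coordinate computation of the weighted Cauchy transform of $\C$. For $(1)\Rightarrow(2)$ I would apply Lemma~\ref{lemma:WQDGCE} with $F(w)=|w|^{2a}/a^2$, noting $\tfrac14\Delta F=\rho_a$ and $\partial_w F=\tfrac1a\overline w|w|^{2(a-1)}$. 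For $(2)\Rightarrow(1)$, assume $h\dEquals\tfrac1a\overline w|w|^{2(a-1)}+G$; then for $f$ in the relevant test class ($\A(\Omega)$ or $\A_0(\Omega)$), the product $fG$ is analytic in $\Omega$ with the required decay at infinity so $\oint_{\partial\Omega}fG\,dw=0$, and Green's theorem plus the key identity $\partial_{\overline w}\bigl(\tfrac1a\overline w|w|^{2(a-1)}\bigr)=|w|^{2(a-1)}$ turns the remaining boundary integral into $\int_\Omega f\rho_a\,dA$.

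For $(2)\Leftrightarrow(3)$, setting $S_a:=h-G$ gives a function meromorphic in $\Omega$ with the prescribed boundary value. Conversely, given $S_a$ as in $(3)$, the continuous extension to $\partial\Omega$ rules out pole accumulation, so $S_a$ has only finitely many poles in $\Omega$ (including $\infty$ in the unbounded case), and a Mittag--Leffler decomposition supplies $h$ (rational, in $\Rat_0(\Omega)$ or $\Rat(\Omega)$) and $G:=h-S_a\in\A(\Omega)$ (respectively $\A_0(\Omega)$). The identity $h=\AnalyticIn{S_a}{\Omega\IntComp}$ then follows from the Sokhotski--Plemelj decomposition $S_a\dEquals\AnalyticInNoBracket{S_a}{\Omega}+\AnalyticInNoBracket{S_a}{\Omega\IntComp}$ on $\partial\Omega$: uniqueness of the splitting into an $\Omega$-analytic part and an $\Omega\IntComp$-analytic part with the correct behavior at the relevant point at infinity identifies the $\Omega\IntComp$-piece with $h$ and the $\Omega$-piece with $-G$.

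For the closed form for $G$, a direct polar-coordinates evaluation shows that for $R>|w|$,
\[
\int_{|\xi|<R}\frac{|\xi|^{2(a-1)}}{\xi-w}\,dA(\xi)=-\frac{1}{a}\overline w|w|^{2(a-1)},
\]
so the full-plane integral, interpreted as $R\to\infty$, equals $-\tfrac1a\overline w|w|^{2(a-1)}$ for every $a>0$. Separately, Cauchy's formula applied to $h$ on $\Omega\IntComp$, combined with $(2)$ (which kills the $G$-piece by analyticity) and Green's theorem, yields $h(w)=-\int_\Omega\tfrac{|\xi|^{2(a-1)}}{\xi-w}dA(\xi)$ on $\Omega\IntComp$ in the bounded case, with an extra residue contribution from the pole of the Cauchy kernel inside $\Omega\IntComp$ in the unbounded case. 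Subtracting the two identities gives $\int_{\Omega^c}\tfrac{|\xi|^{2(a-1)}}{\xi-w}dA(\xi)=h(w)-\tfrac1a\overline w|w|^{2(a-1)}=G(w)$ on $\partial\Omega$, and since both sides are analytic in $\Omega$ the identity extends there. The main obstacle will be the bookkeeping: correctly interpreting the non-absolutely-convergent $\Omega^c$-integral as a limit of disks when $a\ge\tfrac12$, tracking the extra Green's-theorem residue in the unbounded case, and keeping the $\Rat_0(\Omega)$ versus $\Rat(\Omega)$ behaviors at $\infty$ straight.
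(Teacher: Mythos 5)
Your proposal is correct and follows essentially the same route as the paper: $(1)\Rightarrow(2)$ via Lemma \ref{lemma:WQDGCE} with $F(w)=|w|^{2a}/a^2$, the identification $S_a:=h-G$ together with $h=\AnalyticIn{S_a}{\Omega\IntComp}$, and a Green's-theorem computation (using $\partial_{\overline w}\bigl(\tfrac1a\overline w|w|^{2(a-1)}\bigr)=|w|^{2(a-1)}$) to close the cycle and to derive Equation \ref{eqn:PQDGCauchyTransFormula}. The only differences are cosmetic reorganizations — you prove $(2)\Rightarrow(1)$ directly rather than $(3)\Rightarrow(1)$, make the Mittag--Leffler step in $(3)\Rightarrow(2)$ explicit, and obtain the $G$-formula by subtracting the weighted Cauchy transform of $\Omega$ from that of $\C$ (correctly evaluated as $-\tfrac1a\overline w|w|^{2(a-1)}$) instead of applying Cauchy's formula to $G$ directly, all of which amount to the same manipulations.
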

The generalized Schwarz function $S_a\in\M(\Omega)$ is unique when it exists as a consequence of its equality with $ \frac{1}{a}\overline{w}|w|^{2(a-1)}$ on $\partial\Omega$. We'll see in the next section that simply connected PQDs admit an additional characterization in terms of their Riemann map.
\begin{proof}[Proof of Theorem \ref{theorem:EquivPQDChars}]\;\\
$\mathbf{(1)\implies(2)}$: This is immediate from Equation \ref{eqn:PQDCoincidence} (via Lemma \ref{lemma:WQDGCE}).\\
$\mathbf{(2)\implies(3)}$: Rearranging, we find that $\frac{1}{a}\overline{w}|w|^{2(a-1)}\dEquals h(w)-G(w)=:S_a(w)$, which is the generalized Schwarz function for $\Omega$ because $h-G\in\M(\Omega)$. Also, $\AnalyticInNoBracket{S_a}{\Omega\IntComp}=\AnalyticIn{h-G}{\Omega\IntComp}=h$.\\
$\mathbf{(3)\implies(1)}$: Finally, if $\Omega$ is a bounded domain for which such an $S_a$ exists, and $f\in\A(\Omega)$, then
\begin{align*}
\int_{\Omega}f(w)|w|^{2(a-1)}dA(w)&=\oint_{\partial\Omega}f(w)\frac{1}{a}\overline{w}|w|^{2(a-1)}dw=\oint_{\partial\Omega}f(w)S_a(w)dw\\
&=\oint_{\partial\Omega}f(w)\AnalyticIn{S_a(w)}{\Omega\IntComp}dw=\oint_{\partial\Omega}f(w)h(w)dw,
\end{align*}
So $\Omega\in\QD_a(h)$. The argument in the unbounded case is completely analogous. 

Finally, we verify Equation \ref{eqn:PQDGCauchyTransFormula}: First, if $\Omega$ is unbounded then for each $w\in\Omega$,
\begin{align*}
G(w)&=\oint_{\partial\Omega}\dfrac{G(\xi)d\xi}{\xi-w}=-\oint_{\partial\Omega\IntComp}\dfrac{h(\xi)-\frac{1}{a}\overline{\xi}|\xi|^{2(a-1)}}{\xi-w}d\xi=\oint_{\partial\Omega\IntComp}\dfrac{\frac{1}{a}\overline{\xi}|\xi|^{2(a-1)}}{\xi-w}d\xi=\int_{\Omega^c}\dfrac{|\xi|^{2(a-1)}}{\xi-w}dA(\xi).
\end{align*}
The first equality is from Cauchy's integral formula, the second follows from part (2) of the lemma, the third follows from the fact that $h\in\A(\Omega\IntComp)$, and the final equality is Green's formula.
On the other hand, if $\Omega$ is bounded then $\Omega\IntComp$ is unbounded, so we consider the integral in terms of its Cauchy principal value. In particular, we have by Green's theorem that for each $w\in\Omega$,
\begin{align*}
\int_{\Omega^c}\dfrac{|\xi|^{2(a-1)}}{\xi-w}dA(\xi)&:=\lim_{r\to\infty}\int_{\Omega^c\cap\D_r}\dfrac{|\xi|^{2(a-1)}}{\xi-w}dA(\xi)=\lim_{r\to\infty}\oint_{\partial(\Omega^c\cap\D_r)}\dfrac{\frac{1}{a}\overline{\xi}|\xi|^{2(a-1)}}{\xi-w}d\xi
\end{align*}
Integrating over each component of the boundary separately, and applying the fact that $\frac{1}{a}\overline{\xi}|\xi|^{2(a-1)}=h(\xi)-G(\xi)$ on $\partial\Omega$, we find
$$\int_{\Omega^c}\dfrac{|\xi|^{2(a-1)}}{\xi-w}dA(\xi)=\oint_{\partial\Omega\IntComp}\dfrac{h(\xi)-G(\xi)}{\xi-w}d\xi+\lim_{r\to\infty}\dfrac{r^{2a}}{a}\oint_{\partial\D_r}\dfrac{\xi^{-1}}{\xi-w}d\xi.$$
Applying Cauchy's theorem and that $h\in\A_0(\Omega)$, we obtain
$$\int_{\Omega^c}\dfrac{|\xi|^{2(a-1)}}{\xi-w}dA(\xi)=\oint_{\partial\Omega}\dfrac{G(\xi)}{\xi-w}d\xi+\lim_{r\to\infty}\dfrac{r^{2a}}{a}\cdot0=G(w).$$
\end{proof}

\subsection{Boundary Regularity of PQDs}
Recall that the boundary regularity of $\Omega\in\QD$ (Theorem \ref{thm:QDBoundaryRegularity}) was a direct consequence of the existence of an associated Schwarz function and Sakai's regularity theorem \ref{theorem:SakaiRegularity}, which characterizes the types of singularities which can occur on a curve admitting a local Schwarz function. Unfortunately, Sakai's result can't be applied directly to PQDs, as they don't generally admit a Schwarz function. However, a recent result of Vardakis and Volberg \cite{vardakis2022freeboundaryproblemssakais} (Corollary 2.5) sufficiently generalizes Sakai's result for our purposes.
\begin{lemma}\label{lemma:GeneralizedSchwarzToLocal}
    Let $\Omega$ be a bounded domain, $p$ a polynomial, $F$ a function analytic in a neighborhood of $\Cl(\Omega)$, and $T\in\A(\Omega)$. Suppose that for all $w\in\partial\Omega$, we have
    $$T(w)=p(\overline{w})F(w).$$
    Then for each $w_0\in\partial\Omega$ such that $p'(\overline{w_0})\neq0$, there exists $\delta>0$ such that $p^{-1}\left(\frac{T}{F}\right)$ is a local Schwarz function at $w_0$.
\end{lemma}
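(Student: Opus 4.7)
The plan is to directly construct the candidate Schwarz function as $S(w):=g(T(w)/F(w))$, where $g$ is an analytic local inverse of $p$ near $p(\overline{w_0})$, and then verify the three defining properties of a local Schwarz function: analyticity in $\Omega\cap\D_\delta(w_0)$, continuous extension to $\partial\Omega\cap\D_\delta(w_0)$, and agreement with $\overline{w}$ on that boundary arc.

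First I would invoke the inverse function theorem at $\overline{w_0}$: since $p'(\overline{w_0})\neq0$, there exist open neighborhoods $U\ni\overline{w_0}$ and $V\ni p(\overline{w_0})$ together with an analytic bijection $g\colon V\to U$ satisfying $g\circ p=\mathrm{id}_U$ and $p\circ g=\mathrm{id}_V$. This $g$ is the intended local branch of $p^{-1}$.

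Next I would use the boundary identity $T(w)=p(\overline{w})F(w)$ to control $T/F$ near $w_0$. In the principal case $F(w_0)\neq 0$, continuity of $F$ (analytic in a neighborhood of $\Cl(\Omega)$) guarantees that $F$ is bounded away from zero in $\Cl(\Omega)\cap\D_{\delta_1}(w_0)$ for some $\delta_1>0$. Since $T\in\A(\Omega)$ extends continuously to $\partial\Omega$, the quotient $T/F$ is analytic in $\Omega\cap\D_{\delta_1}(w_0)$ and continuous on the closure, with boundary values
\[
\frac{T(w)}{F(w)} = p(\overline{w}), \qquad w\in\partial\Omega\cap\D_{\delta_1}(w_0).
\]
In particular $(T/F)(w_0)=p(\overline{w_0})\in V$, so by continuity there exists $\delta\in(0,\delta_1]$ with $T(w)/F(w)\in V$ for all $w\in\Cl(\Omega)\cap\D_\delta(w_0)$. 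Then $S:=g\circ(T/F)$ is a composition of analytic functions in $\Omega\cap\D_\delta(w_0)$, continuous on the closure, and on $\partial\Omega\cap\D_\delta(w_0)$
\[
S(w)=g(p(\overline{w}))=\overline{w},
\]
which is precisely the definition of a local Schwarz function at $w_0$.

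The main technical obstacle is the degenerate case $F(w_0)=0$: the boundary identity forces $T$ to vanish on $\partial\Omega$ at $w_0$, so one expects $T/F$ to nonetheless extend analytically across $w_0$, but this requires an independent order-of-vanishing argument (or a mild assumption on $F$ that is typically implicit in the intended applications to PQDs, where $F$ is chosen non-vanishing near boundary points of interest). Once the non-vanishing of $F$ at $w_0$ is secured, the rest of the argument is essentially a bookkeeping exercise in the inverse function theorem and in continuity of the relevant compositions.
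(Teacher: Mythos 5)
The paper does not actually prove this lemma --- it is imported as Corollary 2.5 of Vardakis and Volberg \cite{vardakis2022freeboundaryproblemssakais} --- so there is no in-paper argument to compare yours against; I can only assess your proof on its own terms. Your construction (local analytic inverse $g$ of $p$ at $\overline{w_0}$ via the inverse function theorem, then $S:=g\circ(T/F)$, then $S(w)=g(p(\overline{w}))=\overline{w}$ on the boundary arc once $\delta$ is small enough that $\overline{w}\in U$) is the natural skeleton, and it is complete whenever $F(w_0)\neq 0$: there $T/F$ is analytic in $\Omega\cap\D_{\delta}(w_0)$, continuous on $\Cl(\Omega)\cap\D_{\delta}(w_0)$ with value $p(\overline{w_0})\in V$ at $w_0$, and the composition goes through.

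The case $F(w_0)=0$ that you flag is a genuine gap for the statement as written, since the lemma imposes no nonvanishing hypothesis on $F$. Your local argument really does break there: for small $\delta$ the only zero of $F$ in $\D_{\delta}(w_0)$ is $w_0\notin\Omega$, so $T/F$ is still analytic in $\Omega\cap\D_{\delta}(w_0)$ and equals $p(\overline{w})$ on $\partial\Omega\cap\D_{\delta}(w_0)\setminus\{w_0\}$, but continuity of $T/F$ at $w_0$ \emph{from inside} $\Omega$ does not follow from continuity plus a maximum-principle argument alone; the model obstruction is $e^{(1+z)/(1-z)}$ on $\D$, which has unimodular boundary values on $\partial\D\setminus\{1\}$ yet is unbounded inside. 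Closing this case requires either exploiting the global rigidity of the identity $T=p(\overline{w})F$ on all of $\partial\Omega$ or the Smirnov-class machinery of Vardakis--Volberg, neither of which your proof invokes. That said, the gap is harmless for every use the paper makes of the lemma: in the applications one takes $p(w)=w^{a}$ with $F(w)=w^{a-1}r(w)$ (or the translated analogues), where $r$ has no zeros on $\partial\Omega$, so the boundary zeros of $F$ coincide exactly with the zeros of $p'(\overline{w})$ and are already excluded by the hypothesis $p'(\overline{w_0})\neq 0$. In other words, your argument proves the lemma in precisely the cases the paper needs, but not the full statement.
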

In particular, if $a\in\Z_{+}$ and $\Omega\in\QD_a(h)$ is a bounded PQD, Theorem \ref{theorem:EquivPQDChars} tells us that there exists a $G\in\A(\Omega)$ such that $a(h(w)-G(w))=\overline{w}^aw^{a-1}$. If we let $r(w)=\prod_{k}(w-a_k)^{n_k}$ (where the $a_k$ are the poles of $h$ and $n_k$ their respective orders), then
$$p(\overline{w})F(w):=\overline{w}^a(w^{a-1}r(w))\dEquals ar(w)(h(w)-G(w))=:T(w),$$
where $T$ is analytic in $\Omega$, $F(w):=w^{a-1}r(w)$ is analytic in a neighborhood of $\Cl(\Omega)$, and $p(w)=w^{a-1}$ is a polynomial. This choice of $T$, $p$, and $F$ satisfy the conditions of Lemma \ref{lemma:GeneralizedSchwarzToLocal}, and we find that $\Omega$ admits a local Schwarz function at every point $w_0\in\partial\Omega$, except possibly at $w_0=0$. We conclude that if $\Omega$ is a bounded PQD then for each $\epsilon>0$, $\partial\Omega\setminus\D_\epsilon$ has finitely many singular points, each of which is either a cusp or a double point.
On the other hand, if $\Omega$ is an unbounded PQD, then choose $w_1\notin\Cl(\Omega)$ and consider the bounded domain $\widetilde{\Omega}=(\Omega-w_1)^{-1}$, so $\partial\widetilde{\Omega}$ satisfies
$$\dfrac{1}{h((w+w_1)^{-1})-G((w+w_1)^{-1})}=a\overline{(w+w_1)}^{a}(w+w_1)^{a-1}$$
for some $G\in\A_0(\Omega)$. If we let $r(w)=\prod_{k}(w-a_k)^{n_k}$ (where the $a_k$ are the poles of the left hand side and $n_k$ their respective orders), then
$$p(\overline{w})F(w):=a(\overline{w}+\overline{w_1})^{a}((w+w_1)^{a-1}r(w))\dEquals\dfrac{r(w)}{h((w+w_1)^{-1})-G((w+w_1)^{-1})}=:T(w),$$
where $T$ is analytic in $\widetilde{\Omega}$, $F(w):=((w+w_1)^{a-1}r(w))$ is analytic in a neighborhood of $\Cl(\widetilde{\Omega})$, and $p(w)=a(w+\overline{w_1})^{a}$ is a polynomial. This choice of $T$, $p$, and $F$ satisfy the conditions of Lemma \ref{lemma:GeneralizedSchwarzToLocal}, and we find that $\widetilde{\Omega}$ admits a local Schwarz function at every point of $\partial\widetilde{\Omega}$, except possibly at $-w_1$. The map $w\mapsto(w-w_1)^{-1}$ is univalent in a neighborhood of $\partial\Omega$, so this implies that $\Omega$ admits a local Schwarz function at every point of $\partial\Omega$, except possibly at $0$. In summary,
\begin{lemma}\label{lemma:PQDBoundaryRegularityv1}
    If $\Omega$ is a PQD then for each $\epsilon>0$, $\partial\Omega\setminus\D_\epsilon$ has finitely many singular points, each of which is either a cusp or a double point.
\end{lemma}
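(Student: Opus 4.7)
The plan is to construct, at each point $w_0 \in \partial\Omega$ with $w_0 \neq 0$, a local Schwarz function, and then invoke Sakai's regularity theorem (\ref{theorem:SakaiRegularity}) to classify the singular points. The chief tool is the Vardakis--Volberg lemma (\ref{lemma:GeneralizedSchwarzToLocal}), which converts a boundary identity of the form $T(w) \dEquals p(\overline{w}) F(w)$ into a local Schwarz function $p^{-1}(T/F)$ at any $w_0$ where $p'(\overline{w_0}) \neq 0$. The task thus reduces to manufacturing the right factorization, together with a compactness argument for finiteness.

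For the bounded case with $a \in \Z_{+}$, I would start from Theorem \ref{theorem:EquivPQDChars}(2), which supplies $G \in \A(\Omega)$ with $a(h - G) \dEquals \overline{w}^a w^{a-1}$. Letting $r(w) = \prod_k (w - a_k)^{n_k}$ collect the poles of $h$ with their multiplicities, I would set $T(w) := a r(w)(h(w) - G(w)) \in \A(\Omega)$, $F(w) := w^{a-1} r(w)$ (analytic on a neighborhood of $\Cl(\Omega)$), and $p(z) := z^a$. The identity $T \dEquals p(\overline{w}) F$ then holds on $\partial\Omega$, and $p'(\overline{w_0}) = a \overline{w_0}^{a-1} \neq 0$ at every $w_0 \in \partial\Omega \setminus \{0\}$, so Lemma \ref{lemma:GeneralizedSchwarzToLocal} delivers the desired local Schwarz function.

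To reduce the unbounded case to the bounded one, I would fix $w_1 \notin \Cl(\Omega)$ and pass to $\widetilde\Omega := (\Omega - w_1)^{-1}$ via the M\"obius biholomorphism $w \mapsto (w - w_1)^{-1}$ of $\Ch$. Pulling back the generalized Schwarz identity to $\partial\widetilde\Omega$ and clearing denominators yields a factorization of the same shape with a polynomial $p$ of degree $a$ whose derivative vanishes only at the single point of $\partial\widetilde\Omega$ corresponding to $0 \in \partial\Omega$. Since $\widetilde\Omega$ is bounded and the M\"obius transformation is conformal on a neighborhood of $\partial\Omega \setminus \{w_1\}$, the local Schwarz functions produced on $\partial\widetilde\Omega$ transport back to $\Omega$, covering all of $\partial\Omega \setminus \{0\}$.

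Once a local Schwarz function is in hand at each $w_0 \in \partial\Omega \setminus \{0\}$, Sakai's theorem classifies each singular point of $\partial\Omega \setminus \D_\epsilon$ as a cusp, a double point, or a degenerate point; the last is ruled out by the standing assumption that $\Omega$ equals the interior of its closure. Finiteness follows from the discreteness of the singular set together with compactness of $\partial\Omega \cap \Cl(\D_R \setminus \D_\epsilon)$ for each $R$, with the M\"obius reduction precluding accumulation at $\infty$. The main obstacle I anticipate is the non-integer $a$ case: here $p(z) = z^a$ is not polynomial, so one must work in a small disk about each $w_0 \neq 0$ on which a single-valued branch of $z^a$ exists and invoke a local analogue of Lemma \ref{lemma:GeneralizedSchwarzToLocal} with analytic $p$.
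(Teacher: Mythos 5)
Your proposal matches the paper's proof essentially step for step: the same factorization $T=ar(h-G)$, $F(w)=w^{a-1}r(w)$, $p(w)=w^{a}$ fed into the Vardakis--Volberg lemma, the same M\"obius inversion $\widetilde{\Omega}=(\Omega-w_1)^{-1}$ to reduce the unbounded case to the bounded one, and the same appeal to Sakai's theorem with degenerate points excluded by the interior-of-closure assumption. Your closing caveat about non-integer $a$ is apt rather than a defect --- the paper's own derivation is carried out only for $a\in\Z_{+}$, where $p(z)=z^{a}$ is genuinely a polynomial --- so your blind reconstruction is, if anything, slightly more candid about the scope of the argument.
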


To constrain the behavior at the origin, we require an extra ingredient, that $(\partial\Omega)^a$ is a subset of an algebraic curve:

\begin{theorem}\label{thm:PQDAlgebraic}
If $\Omega\in\QD_a$ for some $a\in\Z_{+}$, there exists a non-constant polynomial $p$, irreducible over $\C$, with real coefficients such that $p(x,y)=0$ for all $x+i y\in\partial\Omega^a$. (c.f. \cite{AharonovShapiro}, Theorem 3). In particular, $\partial\Omega$ is a subset of the $a$th root of an algebraic curve.
\end{theorem}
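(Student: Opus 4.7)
The plan is to adapt the algebraicity argument of Aharonov--Shapiro (\cite{AharonovShapiro}, Theorem 3) to the PQD setting, replacing the classical Schwarz function by the generalized Schwarz function from Theorem \ref{theorem:EquivPQDChars}. That theorem supplies an $S_a\in\M(\Omega)$ with $aS_a(w)=\overline{w}^a w^{a-1}$ on $\partial\Omega$. Since $a\in\Z_+$, one can define
$$R(w):=aw^{1-a}S_a(w),$$
which is meromorphic on $\Omega$ (with at worst a pole at the origin) and satisfies the ``$a$-th power Schwarz relation''
$$R(w)=\overline{w}^a \qquad \text{on }\partial\Omega.$$

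The heart of the argument is promoting $R$ to a rational function on the full sphere $\Ch$. By Lemma \ref{lemma:PQDBoundaryRegularityv1}, the boundary $\partial\Omega$ is real-analytic outside a finite collection of cusps and double points (and possibly the origin). At any regular boundary point $w_0$, let $\mathcal{S}$ denote the classical local Schwarz function at $w_0$, so $\mathcal{S}$ is analytic in a neighborhood of $w_0$ in $\C$ and satisfies $\mathcal{S}(w)=\overline{w}$ on $\partial\Omega$. Both $R$ and $\mathcal{S}^a$ are analytic in $\Omega$ near $w_0$ and agree along the arc $\partial\Omega$, so by the identity theorem $R=\mathcal{S}^a$ in a one-sided neighborhood, exhibiting the meromorphic extension of $R$ across $\partial\Omega$ near $w_0$. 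Gluing these local extensions and handling the finitely many singular points by Riemann's removable singularity theorem (together with a direct local inspection of $R=aw^{1-a}(h-G)$ at the origin and at $\infty$ in the unbounded case) shows that $R$ is meromorphic on $\Ch$, hence rational: $R(w)=N(w)/D(w)$ for coprime $N,D\in\C[w]$.

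Setting $u=w^a$, the boundary identity becomes $D(w)\overline{u}-N(w)=0$, which combined with $w^a-u=0$ is a system of two polynomial equations in $w$ with coefficients in $\C[u,\overline{u}]$. Eliminating $w$ via the resultant produces a nonzero $Q\in\C[u,\overline{u}]$ vanishing on $\{(u,\overline{u}):u\in\partial\Omega^a\}$. This vanishing set is invariant under the involution $(u,\overline{u})\mapsto(\overline{u},u)$, so its minimal defining polynomial may be chosen Hermitian-symmetric, i.e., to have real coefficients when expressed in the variables $x=\Re u$, $y=\Im u$. Picking the irreducible factor that still vanishes on the infinite set $\partial\Omega^a$ yields the desired $p(x,y)$.

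The main obstacle is the rationality of $R$ on $\Ch$, specifically the local analysis at the origin, where the factor $w^{1-a}$ in the definition of $R$ creates a potential pole, and at infinity in the unbounded case where $S_a$ may have polynomial growth. Away from these points the Aharonov--Shapiro framework transfers almost verbatim, through the identification $R=\mathcal{S}^a$ near regular boundary points and Riemann's theorem at the finitely many singular points.
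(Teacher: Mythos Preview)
Your reduction to the function $R(w)=aw^{1-a}S_a(w)$ with $R\dEquals\overline{w}^a$ is exactly what the paper does (there it is called $T$). The gap is in the next step: you assert that $R$ extends to a rational function on $\Ch$, but this is false in general, and your argument does not establish it. Gluing the local Schwarz functions $\mathcal S$ only yields an analytic extension of $R$ to a tubular neighborhood of $\partial\Omega$; it does not fill in the interior of $\Omega^c$. Attempting to continue further runs into genuine branch points. Already for $a=1$ and the cardioid $\Omega=\varphi(\D)$, $\varphi(z)=z+\tfrac{z^2}{2}$, the Schwarz function is
\[
S(w)=\varphi^\#\circ\psi(w)=\frac{2\sqrt{1+2w}-1}{2(\sqrt{1+2w}-1)^2},
\]
which is algebraic of degree $2$ over $\C(w)$ with a branch point at $w=-\tfrac12\in\Omega^c$; it is not rational. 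So your resultant computation, which presupposes $R=N/D$ with $N,D\in\C[w]$, never gets off the ground.

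The paper's route avoids this entirely: it stays inside $\Omega$ and applies Theorem~2 of Aharonov--Shapiro. Setting
\[
f(w)=\tfrac12\bigl(w^a+R(w)\bigr),\qquad g(w)=\tfrac1{2i}\bigl(w^a-R(w)\bigr),
\]
one has $f,g\in\M(\Omega)$ with continuous real boundary values $f\dEquals\Re(w^a)$, $g\dEquals\Im(w^a)$. The cited theorem then produces directly a nontrivial irreducible real polynomial $p$ with $p(f,g)\equiv 0$, hence $p(\Re(w^a),\Im(w^a))=0$ on $\partial\Omega$. The key point is that Aharonov--Shapiro's result is an \emph{algebraic-dependence} statement for meromorphic functions on a domain with real boundary values; it requires no global meromorphic (let alone rational) extension of $R$ to $\Ch$.
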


\begin{proof}[Proof of Theorem \ref{thm:PQDAlgebraic}]
By Theorem \ref{theorem:EquivPQDChars}, $\Omega$ admits a generalized Schwarz function, $S_a$ which is analytic in $\Omega$ except at finitely many poles such that $T(w):=\frac{S_a(w)}{\frac{1}{a}w^{a-1}}\dEquals\overline{w}^a$. Thus
$$f(w):=\frac{1}{2}(w^a+T(w)),\text{ and }g(w):=\frac{1}{2i}(w^a-T(w))$$
are analytic except at finitely many poles, and admit continuous real extensions to $\partial\Omega$, $f(w)\dEquals\Re(w^a)$, $g(w)\dEquals\Im(w^a)$. Now suppose $\Omega$ is bounded. We apply Theorem 2 of \cite{AharonovShapiro}:

{\it Let $\Omega\subset\C$ be a bounded domain and suppose $f$ and $g$ are holomorphic in $\Omega$ except at finitely many poles. Further suppose that $f$ and $g$ admit real, continuous, extensions to $\partial\Omega$. Then there exists a non-trivial polynomial $p(x,y)$, irreducible over $\C$, with real coefficients such that $p(f(z),g(z))=0$ on $\partial\Omega$.}

Therefore there exists a real non-constant, $\C$-irreducible polynomial $p$ such that $p(f(w),g(w))=0$ on $\partial\Omega$. Thus, $p(\Re(w^a),\Im(w^a))=0$ on $\partial\Omega$. This is equivalent to $p(x,y)=0$ for all $x+iy\in\partial\Omega^a$. The argument is similar, but slightly more delicate in the unbounded case; see the proof of Theorem 3 in \cite{AharonovShapiro}.
\end{proof}

Theorem \ref{thm:PQDAlgebraic} straightforwardly implies that $\partial\Omega$ has finitely many singular points, so we conclude

\begin{theorem}\label{theorem:PQDBoundaryRegularityv2}
    If $\Omega\in\QD_a$ for some $a\in\Z_{+}$ then $\partial\Omega$ has finitely many singular points, where each non-zero singular point is either a cusp or a double point.
\end{theorem}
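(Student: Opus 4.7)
The strategy is to combine Lemma~\ref{lemma:PQDBoundaryRegularityv1}, which already handles all nonzero singular points, with Theorem~\ref{thm:PQDAlgebraic}, which constrains $\partial\Omega$ to lie in a real algebraic curve. The lemma guarantees that for every $\epsilon>0$, $\partial\Omega\setminus\D_\epsilon$ has only finitely many singular points and that each is a cusp or a double point. Hence the only way $\partial\Omega$ could fail to have finitely many singular points in total is for them to accumulate at the origin, and this is precisely what the assumption $a\in\Z_{+}$ (via the algebraic curve description) rules out.

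First I would invoke Theorem~\ref{thm:PQDAlgebraic} to obtain a non-constant, $\C$-irreducible real polynomial $p$ such that $p(\Re(w^a),\Im(w^a))=0$ for all $w\in\partial\Omega$. Setting $q(x,y):=p(\Re((x+iy)^a),\Im((x+iy)^a))$ yields a non-constant real polynomial whose real zero set $V:=\{w\in\C:q(\Re w,\Im w)=0\}$ contains $\partial\Omega$. I would then verify that the singular locus $V_{\mathrm{sing}}=\{w\in V:\nabla q(w)=0\}$ is finite. This is a classical consequence of the irreducibility of the complex curve $\widetilde V=\{p=0\}\subset\C^2$ (whose singular locus is then finite) combined with the fact that the polynomial map $\Phi(w)=(\Re(w^a),\Im(w^a))$ has $a$-to-$1$ fibers and non-degenerate Jacobian away from $w=0$; this forces $V_{\mathrm{sing}}\subseteq\Phi^{-1}(\widetilde V_{\mathrm{sing}})\cup\{0\}$, which is finite.

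The remaining step is the local claim that every $w_0\in\partial\Omega\setminus V_{\mathrm{sing}}$ is a regular point of $\partial\Omega$. The implicit function theorem presents $V\cap\D_\delta(w_0)$ as a simple real analytic arc separating a small disk into two open half-disks $D_{\pm}$. Since $\partial\Omega\cap\D_\delta(w_0)\subseteq V$ and $\Omega$ equals the interior of its closure (so $w_0$ is not isolated in $\partial\Omega$), a standard topological argument shows that, after shrinking $\delta$, one of $D_{\pm}$ lies entirely in $\Omega$ and the other in $\mathrm{int}(\Omega^c)$, and $\partial\Omega\cap\D_\delta(w_0)$ coincides with the full arc; hence $w_0$ is a regular point of $\partial\Omega$. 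Consequently the singular set of $\partial\Omega$ is contained in the finite set $V_{\mathrm{sing}}$. Applying Lemma~\ref{lemma:PQDBoundaryRegularityv1} with any $\epsilon$ smaller than $\min\{|w|:w\in V_{\mathrm{sing}}\setminus\{0\}\}$ then classifies each nonzero singular point as a cusp or a double point, completing the proof. The main technical obstacle is this last topological step, where care is required to invoke the hypothesis that $\Omega$ equals the interior of its closure; without it, $\partial\Omega$ could terminate at a smooth point of $V$ and produce spurious singularities.
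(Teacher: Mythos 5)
Your proof follows the same route as the paper: Lemma \ref{lemma:PQDBoundaryRegularityv1} classifies the nonzero singular points, and Theorem \ref{thm:PQDAlgebraic} supplies global finiteness --- the paper merely asserts that the algebraicity ``straightforwardly implies'' finiteness, whereas you spell out the argument (finiteness of the singular locus of the irreducible curve via the Jacobian of $w\mapsto w^a$ away from the origin, plus the implicit-function-theorem and interior-of-closure step showing smooth points of the curve on $\partial\Omega$ are regular points). The details you supply are correct and fill in exactly what the paper leaves implicit.
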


\subsection{Simply Connected Power-Weighted Quadrature Domains with a=2}\label{subsec:2PQDFTMethod}

As a first step towards generalizing Theorems \ref{thm:ClassicBQDRationalRiemannIffQD} and \ref{thm:ClassicUQDRationalRiemannIffQD} to the power-weighted setting, it is instructive to consider the case in which $a=2$ and $0\notin\Cl(\Omega)$. We further divide our discussion into, first, the case of unbounded PQDs and, second, the case of bounded PQDs.

\subsubsection{Unbounded PQDs}\label{subsubsec:2UPQDFTMethod}
Suppose that $\Omega\in\QD_2$ is a simply connected unbounded PQD and quadrature function $h$ and Riemann map $\varphi:\D\IntComp\rightarrow\Omega$. Further suppose that $0\notin\Cl(\Omega)$. Composing $\varphi$ with the Schwarz function equation for PQDs, $h(w)\dEquals\frac{1}{2}w\overline{w}^2+G(w)$ (Theorem \ref{theorem:EquivPQDChars}), we obtain
$$\overline{\varphi}^2\dEquals \left(\frac{h(w)-G(w)}{\frac{1}{2}w}\right)\circ\varphi$$
Expanding $\varphi$ in series, $\varphi(z)=cz+f_0+\left<z^{-1}\right>$, we see that
\begin{align*}
    \varphi^2(z)&=c^2z^2+2cf_0z+(f_0^2+2cf_1)+\left<z^{-1}\right>\\
    &=\mathring{W_2}(z)+(f_0^2+2cf_1)+\left<z^{-1}\right>
\end{align*}
where $\mathring{W_2}(z):=W_2(z)-W_2(0)$ and $W_2$ is the second inverse Faber polynomial (Equation \ref{eqn:FaberPolyFormulae}) so, in the disk, $(\varphi^2)^{\#}=\mathring{W_2}^{\#}+(f_0^2+2cf_1)+\left<z^{-1}\right>$. Therefore $(\varphi^2)^{\#}-\mathring{W_2}^{\#}\in\A(\D)$ which implies
$$(\varphi^2)^{\#}-\mathring{W_2}^{\#}=\AnalyticIn{(\varphi^2)^{\#}-\mathring{W_2}^{\#}}{\D}=\AnalyticIn{(\varphi^2)^{\#}}{\D}=\AnalyticIn{\overline{\varphi}^2}{\D},$$
where the last line follows from the fact that $\overline{\varphi}\dEquals\varphi^{\#}$. Rearranging and substituting the Schwarz function equation, we obtain
\begin{align*}
    (\varphi^2)^{\#}&=\mathring{W_2}^{\#}+\AnalyticIn{\overline{\varphi}^2}{\D}=\mathring{W_2}^{\#}+\AnalyticIn{\left(\frac{h(w)-G(w)}{\frac{1}{2}w}\right)\circ\varphi}{\D}.
\end{align*}
Equation \ref{eqn:ExteriorFaberTransformProjectionExtension} (using the boundary regularity from Theorem \ref{theorem:PQDBoundaryRegularityv2}) implies
$$\AnalyticIn{\left(\frac{h(w)-G(w)}{\frac{1}{2}w}\right)\circ\varphi}{\D}=\Phi_{\varphi}^{-1}\left(\AnalyticIn{\frac{h(w)-G(w)}{\frac{1}{2}w}}{\Omega\IntComp}\right)=\Phi_{\varphi}^{-1}\left(2\frac{h(w)-h(0)}{w}\right).$$
Hence, we obtain an implicit solution to the inverse problem,
\begin{equation}\label{eqn:2UPQDRiemannMapFormula}
    \varphi^2=\mathring{W_2}+2\Phi_{\varphi}^{-1}\left(\dfrac{h(w)-h(0)}{w}\right)^{\#}.
\end{equation}
Moreover, as the Faber transform of a rational function is rational, we conclude that
\begin{corollary}
    If $0\notin\Omega$ is a simply connected unbounded domain then $\Omega\in\QD_2$ iff the square of its Riemann map extends to a rational function.
\end{corollary}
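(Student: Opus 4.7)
The plan is to handle the two implications separately, using Equation \ref{eqn:2UPQDRiemannMapFormula} for the forward direction and the generalized Schwarz function characterization in Theorem \ref{theorem:EquivPQDChars}(3) for the reverse direction.

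For the forward direction, assume $\Omega\in\QD_2$ with quadrature function $h\in\Rat(\Omega)$. Because $0\notin\Omega$, the function $h$ is analytic at the origin, and the quotient $w\mapsto\frac{h(w)-h(0)}{w}$ is rational with poles only at the poles of $h$, hence still lies in $\Rat(\Omega)$. Since the exterior Faber transform restricts to an isomorphism $\Rat(\D\IntComp)\leftrightarrow\Rat(\Omega)$, the preimage $\Phi_{\varphi}^{-1}\!\left(\frac{h(w)-h(0)}{w}\right)$ lies in $\Rat(\D\IntComp)$, and its Schwarz reflection $(\cdot)^{\#}$ is again rational. Combined with the fact that $\mathring{W_2}$ is a polynomial, Equation \ref{eqn:2UPQDRiemannMapFormula} then exhibits $\varphi^2$ as a sum of rational functions, so $\varphi^2$ extends to a rational function on $\Ch$.

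For the reverse direction, suppose $\varphi^2$ extends to a rational function on $\Ch$. The strategy is to construct a generalized Schwarz function $S_2\in\M(\Omega)$ satisfying $S_2(w)\dEquals\tfrac{1}{2}w\overline{w}^2$. For $z\in\partial\D$ one has $\overline{z}=z^{-1}$, so $\varphi^{\#}(z)=\overline{\varphi(z)}$; consequently, for $w\in\partial\Omega$,
$$\overline{w}^2=\overline{\varphi(\psi(w))}^2=(\varphi^2)^{\#}\circ\psi(w).$$
Since $\varphi^2$ is rational, $(\varphi^2)^{\#}$ is rational with all poles contained in $\D\IntComp$, so composition with the biholomorphism $\psi\colon\Omega\to\D\IntComp$ produces a function meromorphic in $\Omega$ with only finitely many poles. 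Setting $S_2(w):=\tfrac{1}{2}w\,(\varphi^2)^{\#}\circ\psi(w)$ therefore yields an element of $\M(\Omega)$ that agrees with $\tfrac{1}{2}w\overline{w}^2$ on $\partial\Omega$, and Theorem \ref{theorem:EquivPQDChars}(3) gives $\Omega\in\QD_2$.

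The principal subtlety I expect is correctly tracking pole locations under both the exterior Faber transform and the Schwarz reflection, and verifying in the reverse direction that the candidate $S_2$ does not acquire unwanted singularities at the origin (which is where the hypothesis $0\notin\Omega$ enters to keep the auxiliary factor of $w$ benign). Beyond these bookkeeping points, the argument is essentially a direct manipulation of the formulas already established above.
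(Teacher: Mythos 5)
Your proof is correct and follows essentially the same route as the paper: the forward direction reads rationality of $\varphi^2$ off Equation \ref{eqn:2UPQDRiemannMapFormula} via the fact that the exterior Faber transform preserves rationality, and the reverse direction exhibits $\frac{w}{2}(\varphi^2)^{\#}\circ\psi(w)$ as a generalized Schwarz function and invokes Theorem \ref{theorem:EquivPQDChars}. The only slip is the claim that all poles of $(\varphi^2)^{\#}$ lie in $\D\IntComp$ --- since $\varphi^2\sim c^2z^2$ at $\infty$, its reflection has a double pole at $0\notin\D\IntComp$ --- but this is harmless, because a rational function is meromorphic on all of $\Ch$ and hence $(\varphi^2)^{\#}\circ\psi$ is meromorphic in $\Omega$ regardless.
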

The reverse direction follows from the fact that $\frac{w}{2}(\varphi^2)^\#\circ\psi(w)$ is a generalized Schwarz function for $\Omega$ (Theorem \ref{theorem:EquivPQDChars}). Reflecting Equation \ref{eqn:2UPQDRiemannMapFormula} and applying the Faber transform to both sides of the equation, yields a partial solution to the direct problem,
\begin{equation}
    \dfrac{h(w)-h(0)}{w}=\dfrac{1}{2}\Phi_{\varphi}\left(\left(\varphi^2-\mathring{W_2}\right)^{\#}\right)(w).
\end{equation}
Note that this leaves $h(0)$ undetermined.

Another, more direct, formula for $h$ can be obtained by composing $\varphi$ with a different arrangement of the Schwarz function equation, $\frac{1}{2}\varphi\overline{\varphi}^2\dEquals(h-G)\circ\varphi$. Taking the Faber transform of this (after projecting onto the part analytic in $\D$) and applying Equation \ref{eqn:ExteriorFaberTransformProjectionExtension} yields
\begin{align*}
    \Phi_\varphi\left(\AnalyticIn{\frac{1}{2}\varphi\overline{\varphi}^2}{\D}\right)&=\Phi_\varphi\left(\AnalyticIn{\left(h-G\right)\circ\varphi}{\D}\right)=\AnalyticIn{h-G}{\Omega\IntComp}=h
\end{align*}
Noting that $\overline{\varphi}\dEquals\varphi^{\#}$ and again applying Equation \ref{eqn:ExteriorFaberTransformProjectionExtension}, we obtain a solution to the direct problem
\begin{equation}\label{eqn:2UPQDHFormula1}
    h(w)=\Phi_\varphi\left(\AnalyticIn{\dfrac{1}{2}\varphi(\varphi^2)^{\#}}{\D}\right)(w)=\AnalyticIn{\dfrac{1}{2}w(\varphi^2)^{\#}\circ\psi(w)}{\Omega\IntComp} 
\end{equation}

Finally, consider the arrangement of the Schwarz function equation, $\frac{1}{2}\varphi^2\overline{\varphi}^2\dEquals(w(h(w)-G(w)))\circ\varphi$. Taking the Faber transform of this (after projecting onto the part analytic in $\D$) and applying Equation \ref{eqn:ExteriorFaberTransformProjectionExtension} yields
\begin{align*}
    \Phi_\varphi\left(\AnalyticIn{\frac{1}{2}\varphi^2\overline{\varphi}^2}{\D}\right)&=\Phi_\varphi\left(\AnalyticIn{\left(w(h(w)-G(w))\right)\circ\varphi}{\D}\right)=\AnalyticIn{wh(w)-wG(w)}{\Omega\IntComp}=wh(w)+\Res{\infty}G.
\end{align*}
One can show via Equation \ref{eqn:PQDGCauchyTransFormula} that $\Res{\infty}G=t:=\int_{\Omega^c}|w|^{2(a-1)}dA(w)$, the weighted area of $\Omega^c$. Hence,
\begin{equation}
    h(w)=\dfrac{1}{w}\Phi_\varphi\left(\AnalyticIn{\dfrac{1}{2}\varphi^2(\varphi^2)^{\#}}{\D}\right)(w)-\dfrac{t}{w}=\dfrac{1}{w}\AnalyticIn{\dfrac{1}{2}w^2(\varphi^2)^{\#}\circ\psi(w)}{\Omega\IntComp}-\dfrac{t}{w}
\end{equation}

\paragraph{Linear PQDs Not Containing Zero}\label{par:NZLinearPQDs}
We'll now apply these results to characterize the set of $\Omega\in\QD_2$ having a linear quadrature function. The conclusion of this analysis is summarized in the following theorem.
\begin{theorem}
Fix $\alpha_0\in\C$, $\alpha_1\in\C\setminus\{0\}$, and a simply connected domain $0\notin\Omega\in\QD_2(\alpha_0+\alpha_1w)$. Then $\Omega$ is unique modulo conformal radius unless $\alpha_0=0$, in which case $\pm\Omega$ are the only such domains.
\end{theorem}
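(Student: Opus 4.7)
The plan is to pin down the Riemann map $\varphi:\D\IntComp\to\Omega$ by combining two identities established earlier in this subsection. Equation~\ref{eqn:2UPQDRiemannMapFormula} will determine the form of $\varphi^2$ up to one scalar parameter, and a separate direct-problem identity derived from the generalized Schwarz function equation will pin that parameter down.

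First I would specialize Equation~\ref{eqn:2UPQDRiemannMapFormula} to $h(w)=\alpha_0+\alpha_1 w$. Since $h(0)=\alpha_0$, the quotient $(h(w)-h(0))/w=\alpha_1$ is a constant; the inverse exterior Faber transform carries constants to themselves (the Cauchy projection of a constant onto $\D$ returns that constant), so $\Phi_{\varphi}^{-1}(\alpha_1)^{\#}=\overline{\alpha_1}$. Using $\mathring{W_2}(z)=c^2z^2+2cf_0z$ from Equation~\ref{eqn:FaberPolyFormulae}, this gives
\begin{equation*}
\varphi(z)^2=c^2z^2+2cf_0z+2\overline{\alpha_1},
\end{equation*}
where $c>0$ is the conformal radius and $f_0$ is the constant term in the Laurent expansion of $\varphi$ at $\infty$.

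Next I would rearrange the generalized Schwarz function equation $S_2(w)\dEquals\frac{1}{2}w\overline{w}^2$ from Theorem~\ref{theorem:EquivPQDChars}. Composing with $\varphi$ on $\partial\D$ and using $\overline{\varphi(z)}=\varphi^{\#}(z)$ yields
\begin{equation*}
\varphi(z)\,(\varphi^2)^{\#}(z)\dEquals 2h(\varphi(z))-2G(\varphi(z)),
\end{equation*}
where $G\in\A_0(\Omega)$ is the $\rho_2$-weighted Cauchy transform from Equation~\ref{eqn:PQDGCauchyTransFormula}. Both sides are meromorphic on $\D\IntComp$ with matching leading behaviour $2c\alpha_1 z$ at infinity, so their difference is analytic on $\D\IntComp\cup\{\infty\}$ and vanishes on $\partial\D$; the maximum principle then forces the identity to hold on all of $\D\IntComp$. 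Expanding in Laurent series at $z=\infty$ with $(\varphi^2)^{\#}(z)=c^2/z^2+2c\overline{f_0}/z+2\alpha_1$ and $\varphi(z)=cz+f_0+\langle z^{-1}\rangle$, the $z^{1}$ coefficients match automatically; matching the constant terms gives
\begin{equation*}
2c^2\overline{f_0}+2\alpha_1 f_0=2\alpha_0+2\alpha_1 f_0,\qquad\text{so}\qquad f_0=\overline{\alpha_0}/c^2,
\end{equation*}
while lower-order coefficients determine the Laurent expansion of $G$ at $\infty$ recursively and impose no further constraint on $\varphi$.

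Combining the two steps, $\varphi(z)^2=c^2z^2+\frac{2\overline{\alpha_0}}{c}z+2\overline{\alpha_1}$. For each admissible conformal radius $c$, this polynomial determines $\varphi$ up to an overall sign. When $\alpha_0\neq0$ the subleading term $f_0=\overline{\alpha_0}/c^2$ is nonzero, so the normalization $\varphi(z)=cz+f_0+\langle z^{-1}\rangle$ distinguishes the two square roots and gives a unique $\Omega$. When $\alpha_0=0$ both $\varphi(z)$ and $\tilde\varphi(z):=-\varphi(-z)$ satisfy $\tilde\varphi(z)^2=c^2z^2+2\overline{\alpha_1}$ together with the correct normalization $\tilde\varphi'(\infty)=c>0$; their images are $\Omega$ and $-\Omega$ respectively, exhausting the possibilities. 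The main delicate point is the promotion of the identity of Step~2 from $\partial\D$ to all of $\D\IntComp$, which relies on the cancellation of the leading poles at infinity and the maximum principle; once this is secured the rest is Laurent coefficient bookkeeping.
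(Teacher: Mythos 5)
Your proposal is correct and takes essentially the same route as the paper: you apply Equation \ref{eqn:2UPQDRiemannMapFormula} to get $\varphi^2(z)=c^2z^2+2cf_0z+2\overline{\alpha_1}$, pin down $f_0=\overline{\alpha_0}/c^2$ by the same Laurent matching that underlies Equation \ref{eqn:2UPQDHFormula1} (which the paper simply cites, while you rederive it via the maximum principle), and read off uniqueness up to sign. The only wobble is in your last paragraph: the normalization $\varphi'(\infty)=c>0$ separates $\varphi$ from $-\varphi$ for every $\alpha_0$, so what $\alpha_0\neq0$ actually excludes is the second normalized candidate $\tilde{\varphi}(z)=-\varphi(-z)$ --- precisely the map you invoke in the $\alpha_0=0$ case --- which is also how the paper's argument (via the identity $h(w)=-h(-w)$ for the reflected domain) reaches the same conclusion.
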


Let $\Omega$ be a simply connected domain not containing zero such that $\Omega\in\QD_2(h)$, where $h(w)=\alpha_0+w$ for some $\alpha_0\in\C$ (by change of variables, we can wlog assume $\alpha_1=1$ as long as $\alpha_1\neq0$). Applying Equation \ref{eqn:2UPQDRiemannMapFormula}, we find that
\begin{align*}
    \varphi^2(z)&=\mathring{W_2}^{\#}(z)+2\Phi_{\varphi}^{-1}(1)=c^2z^2+2cf_0z+2.
\end{align*}
To solve for the undetermined coefficient $f_0$, we apply Equation \ref{eqn:2UPQDHFormula1},
\begin{align*}
    \alpha_0+w&=h(w)=\AnalyticIn{\dfrac{1}{2}w\left(\dfrac{c^2}{\psi^2(w)}+\dfrac{2c\overline{f_0}}{\psi(w)}+2\right)}{\Omega\IntComp}=\AnalyticIn{w+c^2\overline{f_0}+O(w^{-1})}{\Omega\IntComp}=w+c^2\overline{f_0}.
\end{align*}
Hence, $f_0=c^{-2}\overline{\alpha_0}$, and we find
\begin{align*}
    \varphi^2(z)&=c^2z^2+2c^{-1}\overline{\alpha_0}z+2.
\end{align*}
In particular, if $\Omega\in\QD_2(\alpha_0+w)$ is a simply connected domain not containing $0$ and $\varphi$ is the associated Riemann map (uniquely associated via the normalization $\varphi(\infty)=\infty$, $\varphi'(\infty)=c>0$), then $\varphi$ satisfies the above equation in $\D\IntComp$.

To determine uniqueness, suppose there were two such domains $\Omega_1$ and $\Omega_2$ with equal conformal radius $c>0$ and associated Riemann maps $\varphi_1$ and $\varphi_2$. The prior equation tells us that $\varphi_1^2(z)=\varphi_2^2(z)$ for all $z\in\D\IntComp$, so $1=\left(\frac{\varphi_1(z)}{\varphi_2(z)}\right)^2$, which implies $\frac{\varphi_1(z)}{\varphi_2(z)}=\pm1$ for all $z\in\D\IntComp$. As $\varphi_1$ and $\varphi_2$ are non-zero and analytic in $\D\IntComp$, we conclude that $\varphi_2=\pm\varphi_1$. In particular, $\Omega_2=\pm\Omega_1$. If $\Omega_2=-\Omega_1$ then, applying the quadrature identity to the Cauchy kernel, we find that for each $w\in(\Omega_2)\IntComp$,
\begin{align*}
    h(w)&=\oint_{\partial\Omega_2}\dfrac{h(\xi)d\xi}{\xi-w}=\int_{\Omega_2}\dfrac{|\xi|^2dA(\xi)}{\xi-w}=-\int_{\Omega_1}\dfrac{|\xi|^2dA(\xi)}{\xi-(-w)}=-\oint_{\partial\Omega_1}\dfrac{h(\xi)d\xi}{\xi-(-w)}=-h(-w)
\end{align*}

Later on we will discuss these domains in the more general case of positive real $a$ and when $0\in\Omega$.

\subsubsection{Simply Connected Bounded PQDs with a=2}\label{subsubsec:2BPQDFTMethod}
Suppose that $\Omega\in\QD_2(h)$ is simply connected and bounded with Riemann map $\varphi:\D\rightarrow\Omega$. Further suppose that $0\notin\Cl(\Omega)$. Composing $\varphi$ with the Schwarz function equation for PQDs (Theorem \ref{theorem:EquivPQDChars}) $h(w)\dEquals\frac{1}{2}w\overline{w}^2+G(w)$ (for some $G\in\A(\Omega)$), we obtain $\overline{\varphi}^2\dEquals \left(\frac{h(w)-G(w)}{\frac{1}{2}w}\right)\circ\varphi$. Expanding $\varphi$ in series, $\varphi(z)=\varphi(0)+\left<z\right>$, we see that $(\varphi^2)^{\#}=\left(\varphi^2(0)+\left<z\right>\right)^{\#}=\overline{\varphi^2(0)}+\left<z^{-1}\right>$. Therefore $(\varphi^2)^{\#}-\overline{\varphi^2(0)}\in\A_0(\D\IntComp)$ which implies
$$(\varphi^2)^{\#}-\overline{\varphi^2(0)}=\AnalyticIn{(\varphi^2)^{\#}-\overline{\varphi^2(0)}}{\D\IntComp}=\AnalyticIn{(\varphi^2)^{\#}}{\D\IntComp}=\AnalyticIn{\overline{\varphi}^2}{\D\IntComp},$$
where the last line follows from the fact that $\overline{\varphi}\dEquals\varphi^{\#}$. Rearranging and substituting the Schwarz function equation, we obtain
\begin{align*}
    (\varphi^2)^{\#}&=\overline{\varphi^2(0)}+\AnalyticIn{\overline{\varphi}^2}{\D\IntComp}=\overline{\varphi^2(0)}+\AnalyticIn{\left(\frac{h(w)-G(w)}{\frac{1}{2}w}\right)\circ\varphi}{\D\IntComp}.
\end{align*}
Equation \ref{eqn:InteriorFaberTransformProjectionExtension} (using the boundary regularity from Theorem \ref{theorem:PQDBoundaryRegularityv2}) implies
$$\AnalyticIn{\left(\frac{h(w)-G(w)}{\frac{1}{2}w}\right)\circ\varphi}{\D\IntComp}=\Phi_{\varphi}^{-1}\left(\AnalyticIn{\frac{h(w)-G(w)}{\frac{1}{2}w}}{\Omega\IntComp}\right)=\Phi_{\varphi}^{-1}\left(2\frac{h(w)-h(0)}{w}\right).$$
Hence, we obtain an implicit solution to the inverse problem,
\begin{equation}
    \varphi^2=\varphi^2(0)+2\Phi_{\varphi}^{-1}\left(\dfrac{h(w)-h(0)}{w}\right)^{\#}.
\end{equation}
Reflecting and applying the Faber transform to both sides of the equation, yields a partial solution to the direct problem,
\begin{equation}
    \dfrac{h(w)-h(0)}{w}=\dfrac{1}{2}\Phi_{\varphi}\left((\varphi^2)^{\#}-\overline{\varphi^2(0)}\right)(w).
\end{equation}
Moreover, the constraint $h(\infty)=0$ implies $h(0)$ is equal to the residue at $\infty$ of the right hand side.\\

Another, more direct, formula for $h$ can be obtained by composing $\varphi$ with a different arrangement of the Schwarz function equation, $\frac{1}{2}\varphi\overline{\varphi}^2\dEquals(h-G)\circ\varphi$. Taking the Faber transform of this (after projecting onto the part analytic in $\D\IntComp$) and applying Equation \ref{eqn:InteriorFaberTransformProjectionExtension} yields
\begin{align*}
    \Phi_\varphi\left(\AnalyticIn{\frac{1}{2}\varphi\overline{\varphi}^2}{\D\IntComp}\right)&=\Phi_\varphi\left(\AnalyticIn{\left(h-G\right)\circ\varphi}{\D\IntComp}\right)\\
    &=\AnalyticIn{h-G}{\Omega\IntComp}=h
\end{align*}
Noting that $\overline{\varphi}\dEquals\varphi^{\#}$ and again applying Equation \ref{eqn:InteriorFaberTransformProjectionExtension}, we obtain a solution to the direct problem
\begin{equation}
    h(w)=\Phi_\varphi\left(\AnalyticIn{\dfrac{1}{2}\varphi(\varphi^2)^{\#}}{\D\IntComp}\right)(w)=\AnalyticIn{\dfrac{1}{2}w(\varphi^2)^{\#}\circ\psi(w)}{\Omega\IntComp} 
\end{equation}

$\frac{1}{2}\varphi^2\overline{\varphi}^2\dEquals(wh(w)-wG(w))\circ\varphi$. Taking the Faber transform of this (after projecting onto the part analytic in $\D\IntComp$) and applying Equation \ref{eqn:InteriorFaberTransformProjectionExtension} yields
\begin{align*}
    \Phi_\varphi\left(\AnalyticIn{\frac{1}{2}\varphi^2\overline{\varphi}^2}{\D\IntComp}\right)&=\Phi_\varphi\left(\AnalyticIn{\left(wh(w)-wG(w)\right)\circ\varphi}{\D\IntComp}\right)\\
    &=\AnalyticIn{wh(w)-wG(w)}{\Omega\IntComp}\\
    &=wh(w)+\Res{\infty}h.
\end{align*}
By applying the quadrature identity to the constant function shows $-\Res{\infty}h=t:=\int_{\Omega}|w|^{2(a-1)}dA(w)$, the weighted area of $\Omega$. Hence,
\begin{equation}
    h(w)=\dfrac{1}{w}\Phi_\varphi\left(\AnalyticIn{\dfrac{1}{2}\varphi^2(\varphi^2)^{\#}}{\D\IntComp}\right)(w)+\dfrac{t}{w}=\dfrac{1}{w}\AnalyticIn{\dfrac{1}{2}w^2(\varphi^2)^{\#}\circ\psi(w)}{\Omega\IntComp}+\dfrac{t}{w}
\end{equation}

\;\\

We are now prepared to consider two different generalizations of the Riemann mapping characterization and formulae for simply connected classical QDs (Theorems \ref{thm:ClassicBQDRationalRiemannIffQD} and \ref{thm:ClassicUQDRationalRiemannIffQD}) to PQDs for arbitrary $a>0$. In particular, we exhibit Faber transform formulae in terms of both sum (Mittag-Leffler) and product (inner-outer/Nevanlinna) decompositions of the Riemann map $\varphi$.

Each of these approaches come with their own benefits and drawbacks. The sum representation yields relatively simple formulae, but only applies for integer values of $a$. On the other hand, the product representation yields more specific and unwieldy formulae, but applies for all $a>0$. We will begin with a discussion of the simpler sum representation.

\subsection{Faber Sum Identities for PQDs}\label{subsec:PQDAddFTMethod}

Fix $a\in\Z_{+}$ and let $\Omega$ be a simply connected unbounded PQD with quadrature function $h$ and Riemann map $\varphi:\D\IntComp\rightarrow\Omega$. Note that $\varphi^a(z)$ can be rewritten as $p+g$, where $p\in\A(\D)$ is a polynomial of degree $a$ and $g\in\A_0(\D\IntComp)$. Recalling the definition of Faber polynomials (Equation \ref{eqn:FTFormulae}), we find that $p$ is, in fact, the $a$th inverse Faber polynomial $W_a$,
\begin{align*}
p&=\AnalyticIn{p}{\D}=\AnalyticIn{\varphi^{a}-g}{\D}=\AnalyticIn{\varphi^{a}}{\D}=W_a.
\end{align*}
As a consequence, $\AnalyticIn{(\varphi^{a})^{\#}}{\D}=(\varphi^{a})^{\#}-\mathring{W_a}^{\#}\in\A(\D)$, where $\mathring{W_a}:=W_a-W_a(0)$, so we may compute its Faber transform:
\begin{align*}
    \Phi_{\varphi}\left((\varphi^{a})^{\#}-\mathring{W_a}^{\#}\right)&=\Phi_{\varphi}\left(\AnalyticIn{(\varphi^{a})^{\#}}{\D}\right).
\end{align*}
Note that $(\varphi^{a})^{\#}\dEquals\overline{\varphi^{a}}$, so rearranging Equation \ref{eqn:PQDCoincidence} and substituting, we obtain $$(\varphi^{a})^{\#}\dEquals\left(\dfrac{h(w)-G(w)}{\frac{1}{a}w^{a-1}}\right)\circ\varphi,$$
for some $G\in\A_0(\Omega)$. Then, as $\AnalyticInNoBracket{}{\D}$ and the Faber transform only respect the boundary values of their argument, we may substitute to obtain
\begin{align*}
    \Phi_{\varphi}\left((\varphi^{a})^{\#}-\mathring{W_a}^{\#}\right)&=\Phi_{\varphi}\left(\AnalyticIn{\left(\dfrac{h(w)-G(w)}{\frac{1}{a}w^{a-1}}\right)\circ\varphi}{\D}\right).
\end{align*}
We recognize the argument of the RHS as the inverse Faber transform. Applying Equation \ref{eqn:ExteriorFaberTransformProjectionExtension} yields
\begin{align*}
    \Phi_{\varphi}\left((\varphi^{a})^{\#}-\mathring{W_a}^{\#}\right)(w)&=\AnalyticIn{\left(\dfrac{h(w)-G(w)}{\frac{1}{a}w^{a-1}}\right)\circ\varphi\circ\psi(w)}{\Omega\IntComp}\\
    &=\AnalyticIn{\dfrac{h(w)-G(w)}{\frac{1}{a}w^{a-1}}}{\Omega\IntComp}.
\end{align*}
Applying the inverse Faber transform to both sides of the above equation and rearranging, we obtain
\begin{equation}
    \varphi^{a}(z)=\mathring{W_a}(z)+\Phi_{\varphi}^{-1}\left(\AnalyticIn{\dfrac{h(w)-G(w)}{\frac{1}{a}w^{a-1}}}{\Omega\IntComp}\right)^{\#}(z).
\end{equation}
Furthermore, the Faber transform of a meromorphic function is rational, so we may conclude that $\varphi^a\in\Rat(\D)$. This proves the backward direction of the first of the two main theorems of this section:
\begin{theorem}\label{thm:GenUPQDLefflerFTFormula}
Take $a\in\Z_{+}$ and let $\Omega$ be an unbounded simply connected domain. Then there exists $r\in\Rat(\D\IntComp)$ for which $\varphi^a(z)=W_a(z)-W_a(0)+r^{\#}(z)$ is a Riemann map for $\Omega$ if and only if $\Omega\in\QD_a(h)$ for some $h\in\Rat(\Omega)$.
\end{theorem}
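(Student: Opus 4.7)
My plan is to prove the forward direction; the reverse direction was derived immediately above the theorem statement. So suppose $\Omega$ is unbounded and simply connected with Riemann map $\varphi:\D\IntComp\to\Omega$ satisfying $\varphi^a = \mathring{W_a} + r^{\#}$ for some $r\in\Rat(\D\IntComp)$, where $\mathring{W_a}:=W_a-W_a(0)$. The goal is to exhibit a rational $h$ with $\Omega\in\QD_a(h)$.

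The natural strategy is to construct a generalized Schwarz function for $\Omega$ and then invoke characterization (3) of Theorem \ref{theorem:EquivPQDChars}. The candidate I would propose is
$$
S_a(w) := \tfrac{1}{a}\, w^{a-1}\, (\varphi^a)^{\#}\circ\psi(w), \qquad \psi:=\varphi^{-1}.
$$
For the boundary identity, I would take $w = \varphi(z) \in \partial\Omega$ with $z \in \partial\D$; since $1/\overline{z} = z$ there, $(\varphi^a)^{\#}(z) = \overline{\varphi^a(z)} = \overline{w}^a$, and hence $S_a(w) = \tfrac{1}{a}\, \overline{w}\, |w|^{2(a-1)}$ as required on $\partial\Omega$.

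The remaining step is meromorphy of $S_a$ in $\Omega$. Here is the one place calling for care: by hypothesis, $\varphi^a$ extends to a rational function on $\Ch$, hence so does its reflection $(\varphi^a)^{\#} = \mathring{W_a}^{\#} + r$. The reflection $\mathring{W_a}^{\#}$ is a polynomial in $z^{-1}$ with pole only at $0\in\D$, while $r$ contributes finitely many poles lying in $\D\IntComp$. Composing with the biholomorphism $\psi:\Omega\to\D\IntComp$, the pole at $0$ disappears (since $0\notin\D\IntComp$) and the poles of $r$ pull back to a finite set in $\Omega$; thus $S_a\in\M(\Omega)$. Theorem \ref{theorem:EquivPQDChars} then delivers $\Omega\in\QD_a(h)$ with $h = \AnalyticIn{S_a}{\Omega\IntComp}$, which is rational because $S_a$ is meromorphic in $\Omega$.

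The crux of the argument is really the interplay $\overline{\varphi}\dEquals\varphi^{\#}$ on $\partial\D$, which converts the rational representation of $\varphi^a$ into the boundary identity $\overline{w}^a = (\varphi^a)^{\#}\circ\psi(w)$ on $\partial\Omega$; everything else is bookkeeping about where the poles of $\mathring{W_a}^{\#}$ and $r$ live relative to $\D\IntComp$. I do not anticipate any essential difficulty beyond this, as the proof is in a precise sense dual to the backward direction already established.
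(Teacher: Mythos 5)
Your proof is correct and follows essentially the same route as the paper: the paper omits this proof as analogous to the bounded case (Theorem \ref{thm:GenBPQDLefflerFTFormula}), whose forward direction is established exactly by exhibiting $S_a(w)=\frac{1}{a}w^{a-1}(\varphi^a)^{\#}\circ\psi(w)$ as a generalized Schwarz function via the boundary identity $\overline{w}^a=(\varphi^a)^{\#}\circ\psi(w)$ and invoking Theorem \ref{theorem:EquivPQDChars}. Your additional bookkeeping on where the poles of $\mathring{W_a}^{\#}$ and $r$ land under $\psi$ correctly fills in the paper's one-line claim that $S_a$ extends meromorphically to $\Omega$.
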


\begin{theorem}\label{thm:GenBPQDLefflerFTFormula}
Take $a\in\Z_{+}$ and let $\Omega$ be a bounded simply connected domain. Then there exists $r\in\Rat_0(\D)$ for which $\varphi^a(z)=\varphi^{a}(0)+r^{\#}(z)$ is a Riemann map for $\Omega$ if and only if $\Omega\in\QD_a(h)$ for some $h\in\Rat_0(\Omega)$.
\end{theorem}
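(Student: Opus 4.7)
The plan is to mirror the argument just given for Theorem \ref{thm:GenUPQDLefflerFTFormula}, replacing the exterior Faber transform with the interior one and using the interior analog of the projection identity. Since Theorem \ref{theorem:EquivPQDChars}(3) tells us $\Omega\in\QD_a$ iff there is a generalized Schwarz function $S_a\in\M(\Omega)$ with $S_a(w)\dEquals\tfrac{1}{a}\overline{w}|w|^{2(a-1)}=\tfrac{1}{a}\overline{w}^{a}w^{a-1}$, the two directions will read off cleanly from the identity $\overline{\varphi}\dEquals\varphi^{\#}$ on $\partial\D$ together with the form of $\varphi^a$ inside the disk.

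For the forward direction, suppose $\Omega\in\QD_a(h)$ with Riemann map $\varphi:\D\to\Omega$. By Theorem \ref{theorem:EquivPQDChars} there exists $G\in\A(\Omega)$ with $\tfrac{1}{a}\overline{w}^{a}w^{a-1}\dEquals h(w)-G(w)$, which after composing with $\varphi$ and using $\overline{\varphi}\dEquals\varphi^{\#}$ yields
\begin{equation*}
(\varphi^a)^{\#}\dEquals\left(\dfrac{a(h(w)-G(w))}{w^{a-1}}\right)\circ\varphi\qquad\text{on }\partial\D.
\end{equation*}
Since $\varphi^a(z)=\varphi^a(0)+\left\langle z\right\rangle$, the reflection satisfies $(\varphi^a)^{\#}=\overline{\varphi^a(0)}+\left\langle z^{-1}\right\rangle\in\A(\D\IntComp)$, so $(\varphi^a)^{\#}-\overline{\varphi^a(0)}=\AnalyticIn{(\varphi^a)^{\#}}{\D\IntComp}$. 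Applying the interior analog of Equation \ref{eqn:InteriorFaberTransformProjectionExtension} (available here because Theorem \ref{theorem:PQDBoundaryRegularityv2} guarantees $\partial\Omega$ has the boundary regularity needed for the Sokhotski--Plemelj step) converts the Cauchy projection of $F\circ\varphi$ into $\Phi_\varphi^{-1}(\AnalyticInNoBracket{F}{\Omega\IntComp})$. This produces
\begin{equation*}
(\varphi^a)^{\#}(z)=\overline{\varphi^a(0)}+\Phi_{\varphi}^{-1}\left(\AnalyticIn{\dfrac{a(h(w)-G(w))}{w^{a-1}}}{\Omega\IntComp}\right)(z),
\end{equation*}
and reflecting gives $\varphi^a(z)=\varphi^a(0)+r^{\#}(z)$ with
\begin{equation*}
r:=\Phi_{\varphi}^{-1}\left(\AnalyticIn{\dfrac{a(h-G)}{w^{a-1}}}{\Omega\IntComp}\right).
\end{equation*}
The inner projection yields a rational function whose poles lie in $\Omega$ (at the poles of $h$, and possibly $0$ if $0\in\Omega$) and which vanishes at $\infty$, so $\Phi_\varphi^{-1}$ sends it to an element of $\Rat_0(\D)$.

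For the converse, suppose $\varphi^a(z)=\varphi^a(0)+r^{\#}(z)$ with $r\in\Rat_0(\D)$. Then $(\varphi^a)^{\#}(z)=\overline{\varphi^a(0)}+r(z)$, and since $r\in\A(\D\IntComp)$ is rational, the function
\begin{equation*}
S_a(w):=\dfrac{1}{a}w^{a-1}\,(\varphi^a)^{\#}\circ\psi(w)=\dfrac{1}{a}w^{a-1}\bigl(\overline{\varphi^a(0)}+r(\psi(w))\bigr)
\end{equation*}
is meromorphic in $\Omega$ (the poles of $r\circ\psi$ sit at preimages under $\psi$ of the poles of $r$, which lie in $\Omega$). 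On $\partial\Omega$ we have $(\varphi^a)^{\#}\circ\psi(w)\dEquals\overline{\varphi^a(\psi(w))}=\overline{w}^a$, so $S_a(w)\dEquals\tfrac{1}{a}\overline{w}^a w^{a-1}=\tfrac{1}{a}\overline{w}|w|^{2(a-1)}$. Theorem \ref{theorem:EquivPQDChars}(3)$\Rightarrow$(1) now gives $\Omega\in\QD_a(h)$ for some $h\in\Rat_0(\Omega)$.

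The main technical point is ensuring the Cauchy projection identity \ref{eqn:InteriorFaberTransformProjectionExtension} applies — this requires $\partial\Omega$ to be piecewise $C^1$, which is supplied by Theorem \ref{theorem:PQDBoundaryRegularityv2}. The only cosmetic difference from the unbounded proof is that the ``polynomial part'' $\mathring{W_a}$ is replaced by the single scalar $\varphi^a(0)$, which is precisely the content of the fact that a function analytic in $\D$ has Cauchy projection onto $\D\IntComp$ equal to its value at the center.
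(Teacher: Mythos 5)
Your proof is correct and follows essentially the same route as the paper's: the direction from $\Omega\in\QD_a(h)$ to the existence of $r$ uses the generalized Schwarz function equation from Theorem \ref{theorem:EquivPQDChars}, the decomposition $(\varphi^a)^{\#}=\overline{\varphi^a(0)}+\A_0(\D\IntComp)$, and the interior projection identity \ref{eqn:InteriorFaberTransformProjectionExtension}, while the converse exhibits $S_a(w)=\frac{1}{a}w^{a-1}(\overline{\varphi^a(0)}+r\circ\psi(w))$ as a generalized Schwarz function exactly as the paper does. The only quibble is the closing parenthetical about the Cauchy projection onto $\D\IntComp$ "equal to its value at the center," which is imprecisely phrased (that projection of a function analytic in $\D$ is zero; the constant $\overline{\varphi^a(0)}$ is what survives as the non-$\A_0(\D\IntComp)$ part of the reflection), but this is cosmetic and does not affect the argument.
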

In the course of the proof of Theorem \ref{thm:GenBPQDLefflerFTFormula}, we also obtain the following pair of equations relating $h$ and $\varphi$.
\begin{corollary}\label{cor:GenPQDLefflerFTFormulae}
Whenever $r$ and $h$, as in Theorems \ref{thm:GenUPQDLefflerFTFormula} and \ref{thm:GenBPQDLefflerFTFormula}, exist, they satisfy the following pair of identities
\begin{equation}\label{eqn:GenPQDLefflerFTFormulae}
\begin{aligned}
r(z)=\Phi_{\varphi}^{-1}\left(\AnalyticIn{\dfrac{h(w)-G(w)}{\frac{1}{a}w^{a-1}}}{\Omega\IntComp}\right)(z),\tab\tab h(w)=\AnalyticIn{\dfrac{1}{a}w^{a-1}(\varphi^a)^{\#}\circ\psi(w)}{\Omega\IntComp}.
\end{aligned}
\end{equation}
Where $G\in\A(\Omega)$ when $\Omega$ is bounded and $G\in\A_0(\Omega)$ when $\Omega$ is unbounded.
\end{corollary}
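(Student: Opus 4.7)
The plan is to extract both identities as near-immediate consequences of the Faber-transform arguments already carried out in the body of the text for Theorems \ref{thm:GenUPQDLefflerFTFormula} and \ref{thm:GenBPQDLefflerFTFormula}, together with the generalized Schwarz function characterization from Theorem \ref{theorem:EquivPQDChars}. The first identity is essentially a bookkeeping step from those proofs; the second requires a short independent computation using $\varphi^{\#}\dEquals\overline{\varphi}$ on $\partial\D$.

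For the first identity, I would observe that the derivation leading to the unbounded case of Theorem \ref{thm:GenUPQDLefflerFTFormula} already produces
$$\varphi^{a}(z)=\mathring{W_a}(z)+\Phi_{\varphi}^{-1}\!\left(\AnalyticIn{\tfrac{h(w)-G(w)}{\frac{1}{a}w^{a-1}}}{\Omega\IntComp}\right)^{\#}\!(z).$$
Comparing with the defining decomposition $\varphi^{a}=\mathring{W_a}+r^{\#}$ and noting that $r\in\Rat(\D\IntComp)$ is uniquely determined (by uniqueness of the analytic part of $(\varphi^{a})^{\#}$ in $\D$), the first formula falls out. For the bounded case one repeats the argument verbatim with the interior Faber transform: starting from $\overline{\varphi}^{a}\dEquals\bigl((h-G)/\tfrac{1}{a}w^{a-1}\bigr)\circ\varphi$, decompose $(\varphi^{a})^{\#}=\overline{\varphi^a(0)}+\AnalyticInNoBracket{\overline{\varphi}^{a}}{\D\IntComp}$, apply Equation \ref{eqn:InteriorFaberTransformProjectionExtension} to rewrite the projection as $\Phi_{\varphi}^{-1}\bigl(\AnalyticIn{(h-G)/\tfrac{1}{a}w^{a-1}}{\Omega\IntComp}\bigr)$, and match against $\varphi^{a}(0)+r^{\#}$.

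The second identity is quicker. Since $\varphi^{\#}\dEquals\overline{\varphi}$ on $\partial\D$, pulling back via $\psi$ gives $(\varphi^{a})^{\#}\!\circ\psi(w)\dEquals\overline{w}^{a}$ on $\partial\Omega$, so
$$\tfrac{1}{a}w^{a-1}(\varphi^{a})^{\#}\!\circ\psi(w)\;\dEquals\;\tfrac{1}{a}\overline{w}^{a}w^{a-1}\;=\;\tfrac{1}{a}\overline{w}\,|w|^{2(a-1)}\;\dEquals\;S_a(w),$$
where $S_a$ is the generalized Schwarz function from Theorem \ref{theorem:EquivPQDChars}. That same theorem records $h=\AnalyticInNoBracket{S_a}{\Omega\IntComp}$, so applying $\AnalyticInNoBracket{\cdot}{\Omega\IntComp}$ to both sides yields the desired expression for $h$. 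This argument is insensitive to whether $\Omega$ is bounded or unbounded, since the Cauchy projection onto $\Omega\IntComp$ is defined in both cases.

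The only genuinely delicate point, and the one I would flag as the main obstacle, is ensuring that all boundary-value manipulations are legitimate: the projection identity (\ref{eqn:ExteriorFaberTransformProjectionExtension})–(\ref{eqn:InteriorFaberTransformProjectionExtension}) requires piecewise $C^{1}$ regularity of $\partial\Omega$, and the substitution $\overline{w}^{a}\dEquals(\varphi^{a})^{\#}\!\circ\psi(w)$ must hold in a sense strong enough to pass through the Cauchy projection. Both are justified by Theorem \ref{theorem:PQDBoundaryRegularityv2}: $\partial\Omega$ has at most finitely many singular points (cusps or double points), and is real-analytic elsewhere, which is ample for the residue-calculus and projection arguments. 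Once this regularity is invoked, the two identities in (\ref{eqn:GenPQDLefflerFTFormulae}) follow from the two short derivations above.
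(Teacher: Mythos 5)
Your proposal is correct and follows essentially the same route as the paper: the first identity is read off from the Faber-transform derivation in the proofs of Theorems \ref{thm:GenUPQDLefflerFTFormula} and \ref{thm:GenBPQDLefflerFTFormula}, and the second follows by substituting $\overline{w}^{a}\dEquals(\varphi^{a})^{\#}\circ\psi(w)$ into the generalized Schwarz function equation and applying the Cauchy projection onto $\Omega\IntComp$, with $G$ dropping out. Your appeal to $h=\AnalyticIn{S_a}{\Omega\IntComp}$ from Theorem \ref{theorem:EquivPQDChars} and to Theorem \ref{theorem:PQDBoundaryRegularityv2} for the boundary regularity matches the paper's own use of these results.
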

Note that the $G(w)$ term in the left equation drops out whenever $0\notin\Omega$. We now prove Theorem \ref{thm:GenBPQDLefflerFTFormula} and Corollary \ref{cor:GenPQDLefflerFTFormulae}. The proof of Theorem \ref{thm:GenUPQDLefflerFTFormula} will be omitted, as it is analogous.

\begin{proof}[Proof of Theorem \ref{thm:GenBPQDLefflerFTFormula} and Corollary \ref{cor:GenPQDLefflerFTFormulae}]\;\\
For the forward direction, suppose there exists $r\in\Rat_0(\D)$ for which $\varphi^{a}(z)=\varphi^{a}(0)+r^{\#}(z)$. Then note that for $w\in\partial\Omega$,
\begin{align*}
    \dfrac{1}{a}\overline{w}|w|^{2(a-1)}&=\dfrac{1}{a}w^{a-1}\overline{w}^a=\dfrac{1}{a}w^{a-1}\overline{\varphi^a\circ\psi(w)}\\
    &=\dfrac{1}{a}w^{a-1}\overline{\left(\varphi^{a}(0)+r^{\#}\circ\psi(w)\right)}\\
    &=\dfrac{1}{a}w^{a-1}\left(\overline{\varphi^{a}(0)}+r\circ\psi(w)\right)=:S_{a}(w)
\end{align*}
extends meromorphically to $\Omega$, so $S_a$ is a generalized Schwarz function for $\Omega$. Thus $\Omega\in\QD_a(h)$ for some $h\in\Rat_0(\Omega)$ by Theorem \ref{theorem:EquivPQDChars}.\\

For the reverse direction, suppose that $\Omega\in\QD_a(h)$ for some $h\in\Rat_0(\Omega)$. Then, as $\AnalyticIn{\overline{\varphi^{a}(0)}}{\D\IntComp}=0$ and $(\varphi^{a})^{\#}-\overline{\varphi^{a}(0)}\in\A_0(\D\IntComp)$, we find that $\AnalyticIn{(\varphi^{a})^{\#}}{\D\IntComp}=(\varphi^{a})^{\#}-\overline{\varphi^{a}(0)}$, so
\begin{align*}
    \Phi_{\varphi}\left((\varphi^{a})^{\#}-\overline{\varphi^{a}(0)}\right)&=\Phi_{\varphi}\left(\AnalyticIn{(\varphi^{a})^{\#}}{\D\IntComp}\right).
\end{align*}
Rearranging Equation \ref{eqn:PQDCoincidence} and substituting as we did in the bounded case, we obtain
\begin{align*}
    \Phi_{\varphi}\left((\varphi^{a})^{\#}-\overline{\varphi^{a}(0)}\right)&=\Phi_{\varphi}\left(\AnalyticIn{\left(\dfrac{h(w)-G(w)}{\frac{1}{a}w^{a-1}}\right)\circ\varphi}{\D\IntComp}\right).
\end{align*}
We recognize the argument of the RHS as the inverse Faber transform. Applying Equation \ref{eqn:InteriorFaberTransformProjectionExtension} yields
\begin{align*}
    \Phi_{\varphi}\left((\varphi^{a})^{\#}-\overline{\varphi^{a}(0)}\right)(w)&=\AnalyticIn{\left(\dfrac{h(w)-G(w)}{\frac{1}{a}w^{a-1}}\right)\circ\varphi\circ\psi(w)}{\Omega\IntComp}=\AnalyticIn{\dfrac{h(w)-G(w)}{\frac{1}{a}w^{a-1}}}{\Omega\IntComp}.
\end{align*}
Taking the inverse Faber transform of both sides and noting that the RHS is meromorphic in $\Omega$, so that its inverse Faber transform is rational, we obtain the desired result.\\

Finally, we verify the formula for the quadrature function. By Theorem \ref{theorem:EquivPQDChars}, $h(w)\dEquals\frac{1}{a}w^{a-1}\overline{w}^{a}+G(w)$ for some $G\in\A(\Omega)$, so
$$h(w)=\AnalyticIn{h(w)}{\Omega^{\ast}}=\AnalyticIn{\frac{1}{a}w^{a-1}\overline{w}^{a}+G(w)}{\Omega^{\ast}}=\AnalyticIn{\frac{1}{a}w^{a-1}\overline{w}^{a}}{\Omega^{\ast}},$$
where the first equality follows from the fact that $h\in\A_0(\Omega_{\ast})$ and the last from the fact that $G\in\A(\Omega)$. Thus, noting that $\overline{w}^{a}\dEquals(\varphi^a)^{\#}\circ\psi(w)$ we find that $h(w)=\AnalyticIn{\frac{1}{a}w^{a-1}(\varphi^a)^{\#}\circ\psi(w)}{\Omega^{\ast}}$. The argument for unbounded PQDs is analogous.
\end{proof}

As a demonstration of the utility of this approach, we consider a simple one point PQD for $a=2$.
\subsubsection{One Point PQDs with a=2 not Containing Zero}\label{subsubsec:NZOnePtPQDEx}

Let $0\notin\Omega\in\QD_2(h)$ be a simply connected bounded domain with $h(w)=\frac{\alpha}{w-1}$ for some $\alpha>0$. Let $\varphi:\D\rightarrow\Omega$ be the unique Riemann map associated to $\Omega$ for which $\varphi(0)=1$ and $\varphi'(0)>0$. Then by Theorem \ref{thm:GenBPQDLefflerFTFormula} and corollary \ref{cor:GenPQDLefflerFTFormulae}, there exists $r\in\Rat_0(\D)$ for which $\varphi^2=1+r^{\#}$ is a Riemann map for $\Omega$ and $r$ is given by Equation \ref{eqn:GenPQDLefflerFTFormulae} for some $G\in\A(\Omega)$. Thus,
\begin{align*}
    r(z)&=\Phi_{\varphi}^{-1}\left(\AnalyticIn{2\dfrac{\frac{\alpha}{w-1}-G(w)}{w}}{\Omega\IntComp}\right)(z)=2\Phi_{\varphi}^{-1}\left(\AnalyticIn{\dfrac{\alpha}{w-1}-\dfrac{\alpha+G(w)}{w}}{\Omega\IntComp}\right)(z)\\
    &=2\alpha\Phi_{\varphi}^{-1}\left(\dfrac{1}{w-1}\right)(z)=\dfrac{2\alpha\psi'(1)}{z-\psi(1)},
\end{align*}
where the last equality follows from Equation \ref{eqn:FTFormulae}. Also $\psi(1)=\varphi^{-1}(1)=0$ and $\psi'(1)=\varphi'(0)^{-1}$, so $r(z)=\frac{2\alpha}{\varphi'(0)z}$, and we find $\varphi^2(z)=1+\frac{2\alpha}{\varphi'(0)}z$. Thus
$$2\varphi'(0)=2\varphi(0)\varphi'(0)=(\varphi^2)'(0)=\frac{2\alpha}{\varphi'(0)},$$
so $\varphi'(0)=\sqrt{\alpha}$, and we conclude
$$\varphi^2(z)=1+2\sqrt{\alpha}z.$$
In summary, 

\begin{theorem}
If $\alpha>0$ and $\Omega\in\QD_2\left(\frac{\alpha}{w-1}\right)$ is a simply connected domain not containing $0$, then $\Omega$ is the unique connected component of the square root of $\D_{2\sqrt{\alpha}}(1)$ containing $1$.
\end{theorem}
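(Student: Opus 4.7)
The plan is to invoke Theorem \ref{thm:GenBPQDLefflerFTFormula} directly and then pin down the one undetermined coefficient using the formula for $r$ in Corollary \ref{cor:GenPQDLefflerFTFormulae}. First I would normalize: since $1 \in \Omega$ (otherwise the pole of $h$ would lie outside $\Omega$), I choose the Riemann map $\varphi : \D \to \Omega$ with $\varphi(0) = 1$ and $\varphi'(0) > 0$. Theorem \ref{thm:GenBPQDLefflerFTFormula} then guarantees the existence of $r \in \Rat_0(\D)$ such that
\begin{equation*}
\varphi^2(z) = 1 + r^{\#}(z),
\end{equation*}
so the problem reduces to identifying $r$.

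Next I would apply the left identity in Equation \ref{eqn:GenPQDLefflerFTFormulae} with $a=2$, which gives
\begin{equation*}
r(z) = \Phi_{\varphi}^{-1}\!\left(\AnalyticIn{\tfrac{2(h(w)-G(w))}{w}}{\Omega\IntComp}\right)(z),
\end{equation*}
for some $G \in \A(\Omega)$. Here I would split $\frac{1}{w(w-1)} = \frac{1}{w-1} - \frac{1}{w}$ inside the Cauchy projection. The key structural point is that $0 \notin \Omega$, so the function $w \mapsto \frac{\alpha + G(w)}{w}$ is analytic in $\Omega$, and hence vanishes under the projection $\AnalyticInNoBracket{\cdot}{\Omega\IntComp}$. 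This leaves only the contribution from $\frac{\alpha}{w-1}$, whose inverse Faber transform is computed by Equation \ref{eqn:FTFormulae}, using $\psi(1) = 0$ and $\psi'(1) = \varphi'(0)^{-1}$. The result is
\begin{equation*}
r(z) = \frac{2\alpha}{\varphi'(0)\,z},
\end{equation*}
so $\varphi^2(z) = 1 + \tfrac{2\alpha}{\varphi'(0)} z$.

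Finally, differentiating at $z=0$ gives $2\varphi'(0) = \tfrac{2\alpha}{\varphi'(0)}$ (using $\varphi(0)=1$), so $\varphi'(0) = \sqrt{\alpha}$, yielding $\varphi^2(z) = 1 + 2\sqrt{\alpha} z$. Thus $\varphi^2$ is a Riemann map of $\D_{2\sqrt{\alpha}}(1)$, and $\Omega = \varphi(\D)$ is the image of $\D$ under a single-valued branch of $\sqrt{1 + 2\sqrt{\alpha} z}$ sending $0$ to $1$, i.e., the connected component of $\{w : w^2 \in \D_{2\sqrt{\alpha}}(1)\}$ containing $1$.

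The main subtlety I anticipate is the justification for dropping the $G$ term: one must be careful that $G \in \A(\Omega)$ (not merely analytic off $\Omega$), so that $\frac{G(w)}{w}$ is genuinely analytic in $\Omega$ when $0 \notin \Omega$. The other minor point is verifying that $\varphi$ is indeed univalent on $\D$ (so that $\Omega$ really is a connected component of the square-root preimage), which follows because $1 + 2\sqrt{\alpha} z$ is univalent on $\D$ and $0 \notin \D_{2\sqrt{\alpha}}(1)$ precludes branching inside.
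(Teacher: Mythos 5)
Your proposal is correct and follows essentially the same route as the paper's own argument: Theorem \ref{thm:GenBPQDLefflerFTFormula} plus Corollary \ref{cor:GenPQDLefflerFTFormulae}, the same partial-fraction split isolating $\frac{\alpha}{w-1}$ from the term $\frac{\alpha+G(w)}{w}$ that is analytic in $\Omega$ (since $0\notin\Omega$), the same evaluation of the inverse Faber transform via Equation \ref{eqn:FTFormulae} with $\psi(1)=0$, and the same normalization $(\varphi^2)'(0)=2\varphi'(0)$ to fix $\varphi'(0)=\sqrt{\alpha}$. No gaps.
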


\noindent In \S\ref{subsec:OnePtPQDs} we provide a classification of simply connected one point PQDs more generally.

\subsection{Faber Product Identities for PQDs}\label{subsec:PQDMultFTMethod}
As the title of this subsection suggests, this portion of the paper is concerned with formulae obtained by decomposing the Riemann map into a product. Out of a host of possible factorizations, those using {\it Blaschke factors} proved by far the most fruitful. For each $\lambda\in\D$, the associated Blaschke factor is defined by
$$b_{\lambda}(z):=\frac{\overline{\lambda}}{|\lambda|}\frac{z-\lambda}{\overline{\lambda}z-1}.$$
(also $b_0(z):=z$ by standard convention). $b_\lambda$ satisfies the following important properties:
\begin{enumerate}
    \item $|b_\lambda(z)|=1$ for all $z\in\partial\D$,
    \item $b_\lambda^{\#}(z)=b_{\lambda}(z)^{-1}$ for all $z\in\C$,
    \item $\lim_{|\lambda|\to1}b_{\lambda}(z)=1$,
    \item $\lim_{|z|\to\infty}b_{\lambda}(z)=|\lambda|^{-1}>0$.
\end{enumerate}
When $|\lambda|>1$, we also call $b_\lambda$ a Blaschke factor (in the exterior disk).

Let $\Omega$ be a simply connected domain and $\varphi$ the associated Riemann map (with the standard normalization). We define the function $\varphi_{\rm out}$ as follows. If $\infty\notin\Omega$ then
\begin{equation}\label{eqn:BoundedOuterFactorization}
\begin{alignedat}{2}
    \varphi(z)&=\varphi_{\rm out}(z),\quad&&\text{when }0\notin\Omega,\\
    \varphi(z)&=b_{z_0}(z)\varphi_{\rm out}(z),\quad&&\text{when }0\in\Omega,
\end{alignedat}
\end{equation}
where $z_0\in\D$ is the unique root of $\varphi$. On the other hand, if $\infty\in\Omega$ then
\begin{equation}\label{eqn:UnboundedOuterFactorization}
\begin{alignedat}{2}
    \varphi(z)&=z\varphi_{\rm out}(z),\quad&&\text{when }0\notin\Omega,\\
    \varphi(z)&=zb_{z_0}(z)\varphi_{\rm out}(z),\quad&&\text{when }0\in\Omega,
\end{alignedat}
\end{equation}

where $z_0\in\D\IntComp$ is the unique root of $\varphi$. The formulae (\ref{eqn:BoundedOuterFactorization}) and (\ref{eqn:UnboundedOuterFactorization}) give the inner-outer factorization of $\varphi$. It is trivial to see that $\varphi$ belongs to the meromorphic Nevanlinna class, which consists of ratios of bounded holomorphic functions. Thus $\varphi_{\rm out}$ is exactly the {\it outer part} of $\varphi$ in the sense of the Nevanlinna theory. The key property of $\varphi_{\rm out}$ for our purposes is the fact that it is (by construction) finite, non-zero, and analytic, and hence $\varphi_{\rm out}^a$ is single-valued and analytic. 

On the other hand, we denote by $\varphi_{\rm in}:=\frac{\varphi}{\varphi_{\rm out}}$ the {\it inner part} of $\varphi$. Equations \ref{eqn:BoundedOuterFactorization} and \ref{eqn:UnboundedOuterFactorization} yield by inspection
\begin{equation}\label{eqn:PQDInnerFactors}
\begin{alignedat}{2}
    \varphi_{\rm in}(z)&=1,\quad&&\text{when }\infty\notin\Omega,\;0\notin\Omega,\\
    \varphi_{\rm in}(z)&=b_{z_0}(z),\quad&&\text{when }\infty\notin\Omega,\;0\in\Omega,\\
    \varphi_{\rm in}(z)&=z,\quad&&\text{when }\infty\in\Omega,\;0\notin\Omega,\\
    \varphi_{\rm in}(z)&=zb_{z_0}(z),\quad&&\text{when }\infty\in\Omega,\;0\in\Omega.
\end{alignedat}
\end{equation}
In particular, $\varphi_{\rm in}$ is a product of Blaschke factors (and their reciprocals). This leads to the key property of $\varphi_{\rm in}$ for our purposes: that $|\varphi_{\rm in}(z)|=1$ for all $z\in\partial\D$ and $\varphi_{\rm in}^{\#}(z)=\varphi_{\rm in}(z)^{-1}$ for all $z\in\C$.\\

For the remainder of this discussion, fix $a>0$ and let $\Omega$ be a simply connected domain with Riemann map $\varphi=\varphi_{\rm in}\varphi_{\rm out}$ (unless otherwise stated). In this case, we obtain the following simple characterization of simply connected PQDs.
\begin{theorem}\label{thm:SCPQDCharacterization}
$\Omega\in\QD_a$ iff $\varphi^a_{\rm out}$ extends to a rational function.
\end{theorem}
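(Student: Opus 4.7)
The plan is to use the generalized Schwarz function characterization of PQDs (Theorem \ref{theorem:EquivPQDChars}(3)) and pull back its defining identity through the Riemann map, using the inner-outer factorization $\varphi = \varphi_{\rm in}\varphi_{\rm out}$ to isolate $(\varphi_{\rm out}^a)^{\#}$. The key features of the factorization for this argument are that $|\varphi_{\rm in}(z)|=1$ on $\partial \D$ (so $\varphi_{\rm in}^{-1}=\varphi_{\rm in}^{\#}$ there), while $\varphi_{\rm out}$ is analytic and nowhere zero on its domain of definition---ensuring that the complex powers $\varphi_{\rm out}^{a-1}$, $\varphi_{\rm out}^{a}$ and $(\varphi_{\rm out}^{a})^{\#}$ are single-valued analytic functions for arbitrary $a>0$, even when $0\in\Omega$ and $\varphi$ itself has a zero.

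On $\partial \D$ one has $\overline{\varphi}=\varphi_{\rm in}^{-1}\varphi_{\rm out}^{\#}$ and $|\varphi|^{2}=\varphi\overline{\varphi}=\varphi_{\rm out}\varphi_{\rm out}^{\#}$, so the Schwarz identity $aS_a(w)\dEquals\overline{w}|w|^{2(a-1)}$ pulls back to
\[
(\varphi_{\rm out}^{a})^{\#}(z)\;=\;a\,\varphi_{\rm in}(z)\,\varphi_{\rm out}(z)^{1-a}\,S_a(\varphi(z))\qquad\text{on }\partial \D.
\]
For the forward direction, assume $\Omega\in\QD_a$. The right-hand side is meromorphic on the side of $\partial \D$ where $\varphi$ is defined (i.e.\ in $\D$ for bounded $\Omega$ and in $\D\IntComp$ for unbounded $\Omega$), since $\varphi_{\rm in}$ is a rational product of Blaschke factors, $\varphi_{\rm out}^{1-a}$ is analytic and nonvanishing, and $S_a\circ\varphi$ is meromorphic. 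The left-hand side is analytic on the opposite side. Invoking the boundary regularity from Lemma \ref{lemma:PQDBoundaryRegularityv1} to justify continuous matching off a finite exceptional set, Painlev\'e's theorem glues the two halves into a meromorphic function on $\Ch$, which must be rational; hence $\varphi_{\rm out}^a$ extends to a rational function.

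For the reverse direction, write $\varphi_{\rm out}^{a}=R\in\Rat(\Ch)$ and define
\[
S_a(w):=\tfrac{1}{a}\,\varphi_{\rm in}\bigl(\psi(w)\bigr)\,\varphi_{\rm out}\bigl(\psi(w)\bigr)^{1-a}\,R^{\#}\bigl(\psi(w)\bigr),
\]
which is meromorphic in $\Omega$ because $\psi$ is a biholomorphism and the expression inside the composition is meromorphic on the disk. Running the boundary calculation in reverse yields $S_a(w)\dEquals\tfrac{1}{a}\overline{w}|w|^{2(a-1)}$ on $\partial\Omega$, whence Theorem \ref{theorem:EquivPQDChars}(3) gives $\Omega\in\QD_a$. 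The principal obstacle I anticipate is the bookkeeping across the four inner-factor cases of Equation \ref{eqn:PQDInnerFactors}, combined with branch-cut issues when $a\notin\Z$: these are circumvented by routing everything through the nonvanishing outer factor, but one still must justify the removal of apparent singularities of $(\varphi_{\rm out}^{a})^{\#}$ on $\partial \D$ at the finitely many singular boundary points of $\Omega$, which proceeds via the boundedness of $\varphi_{\rm out}$ near $\partial \D$ together with the regularity statements of Lemma \ref{lemma:PQDBoundaryRegularityv1}.
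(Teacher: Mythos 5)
Your boundary identity $(\varphi_{\rm out}^{a})^{\#}=a\,\varphi_{\rm in}\,\varphi_{\rm out}^{1-a}\,(S_a\circ\varphi)$ on $\partial\D$ is exactly the one the paper works with (composed with $\psi$ it is the paper's $r\circ\psi(w)=aw(h(w)-G(w))\left(\frac{\varphi_{\rm in}\circ\psi(w)}{w}\right)^{a}$), and your reverse direction is the paper's argument in substance. But your displayed formula for $S_a$ in the reverse direction is wrong: solving your own identity for $S_a\circ\varphi$ gives $S_a\circ\varphi=\frac{1}{a}\varphi_{\rm in}^{-1}\varphi_{\rm out}^{a-1}R^{\#}$, not $\frac{1}{a}\varphi_{\rm in}\varphi_{\rm out}^{1-a}R^{\#}$; you moved the prefactor across without inverting it. As written your $S_a$ has boundary values $\frac{1}{a}\overline{w}|w|^{2(a-1)}\cdot\left(\varphi_{\rm in}\varphi_{\rm out}^{1-a}\right)^{2}\circ\psi(w)$, which is not $\frac{1}{a}\overline{w}|w|^{2(a-1)}$ in general. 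The fix is immediate — the corrected expression is the paper's $S_a(w)=\frac{(RR^{\#})\circ\psi(w)}{aw}$ and is still meromorphic in $\Omega$ since $\varphi_{\rm in}^{-1}$ is meromorphic and $\varphi_{\rm out}^{a-1}$ is analytic and nonvanishing — but the verification must be redone with the right exponents.

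The forward direction is where you genuinely depart from the paper: you glue $(\varphi_{\rm out}^{a})^{\#}$ to the meromorphic expression across $\partial\D$ and invoke Painlev\'e, in the Aharonov--Shapiro style, whereas the paper applies the Cauchy projection $\AnalyticIn{\,\cdot\,}{\Omega\IntComp}$ (i.e.\ the Faber transform) to the boundary identity, drops the $wG(w)Y(w)$ term by analyticity, and uses the fact that the projection of a function meromorphic in $\Omega$ is rational. Your route is more elementary, but it is thinner precisely at the exceptional points you flag. If $\varphi_{\rm out}$ vanishes at some $p\in\partial\D$ (i.e.\ $0\in\partial\Omega$, which does occur for PQDs) and $a>1$, then $\varphi_{\rm out}^{1-a}\to\infty$ on the inner side, and after Painlev\'e you are left with an isolated singularity of the glued function at $p$ about which you know only that it is bounded on the outer half-neighbourhood. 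One-sided boundedness does not imply removability — $e^{1/z}$ is analytic in the punctured plane and bounded on a half-plane — so ``boundedness of $\varphi_{\rm out}$ near $\partial\D$'' does not close this case; you would need a lower bound on $|\varphi_{\rm out}|$ near $p$, or control of $\varphi_{\rm out}^{1-a}\,(S_a\circ\varphi)$ from inside. When $\varphi_{\rm out}$ is nonvanishing on $\Cl(\D)$, or when $a\leq1$, the glued function is continuous across all of $\partial\D$ and your argument is complete. The Cauchy-projection route is more robust here because the boundary datum $r\circ\psi$ itself stays bounded (it decays like $|w|^{a}$ at the origin), so the contour integral is insensitive to finitely many bad boundary points.
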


\begin{proof}[Proof of Theorem \ref{thm:SCPQDCharacterization}]
For the reverse direction, suppose that $\varphi_{\rm out}^a$ extends to a rational function. Then, as $|\varphi_{\rm in}|\dEquals1$, we find that $\left|\varphi\right|^{2a}=\left|\varphi_{\rm in}\right|^{2a}\left|\varphi_{\rm out}\right|^{2a}\dEquals\left|\varphi_{\rm out}^a\right|^2\dEquals \varphi_{\rm out}^a(\varphi_{\rm out}^a)^{\#}$. Hence,
\begin{align*}
\dfrac{1}{a}\overline{w}|w|^{2(a-1)}&=\dfrac{\left|\varphi\right|^{2a}\circ\psi(w)}{a w}\dEquals\dfrac{\left((\varphi_{\rm out}^a)^{\#}\varphi_{\rm out}^a\right)\circ\psi(w)}{aw}=:S_a(w)
\end{align*}
is meromorphic in $\Omega$, and thus a generalized Schwarz function associated to $\Omega$ and $a$. We conclude by Theorem \ref{theorem:EquivPQDChars} that $\Omega\in\QD_a$.

Now we only need demonstrate that $\Omega\in\QD_a$ implies $\varphi_{\rm out}^a$ extends to a rational function. Note that it is necessary and sufficient to show that $r:=(\varphi_{\rm out}^{a})^{\#}=\left(\frac{\varphi^{\#}}{\varphi_{\rm in}^{\#}}\right)^a$ extends to a rational function. By construction, $r$ is finite, non-zero, analytic (defined in $\D\IntComp$ if $\Omega$ is bounded and $\D$ if $\Omega$ is unbounded), and extends continuously to the boundary. Hence for each $w\in\partial\Omega$,
\begin{align*}
    r\circ\psi(w)&=\left(\dfrac{\varphi^{\#}\circ\psi(w)}{\varphi_{\rm in}^{\#}\circ\psi(w)}\right)^{a}\\
    &=\left(\overline{w}\varphi_{\rm in}\circ\psi(w)\right)^{a}\\
    &=aw\left(\dfrac{1}{a}\overline{w}|w|^{2(a-1)}\right)\left(\dfrac{\varphi_{\rm in}\circ\psi(w)}{w}\right)^{a}
\end{align*}
If $\Omega$ is bounded then, by the definition of interior the Faber transform, we have $\Phi_{\varphi}\left(\AnalyticInNoBracket{r}{\D\IntComp}\right)(z)=\AnalyticIn{r\circ\psi(w)}{\Omega\IntComp}$. Moreover by Theorem \ref{theorem:EquivPQDChars}, $\frac{1}{a}\overline{w}|w|^{2(a-1)}\dEquals h(w)-G(w)$. Hence, by Equation \ref{eqn:InteriorFaberTransformProjectionExtension},
\begin{align*}
\Phi_{\varphi}\left(\AnalyticInNoBracket{r}{\D\IntComp}\right)(w)&=\AnalyticIn{aw(h(w)-G(w))\left(\dfrac{\varphi_{\rm in}\circ\psi(w)}{w}\right)^{a}}{\Omega\IntComp}.
\end{align*}
Setting $Y(w):=a\left(\frac{\varphi_{\rm in}\circ\psi(w)}{w}\right)^{a}\in\A(\Omega)$, we have $wG(w)Y(w)\in\A(\Omega)$ so
\begin{align*}
\Phi_{\varphi}\left(\AnalyticInNoBracket{r}{\D\IntComp}\right)(w)&=\AnalyticIn{wh(w)Y(w)}{\Omega\IntComp}-\AnalyticIn{wG(w)Y(w)}{\Omega\IntComp}=\AnalyticIn{wh(w)Y(w)}{\Omega\IntComp}.
\end{align*}
(where the last equality follows from Theorem \ref{theorem:EquivPQDChars}). As $r\in\A(\D\IntComp)$ is bounded, $r-r(\infty)\in\A_0(\D\IntComp)$, so $\AnalyticInNoBracket{r}{\D\IntComp}=r-r(\infty)$. Applying the inverse Faber transform, we obtain
\begin{align*}
r&=r(\infty)+\Phi_{\varphi}^{-1}\left(\AnalyticIn{wh(w)Y(w)}{\Omega\IntComp}\right).
\end{align*}
The argument of the inverse Faber transform here is in $\M(\Omega)$, so its transform is rational, and we may conclude that $r$ is rational.

On the other hand, if $\Omega$ is unbounded then by reasoning analogous to the bounded case, we find that
\begin{align*}
\Phi_{\varphi}\left(r\right)(w)&=\AnalyticIn{w(h(w)-G(w))Y(w)}{\Omega\IntComp}.
\end{align*}
As $wG(w)Y(w)\in\A(\Omega)$, $G(w)=O(w^{-1})$ and $Y(w)=O(1)$ about $\infty$, we find that
\begin{align*}
\Phi_{\varphi}\left(r\right)(w)&=\AnalyticIn{wh(w)Y(w)}{\Omega\IntComp}-\AnalyticIn{wG(w)Y(w)}{\Omega\IntComp}=\AnalyticIn{wh(w)Y(w)}{\Omega\IntComp}+C
\end{align*}
for some $C\in\C$. Applying the inverse Faber transform, we obtain
\begin{align*}
r&=\Phi_{\varphi}^{-1}\left(\AnalyticIn{wh(w)Y(w)}{\Omega\IntComp}\right)+C
\end{align*}
The argument of the inverse Faber transform here is in $\M(\Omega)$, so its transform is rational, and we conclude that $r$ is rational. The desired conclusion follows.
\end{proof}
It follows immediately that if $\Omega\in\QD_a$ then there exists a rational function $r$ such that
\begin{equation}\label{eqn:PQDRiemannMapProdFactorization}
    \varphi(z)=\varphi_{\rm in}(z)r^{\#}(z)^{\frac{1}{a}}.
\end{equation}

We furthermore obtain a formula for $h$ in terms of this representation of $\varphi$, hence solving the direct problem for PQDs. In particular if $\Omega\in\QD_a(h)$ then by Theorem \ref{theorem:EquivPQDChars},
$$h(w)\dEquals\dfrac{|w|^{2a}}{aw}+G(w)=\dfrac{|\varphi|^{2a}\circ\psi(w)}{aw}+G(w)\dEquals\dfrac{\left(rr^{\#}\right)\circ\psi(w)}{aw}+G(w).$$
Noting that $\AnalyticIn{G}{\Omega\IntComp}=0$ and $\AnalyticIn{h}{\Omega\IntComp}=h$, we find
\begin{equation}\label{eqn:PQDHFormulaOld}
    h(w)=\dfrac{1}{a}\AnalyticIn{\dfrac{\left(rr^{\#}\right)\circ\psi(w)}{w}}{\Omega\IntComp}
\end{equation}

More generally, in the course of the proof of Theorem \ref{thm:SCPQDCharacterization}, we obtained an implicit solution to the inverse problem for PQDs. In the discussion to follow we sharpen this result, first for bounded PQDs, then for unbounded PQDs. By similar reasoning, we also obtain another solution to the direct problem for PQDs.

\subsubsection{Bounded PQDs}

\begin{theorem}\label{thm:GenBPQDInvProbFormula}
If $\Omega\in\QD_a(h)$ with Riemann map $\varphi$ as in Equation \ref{eqn:PQDRiemannMapProdFactorization} then
\begin{equation}\label{eqn:GenBPQDInvProbFormula}
    r=a\Phi_{\varphi}^{-1}\left(\AnalyticIn{wh(w)\left(\dfrac{\varphi_{\rm in}\circ\psi(w)}{w}\right)^{a}}{\Omega\IntComp}\right)+C,
\end{equation}
where $C=\overline{\varphi^a(0)}$ when $0\notin\Omega$ and $C=\frac{\overline{\varphi^a(0)}}{|z_0|^a}$ when $0\in\Omega$.
\end{theorem}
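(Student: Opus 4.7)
The plan is to repeat the computation carried out in the bounded case of the proof of Theorem \ref{thm:SCPQDCharacterization} and then identify the constant of integration $C$ by evaluating $r$ at infinity. Since $\Omega$ is bounded, $r := (\varphi_{\rm out}^{a})^{\#}$ is bounded and analytic in $\D\IntComp$, so $r - r(\infty) \in \A_0(\D\IntComp)$ and hence $\AnalyticIn{r}{\D\IntComp} = r - r(\infty)$.

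Next I would observe that on $\partial\Omega$, $|\varphi_{\rm in}| = 1$ together with $\varphi_{\rm in}^{\#} = \varphi_{\rm in}^{-1}$ gives
$$r \circ \psi(w) \dEquals aw \cdot \frac{1}{a}\overline{w}|w|^{2(a-1)} \cdot \left(\frac{\varphi_{\rm in}\circ\psi(w)}{w}\right)^{a}.$$
By Theorem \ref{theorem:EquivPQDChars}, $\frac{1}{a}\overline{w}|w|^{2(a-1)} \dEquals h(w) - G(w)$ for some $G \in \A(\Omega)$. Setting $Y(w) := a\left(\frac{\varphi_{\rm in}\circ\psi(w)}{w}\right)^{a}$, one verifies via Equation \ref{eqn:PQDInnerFactors} (and noting that, when $0 \in \Omega$, the simple zero of $\varphi_{\rm in} = b_{z_0}$ at $z_0 = \psi(0)$ exactly cancels the pole of $1/w$ at $w = 0$) that $\varphi_{\rm in}\circ\psi(w)/w$ is non-vanishing and analytic on $\Omega$, and hence $Y \in \A(\Omega)$. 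Therefore $wG(w)Y(w) \in \A(\Omega)$, so its Cauchy projection onto $\Omega\IntComp$ vanishes, giving
$$\Phi_{\varphi}\left(\AnalyticIn{r}{\D\IntComp}\right)(w) = \AnalyticIn{wh(w)Y(w)}{\Omega\IntComp}.$$
Applying the inverse Faber transform and rearranging then yields the desired formula with $C = r(\infty)$.

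To conclude, I would identify $r(\infty)$. Since $r = (\varphi_{\rm out}^{a})^{\#}$, we have $r(\infty) = \overline{\varphi_{\rm out}^{a}(0)}$. When $0 \notin \Omega$, Equation \ref{eqn:BoundedOuterFactorization} gives $\varphi_{\rm out} = \varphi$, so $C = \overline{\varphi^{a}(0)}$. When $0 \in \Omega$, we have $\varphi_{\rm out}(z) = \varphi(z)/b_{z_0}(z)$, and a direct computation (using the standard form of the Blaschke factor) gives $b_{z_0}(0) = |z_0|$, so $\varphi_{\rm out}(0) = \varphi(0)/|z_0|$, yielding $C = \overline{\varphi^{a}(0)}/|z_0|^{a}$.

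The main subtlety will be verifying that $Y \in \A(\Omega)$: the $a$-th power must be globally single-valued and finite, which requires both that $\Omega$ is simply connected and that the zero of $\varphi_{\rm in}\circ\psi$ at $w=0$ (present precisely when $0 \in \Omega$) is of the right order to cancel the $w^{-a}$ factor. Once this analyticity is secured, the rest of the argument is a routine recasting of the identity from the proof of Theorem \ref{thm:SCPQDCharacterization}, and the identification of $C$ reduces to the elementary evaluation $b_{z_0}(0) = |z_0|$.
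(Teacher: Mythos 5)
Your proposal is correct and follows essentially the same route as the paper: the paper's proof simply defers to the bounded case of the proof of Theorem \ref{thm:SCPQDCharacterization} (which already yields $r = r(\infty) + \Phi_{\varphi}^{-1}(\AnalyticIn{wh(w)Y(w)}{\Omega\IntComp})$ with $Y(w)=a(\varphi_{\rm in}\circ\psi(w)/w)^a$) and then identifies $C=r(\infty)=\overline{\varphi_{\rm out}^a(0)}$ by evaluation at $0$. Your computation $b_{z_0}(0)=|z_0|$ and the resulting case split for $C$, as well as the check that $Y\in\A(\Omega)$, are exactly the details the paper leaves implicit.
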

The proof of Theorem \ref{thm:SCPQDCharacterization} already tells us that Equation \ref{eqn:GenBPQDInvProbFormula} holds for some $C\in\C$. In this case, the value of $C$ is easily obtained by evaluating of $\varphi$ at $0$. We can obtain by similar analysis a solution to the direct problem.

\begin{theorem}\label{thm:BPQDDirectProblemSol}
If $\Omega\in\QD_a(h)$ with Riemann map $\varphi$ as in Equation \ref{eqn:PQDRiemannMapProdFactorization} then
\begin{equation}\label{eqn:BPQDDirectProblemSol}
h(w)=\dfrac{1}{a w}\Phi_{\varphi}\left(\AnalyticIn{rr^{\#}}{\D\IntComp}\right)(w)+\dfrac{t}{w},
\end{equation}
where $t=\int_{\Omega}|w|^{2(a-1)}dA(w)$.
\end{theorem}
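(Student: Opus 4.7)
The plan is to compute $h$ directly by starting from the generalized Schwarz function equation of Theorem \ref{theorem:EquivPQDChars}, rewriting its right-hand side in terms of the rational function $r$ via the outer-factor representation $\varphi=\varphi_{\rm in}\varphi_{\rm out}$, and then applying the Cauchy projection $\AnalyticInNoBracket{\cdot}{\Omega\IntComp}$ so that Equation \ref{eqn:InteriorFaberTransformProjectionExtension} converts the geometric boundary quantity into the Faber transform $\Phi_\varphi(\AnalyticIn{rr^{\#}}{\D\IntComp})$.

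First I would rewrite $|w|^{2a}$ on $\partial\Omega$. Since $|\varphi_{\rm in}|=1$ on $\partial\D$ by Equation \ref{eqn:PQDInnerFactors}, since $\varphi_{\rm out}^a=r^{\#}$ by hypothesis, and since $\overline{r^{\#}(z)}=r(z)$ for $z\in\partial\D$ directly from the definition of the reflection $\#$, we have
\begin{equation*}
    |w|^{2a}\dEquals|\varphi_{\rm out}^a\circ\psi(w)|^2=r^{\#}(\psi(w))\,\overline{r^{\#}(\psi(w))}=(rr^{\#})\circ\psi(w).
\end{equation*}
Combining with Theorem \ref{theorem:EquivPQDChars}, which provides $G\in\A(\Omega)$ satisfying $h(w)\dEquals\frac{1}{a}\overline{w}|w|^{2(a-1)}+G(w)=\frac{|w|^{2a}}{aw}+G(w)$, and then multiplying through by $aw$, one obtains the boundary identity
\begin{equation*}
    awh(w)\dEquals(rr^{\#})\circ\psi(w)+awG(w).
\end{equation*}

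Next I would apply the Cauchy projection $\AnalyticInNoBracket{\cdot}{\Omega\IntComp}$ to both sides. On the right, $awG\in\A(\Omega)$ (since $w$ is entire and $\Omega$ is bounded), so its projection vanishes, while Equation \ref{eqn:InteriorFaberTransformProjectionExtension} applied to $f=rr^{\#}\in C^0(\partial\D)$ gives $\AnalyticIn{(rr^{\#})\circ\psi}{\Omega\IntComp}=\Phi_\varphi(\AnalyticIn{rr^{\#}}{\D\IntComp})$. On the left, $h\in\Rat_0(\Omega)$ has Laurent expansion $h(w)=h_1 w^{-1}+O(w^{-2})$ at $\infty$, so $awh(w)-ah_1\in\A_0(\Omega\IntComp)$, giving $\AnalyticIn{awh}{\Omega\IntComp}=awh(w)-ah_1$. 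Applying the quadrature identity to the constant test function $1\in\A(\Omega)$ identifies the constant as $h_1=\oint_{\partial\Omega}h(w)\,dw=\int_\Omega|w|^{2(a-1)}\,dA=t$, and dividing by $aw$ produces Equation \ref{eqn:BPQDDirectProblemSol}.

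The main technical point is bookkeeping the constant $at$. Because $h$ only decays like $1/w$ at $\infty$, the multiplication by $w$ needed to cancel the $1/w$ appearing in $\frac{|w|^{2a}}{aw}$ leaves a bounded nonvanishing boundary function, and the Cauchy projection of $awh$ then picks up a nonzero $\A(\Omega)$-piece equal to the limit $at$. Recognizing this limit as the weighted area $t$ through the quadrature identity with $f=1$ is the step that closes the formula; the rest is essentially symbolic manipulation that matches the derivation the paper carried out for the $a=2$ bounded case in \S\ref{subsubsec:2BPQDFTMethod}.
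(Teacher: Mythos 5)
Your proposal is correct and follows essentially the same route as the paper: both start from the generalized Schwarz function identity $awh\dEquals(rr^{\#})\circ\psi+awG$, project onto $\Omega\IntComp$ so that the $G$-term drops and Equation \ref{eqn:InteriorFaberTransformProjectionExtension} produces $\Phi_\varphi\bigl(\AnalyticIn{rr^{\#}}{\D\IntComp}\bigr)$, and then identify the constant discrepancy between $wh$ and its projection as $t$ via the quadrature identity applied to $f=1$. The only (immaterial) difference is that you extract that constant from the Laurent expansion of $h$ at $\infty$, whereas the paper splits the Cauchy kernel in the contour integral for $\AnalyticIn{wh}{\Omega\IntComp}$.
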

\begin{proof}[Proof of Theorem \ref{thm:BPQDDirectProblemSol}]
Recall from the proof of Theorem \ref{thm:SCPQDCharacterization} that $\frac{(rr^{\#})\circ\psi(w)}{aw}=\frac{\left((\varphi_{\rm out}^a)^{\#}\varphi_{\rm out}^a\right)\circ\psi(w)}{aw}:=S_a(w)$ is a generalized Schwarz function for $\Omega$. Thus, by Theorem \ref{theorem:EquivPQDChars}, $h(w)\dEquals\frac{(rr^{\#})\circ\psi(w)}{aw}+G(w)$, where $h$ is the quadrature function and $G\in\A(\Omega)$. Hence, by the definition of the Faber transform,
\begin{align*}
    \AnalyticIn{wh(w)}{\Omega\IntComp}&=\dfrac{1}{a}\AnalyticIn{(rr^{\#})\circ\psi(w)}{\Omega\IntComp}+\AnalyticIn{wG(w)}{\Omega\IntComp}=\Phi_{\varphi}\left(\AnalyticIn{rr^{\#}}{\D\IntComp}\right)(w).
\end{align*}
Moreover, note that if $w\in\Omega\IntComp$, then
\begin{align*}
\AnalyticIn{wh(w)}{\Omega\IntComp}&=\oint_{\partial\Omega\IntComp}\dfrac{\xi h(\xi)}{\xi-w}d\xi=-\oint_{\partial\Omega}h(\xi)d\xi+w\oint_{\partial\Omega\IntComp}\dfrac{h(\xi)}{\xi-w}d\xi
\end{align*}
Applying the quadrature identity to the first term, and Cauchy's integral formula to the second, we obtain
\begin{align*}
\AnalyticIn{wh(w)}{\Omega\IntComp}&=-\int_{\Omega}|\xi|^{2(a-1)}dA(\xi)+wh(w)
\end{align*}
Thus,
\begin{align*}
    wh(w)&=\AnalyticIn{wh(w)}{\Omega\IntComp}+\int_{\Omega}|\xi|^{2(a-1)}dA(\xi)=\Phi_{\varphi}\left(\AnalyticIn{rr^{\#}}{\D\IntComp}\right)(w)+t.
\end{align*}
\end{proof}

Applying the fact that the inverse Faber transform takes poles to their image under $\psi:=\varphi^{-1}$, we obtain the following corollary

\begin{corollary}\label{corr:GenBPQDFTFormula}
Let $h$ be a rational function with poles $\{p_k\}$ of respective orders $\{n_k\}$. Then the $r$ in Theorem \ref{thm:GenBPQDInvProbFormula} is given by
\begin{equation}\label{eqn:GenBPQDFTFormula}
r(z)=C+z^{-1}p^{\#}(z)+\sum_{k}\sum_{j=1}^{n_k}\dfrac{\beta_{k,j}}{\left(z-z_k\right)^{j}}.
\end{equation}
for some constants $\{\beta_{k,j}\}$, $\varphi(z_k)=p_k$ and the same value of $C$ as in the Theorem. If $\varphi(0)=p_j$ for some $j$, then the sum over $k$ skips $j$ and $p$ is a polynomial of degree:
$$\deg(p)=\begin{cases}
   n_j-1 & p_j\neq0\\
   n_j-2 & p_j=0
\end{cases}$$
$p=0$ otherwise.
\end{corollary}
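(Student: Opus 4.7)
The plan is to unfold Equation \ref{eqn:GenBPQDInvProbFormula} by expanding $\AnalyticIn{wh(w)Y(w)}{\Omega\IntComp}$ in partial fractions and then applying the inverse Faber transform formula from Equation \ref{eqn:FTFormulae} term by term, where $Y(w):=a(\varphi_{\rm in}\circ\psi(w)/w)^{a}$ is the factor appearing in Theorem \ref{thm:GenBPQDInvProbFormula}. The governing principle is that $\Phi_\varphi^{-1}(1/(w-w_0)^{m})$ is rational in $z$ with its sole pole at $\psi(w_0)$ and of order exactly $m$, so the locations and orders of the poles of $r-C$ in $\D$ are transported from those of $wh(w)Y(w)$ inside $\Omega$ via $\psi$.

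First I would verify that $Y\in\A(\Omega)$ with $Y(0)\neq 0$, using the explicit inner factors listed in Equation \ref{eqn:PQDInnerFactors}. When $0\notin\Omega$, $\varphi_{\rm in}\equiv 1$ and $Y(w)=aw^{-a}$ is analytic and non-vanishing on $\Omega$. When $0\in\Omega$, $\varphi_{\rm in}(z)=b_{z_0}(z)$ with $z_0=\psi(0)\in\D$, and since $b_{z_0}$ has a simple zero at $z_0$ while $\psi'(0)\neq 0$, the quotient $b_{z_0}(\psi(w))/w$ extends analytically and non-vanishingly across $w=0$ (the sub-case $\varphi(0)=0$, in which $z_0=0$ and $b_{z_0}(z)=z$ by convention, works identically). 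Consequently the interior singularities of $wh(w)Y(w)$ are exactly those of $wh(w)$: a pole of order $n_k$ at each $p_k\neq 0$, and, if $p_j=0$ for some $j$, a pole of order $n_j-1$ at the origin (none when $n_j=1$).

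Taking $\AnalyticInNoBracket{\cdot}{\Omega\IntComp}$ preserves these principal parts, and applying $\Phi_\varphi^{-1}$ to each summand of the resulting partial fraction yields, for every $k$ with $z_k=\psi(p_k)\neq 0$, a contribution $\sum_{l=1}^{n_k}\beta_{k,l}/(z-z_k)^{l}$. If $\varphi(0)=p_j$ for some $j$, then $z_j=0$ and the combined principal part at the origin takes the form $\sum_{l=1}^{N}\delta_l\,z^{-l}$, which can be repackaged as $z^{-1}p^{\#}(z)$ with $p(w):=\sum_{l=1}^{N}\overline{\delta_{l}}\,w^{l-1}$ a polynomial of degree $N-1$. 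By the pole count in the previous paragraph, $N=n_j$ when $p_j\neq 0$ and $N=n_j-1$ when $p_j=0$, giving $\deg p=n_j-1$ and $n_j-2$ respectively; when $\varphi(0)\notin\{p_k\}$ no such contribution arises and $p=0$. The constant $C$ is inherited unchanged from Theorem \ref{thm:GenBPQDInvProbFormula}. The main obstacle is the bookkeeping around $w=0$: one must simultaneously track the explicit factor of $w$ (which drops the pole order by one exactly when $0$ is itself a pole of $h$) and the factor $Y$ (which does not, because $Y(0)\neq 0$), and confirm that this reduction is mirrored faithfully in the pole at $z=\psi(0)$ after $\Phi_\varphi^{-1}$.
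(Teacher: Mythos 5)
Your proposal is correct and follows exactly the route the paper intends: the paper gives no written proof, only the one-line remark that the inverse Faber transform carries poles to their images under $\psi$, and your argument is the careful unfolding of that remark (checking $Y\in\A(\Omega)$ with $Y(0)\neq0$, tracing how the explicit factor of $w$ lowers the pole order at the origin, and repackaging the principal part at $z=0$ as $z^{-1}p^{\#}(z)$). The bookkeeping around $w=0$ that you flag as the main obstacle is handled correctly, so nothing further is needed.
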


\subsubsection{Unbounded PQDs}

\begin{theorem}\label{thm:GenUPQDInvProbFormula}
If $\Omega\in\QD_a(h)$ then it has a Riemann map $\varphi$ as in Equation \ref{eqn:PQDRiemannMapProdFactorization} with
\begin{equation}\label{eqn:GenUPQDInvProbFormula}
    r=a\Phi_{\varphi}^{-1}\left(\AnalyticIn{wh(w)\left(\dfrac{\varphi_{\rm in}\circ\psi(w)}{w}\right)^{a}}{\Omega\IntComp}\right)+C,
\end{equation}
where, $C=\frac{a}{c^a}t$ when $0\notin\Omega$ and $C=\frac{a}{|cz_0|^a}t$ when $0\in\Omega$, and $t=\int_{\Omega^c}|w|^{2(a-1)}dA(w)$.
\end{theorem}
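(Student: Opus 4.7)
The proof will closely parallel the bounded case (Theorem \ref{thm:GenBPQDInvProbFormula}), building directly on the machinery already set up in the proof of Theorem \ref{thm:SCPQDCharacterization} for unbounded $\Omega$. The plan is to re-examine that earlier computation, which established the existence of the formula up to an undetermined additive constant, and to pin down the constant by a residue/boundary-value analysis at $\infty$, distinguishing the cases $0 \notin \Omega$ and $0 \in \Omega$ by means of the corresponding inner factor $\varphi_{\rm in}$ in Equation \ref{eqn:PQDInnerFactors}.

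Concretely, first I would invoke the unbounded half of the proof of Theorem \ref{thm:SCPQDCharacterization}, which combines the generalized Schwarz function equation (Theorem \ref{theorem:EquivPQDChars}) with the identity $|\varphi_{\rm in}|\dEquals1$ and Equation \ref{eqn:ExteriorFaberTransformProjectionExtension} to obtain
\[
\Phi_\varphi(r)(w) \;=\; \AnalyticIn{w h(w)\, Y(w)}{\Omega\IntComp} \;-\; \AnalyticIn{w G(w)\, Y(w)}{\Omega\IntComp},
\]
where $Y(w) := a(\varphi_{\rm in}\circ\psi(w)/w)^a$ and $G \in \A_0(\Omega)$ is the analytic remainder from the Schwarz equation. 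Since $wG(w)Y(w)$ extends analytically from $\Omega$ to a function continuous on $\text{Cl}(\Omega)$, the second term is simply a constant — namely the value $(wGY)(\infty)$ of that function at $\infty$, by the standard fact that $\AnalyticInNoBracket{f}{\Omega\IntComp} = f(\infty)$ for $f \in \A(\Omega)$ in the unbounded setting. Applying $\Phi_\varphi^{-1}$ (which fixes constants) and factoring the $a$ outside the bracket yields the formula in the theorem, with $C = -(wGY)(\infty)$.

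All that remains is to compute this $C$. Using Equation \ref{eqn:PQDGCauchyTransFormula}, $G(w) = -t/w + O(w^{-2})$ at $\infty$, where $t = \int_{\Omega^c}|w|^{2(a-1)}dA$, so $wG(w) \to -t$. For $Y(\infty)$ I would expand $\psi(w) = w/c + O(1)$ together with the two cases of $\varphi_{\rm in}$:
\begin{itemize}
\item If $0 \notin \Omega$, then $\varphi_{\rm in}(z)=z$, so $\varphi_{\rm in}\circ\psi(w)/w = \psi(w)/w \to 1/c$ and $Y(\infty) = a/c^a$.
\item If $0 \in \Omega$, then $\varphi_{\rm in}(z)=zb_{z_0}(z)$ with $b_{z_0}(\infty)=1/|z_0|$, so $\varphi_{\rm in}\circ\psi(w)/w \to 1/(c|z_0|)$ and $Y(\infty) = a/(c|z_0|)^a$.
\end{itemize}
Thus $C = -(wGY)(\infty) = at/c^a$ and $C = at/(c|z_0|)^a$, respectively, matching the statement.

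I do not anticipate a serious obstacle: the structural identity is already handed to us by Theorem \ref{thm:SCPQDCharacterization}, and the only new work is the asymptotic computation at $\infty$. The main point of care is being sure that the constant $C$ really is just $-(wGY)(\infty)$ — this needs the orientation convention for $\AnalyticInNoBracket{\cdot}{\Omega\IntComp}$ together with the fact that $Y$ is analytic on $\Omega$ (which in the $0 \in \Omega$ case relies on $b_{z_0}$ having no pole on $\psi(\Omega)=\D\IntComp$, since $z_0 \in \D\IntComp$ makes $1/|z_0| < 1$ and $b_{z_0}$ regular there). Once that is verified, the identification of $C$ is a one-line Laurent expansion.
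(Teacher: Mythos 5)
Your proposal is correct and follows essentially the same route as the paper: both start from the identity $\Phi_\varphi(r)=\AnalyticIn{whY}{\Omega\IntComp}-\AnalyticIn{wGY}{\Omega\IntComp}$ established in the proof of Theorem \ref{thm:SCPQDCharacterization}, identify $C$ with $-\AnalyticIn{wGY}{\Omega\IntComp}=tY(\infty)$ via the expansion $wG(w)=-t+g(w)$ with $g\in\A_0(\Omega)$, and then evaluate $Y(\infty)$ in the two cases using $\psi(w)/w\to 1/c$ and $b_{z_0}(\infty)=|z_0|^{-1}$. The only cosmetic difference is that the paper spells out the step $\AnalyticIn{Y}{\Omega\IntComp}=Y(\infty)$ by writing $Y=Y(\infty)+(Y-Y(\infty))$, which you absorb into the cited ``standard fact.''
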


\begin{proof}[Proof of Theorem \ref{thm:GenUPQDInvProbFormula}]
The proof of Theorem \ref{thm:SCPQDCharacterization} already tells us that Equation \ref{eqn:GenUPQDInvProbFormula} holds for some $C\in\C$. The proof also shows that 
$$C=-\AnalyticIn{wG(w)Y(w)}{\Omega\IntComp},$$
where $Y(w)=a\left(\frac{\varphi_{\rm in}\circ\psi(w)}{w}\right)^{a}$ is non-zero, bounded, and analytic in $\Omega$. Expanding Equation \ref{eqn:PQDGCauchyTransFormula} about $\infty$, we find that $wG(w)=-t+g(w)$, where $t=\int_{\Omega^c}|w|^{2(a-1)}dA(w)$ and $g\in\A_0(\Omega)$, so
\begin{align*}
    C&=t\AnalyticIn{Y(w)}{\Omega\IntComp}-\AnalyticIn{g(w)Y(w)}{\Omega\IntComp}=t\AnalyticIn{Y(w)}{\Omega\IntComp},
\end{align*}
because $g(w)Y(w)\in\A_0(\Omega)$. Finally note that, as $Y(w)$ is non-zero, bounded, and analytic in $\Omega$, $Y(w)-Y(\infty)\in\A_0(\Omega)$ so
\begin{align*}
    C&=t\AnalyticIn{Y(w)}{\Omega\IntComp}-\AnalyticIn{g(w)Y(w)}{\Omega\IntComp}=t\AnalyticIn{Y(\infty)}{\Omega\IntComp}+t\AnalyticIn{Y(w)-Y(\infty)}{\Omega\IntComp}=tY(\infty).
\end{align*}
The desired formula for $C$ follows.
\end{proof}

We can obtain by similar analysis a solution to the direct problem.

\begin{theorem}\label{thm:UPQDDirectProblemSol}
If $\Omega\in\QD_a(h)$ with Riemann map $\varphi$ as in Equation \ref{eqn:PQDRiemannMapProdFactorization} then
\begin{equation}\label{eqn:UPQDDirectProblemSol}
h(w)=\dfrac{1}{a w}\Phi_{\varphi}\left(\AnalyticIn{rr^{\#}}{\D}\right)(w)-\dfrac{t}{w},
\end{equation}
where $t=\int_{\Omega^c}|w|^{2(a-1)}dA(w)$.
\end{theorem}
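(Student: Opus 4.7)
The plan is to mirror the proof of Theorem \ref{thm:BPQDDirectProblemSol}, adjusting for the fact that $\Omega$ is now unbounded and $G\in\A_0(\Omega)$ rather than $\A(\Omega)$. I would begin with the generalized Schwarz function identity $h(w)\dEquals\frac{1}{a}\overline{w}|w|^{2(a-1)}+G(w)$ from Theorem \ref{theorem:EquivPQDChars}, then rewrite the first summand as $\frac{(rr^{\#})\circ\psi(w)}{aw}$ exactly as in the proof of Theorem \ref{thm:SCPQDCharacterization}. Multiplying through by $w$ and applying the Cauchy projection $\AnalyticInNoBracket{\cdot}{\Omega\IntComp}$ to both sides would then yield
\begin{equation*}
\AnalyticIn{wh}{\Omega\IntComp}(w)=\frac{1}{a}\Phi_{\varphi}\left(\AnalyticIn{rr^{\#}}{\D}\right)(w)+\AnalyticIn{wG}{\Omega\IntComp}(w),
\end{equation*}
where the Faber transform on the right-hand side comes from Equation \ref{eqn:ExteriorFaberTransformProjectionExtension} applied to $rr^{\#}\in C^{0}(\partial\D)$.

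Next I would evaluate the two remaining projections. Expanding Equation \ref{eqn:PQDGCauchyTransFormula} in a geometric series about infinity gives $G(w)=-t/w+O(w^{-2})$, so $wG(w)+t\in\A_0(\Omega)$; since $\A_0(\Omega)$ is annihilated by $\AnalyticInNoBracket{\cdot}{\Omega\IntComp}$ in the unbounded setting (the same fact used in the derivation of Equation \ref{eqn:ExteriorFaberTransformProjectionExtension}), this gives $\AnalyticIn{wG}{\Omega\IntComp}=-\AnalyticIn{t}{\Omega\IntComp}=-t$ on $\Omega\IntComp$. For the left-hand side, the splitting $\xi=(\xi-w)+w$ used in the proof of Theorem \ref{thm:BPQDDirectProblemSol} produces
\begin{equation*}
\AnalyticIn{wh}{\Omega\IntComp}(w)=-\oint_{\partial\Omega}h\,d\xi+w\,h(w).
\end{equation*}

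The key departure from the bounded case occurs in evaluating $\oint_{\partial\Omega}h\,d\xi$: the quadrature identity is unavailable since $1\notin\A_0(\Omega)$. Instead, I would observe that $h\in\Rat(\Omega)$ has all of its poles in $\Omega$, so it is analytic on the bounded compact set $\Omega\IntComp$, and Cauchy's theorem immediately gives $\oint_{\partial\Omega\IntComp}h\,d\xi=0$, hence $\oint_{\partial\Omega}h\,d\xi=0$. Combining the two resulting expressions yields $w\,h(w)=\frac{1}{a}\Phi_{\varphi}(\AnalyticIn{rr^{\#}}{\D})(w)-t$ on $\Omega\IntComp$; since both sides are rational the identity extends globally, and dividing by $w$ produces Equation \ref{eqn:UPQDDirectProblemSol}. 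The main subtlety relative to the bounded proof is the contribution of $G$ at infinity, which simultaneously flips the sign of the constant-term correction from $+t$ to $-t$ and forces $\oint_{\partial\Omega}h\,d\xi$ to vanish rather than equal $t$, so that the entire $-t/w$ correction is attributable to $\AnalyticInNoBracket{wG}{\Omega\IntComp}$.
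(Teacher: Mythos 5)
Your proof is correct and follows essentially the same route as the paper's: write $h\dEquals\frac{(rr^{\#})\circ\psi(w)}{aw}+G(w)$, project $wh$ onto $\Omega\IntComp$, identify the Faber transform via Equation \ref{eqn:ExteriorFaberTransformProjectionExtension}, and extract the $-t$ from the expansion of $G$ about infinity using Equation \ref{eqn:PQDGCauchyTransFormula}. The only (harmless) difference is that you evaluate $\AnalyticIn{wh(w)}{\Omega\IntComp}$ by the splitting $\xi=(\xi-w)+w$ together with Cauchy's theorem, whereas the paper observes directly that $wh$ is analytic on the bounded set $\Omega\IntComp$, so its Cauchy projection equals $wh(w)$ outright.
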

\begin{proof}[Proof of Theorem \ref{thm:UPQDDirectProblemSol}]
Recall from the proof of Theorem \ref{thm:SCPQDCharacterization} that $\frac{(rr^{\#})\circ\psi(w)}{aw}:=S_a(w)$ is a generalized Schwarz function for $\Omega$. Thus, by Theorem \ref{theorem:EquivPQDChars}, $h(w)\dEquals\frac{(rr^{\#})\circ\psi(w)}{aw}+G(w)$, where $h$ is the quadrature function and $G\in\A_0(\Omega)$. Hence, by the definition of the Faber transform,
\begin{align*}
    wh(w)&=\AnalyticIn{wh(w)}{\Omega\IntComp}=\dfrac{1}{a}\AnalyticIn{(rr^{\#})\circ\psi(w)}{\Omega\IntComp}+\AnalyticIn{wG(w)}{\Omega\IntComp}=\Phi_{\varphi}\left(\AnalyticIn{rr^{\#}}{\D\IntComp}\right)(w)-t.
\end{align*}
Where the last equality follows from expansion of Equation \ref{eqn:PQDGCauchyTransFormula} about $\infty$, which demonstrates that $wG(w)=-t+g(w)$, where $t=\int_{\Omega^c}|w|^{2(a-1)}dA(w)$ and $g\in\A_0(\Omega)$, so $\AnalyticIn{wG(w)}{\Omega\IntComp}=\AnalyticIn{-t+g(w)}{\Omega\IntComp}=-t$.
\end{proof}

\begin{example}\label{ex:15_PQD_Nested_Family_PostCrit}
Consider the map $\varphi:\D\IntComp\rightarrow\Ch$ given by $\varphi(z)=cz\left(1-\frac{\gamma}{z}\right)^{\frac{1}{a}}$, where $|\gamma|<1$ and $a,c>0$. Note that $\Omega=:\varphi(\D\IntComp)$ is an unbounded domain not containing $0$ on account of the assumption that $|\gamma|<1$. Hence, when $\varphi$ is univalent, it is a Riemann map for $\varphi$ with $\varphi_{\rm out}^a(z)=r^{\#}(z)=c^a\left(1-\frac{\gamma}{z}\right)$ rational so Theorem \ref{thm:SCPQDCharacterization} tells us that $\Omega\in\QD_a$. We may now apply Theorem \ref{thm:UPQDDirectProblemSol} to solve the direct problem of finding the quadrature function associated to $\Omega$:
\begin{align*}
h(w)&=\dfrac{1}{aw}\Phi_{\varphi}\left(\AnalyticIn{c^{a}\left(1-\frac{\gamma}{z}\right)c^{a}\left(1-\overline{\gamma}z\right)}{\D}\right)(w)-\dfrac{t}{w}\\
&=\dfrac{c^{2a}}{aw}\Phi_{\varphi}\left(\AnalyticIn{1+|\gamma|^2-\frac{\gamma}{z}-\overline{\gamma}z}{\D}\right)(w)-\dfrac{t}{w}\\
&=\dfrac{c^{2a}}{aw}\Phi_{\varphi}\left(1+|\gamma|^2-\overline{\gamma}z\right)(w)-\dfrac{t}{w}\\
&=\dfrac{c^{2a}}{aw}\left(1+|\gamma|^2-\overline{\gamma}(wc^{-1}+f_0)\right)-\dfrac{t}{w}\\
&=-\dfrac{c^{2a-1}}{a}\overline{\gamma}+\dfrac{c_1}{w}.
\end{align*}
However, $h\in\A(\Omega\IntComp)$ and $0\in\Omega\IntComp$, so $c_1=0$ and we find that $\Omega\in\QD_a(\alpha)$, for $\gamma=-\frac{\overline{\alpha}a}{c^{2a-1}}$. We will characterize the univalence of $\varphi$ in \S \ref{subsec:BasicMonomialPQDClass}. Several examples of $\Omega$ are provided in Figure \ref{fig:15_PQD_Nested_Family_PostCrit}. Note that in the limit as $|\gamma|\to1$, $\partial\Omega$ approaches $0$, forming a ``corner'' there. 
\end{example}

\begin{figure}[ht]
  \centering
    \includegraphics[height=0.4\linewidth,width=.4\linewidth]{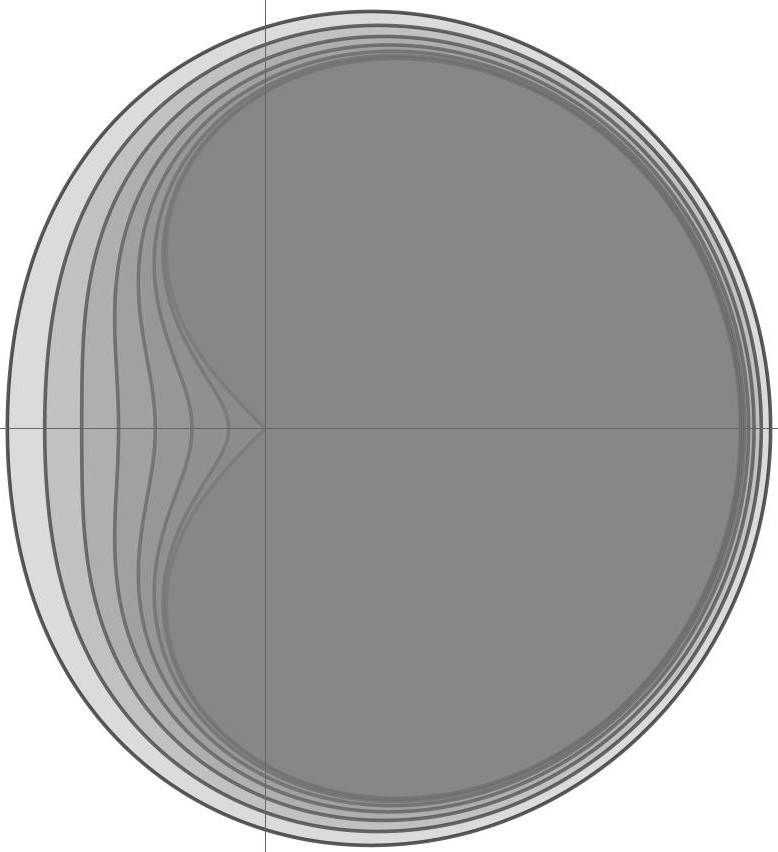}\includegraphics[height=0.4\linewidth,width=.4\linewidth]{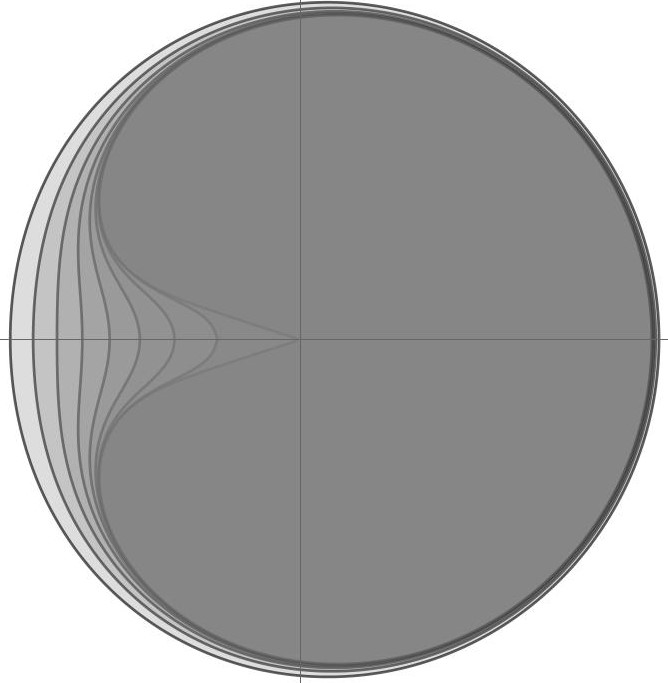}
    \caption{Two families of simply connected unbounded $\Omega\in\QD_a\left(\frac{1}{2}\right)$ (the complements of the shaded regions) with $a=2,5$ (Example \ref{ex:15_PQD_Nested_Family_PostCrit})}\label{fig:15_PQD_Nested_Family_PostCrit}\vspace{1.5em}
\end{figure}
\FloatBarrier

We furthermore obtain a detailed characterization of the pole structure of $r$ in terms of $h$.
\begin{corollary}\label{corr:GenUPQDFTFormula}
Let $h$ be a rational function with poles $\{p_k\}$ of respective orders $\{n_k\}$, and a pole at $\infty$ of order $n$. Then the $r$ in Theorem \ref{thm:GenUPQDInvProbFormula} is given by
\begin{equation}\label{eqn:GenUPQDFTFormula}
r(z)=C+zp(z)+z\sum_{k}\sum_{j=1}^{n_k}\dfrac{\beta_{k,j}}{\left(z-z_k\right)^{j}},
\end{equation}
for some constants $\{\beta_{k,j}\}$, $\varphi(z_k)=p_k$, a polynomial $p$ of degree $n$ ($p=0$ if $h(\infty)=0$); and $C$ as in the theorem. Moreover, if $h$ has a pole at zero then the corresponding term in the above sum is of one degree lower. In particular, {\it the poles of $h$ are the images of those of $r$ under $\varphi$}.
\end{corollary}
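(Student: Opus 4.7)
The plan is to start from the formula for $r$ in Theorem~\ref{thm:GenUPQDInvProbFormula}, determine the pole structure of the argument of $\Phi_\varphi^{-1}$ using the properties of the Cauchy projection, transport those poles through $\Phi_\varphi^{-1}$ via Equation~\ref{eqn:FTFormulae}, and finally rearrange into the stated factored form.

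First I would verify that $Y(w):=(\varphi_{\rm in}\circ\psi(w)/w)^a$ is well-defined, analytic, nonvanishing, and bounded on all of $\Omega$, with a finite nonzero value at $\infty$. This uses the classification of $\varphi_{\rm in}$ from Equation~\ref{eqn:PQDInnerFactors}: $\varphi_{\rm in}(z)$ has at most one zero in $\D\IntComp$ (at $z_0$ when $0\in\Omega$) and behaves like a positive constant times $z$ at $\infty$. Since $\psi(0)=z_0$ and $b_{z_0}$ has a simple zero at $z_0$, the apparent zero of $\varphi_{\rm in}\circ\psi$ at $w=0$ is exactly cancelled by $1/w$, and simply-connectedness of $\Omega$ makes the $a$-th power single-valued. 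Setting $f(w):=wh(w)Y(w)$, the poles of $f$ in $\Omega$ are then: a pole of order $n_k$ at each $p_k\neq 0$ (since $wY$ is analytic and nonzero there); a pole of order $n_k-1$ at $0$ when some $p_k=0$ (the extra $w$ reduces the pole order by one); and a pole at $\infty$ of order $n+1$ arising from the pole of $h$ of order $n$ together with the extra $w$ and the nonzero limit of $Y$ (which collapses to no pole when $h(\infty)=0$, matching the case $p=0$). The remark below Equation~\ref{eqn:analyticprojection} then identifies $\AnalyticIn{f}{\Omega\IntComp}$ as a rational function with exactly these principal parts in $\Omega$.

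Applying $\Phi_\varphi^{-1}$ by linearity and Equation~\ref{eqn:FTFormulae}: a principal part of order $j$ at $p_k$ is sent to a rational piece $\sum_{i=1}^{j}\gamma_{k,i}/(z-z_k)^i$ with $z_k=\psi(p_k)\in\D\IntComp$, and a polynomial principal part of degree $n+1$ at $\infty$ is sent, via the inverse Faber polynomials $W_m$, to a polynomial in $z$ of degree $n+1$. Adding the constant $C$ from Theorem~\ref{thm:GenUPQDInvProbFormula} presents $r$ as a polynomial of degree $n+1$ plus principal parts at each $z_k$ plus a constant. The stated factored form follows by pure algebra: split the polynomial as $Q(0)+zp(z)$ with $\deg p=n$ and absorb the constant $Q(0)$; then use the identity $z/(z-z_k)^j=1/(z-z_k)^{j-1}+z_k/(z-z_k)^j$, valid because each $z_k\neq 0$, to repackage each group of principal parts in the form $z\sum_j\beta_{k,j}/(z-z_k)^j$ modulo a further constant which is again absorbed. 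The final constant equals $r(0)$, so it agrees with the $C$ of Theorem~\ref{thm:GenUPQDInvProbFormula}.

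The main obstacle is the careful bookkeeping at $p_k=0$: one must check, via the Blaschke structure of $\varphi_{\rm in}$ in Equation~\ref{eqn:PQDInnerFactors}, that $Y$ is finite and nonzero at the origin, so that the pole order of $f$ there is exactly $n_k-1$ rather than something smaller, which is precisely what causes the ``one degree lower'' truncation in the corresponding sum.
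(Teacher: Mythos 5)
Your derivation of the pole structure is correct, and it is exactly the argument the paper intends but never writes out (the only hint given is the one-line remark before the bounded analogue that the inverse Faber transform sends poles to their images under $\psi$). All the load-bearing steps are right and in the right order: $Y=(\varphi_{\rm in}\circ\psi/w)^a$ is single-valued, analytic, bounded and non-vanishing on $\Omega$ including at $w=0$ and $w=\infty$ (the Blaschke cancellation at the origin is precisely the point, and it is what makes the ``one degree lower'' truncation come out as $n_k-1$ and not less); the Cauchy projection $\AnalyticIn{wh(w)Y(w)}{\Omega\IntComp}$ retains exactly the principal parts of $wh(w)Y(w)$ at its poles in $\Omega$; Equation \ref{eqn:FTFormulae} transports a principal part of order $j$ at $p_k$ to one of order exactly $j$ at $z_k=\psi(p_k)$ (the leading coefficient $\psi'(p_k)^j\neq0$) and the polynomial part of degree $n+1$ to a polynomial of degree $n+1$; and the triangular change of variables $z/(z-z_k)^j=(z-z_k)^{-(j-1)}+z_k(z-z_k)^{-j}$ is invertible because $|z_k|>1$.

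The one step that does not hold up is the last sentence: ``the final constant equals $r(0)$, \emph{so} it agrees with the $C$ of Theorem \ref{thm:GenUPQDInvProbFormula}.'' That the additive constant in Equation \ref{eqn:GenUPQDFTFormula} is $r(0)$ is immediate from the displayed form; that it equals the theorem's $C$ would require $\Phi_{\varphi}^{-1}\left(\AnalyticIn{wh(w)Y(w)}{\Omega\IntComp}\right)(0)=0$, and nothing forces this: unlike the interior transform (which lands in $\A_0(\D\IntComp)$, so that in the bounded corollary the constant really is $r(\infty)=C$), the exterior inverse transform $\Phi_{\varphi}^{-1}(f)=\AnalyticIn{f\circ\varphi}{\D}$ carries no normalization at $z=0$; already $\Phi_{\varphi}^{-1}\left(\frac{1}{w-w_0}\right)(0)=-\psi'(w_0)/\psi(w_0)\neq0$. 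In fact, when $0\notin\Omega$ the form of Equation \ref{eqn:GenUPQDFTFormula} forces $r(0)=\overline{r^{\#}(\infty)}=\lim_{z\to\infty}\left(\varphi(z)/z\right)^a=c^a$ independently of $h$, whereas the theorem's $C=\frac{a}{c^a}t$ with $t=\int_{\Omega^c}|w|^{2(a-1)}dA$; for the family of Example \ref{ex:15_PQD_Nested_Family_PostCrit} with $a=2$ one computes $t=\frac{c^4}{4}(2+|\gamma|^2)$, so the two constants disagree whenever $\gamma\neq0$. This imprecision is arguably already present in the corollary's wording, and the paper itself in practice pins the constant down from the asymptotics $\varphi(z)=cz+O(1)$ rather than from $C$ (the step ``$c_1=c^a$'' in the proof of Theorem \ref{thm:ConstMonomialPQDNoZeroClass}); but as written your ``so'' is an unproved and in general false implication, and the honest fix is to identify the constant as $r(0)$ and evaluate it from the normalization of $\varphi$.
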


\subsection{Characterization of Simply Connected One Point PQDs}\label{subsec:OnePtPQDs}
In the prior discussion, we showed that simply connected PQDs admit representations in terms of rational functions. In the following analysis, we characterize this representation for one point PQDs of arbitrary order. Lemmas \ref{lemma:PQDOnePtClassHigherOrder} and \ref{lemma:PQDOnePtInftyClassHigherOrder} cover the case in which the quadrature node is finite (i.e. the quadrature function is a rational function with a unique pole). Lemma \ref{lemma:PolynomialUPQDClass} covers the case in which the quadrature node is at $\infty$ (i.e. the quadrature function is a polynomial).

We begin with a classification of simply connected one point PQDs of order one with coefficient $\alpha>0$, $\QD_a\left(\frac{\alpha}{w-w_0}\right)$. There are several cases to consider:
\begin{enumerate}
    \item $\Omega$ is bounded:
    \begin{enumerate}
        \item $0\notin\Omega$,
        \item $0\in\Omega$ and $w_0\neq0$,
        \item $0\in\Omega$ and $w_0=0$,
    \end{enumerate}
    \item $\Omega$ is unbounded:
    \begin{enumerate}
        \item $0\notin\Omega$,
        \item $0\in\Omega$, but $w_0\neq0$.\footnote{By symmetry, $\Omega$ cannot be unbounded if $w_0=0$.}
    \end{enumerate}
\end{enumerate}
Case 1 is covered by \cite{DragnevLeggSaff} for integer $a$ and summarized by the following theorem
\begin{theorem}
For each $n\in\Z_{+}$, $w_0\in\C$, and $\alpha>0$, there exists a bounded simply connected $\Omega\in\QD_n\left(\frac{\alpha}{w-w_0}\right)$. In addition,
\begin{enumerate}[a.]
\item If $|w_0|^{2n}\geq n^2\alpha$, then $\Omega$ is an nth root of $\D_{n\sqrt{\alpha}}(w_0^n)$, specifically the connected component containing $w_0$.
\item If $w_0=0$, then $\Omega=\D_{\sqrt[2n]{\alpha n}}(0)$,
\item If $0<|w_0|^{2n}<n^2\alpha$, then $\Omega$ is the image of the unit disk under a map of the form $\varphi(z)=\dfrac{\delta z}{(1-z\overline{z_0})^{\frac{1}{n}}}$, where $\delta$ is a constant and $\varphi(z_0)=w_0$.
\end{enumerate}
Cases $(a)$ and $(c)$ correspond to $0\notin\Omega$ and $0\in\Omega$ respectively.
\end{theorem}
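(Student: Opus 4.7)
The plan is to combine the structural characterization of simply connected PQDs (Theorem~\ref{thm:SCPQDCharacterization}) with the explicit Faber--transform representation of the Riemann map (Corollary~\ref{corr:GenBPQDFTFormula}) to reduce $\varphi$ to a finite--parameter family; the direct--problem formula (Theorem~\ref{thm:BPQDDirectProblemSol}) and the univalence criterion then fix the parameters and identify which of the three cases applies.

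I would normalize the Riemann map so that $\varphi(0)=w_0$. Since $h$ has a single simple pole at $w_0=\varphi(0)$, the ``skip'' clause of Corollary~\ref{corr:GenBPQDFTFormula} empties the sum over preimages and leaves $r(z)=C+\bar p_0/z$ (or $r(z)=C$ when $w_0=0$) for scalar parameters $C,p_0\in\C$. The identity $r^{\#}=\varphi_{\rm out}^n$ then produces an explicit closed form for $\varphi^n$ in each of the three geometric cases, after which the remaining free parameters are pinned down by the QI.

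In case $(a)$, $0\notin\Omega$ forces $\varphi_{\rm in}\equiv 1$, so $\varphi^n(z)=\bar C+p_0 z$ is linear and $\Omega$ is the connected component of the $n$th root of $\D_{|p_0|}(w_0^n)$ containing $w_0$. Evaluating Theorem~\ref{thm:BPQDDirectProblemSol}---or equivalently applying the QI to the Cauchy kernel $f(w)=(w-w_0)^{-1}$---yields $|p_0|^2=n^2\alpha$, and univalence of the $n$th root is equivalent to $0\notin\D_{n\sqrt{\alpha}}(w_0^n)$, i.e.\ $|w_0|^{2n}\geq n^2\alpha$. In case $(b)$, $w_0=\varphi(0)=0\in\Omega$, so $\varphi_{\rm in}(z)=z$ and $\varphi_{\rm out}^n=\bar C$ is constant; this gives $\varphi(z)=\delta z$ with $|\delta|=R$ fixed by $R^{2n}/n=\alpha$ (the QI applied to $f\equiv 1$). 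In case $(c)$, $0\in\Omega$ and $w_0\neq 0$, so $\varphi_{\rm in}$ is a Blaschke factor $b_\zeta$ at the root $\zeta=\psi(0)$ of $\varphi$, and $\varphi^n(z)=b_\zeta^n(z)(\bar C+p_0 z)$; a M\"obius reparametrization of $\D$ sending $\zeta$ to the origin converts this into the stated form $\varphi(z)=\delta z/(1-z\bar{z_0})^{1/n}$, with the new $z_0$ being the preimage of $w_0$ and $\delta,z_0$ determined by $\varphi(z_0)=w_0$ together with the QI.

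The principal obstacle is existence for every $\alpha>0$ and $w_0\in\C$: the Faber analysis above produces an explicit candidate with the correct QI, but it does not by itself guarantee univalence across the full parameter range. Existence may be obtained either from the Hele--Shaw / obstacle--problem arguments of \cite{DragnevLeggSaff}, or by verifying univalence directly on each candidate. The most delicate point is the critical threshold $|w_0|^{2n}=n^2\alpha$, at which cases $(a)$ and $(c)$ meet: $\partial\Omega$ touches the origin and a corner of angle $\pi/n$ develops there, in agreement with the boundary regularity established in Theorem~\ref{theorem:PQDBoundaryRegularityv2}.
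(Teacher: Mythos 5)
Your strategy is sound and is genuinely different from what the paper does: the paper does not prove this theorem at all but cites it to \cite{DragnevLeggSaff}, supplying only the univalence verification for case (c) via the starlikeness criterion (Lemma~\ref{lemma:starlikeUnivalence}). Your Faber-transform derivation of the normal forms is the more self-contained route, and cases (a) and (b) are essentially complete: the computation $|p_0|^2=n^2\alpha$ via Theorem~\ref{thm:BPQDDirectProblemSol} checks out (note only that $(w-w_0)^{-1}$ is not an admissible test function, since $w_0\in\Omega$; use the Cauchy kernel based at points of $\Omega^c$ or the theorem's formula directly), and the equivalence of single-valuedness of the $n$th root with $0\notin\D_{n\sqrt{\alpha}}(w_0^n)$, i.e.\ $|w_0|^{2n}\geq n^2\alpha$, is correct.

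Case (c), however, has a genuine gap. In your normalization $\varphi(0)=w_0$ you obtain $\varphi^n(z)=b_{\zeta}(z)^n(\overline{C}+p_0z)$ with $\zeta=\psi(0)$, and a disk automorphism $m$ with $m(0)=\zeta$ turns this into $e^{in\theta}z^n\,\bigl((\overline{C}\overline{\zeta}+p_0)z+(\overline{C}+p_0\zeta)\bigr)/(1-z\overline{z_0})$, where $z_0=-\zeta$ is the new preimage of $w_0$. This matches the stated form $\delta^nz^n/(1-z\overline{z_0})$ only if the coefficient $\overline{C}\overline{\zeta}+p_0$ vanishes, and Corollary~\ref{corr:GenBPQDFTFormula} does not give you that: it constrains the pole structure of $r$, not the location of the zero of $r^{\#}=\varphi_{\rm out}^n$. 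The missing condition is precisely the requirement that $h$ have no pole at $0$, which in case (c) is a point of $\Omega$ and hence an admissible quadrature node a priori. Concretely, $S_n\circ\varphi=\frac{1}{n}(\varphi^n)^{\#}\varphi^{n-1}$ acquires a simple pole at $\psi(0)$ unless the linear term in the numerator of $\varphi_{\rm out}^n$ vanishes, and that pole would contribute a term $\beta/w$ to $h=\AnalyticIn{S_n}{\Omega\IntComp}$, contradicting $h=\alpha/(w-w_0)$; this is also the content of the root-at-$\overline{z_0}^{-1}$ clause in Lemma~\ref{lemma:PQDOnePtClassHigherOrder}(3). You need to add this step. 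Finally, for univalence in case (c) you need not defer to \cite{DragnevLeggSaff}: the computation following Lemma~\ref{lemma:starlikeUnivalence} already shows $\varphi(z)=\delta z(1-z\overline{z_0})^{-1/n}$ is univalent and starlike for every $|z_0|<1$; what remains open in your writeup (and is likewise not proved in the paper) is that the parameter equations for $(\delta,z_0)$ admit a solution with $|z_0|<1$ exactly when $0<|w_0|^{2n}<n^2\alpha$.
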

By applying the following univalence criterion (Lemma \ref{lemma:starlikeUnivalence}), we show furthermore that in case (c), $\varphi$ is univalent in $\D$ iff $|z_0|<1$. An easy corollary of Theorem 6.6 in \cite{Pommerenke1975} is that
\begin{lemma}\label{lemma:starlikeUnivalence}
An analytic function $f:\D\rightarrow\Ch$ for which $\varphi(0)=0$ is univalent and starlike with respect to the origin iff
$$\Re\left(z\dfrac{\varphi'(z)}{\varphi(z)}\right)>0$$
for all $z\in\D$.
\end{lemma}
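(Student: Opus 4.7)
The plan is to analyze the holomorphic function $p(z) := z\varphi'(z)/\varphi(z)$. Under the hypothesis that $\varphi$ is analytic with $\varphi(0)=0$ and $\varphi'(0)\neq 0$ (which is part of both the univalence assumption and the consequence of $p(0)$ being finite), $p$ extends analytically across $0$ with $p(0)=1$. The key computation is
$$\frac{\partial}{\partial\theta}\arg\varphi(re^{i\theta}) = \Re\, p(re^{i\theta}),$$
obtained by differentiating $\Im\log\varphi(re^{i\theta})$ in $\theta$. Thus the positivity condition on $\Re p$ is precisely the condition that $\theta \mapsto \arg\varphi(re^{i\theta})$ is strictly increasing on each circle $|z|=r$, $0<r<1$.

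For the sufficiency direction, I will assume $\Re p > 0$ on $\D$. The monotonicity above implies that each ray from $0$ meets the image curve $\gamma_r := \varphi(\partial\D_r(0))$ exactly once; since $p(0)=1$, the total change in argument around $\partial\D_r(0)$ is $2\pi$, so $\gamma_r$ is a Jordan curve and bounds a starlike Jordan domain. A second application of the argument principle to $\varphi(z) - w$ then shows that $\varphi$ is injective on $r\D$ and maps it bijectively onto the interior of $\gamma_r$. Letting $r\uparrow 1$ yields univalence on $\D$ together with starlikeness of $\varphi(\D)$.

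For the necessity direction, I will first use the Schwarz lemma to propagate starlikeness from $\varphi(\D)$ to $\varphi(r\D)$ for every $r<1$. For $t\in[0,1]$, the map $g_t := \varphi^{-1}\circ(t\varphi)$ is a well-defined self-map of $\D$ (well-defined by starlikeness of $\varphi(\D)$, self-mapping by univalence), fixes the origin, and therefore satisfies $|g_t(z)|\leq|z|$; applied with $z\in r\D$ this gives $t\varphi(z)\in\varphi(r\D)$, so $\varphi(r\D)$ is starlike. Consequently $\gamma_r$ is a starlike Jordan curve and $\theta\mapsto\arg\varphi(re^{i\theta})$ is nondecreasing, which gives $\Re p \geq 0$ on each circle $|z|=r$. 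Since $\Re p$ is harmonic in $\D$ with $\Re p(0)=1>0$, the minimum principle for harmonic functions upgrades this to the strict inequality $\Re p>0$ throughout $\D$.

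The main obstacle I anticipate is the subtle propagation of starlikeness from $\varphi(\D)$ to each $\varphi(r\D)$ in the necessary direction; without this, one only controls the argument on $\partial\D$, where $\varphi$ need not have continuous boundary values. The Schwarz-lemma argument above handles this cleanly. The remaining steps, namely the applications of the argument principle to establish univalence from monotonicity of the argument, and the use of the minimum principle to upgrade non-strict to strict positivity of $\Re p$, are routine.
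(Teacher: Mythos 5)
The paper offers no proof of this lemma at all --- it is dispatched with a citation to Theorem 6.6 of Pommerenke --- so your self-contained argument is by construction a different route; it is in fact the standard textbook proof of the classical starlikeness criterion, and it is correct. The three ingredients (the identity $\partial_\theta\arg\varphi(re^{i\theta})=\Re\bigl(z\varphi'(z)/\varphi(z)\bigr)$; the argument principle for sufficiency; Schwarz-lemma propagation of starlikeness to the subdomains $\varphi(\D_r)$ followed by the minimum principle for necessity) are exactly the ones in the cited source. The step you flag as delicate is indeed the crux of necessity, and your Schwarz-lemma device resolves it; note that the \emph{strict} form of that lemma gives $|\varphi^{-1}(t\varphi(z))|<|z|$ for $0<t<1$, $z\neq0$, hence the open radial segment to each point of $\varphi(\partial\D_r)$ lies in $\varphi(\D_r)$, so each ray from the origin meets that Jordan curve exactly once and the monotonicity of $\theta\mapsto\arg\varphi(re^{i\theta})$ is airtight. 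One correction: finiteness of $p(0)$ does \emph{not} force $\varphi'(0)\neq0$ --- a zero of order $k$ at the origin still gives $p(0)=k<\infty$ --- so in the sufficiency direction the normalization $\varphi'(0)\neq0$ (equivalently $p(0)=1$) must be assumed, as it is in the classical statement; without it $\varphi(z)=z^2$ has $\Re p\equiv2>0$ yet fails to be univalent. This is as much a defect of the lemma as stated in the paper as of your write-up, and it is harmless in the application there, where $\varphi$ has a simple zero at the origin.
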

In our case, $\varphi(z)=\frac{\delta z}{(1-z\overline{z_0})^{\frac{1}{n}}}$ is analytic in $\D$ iff $|z_0|<1$ so, wlog replacing $\overline{z_0}$ with $|z_0|$ and applying the lemma and partial fractions we obtain
\begin{align*}
n\Re\left(z\dfrac{\varphi'(z)}{\varphi(z)}\right)&=\Re\left(z\dfrac{(\varphi^n)'(z)}{\varphi^n(z)}\right)=n-1+\Re\left(\dfrac{1}{1-z|z_0|}\right).
\end{align*}
Taking $z|z_0|=re^{i\theta}$ and noting that $n\geq1$, we find
\begin{align*}
    n\Re\left(z\dfrac{\varphi'(z)}{\varphi(z)}\right)&\geq\dfrac{1}{2}\left(1+\dfrac{1-r^2}{r^2-2r\cos(\theta)+1}\right)\geq\dfrac{1}{2}\left(1+\dfrac{1-r^2}{r^2+2r+1}\right)>0
\end{align*}
for all $0\leq r<1$. The desired conclusion follows.\\

We now consider the unbounded case. As in the discussion of classical one point QDs (\S\ref{sec:OnePtQDClass}), it is instructive to consider the situation from the perspective of potential theory. In particular, Lemmas \ref{lemma:GenComplementDropletQuad} and \ref{lemma:GenQuadComplementDroplet} to follow characterize the relationship between these two perspectives.

\begin{lemma}\label{lemma:GenComplementDropletQuad}
If $K$ is a local droplet of a potential $Q(w)=\frac{|w|^{2a}}{a^2}-2\Re(H(w))$ for some $a>0$ and $H$ with rational derivative, then $K^c$ is a disjoint union of PQDs for which $h:=H'$ is the sum of their quadrature functions. 
\end{lemma}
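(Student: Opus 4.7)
The plan is to reduce the claim to the Schwarz-function characterization of PQDs (Theorem \ref{theorem:EquivPQDChars}, part 3) by invoking the coincidence equation \ref{eqn:ceq}. For $Q(w)=\frac{|w|^{2a}}{a^2}-2\Re(H(w))$, a direct computation gives $\partial_wQ(w)=\frac{\overline{w}|w|^{2(a-1)}}{a}-H'(w)$ and $\frac{1}{4}\Delta Q(w)=|w|^{2(a-1)}=\rho_a(w)$. Since $K$ is a local droplet of $Q$, Frostman's theorem together with the coincidence equation yields
\begin{equation}
\frac{\overline{w}|w|^{2(a-1)}}{a}\dEquals H'(w)+C^{K}_{\rho}(w),\qquad C^{K}_{\rho}(w):=\int_{K}\frac{\rho_a(\xi)\,dA(\xi)}{w-\xi},
\end{equation}
on $\partial K$, where $C^{K}_{\rho}$ is analytic on $K^c$ and $C^{K}_{\rho}(w)=O(w^{-1})$ at infinity.

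Next, I would decompose $K^c=\bigsqcup_l\Omega_l$ into its connected components in $\Ch$, and note that the poles of $H'$ must lie in $K^c$ (else $Q$ would fail to be lower semicontinuous on $K$), so every pole of $h:=H'$ belongs to a unique $\Omega_l$. For each component, define $S_a^{(l)}(w):=H'(w)+C^{K}_{\rho}(w)$ on $\Omega_l$. By construction $S_a^{(l)}\in\M(\Omega_l)$ with poles precisely those of $H'$ inside $\Omega_l$, and on $\partial\Omega_l\subseteq\partial K$ the coincidence equation gives $S_a^{(l)}(w)\dEquals\frac{1}{a}\overline{w}|w|^{2(a-1)}$. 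Theorem \ref{theorem:EquivPQDChars} then delivers $\Omega_l\in\QD_a(h_l)$ with $h_l=\AnalyticIn{S_a^{(l)}}{\Omega_l\IntComp}$; because $C^{K}_{\rho}$ is analytic on $\Omega_l$, this $h_l$ coincides with the sum of the principal parts of $H'$ at its poles inside $\Omega_l$ (the polynomial part of $H'$ at $\infty$ being absorbed into the quadrature function of the unique unbounded component, if one exists). Since every pole of $H'$ is captured by exactly one component, summing principal parts over $l$ recovers $h=H'=\sum_l h_l$, as claimed.

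The main obstacle is verifying that each $\Omega_l$ genuinely qualifies as a PQD in the sense of our definition, i.e. it equals the interior of its closure and has a boundary regular enough that the contour form of the quadrature identity is meaningful. This reduces to the boundary-regularity theory for PQDs already established in Lemma \ref{lemma:PQDBoundaryRegularityv1}, together with the algebraicity of droplet boundaries coming from the rational-derivative hypothesis on $H$. The one point requiring genuine care is the singularity of $\rho_a$ at the origin when $a<1$; because $\rho_a\in L^1_{\rm loc}$ for every $a>0$, this is handled by restricting the contour integrals over $\partial\Omega_l$ to the region $\D_r\setminus\D_\epsilon$ and passing to the limit $\epsilon\to 0$, $r\to\infty$, with integrability justifying the interchange.
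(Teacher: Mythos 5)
Your argument is correct and takes exactly the route the paper intends: the paper dispatches this lemma in one line as a consequence of the coincidence equation (Equation \ref{eqn:ceq}) and Theorem \ref{theorem:EquivPQDChars}, and your proof is precisely the fleshed-out version of that, identifying $S_a:=H'+C^{K}_{\rho}$ as a generalized Schwarz function on each connected component of $K^c$ and reading off the quadrature functions from the Cauchy projection. The extra care you take in apportioning the poles and the polynomial part of $H'$ among the components, and in addressing the regularity of $\partial K$ and the singularity of $\rho_a$ at the origin, goes beyond what the paper records but does not change the approach.
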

This is an easy consequence of the existence of the coincidence equation for local droplets (Equation \ref{eqn:ceq}) and Theorem \ref{theorem:EquivPQDChars}. 

\begin{lemma}\label{lemma:GenQuadComplementDroplet}
If $K\subset\C$ is a compact subset such that $K^c$ is a finite disjoint union of PQDs, $\Omega_j\in\QD_a(h_j)$, $a>0$, then $K$ is a local droplet. Furthermore, the external potential associated to the droplet is given by
$$Q(w)=\dfrac{|w|^{2a}}{a^2}-2\Re(H(w)),\;H(w)=2\Re\left(\int_{w_l}^{w}h(\xi)d\xi\right)+\text{const}(w_l)\text{ in some open nbhd of }K_l$$
(where $h=\sum h_j$ and $w_l$ is any point in the $l$th connected component of $K$, $K_l$). In particular, $Q$ is unique up to several additive constants.
\end{lemma}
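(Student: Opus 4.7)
The plan is to invoke Frostman's theorem part 2 (Lemma \ref{lemma:FrostmanPt2}) applied to the stated $Q$ and the natural measure $d\mu := 2\rho_a\1_K\,dA$. Since each $h_j$ is rational with poles confined to $\Omega_j\subset K^c$, the sum $h=\sum_jh_j$ is holomorphic on a simply connected neighborhood of each component $K_l$ of $K$, so one can choose a holomorphic primitive $H_l$ of $h$ there, unique up to an additive constant (the constants will be fixed at the end). Since each $\Re H_l$ is harmonic on $\text{int}(K_l)$, we have $\Delta Q/2 = 2\rho_a$ on $\text{int}(K)$, matching $d\mu$, while the growth $Q(w)\sim|w|^{2a}/a^2$ away from $K$ handles admissibility at infinity.

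The key step is to show that $U^\mu+Q$ is constant q.e.\ on $K$. Since $U^\mu+Q$ is real-valued, it suffices to verify $\partial_w(U^\mu+Q)\equiv 0$ on $\text{int}(K_l)$ for each $l$. Direct differentiation gives $\partial_w Q = \bar w|w|^{2(a-1)}/a - h(w)$ on $K_l$, while the definition of the logarithmic potential produces $\partial_w U^\mu(w) = \int_K|\xi|^{2(a-1)}/(\xi-w)\,dA(\xi)$, interpreted as a principal value.

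To evaluate this principal value I would apply Green's theorem on $K\setminus\D_\epsilon(w)$ using the antiderivative identity $\partial_{\bar\xi}\bigl(\bar\xi|\xi|^{2(a-1)}/a\bigr) = |\xi|^{2(a-1)}$ and let $\epsilon\to 0$, producing
\begin{equation*}
\partial_w U^\mu(w) \;=\; \oint_{\partial K}\frac{\bar\xi\,|\xi|^{2(a-1)}/a}{\xi-w}\,d\xi \;-\; \frac{\bar w\,|w|^{2(a-1)}}{a}.
\end{equation*}
On each $\partial\Omega_j\subset\partial K$ the generalized Schwarz identity $\bar\xi|\xi|^{2(a-1)}/a\dEquals h_j(\xi)-G_j(\xi)$ from Theorem \ref{theorem:EquivPQDChars} splits the boundary integral. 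With the orientation $\partial K=-\sum_j\partial\Omega_j$, the $G_j$ contributions vanish by Cauchy's theorem (as $G_j\in\A(\Omega_j)$ in the bounded case and $G_j\in\A_0(\Omega_j)$ with the correct decay at $\infty$ in the unbounded case), while the $h_j$ contributions evaluate via Cauchy's formula for the exterior of $\Omega_j$ to $\sum_j h_j(w) = h(w)$. Substituting, $\partial_w U^\mu = h(w) - \bar w|w|^{2(a-1)}/a$, which cancels $\partial_w Q$ exactly.

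Consequently $U^\mu+Q$ is locally constant on $\text{int}(K_l)$, hence constant on each connected $K_l$; by further shifting the additive constant in each $H_l$ (the const$(w_l)$ freedom in the statement) one equalizes these into a single constant on all of $K$, and simultaneously ensures $\Re H_l\geq 0$ on $\partial K_l$ so that $Q$ is lower-semicontinuous and thus admissible. Frostman part 2 then identifies $K$ as a local droplet of $Q$, and the stated uniqueness up to several additive constants is immediate from the freedom in choosing the $H_l$'s. The main obstacle will be the Green's-theorem step with the singular integrand, and in particular handling the unbounded-component case, where the ``missing'' contribution at $\infty$ along $\partial K$ must be absorbed by the $\A_0$ decay of the corresponding $G_j$ and the explicit pole structure of $h_j\in\Rat(\Omega_j)$.
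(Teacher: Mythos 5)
Your overall strategy --- Frostman's theorem (Lemma \ref{lemma:FrostmanPt2}) applied to $d\mu=2\rho_a\1_K\,dA$, together with the evaluation of $\partial_w U^\mu$ on $K$ via Green's theorem and the generalized Schwarz identity $\frac{1}{a}\overline{\xi}|\xi|^{2(a-1)}\dEquals h_j(\xi)-G_j(\xi)$ --- is the same as the paper's, and that evaluation (the $G_j$ terms dropping out by Cauchy's theorem, the $h_j$ terms summing to $h(w)$, the residual $-\frac{1}{a}\overline{w}|w|^{2(a-1)}$ from the excised disk) is correct; the paper reaches the same identity by applying the quadrature identities to Cauchy kernels and computing $C_Q^{\C}$ directly.

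The genuine gap is at the very first construction: you assert that $h=\sum_j h_j$ admits a holomorphic primitive $H_l$ on a \emph{simply connected} neighborhood of each component $K_l$. The hypotheses allow $K_l$ to be multiply connected --- e.g.\ $K$ a closed annulus whose complement is a bounded disk (a PQD) together with an exterior null PQD --- and then no neighborhood of $K_l$ avoiding the poles of the enclosed $h_j$ is simply connected. Worse, $h$ has \emph{no} single-valued primitive there: for a loop $\gamma\subset K_l$ encircling the hole containing a bounded $\Omega_j$, the quadrature identity applied to $f\equiv1\in\A(\Omega_j)$ gives $\oint_\gamma h\,d\xi=\oint_{\partial\Omega_j}h_j\,d\xi=\int_{\Omega_j}\rho_a\,dA=t_j>0$, so the period is nonzero and your $H_l$ does not exist. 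What rescues the lemma --- and this is the one genuinely delicate step, which the paper's proof supplies --- is that these periods are real in the normalized sense (equivalently, the unnormalized periods $2\pi i\,t_j$ are purely imaginary), so that $2\Re\int_{w_l}^{w}h(\xi)\,d\xi$ \emph{is} single-valued and harmonic even though $\int h$ is not; this is precisely why the statement defines $H$ through a real part. The paper proves the period condition by writing $h=\frac{1}{a}\overline{w}|w|^{2(a-1)}-C_Q^K$ on $K$ and recognizing the relevant combination as the exact differential $d\bigl(\frac{|w|^{2a}}{a^2}+U^{\mu}\bigr)$. You need to insert this (or an equivalent) argument before your harmonic $H$ is even defined; everything downstream in your proposal is fine, and the step you flag as the main obstacle (Green's theorem with the singular kernel) is in fact routine.
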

\begin{proof}[Proof of Lemma \ref{lemma:GenQuadComplementDroplet}]\label{proof:GenQuadComplementDroplet}
Let $K^c=\Omega_\infty\cup\bigcup_{j=1}^{N}\Omega_j$ be a disjoint union of PQDs with respective quadrature functions $h_j$. Also let $h=\sum_{j}h_j$. Applying Frostman's theorem (Lemma \ref{lemma:FrostmanPt2}), it is sufficient to construct an open nbhd $\mathcal{O}\supset K$ and harmonic $H:\mathcal{O}\rightarrow\R$ such that $\frac{\partial H}{\partial w}=h$ and
\begin{equation}\label{eqn:GenQuadCompDropIdPf}
    \dfrac{|w|^{2a}}{a^2}-H(w)+U^\mu(w)=0,\;\forall w\in K,
\end{equation}
where $d\mu=\1_K\frac{\Delta Q}{2}dA$. We shall choose $\mathcal{O}$ as an $\epsilon-$nbhd of $K$ for a to be determined value of $\epsilon$. We require that $\mathcal{O}$ satisfies each of the following
\begin{enumerate}
    \item $h$ is holomorphic in $\mathcal{O}$,
    \item each connected component of $\mathcal{O}$ contains precisely one connected component of $K$, 
    \item every loop in $\mathcal{O}$ is homotopic to a loop in $K$.
\end{enumerate}
By assumption, $\partial K$ has finitely many singular points. Thus there exists $\epsilon>0$ s.t. (2) and (3) hold.
Note that, for each $\Omega_j$ and each $w\in (\Omega_j)\IntComp$, the quadrature identity tells us that
$$C_Q^{\Omega_j}(w)=\int_{\Omega_j}\dfrac{|\xi|^{2(a-1)}}{w-\xi}dA(\xi)=\oint_{\partial\Omega_j}\dfrac{h_j(\xi)}{w-\xi}d\xi=\oint_{\partial(\Omega_j)\IntComp}\dfrac{h_j(\xi)}{\xi-w}d\xi=h_j(w),$$
(see Equation \ref{sec:PotentialTheoryHeleShaw} for the initial discussion of $C_Q$.)
and this extends to $\partial\Omega_j$. Thus, for each $w\in\bigcap_{j}\Omega_j^{c}=K$, 
$$C_Q^\Omega(w)=\int_{\Omega}\dfrac{|\xi|^{2(a-1)}}{w-\xi}dA(\xi)=\sum_{j}\int_{\Omega_j}\dfrac{|\xi|^{2(a-1)}}{w-\xi}dA(\xi)=\sum_{j}C_Q^{\Omega_j}(w)=\sum_jh_j(w)=h(w).$$
But then note that
\begin{align*}
C_Q^\C(w)=\int_{\C}\dfrac{|\xi|^{2(a-1)}}{w-\xi}dA(\xi)&=\lim_{R\to\infty}\int_{\D_R}\dfrac{|\xi|^{2(a-1)}}{w-\xi}dA(\xi)=\dfrac{1}{a}\overline{w}|w|^{2(a-1)}-\dfrac{1}{a}\lim_{R\to\infty}\oint_{\partial\D_R}\dfrac{\overline{\xi}|\xi|^{2(a-1)}}{\xi-w}d\xi\\
&=\dfrac{1}{a}\overline{w}|w|^{2(a-1)}-\dfrac{1}{a}\lim_{R\to\infty}R^{2a}\oint_{\partial\D_R}\dfrac{\xi^{-1}}{\xi-w}d\xi=\dfrac{1}{a}\overline{w}|w|^{2(a-1)}.
\end{align*}
So $C_Q^K(w)+C_Q^\Omega(w)=C_Q^\C(w)=\frac{1}{a}\overline{w}|w|^{2(a-1)}$. Thus $h(w)=\frac{1}{a}\overline{w}|w|^{2(a-1)}-C_{Q}^K(w)$ on $K$. Then note that if $\gamma$ is a loop in $K$,
\begin{align*}
2\Re\left(\oint_{\gamma}h(w)dw\right)&=\oint_{\gamma}\left(\dfrac{1}{a}\overline{w}|w|^{2(a-1)}-C_{Q}^{K}(w)\right)dw+\oint_{\gamma}\left(\overline{\dfrac{1}{a}\overline{w}|w|^{2(a-1)}}-\overline{C_{Q}^{K}(w)}\right)d\overline{w}\\
&=\oint_{\gamma}\partial\left(\dfrac{|w|^{2a}}{a^2}+U^{\mu}(w)\right)+\oint_{\gamma}\overline{\partial}\left(\dfrac{|w|^{2a}}{a^2}+U^{\mu}(w)\right)\\
&=\oint_{\gamma}d\left(\dfrac{|w|^{2a}}{a^2}+U^{\mu}(w)\right)=0.
\end{align*}
So, by Morera's theorem, $h$ is holomorphic in $K$ and thus in $\mathcal{O}$ because $h$ is a sum of quadrature functions on the components of $K^c$, so, by (3), this result extends to $\mathcal{O}$.

As a consequence, if we pick a point $w_l$ in each connected component $\mathcal{O}_l$ of $\mathcal{O}$ and set
$$H(w):=2\Re\left(\int_{w_l}^{w}h(\xi)d\xi\right)+\dfrac{|w_l|^{2a}}{a^2}+U^\mu(w_l),\;\forall w\in\mathcal{O}_l,$$
is a real, well-defined harmonic function in $\mathcal{O}$ and $\frac{\partial H}{\partial w}=h$ on $K$. Finally, we need to verify Equation \ref{eqn:GenQuadCompDropIdPf}. By construction, the identity holds at each $w_l$. Furthermore, we have
$$\dfrac{\partial}{\partial w}\left(\dfrac{|w|^{2a}}{a^2}-H(w)+U^\mu(w)\right)=\dfrac{1}{a}\overline{w}|w|^{2(a-1)}-h(w)-C^K(w)=0\;\text{ on }K,$$
and
$$\dfrac{\partial}{\partial \overline{w}}\left(\dfrac{|w|^{2a}}{a^2}-H(w)+U^\mu(w)\right)=0\;\text{ on }K.$$
Equation \ref{eqn:GenQuadCompDropIdPf} follows.

An additional consequence of this (from Frostman's theorem \ref{lemma:FrostmanPt2} and Equation \ref{eqn:GenQuadCompDropIdPf}) is that $K$ is a local droplet of the potential $Q(w)=\frac{|w|^{2a}}{a^2}-H(w)$.
\end{proof}

\subsubsection{Simple One Point PQDs}

By lemma \ref{lemma:GenQuadComplementDroplet}, if $\Omega\in\QD_a\left(\frac{\alpha}{w-w_0}\right)$ is unbounded and simply connected, then $\Omega^{c}$ is a local droplet of the potential
$$Q(w)=\dfrac{|w|^{2a}}{a^2}-2\Re(\alpha\ln(w-w_0)).$$
Thus one might expect a $1-$parameter family of PQDs $\{\Omega_t\}_{t}$, parameterized by the weighted area of the droplets in the corresponding weighted Hele-Shaw chain. This turns out to be the case. However, the situation is more complicated than the classical case discussed in \S\ref{sec:OnePtQDClass} on account of the singularity of the weight at $0$ - this phenomenon also manifests itself in Lemmas \ref{lemma:PolynomialUPQDClass}, \ref{lemma:PQDOnePtClassHigherOrder}, and \ref{lemma:PQDOnePtInftyClassHigherOrder}. In particular, there are apparently four distinct families to consider, depicted in Figure \ref{fig:PQDOnePtUnboundedTable}.\\

\begin{figure}[ht]
  \centering
\begin{table}[H]
\centering
\begin{tblr}{
  cells={valign=m,halign=c},
  row{1}={bg=lightgray,font=\bfseries,rowsep=8pt},
  column{1}={bg=lightgray,font=\bfseries},
  colspec={QQQ},
  hlines,
  vlines
}
 & $\alpha>0$ & $\alpha<0$\\ 
$0\notin\Omega_{t_\ast}$ & \includegraphics[scale=.66,valign=c
]{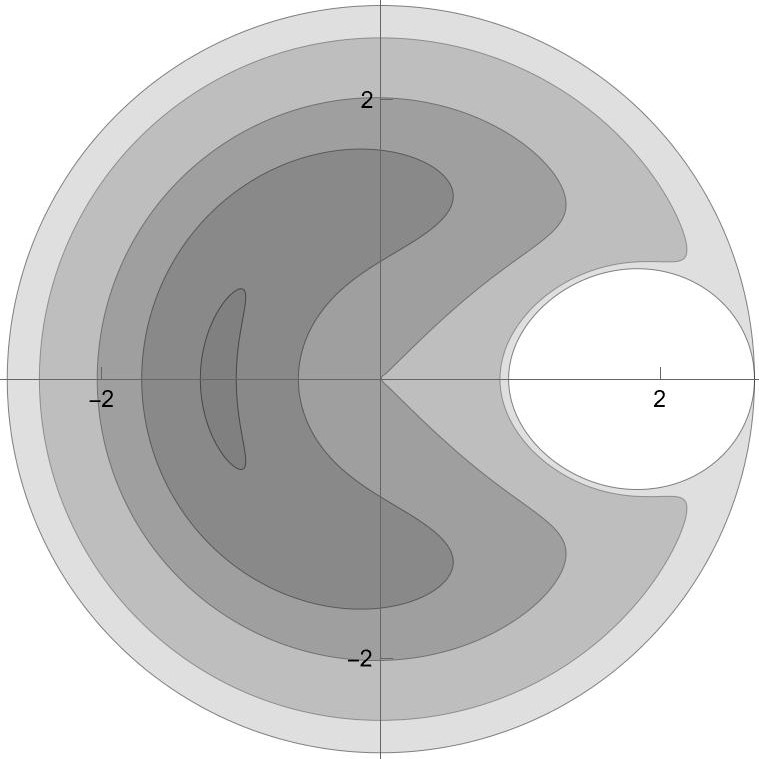} & \includegraphics[scale=.66,valign=c
]{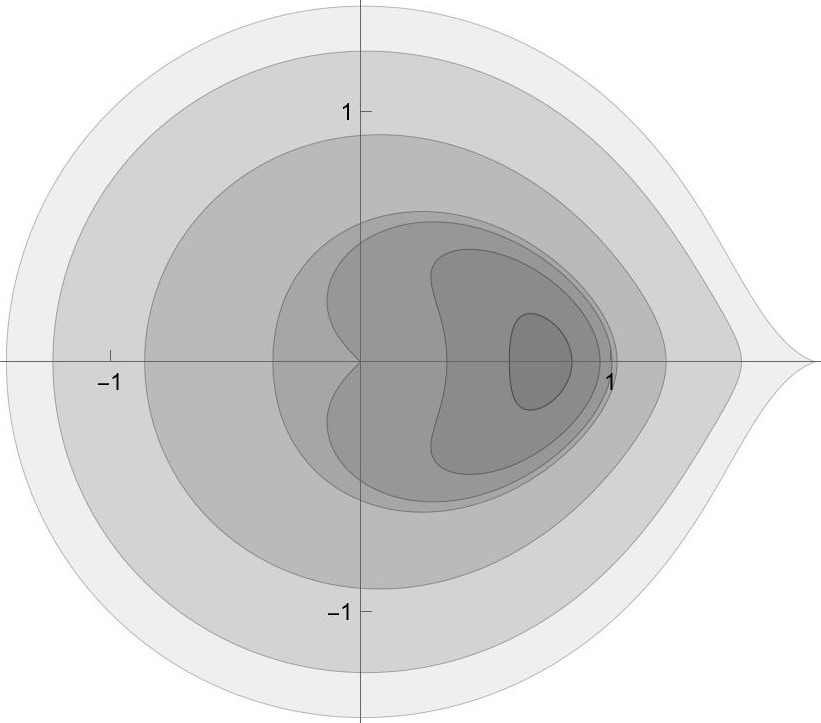}\\ 
$0\in\Omega_{t_\ast}$ & \includegraphics[scale=.66,valign=c
]{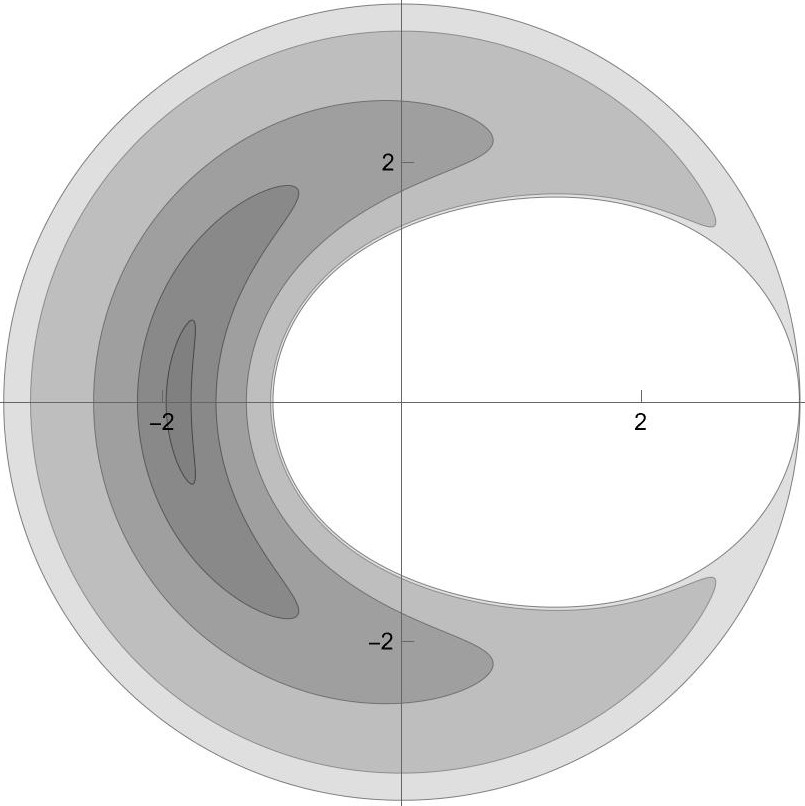} & \includegraphics[scale=.66,valign=c
]{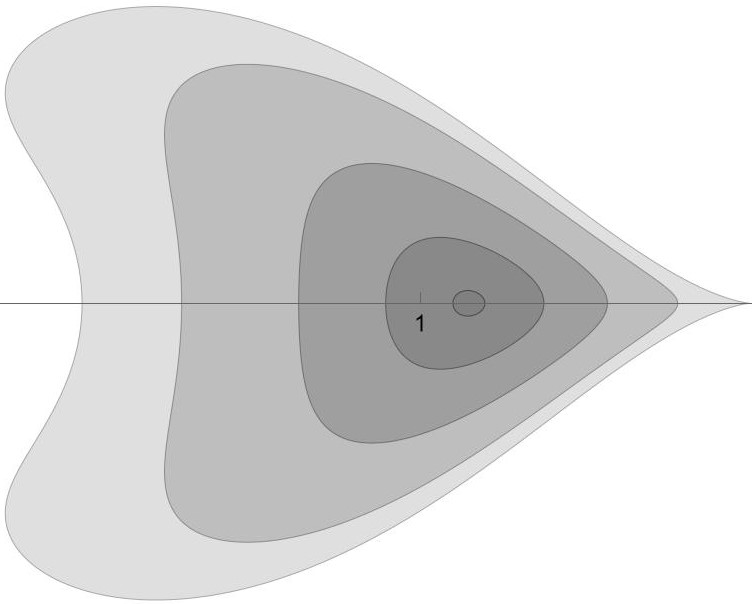}\\
\end{tblr}
\end{table}\vspace{.5em}
\caption{The families of one point unbounded PQDs (complements of the shaded regions) in $\QD_2\left(\frac{\alpha}{w-2}\right)$ with $\alpha=\frac{5}{2}$, $\alpha=\frac{25}{2}$ in column $1$ and $\alpha=-\frac{1}{4}$, $\alpha=-\frac{3}{5}$ in column $2$. Rows 1 and 2 were obtained using Theorems \ref{thm:unbd1ptPQDclassificationNoZero} and \ref{thm:unbd1ptPQDclassificationZero} respectively.}\label{fig:PQDOnePtUnboundedTable}\vspace{1.5em}
\end{figure}
If $\alpha>0$ then there seems to be a unique simply connected $1-$parameter family
$$\{\Omega_t\}_{0<t\leq t_{\ast}}\overset{?}{=}\QD_a\left(\frac{\alpha}{w-w_0}\right)\cap\left\{\Omega\subset\Ch:\Omega\text{ is unbounded}\right\},$$
where $t_{\ast}=\frac{w_0^a}{a}(w_0^a+2a\sqrt{\alpha})+(a-1)\alpha$.

The family and the two phases can be characterized in terms of their Riemann maps, given by Equations \ref{eqn:OnePtPUQDNoZero} and \ref{eqn:OnePtPUQDZero} respectively.

\begin{theorem}\label{thm:unbd1ptPQDclassificationNoZero}
Fix $a>0$ and let $0\notin\Omega$ be an unbounded simply connected domain with conformal radius $c>0$. Then there exist $\alpha,w_0\in\C\setminus\{0\}$ for which $\Omega\in\QD_a\left(\frac{\alpha}{w-w_0}\right)$ if and only if there exists $z_0\in\D\IntComp$ such that $\Omega=\varphi(\D\IntComp)$, where $\varphi:\D\IntComp\rightarrow\Ch$ is a univalent conformal map of the form
\begin{equation}\label{eqn:OnePtPUQDNoZero}
\varphi(z)=cz\left(1+\dfrac{|z_0|^2-1}{z\overline{z_0}-1}\left(\beta^{a}-1\right)\right)^{\frac{1}{a}},
\end{equation}
for which $\alpha=\frac{c^{2a}}{a^2}(1-\overline{\beta}^a)\left(|z_0|^2(1+\beta^a(a-1))-a\beta^a\right)$ and $\left|1-(|z_0|^2-1)(\beta^{a}-1)\right|>|z_0|$, where $\beta=\frac{w_0}{c z_0}$. Also note that $\varphi(z_0)=w_0$.
\end{theorem}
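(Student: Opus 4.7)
I would use the product-factorization framework of Section~\ref{subsec:PQDMultFTMethod}. Since $\Omega$ is unbounded, simply connected, and avoids the origin, Equations~\ref{eqn:UnboundedOuterFactorization}--\ref{eqn:PQDInnerFactors} give $\varphi(z)=z\,\varphi_{\rm out}(z)$ with inner factor $\varphi_{\rm in}(z)=z$; writing $r:=(\varphi_{\rm out}^{a})^{\#}$, Theorem~\ref{thm:SCPQDCharacterization} says $\Omega\in\QD_a$ if and only if $r$ is rational. I would prove the two implications separately, and save the univalence criterion for last.

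\textbf{Forward direction and the $\alpha$ formula.} Suppose $\Omega\in\QD_a\!\left(\frac{\alpha}{w-w_{0}}\right)$. The quadrature function has exactly one simple pole at $w_{0}\neq 0$ and vanishes at infinity, so Corollary~\ref{corr:GenUPQDFTFormula} specializes to
\[
r(z)=C+\frac{\beta_{1}\,z}{z-z_{0}},\qquad z_{0}:=\psi(w_{0})\in\D\IntComp.
\]
Applying the sharp operation,
\[
r^{\#}(z)=\overline{C}\,\frac{z\overline{z_{0}}-\gamma}{z\overline{z_{0}}-1},\qquad \gamma:=1+\overline{\beta_{1}}/\overline{C}.
\]
The normalization $\varphi(z)/z\to c$ at infinity forces $\overline{C}=c^{a}$, and the matching condition $\varphi(z_{0})=w_{0}$, equivalently $r^{\#}(z_{0})=c^{a}\beta^{a}$, pins down $\gamma=1-(|z_{0}|^{2}-1)(\beta^{a}-1)$. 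Taking the $a$-th root returns the stated closed form for $\varphi$. To compute $\alpha$, I plug $\varphi$ into Theorem~\ref{thm:UPQDDirectProblemSol}:
\[
h(w)=\frac{1}{aw}\Phi_{\varphi}\!\left(\AnalyticIn{rr^{\#}}{\D}\right)(w)-\frac{t}{w}.
\]
Expanding $rr^{\#}$ by partial fractions in $z$, the part analytic in $\D$ is supported only at $1/\overline{z_{0}}\in\D$. Applying Equation~\ref{eqn:FaberPolyFormulae} and using the identity $\varphi(1/\overline{z_{0}})=w_{0}$, which one reads directly off the explicit form of $\varphi$, turns the transform into a simple pole at $w_{0}$; matching the coefficient of $1/(w-w_{0})$ produces the stated formula for $\alpha$, with the constant contributions cancelling against $t/w$ as required by $h\in\Rat(\Omega)$.

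\textbf{Reverse direction.} If $\varphi$ has the stated form then $\varphi_{\rm out}^{a}(z)=c^{a}(z\overline{z_{0}}-\gamma)/(z\overline{z_{0}}-1)$ is visibly rational, so assuming $\varphi$ is univalent Theorem~\ref{thm:SCPQDCharacterization} gives $\Omega\in\QD_a$, and running the computation above in reverse verifies that the quadrature function is exactly $\alpha/(w-w_{0})$ with the stated $\alpha$.

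\textbf{Main obstacle: the univalence criterion.} The inequality $|1-(|z_{0}|^{2}-1)(\beta^{a}-1)|>|z_{0}|$, i.e.\ $|\gamma|>|z_{0}|$, is the delicate part. The zero of $r^{\#}$ lies at $\gamma/\overline{z_{0}}$ and its pole at $1/\overline{z_{0}}\in\D$, and the condition must be extracted from tracking when the branched map $z\mapsto z(r^{\#}(z))^{1/a}$ defines a single-valued univalent analytic function on $\D\IntComp$ while keeping the origin out of its image. I would establish this through a Darboux-type boundary injectivity argument (Lemma~\ref{thm:DarbouxUnivalence}) applied to the restriction of $\varphi$ to $\partial\D$, analyzing self-intersections in the spirit of Equation~\ref{eqn:OnePtQDUnivalenceCriterion} and treating the fractional power $1/a$ as an additional constraint. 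Distilling the interaction of the branch point, the inner factor, and the self-intersection condition down to a single clean scalar inequality is the technical heart of the argument.
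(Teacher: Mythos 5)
Your overall route is the same as the paper's: the printed proof of this theorem is a two-line citation of Lemma \ref{lemma:PQDOnePtInftyClassHigherOrder} (itself proved from Theorem \ref{thm:GenUPQDInvProbFormula} and Corollary \ref{corr:GenUPQDFTFormula}) followed by the substitution $z_1=z_0-\frac{|z_0|^2-1}{\overline{z_0}}\beta^{a}$ and an appeal to Equation \ref{eqn:UPQDDirectProblemSol} for the $\alpha$ relation; you simply inline the lemma by applying Corollary \ref{corr:GenUPQDFTFormula} directly. Your derivation of the closed form of $\varphi$ --- $r(z)=C+\beta_1 z/(z-z_0)$, then $\overline{C}=c^{a}$ from the normalization at infinity and $\gamma$ pinned down by $r^{\#}(z_0)=c^{a}\beta^{a}$ --- is correct and reproduces Equation \ref{eqn:OnePtPUQDNoZero}.

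There is, however, a concrete error in your account of the $\alpha$ computation. The projection $\AnalyticIn{rr^{\#}}{\D}$ is the part of $rr^{\#}$ analytic \emph{in} $\D$, i.e.\ the part whose pole lies outside $\D$: it retains the simple pole at $z_0\in\D\IntComp$ and discards the principal part at $1/\overline{z_0}\in\D$. The exterior Faber transform then sends that pole to a pole at $\varphi(z_0)=w_0$ via Equation \ref{eqn:FaberPolyFormulae}, with coefficient built from $\varphi'(z_0)$. Your version keeps the pole at $1/\overline{z_0}$ and invokes ``the identity $\varphi(1/\overline{z_0})=w_0$,'' which is false and cannot be salvaged: $1/\overline{z_0}$ is the pole of $r^{\#}$, it lies in $\D$ and hence outside the domain of $\varphi$, and substituting it into the explicit formula for $\varphi$ produces a vanishing denominator rather than $w_0$. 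Carried out literally, your recipe would require evaluating $\varphi'$ at a singularity; the fix is one line, but the step as written fails. Separately, you leave the inequality $\left|1-(|z_0|^2-1)(\beta^{a}-1)\right|>|z_0|$ as a plan rather than a proof. The paper's own proof is equally silent on it, so this is a gap you share with the source rather than one you introduce; but be warned that $\gamma/\overline{z_0}$ is exactly the zero (and, for non-integer $1/a$, the branch point) of $r^{\#}$, so the ``keep the branch point and the preimage of $0$ out of $\D\IntComp$'' heuristic you describe naturally produces a bound of the form $|\gamma|\le|z_0|$, and you will need to reconcile the direction of the stated inequality with the standing hypotheses that $\varphi$ is single-valued on $\D\IntComp$ and that $0\notin\Omega$ before the Darboux argument can be closed out.
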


\begin{proof}[Proof of Theorem \ref{thm:unbd1ptPQDclassificationNoZero}]\label{proof:unbd1ptPQDclassificationNoZero}\;\\
The equivalence follows directly from Lemma \ref{lemma:PQDOnePtInftyClassHigherOrder}. To obtain Equation \ref{eqn:OnePtPUQDNoZero}, apply Lemma \ref{lemma:PQDOnePtInftyClassHigherOrder} so that $\varphi(z)=cz\left(\frac{z-z_1}{z-\overline{z_0}^{-1}}\right)^{\frac{1}{a}}$, with $\varphi(z_0)=w_0$, which implies $z_1=z_0-\frac{|z_0|^2-1}{\overline{z_0}}\left(\frac{w_0}{c z_0}\right)^a$. Substitution and simplification yields Equation \ref{eqn:OnePtPUQDNoZero}. The final relation follows straightforwardly from Equation \ref{eqn:UPQDDirectProblemSol}.
\end{proof}

\begin{theorem}\label{thm:unbd1ptPQDclassificationZero}
Fix $1\neq a>0$, and let $0\in\Omega$ be an unbounded simply connected domain with conformal radius $c>0$. Then there exist $\alpha,w_0\in\C\setminus\{0\}$ for which $\Omega\in\QD_a\left(\frac{\alpha}{w-w_0}\right)$ if and only if there are $z_0,z_1\in\D\IntComp$ such that $\Omega=\varphi(\D\IntComp)$, where $\varphi:\D\IntComp\rightarrow\Ch$ is a conformal map of the form
\begin{equation}\label{eqn:OnePtPUQDZero}
\varphi(z)=cz|z_1|b_{z_1}(z)\left(1+\dfrac{|z_0|^2-1}{z\overline{z_0}-1}\left(\beta^a-1\right)\right)^{\frac{1}{a}},
\end{equation}
for which $\beta=\frac{w_0}{cz_0}\frac{z_0-\overline{z_1}^{-1}}{z_0-z_1}$, and satisfies the relation
$$\alpha=\dfrac{c^{2a}}{a^2}\dfrac{|z_1|^{2(a-1)}}{|z_0|^2-1}\left(a\overline{z_0}(z_0(\overline{z_1}z_0-2)+z_1)-\dfrac{|z_0-z_1|^2}{1-|z_0|^{-2}}\right).$$
Also note that $\varphi(z_0)=w_0$, and $\varphi(z_1)=0$.
\end{theorem}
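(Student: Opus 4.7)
The plan is to combine Theorem \ref{thm:SCPQDCharacterization}, Corollary \ref{corr:GenUPQDFTFormula}, and Theorem \ref{thm:UPQDDirectProblemSol}, paralleling the proof of Theorem \ref{thm:unbd1ptPQDclassificationNoZero} but tracking the additional Blaschke factor forced by the hypothesis $0\in\Omega$.

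For the forward direction, assume $\Omega\in\QD_a(h)$ with $h(w)=\alpha/(w-w_0)$, $\alpha\neq 0$, $w_0\neq 0$, and $0\in\Omega$. By Theorem \ref{thm:SCPQDCharacterization}, $\varphi_{\rm out}^a=r^{\#}$ for some rational $r$, and Equation \ref{eqn:PQDInnerFactors} yields $\varphi_{\rm in}(z)=zb_{z_1}(z)$, where $z_1\in\D\IntComp$ is the unique preimage of $0$ under $\varphi$. Since $h$ has a single simple pole at $w_0\neq 0$ and is regular at both $0$ and $\infty$, Corollary \ref{corr:GenUPQDFTFormula} gives
\[
r(z)=C+\dfrac{z\beta_1}{z-z_0},\qquad \varphi(z_0)=w_0,
\]
for constants $C,\beta_1$. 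Matching the leading asymptotic $\varphi(z)\sim cz$ at $\infty$ (using $zb_{z_1}(z)\to z/|z_1|$) pins down $C=c^a|z_1|^a$. Reflecting, $r^{\#}(z)=c^a|z_1|^a+\overline{\beta_1}/(1-\overline{z_0}z)$, and putting this over the common denominator $z\overline{z_0}-1$ casts it in the form
\[
r^{\#}(z)=c^a|z_1|^a\left(1+\dfrac{|z_0|^2-1}{z\overline{z_0}-1}(\beta^a-1)\right),
\]
where $\beta^a$ is a compact repackaging of $\overline{\beta_1}$. The explicit formula for $\beta$ then drops out of $\varphi(z_0)=w_0$: noting that the outer factor evaluated at $z_0$ collapses to $c^a|z_1|^a\beta^a$ (because $(|z_0|^2-1)/(z_0\overline{z_0}-1)=1$) and that $z_0b_{z_1}(z_0)=z_0(z_0-z_1)/[|z_1|(z_0-\overline{z_1}^{-1})]$, one obtains $\beta=\tfrac{w_0}{cz_0}\cdot\tfrac{z_0-\overline{z_1}^{-1}}{z_0-z_1}$, and substitution recovers Equation \ref{eqn:OnePtPUQDZero}.

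For the reverse direction, suppose $\varphi$ is univalent on $\D\IntComp$ and has the stated form. Then $\varphi_{\rm out}^a$ is manifestly rational, so Theorem \ref{thm:SCPQDCharacterization} gives $\Omega:=\varphi(\D\IntComp)\in\QD_a(h)$ for some $h\in\Rat(\Omega)$. To identify $h$ and compute $\alpha$, invoke Theorem \ref{thm:UPQDDirectProblemSol}, which expresses
\[
h(w)=\dfrac{1}{aw}\Phi_{\varphi}\left(\AnalyticIn{rr^{\#}}{\D}\right)(w)-\dfrac{t}{w}.
\]
A partial-fractions decomposition of $rr^{\#}$ isolates poles at $z_0\in\D\IntComp$ and $1/\overline{z_0}\in\D$; the Cauchy projection onto $\D$ retains only the constant part and the simple pole at $z_0$, which the Faber transform (Equation \ref{eqn:FaberPolyFormulae}) sends to a constant plus a simple pole at $\varphi(z_0)=w_0$. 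Regularity of $h$ at $w=0$ (guaranteed because $0\in\Omega$) forces an arithmetic identity between the resulting constant and $t=\int_{\Omega^c}|w|^{2(a-1)}dA$, while the residue at $w_0$ yields the claimed closed formula for $\alpha$ after elimination of $\beta$ via the relation above.

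The principal obstacle is bookkeeping: extracting the compact form of $\beta$ from $\varphi(z_0)=w_0$ in the forward direction, and collapsing the Faber-transformed residue into the specific closed form for $\alpha$ stated in the theorem. Both reduce to systematic but tedious algebra once the general machinery of Section \ref{sec:PQDs} is in hand. The hypothesis $a\neq 1$ excludes the classical case, which is treated independently in Section \ref{sec:OnePtQDClass}.
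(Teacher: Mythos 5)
Your overall route is the same as the paper's: the paper invokes Lemma \ref{lemma:PQDOnePtInftyClassHigherOrder} (which is itself proved from Theorem \ref{thm:GenUPQDInvProbFormula} and Corollary \ref{corr:GenUPQDFTFormula}) and then uses $\varphi(z_0)=w_0$ together with Equation \ref{eqn:UPQDDirectProblemSol}; you simply inline that lemma's derivation. Your forward direction and the extraction of $\beta$ from $\varphi(z_0)=w_0$ are correct.

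The gap is in the reverse direction, at the step you dispatch with ``Regularity of $h$ at $w=0$ (guaranteed because $0\in\Omega$).'' This is backwards: quadrature functions lie in $\Rat(\Omega)$, so $0\in\Omega$ is precisely where $h$ is \emph{permitted} to have a pole, and the direct-problem formula generically \emph{does} produce one. Concretely, by Equation \ref{eqn:PQDHFormulaOld} the generalized Schwarz function $\frac{(rr^{\#})\circ\psi(w)}{aw}$ has a pole at $w=0$ (i.e.\ at $z=z_1$) unless $r(z_1)=0$, equivalently unless $r^{\#}$ vanishes at $\overline{z_1}^{-1}$; if it does not, the Cauchy projection onto $\Omega\IntComp$ retains a term $\gamma/w$ and $h$ is a \emph{two}-point quadrature function, not $\frac{\alpha}{w-w_0}$. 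As a sanity check, take $\beta^a=1$ in Equation \ref{eqn:OnePtPUQDZero}: then $rr^{\#}$ is constant and Theorem \ref{thm:UPQDDirectProblemSol} gives $h(w)=(c^{2a}|z_1|^{2a}-at)/(aw)$, a pole at $0$ rather than at $w_0$. So the vanishing of $r$ at $z_1$ is a genuine additional constraint (it pins $\beta^a$ down in terms of $z_0$ and $z_1$) and does not follow from $0\in\Omega$; it is exactly the clause ``$p$ has a root at $z_1$'' in case (2) of Lemma \ref{lemma:PQDOnePtInftyClassHigherOrder}, which the paper's proof leans on and your inlined derivation drops. Correspondingly, in the forward direction you should record that $h$ having no pole at $0$ \emph{forces} $r(z_1)=0$ (the argument is that of Lemma \ref{lemma:PolynomialUPQDClass}(2)), and in the reverse direction this condition must be imposed before the residue computation for $\alpha$ can close up. Note finally that the displayed relation $\beta=\frac{w_0}{cz_0}\frac{z_0-\overline{z_1}^{-1}}{z_0-z_1}$ with $w_0=\varphi(z_0)$ is satisfied automatically by \emph{any} map of the form \ref{eqn:OnePtPUQDZero}, so it cannot stand in for the missing constraint.
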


\begin{proof}[Proof of Theorem \ref{thm:unbd1ptPQDclassificationZero}]\label{proof:unbd1ptPQDclassificationZero}\;\\
The equivalence follows directly from lemma \ref{lemma:PQDOnePtInftyClassHigherOrder}. To obtain Equation \ref{eqn:OnePtPUQDZero} note that, by lemma \ref{lemma:PQDOnePtInftyClassHigherOrder},
$$\varphi(z)=cz|z_1|b_{z_1}(z)\left(\frac{z-\overline{z_1}^{-1}}{z-\overline{z_0}^{-1}}\right)^{\frac{1}{a}},$$
with $\varphi(z_0)=w_0$, which implies $\overline{z_0}^{-1}=z_0-\frac{z_0\overline{z_1}-1}{\overline{z_1}}\left(\frac{cz_0\overline{z_1}}{w_0}\frac{z_0-z_1}{z_0\overline{z_1}-1}\right)^a=z_0-\frac{z_0\overline{z_1}-1}{\overline{z_1}}\beta^a$. Substitution and simplification yields Equation \ref{eqn:OnePtPUQDZero}. The final relation follows from Equation \ref{eqn:UPQDDirectProblemSol}.
\end{proof}

Before continuing on to our discussion of Monomial PQDs, we state the following univalence criterion particularly applicable to PQDs.

\begin{lemma}\label{lemma:PolynomialPowerUnivalence}
Take $a>0$, $k\in\Z_{+}$, $\{z_j\}_{j=1}^{k}$ nonzero constants, and assume $\varphi:\D\IntComp\rightarrow\Ch$ is given by $\varphi(z)=z\left(\left(1-\frac{z_1}{z}\right)\hdots\left(1-\frac{z_k}{z}\right)\right)^{\frac{1}{a}}$. Then,
\begin{enumerate}
\item If $k\leq 2a$ then $\varphi$ is univalent if and only if each $|z_j|\leq1$. In this case, $\varphi(\D\IntComp)$ is starlike with respect to infinity.
\item If $k>2a$, then $\varphi$ is univalent and $\varphi(\D\IntComp)$ is starlike with respect to infinity if and only if $|z_j|\leq1$ and
$$a+\Re\left(\sum_{j=1}^{k}\dfrac{z_j}{z-z_j}\right)>0,\;\;\;\forall z\in\D\IntComp.$$
\end{enumerate}
\end{lemma}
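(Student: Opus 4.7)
The plan is to reduce univalence of $\varphi$ on $\mathbb{D}\IntComp$ to the starlike criterion of Lemma \ref{lemma:starlikeUnivalence} via the inversion $g(z) := 1/\varphi(1/z)$. Indeed, if each $|z_j|\le 1$ then $\varphi$ is single valued and nonzero on $\mathbb{D}\IntComp$, so $g$ is analytic on $\mathbb{D}$ with $g(0)=0$. By construction, $g(\mathbb{D})=1/\varphi(\mathbb{D}\IntComp)$ is starlike with respect to $0$ exactly when $\varphi(\mathbb{D}\IntComp)$ is starlike with respect to infinity, and $g$ is univalent exactly when $\varphi$ is. A direct computation shows $zg'(z)/g(z)=w\varphi'(w)/\varphi(w)$ with $w=1/z$, so applying Lemma \ref{lemma:starlikeUnivalence} to $g$ yields the exterior criterion: $\varphi$ is univalent and $\varphi(\mathbb{D}\IntComp)$ starlike with respect to infinity iff $\Re\bigl(z\varphi'(z)/\varphi(z)\bigr)>0$ on $\mathbb{D}\IntComp$.

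Logarithmic differentiation of the product formula gives
\begin{equation*}
z\dfrac{\varphi'(z)}{\varphi(z)}=1+\dfrac{1}{a}\sum_{j=1}^{k}\dfrac{z_j}{z-z_j},
\end{equation*}
so the starlike criterion becomes exactly $a+\Re\sum_j z_j/(z-z_j)>0$. Combined with the necessity of $|z_j|\le 1$ for $\varphi$ to be single valued and zero free on $\mathbb{D}\IntComp$ (otherwise either the branches of $(1-z_j/z)^{1/a}$ generate nontrivial monodromy around a $z_j\in\mathbb{D}\IntComp$, or, in the accidental case that the total winding is a multiple of $a$, the distinct $z_j$'s in $\mathbb{D}\IntComp$ all produce zeros of $\varphi$, destroying injectivity), this is precisely the content of claim (2).

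For claim (1) I still need to show that when $k\le 2a$ the starlikeness condition is automatic under $|z_j|\le 1$. I rewrite $z_j/(z-z_j)=z/(z-z_j)-1$, turning the target inequality into
\begin{equation*}
(a-k)+\Re\sum_{j=1}^{k}\dfrac{z}{z-z_j}>0 \quad\text{on } \mathbb{D}\IntComp.
\end{equation*}
The Möbius map $\zeta\mapsto 1/(1-\zeta)$ sends the open unit disk onto the half plane $\{\Re>1/2\}$; since $|z_j/z|<1$ whenever $|z|>1\ge|z_j|$, we get $\Re(z/(z-z_j))=\Re(1/(1-z_j/z))>1/2$ strictly for each $j$, so the left hand side exceeds $(a-k)+k/2=a-k/2\ge 0$ whenever $k\le 2a$. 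Thus $\varphi$ is univalent and its image is starlike with respect to infinity, and the necessity of $|z_j|\le 1$ already follows from the single-valued/injectivity argument above.

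The main delicacy will be the monodromy argument in Step 2: I need to verify carefully that $|z_j|>1$ forces failure of univalence in every subcase (including non integer $a$, where branch points in $\mathbb{D}\IntComp$ immediately disqualify $\varphi$, and integer $a$, where accidental single valuedness still leaves multiple zeros in $\mathbb{D}\IntComp$). Everything else is routine, with the Möbius half plane observation doing the real work.
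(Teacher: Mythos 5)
Your reduction to the exterior starlike criterion via $g(z)=1/\varphi(1/z)$, the identity $zg'(z)/g(z)=w\varphi'(w)/\varphi(w)$ with $w=1/z$, and the logarithmic-derivative computation $a\,z\varphi'(z)/\varphi(z)=a+\sum_j z_j/(z-z_j)$ all run parallel to the paper's argument (the paper simply asserts that the exterior criterion follows from Lemma \ref{lemma:starlikeUnivalence} and passes to $\varphi^a$ rather than dividing by $a$). Your M\"obius half-plane estimate $\Re\bigl(z/(z-z_j)\bigr)>\tfrac12$ for $|z|>1\geq|z_j|$, giving $(a-k)+\sum_j\Re\bigl(z/(z-z_j)\bigr)>a-\tfrac{k}{2}\geq0$ when $k\leq2a$, correctly settles the sufficiency in claim (1); the paper leaves that step implicit, so here you are in fact more complete than the source.

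The gap is exactly where you flagged it: the necessity of $|z_j|\leq1$ in claim (1). Your dichotomy (``nontrivial monodromy, or else the distinct $z_j$'s in $\D\IntComp$ all produce zeros, destroying injectivity'') misses the configuration in which exactly one distinct point $p\in\D\IntComp$ carries multiplicity exactly $a$: then $\varphi$ is single-valued on $\D\IntComp$ with a single \emph{simple} zero at $p$, and one simple zero does not by itself destroy injectivity. This is not a repairable technicality, because that configuration yields a counterexample to the ``only if'' of claim (1) as stated: take $a=1$, $k=2$, $z_1=2$, $z_2=\tfrac12$, so that $\varphi(z)=z\bigl(1-\tfrac{2}{z}\bigr)\bigl(1-\tfrac{1}{2z}\bigr)=z+\tfrac{1}{z}-\tfrac{5}{2}$, a translate of the Joukowski map, which is univalent on $\D\IntComp$ even though $|z_1|>1$ (its image, the complement of a segment not containing $0$, is of course not starlike with respect to $\infty$). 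The paper's own proof glosses over the same point by asserting that ``each $|z_j|\leq1$ if and only if $\varphi$ is analytic in $\D\IntComp$,'' which fails in this same configuration. So the step you singled out as the main delicacy cannot be closed as proposed; what your argument does establish is the ``if'' direction of (1) together with the full equivalence in (2), where univalence is paired with starlikeness (which excludes $0$ from the image and hence excludes zeros and branch points of $\varphi$ in $\D\IntComp$).
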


\begin{proof}[Proof of lemma \ref{lemma:PolynomialPowerUnivalence}]
This is a straightforward consequence of the following result:\\
{\it
Let $a>0$, $k,l\in\Z_+$ and, $\{z_j\}_{j=1}^{k}$, $\{w_j\}_{j=1}^{l}$ nonzero constants. If $\varphi:\D\IntComp\rightarrow\Ch$ is given by
$$\varphi(z)=z\left(\dfrac{\left(1-\frac{z_1}{z}\right)\hdots\left(1-\frac{z_k}{z}\right)}{\left(1-\frac{w_1}{z}\right)\hdots\left(1-\frac{w_l}{z}\right)}\right)^{\frac{1}{a}}$$
then $\varphi$ is univalent and $\varphi(\D\IntComp)$ is starlike with respect to infinity if and only if each $|z_j|,|w_j|\leq1$ and
$$a+\Re\left(\sum_{j=1}^{k}\dfrac{z_j}{z-z_j}-\sum_{j=1}^{l}\dfrac{w_j}{z-w_j}\right)>0,\;\;\;\forall z\in\D\IntComp.$$
}
It follows from Lemma \ref{lemma:starlikeUnivalence} that an analytic function $f:\D\IntComp\rightarrow\Ch$ is univalent and $\varphi(\D\IntComp)$ is starlike with respect to infinity if and only if $\Re\left(z\frac{f'(z)}{f(z)}\right)>0$ for all $z\in\D\IntComp$. Moreover, by the chain rule, $\Re\left(z\frac{f'(z)}{f(z)}\right)>0$ if and only if $\Re\left(z\frac{(f^a)'(z)}{f^a(z)}\right)>0$. Thus, its sufficient to show that each $|z_j|,|w_j|\leq1$ and $$a+\Re\left(\sum_{j=1}^{k}\dfrac{z_j}{z-z_j}-\sum_{j=1}^{l}\dfrac{w_j}{z-w_j}\right)>0,\;\;\;\forall z\in\D\IntComp$$
if and only if $\varphi$ is analytic in $\D\IntComp$ and $\Re\left(z\frac{(\varphi^a)'(z)}{\varphi^a(z)}\right)>0$ in $\D\IntComp$.\\
First, note that each $|z_j|,|w_j|\leq1$ if and only if $\varphi$ is analytic in $\D\IntComp$. Second, we have that
$$\varphi^a(z)=z^{a-k+l}\frac{\left(z-z_1\right)\hdots\left(z-z_k\right)}{\left(z-w_1\right)\hdots\left(z-w_l\right)},$$
so
\begin{align*}
\Re\left(z\frac{(\varphi^a)'(z)}{\varphi^a(z)}\right)&=\Re\left(z\dfrac{a-k+l}{z}+\sum_{j=1}^{k}\dfrac{z}{z-z_j}-\sum_{j=1}^{l}\dfrac{z}{z-w_j}\right)=a+\Re\left(\sum_{j=1}^{k}\dfrac{z_j}{z-z_j}-\sum_{j=1}^{l}\dfrac{w_j}{z-w_j}\right).
\end{align*}
The desired result follows.
\end{proof}

\subsection{Classification of Basic Monomial PQDs}\label{subsec:BasicMonomialPQDClass}
In Example \ref{ex:15_PQD_Nested_Family_PostCrit}, we showed that the image $\Omega$ of $\D\IntComp$ under the map
\begin{equation}\label{eqn:BasicPQDNoZeroRiemannMap}
\varphi(z)=cz\left(1-\dfrac{\gamma}{z}\right)^{\frac{1}{a}}
\end{equation}
is a PQD with quadrature function $\alpha=-\frac{c^{2a-1}}{a}\overline{\gamma}$ whenever $|\gamma|<1$, $a,c>0$, and $\varphi$ is univalent in $\D\IntComp$.

Using this result as a starting point, our aim in this section is to fully classify the simply connected domains $\Omega\in\QD_a(\alpha)$. We refer to such domains as {\it basic monomial PQDs}. It turns out that all such domains not containing zero admit a Riemann map in the form of Equation \ref{eqn:BasicPQDNoZeroRiemannMap}. The situation is slightly more complicated when $0\in\Omega$, so we deal with this case last. 

\subsubsection{Basic Monomial PQDs Not Containing Zero}

Theorem \ref{thm:ConstMonomialPQDNoZeroClass} below provides a characterization of basic monomial PQDs not containing zero.

\begin{theorem}\label{thm:ConstMonomialPQDNoZeroClass}
Take $a>0$ and $\alpha\in\C\setminus\{0\}$. There exists a simply connected domain $\Omega$, not containing zero, of conformal radius $c$ for which $\Omega\in\QD_a(\alpha)$ if and only if either
\begin{enumerate}
    \item $0<a<\frac{1}{2}$ and $|\gamma|\leq\frac{a}{1-a}$,
    \item $a>\frac{1}{2}$ and $|\gamma|\leq1$,
    \item $a=\frac{1}{2}$ and $|\alpha|\leq2$.
\end{enumerate}
where $\gamma=-\frac{a\overline{\alpha}}{c^{2a-1}}$. In this case, $\Omega$ is unique modulo conformal radius, and $\Omega=\varphi(\D\IntComp)$ with $\varphi$ univalent in $\D\IntComp$ and given by Equation \ref{eqn:BasicPQDNoZeroRiemannMap}.
\end{theorem}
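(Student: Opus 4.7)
The plan is to combine Theorem \ref{thm:SCPQDCharacterization} and Corollary \ref{corr:GenUPQDFTFormula} to pin down the form of the Riemann map, and then to extract the admissible parameter range via the univalence criterion of Lemma \ref{lemma:PolynomialPowerUnivalence}. The converse direction is already handled by Example \ref{ex:15_PQD_Nested_Family_PostCrit}, which shows that whenever $\varphi$ given by (\ref{eqn:BasicPQDNoZeroRiemannMap}) is univalent, the image is a member of $\QD_a(\alpha)$ with the stated relation between $\alpha$, $c$, and $\gamma$.

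First, suppose such an $\Omega$ exists. Since $\Omega$ is unbounded, simply connected, and $0\notin\Omega$, Equation (\ref{eqn:PQDInnerFactors}) gives $\varphi_{\rm in}(z)=z$, so $\varphi^a(z)=z^a r^{\#}(z)$ with $r\in\Rat(\D\IntComp)$ by Theorem \ref{thm:SCPQDCharacterization}. Applying Corollary \ref{corr:GenUPQDFTFormula} to the constant quadrature function $h=\alpha$ (no finite poles; a pole at infinity of order $n=0$, with $h(\infty)=\alpha\neq 0$ forcing $p$ to be a nonzero constant), the sum over $k$ is empty and $r(z)=C+\delta z$ for some scalars $C,\delta\in\C$. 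Consequently $\varphi(z)=z(\overline{C}+\overline{\delta}/z)^{1/a}$. Matching the normalization $\varphi'(\infty)=c>0$ yields $\overline{C}=c^a$, and $\alpha\neq 0$ forces $\delta\neq 0$ (otherwise $\Omega$ would be the exterior of a disk centered at the origin, which an easy direct calculation shows belongs to $\QD_a(0)$, not $\QD_a(\alpha)$). Setting $\gamma:=-\overline{\delta}/c^a$ then produces the form (\ref{eqn:BasicPQDNoZeroRiemannMap}).

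Next, applying Theorem \ref{thm:UPQDDirectProblemSol} to this $\varphi$, exactly as in the computation of Example \ref{ex:15_PQD_Nested_Family_PostCrit}, recovers $\alpha=-c^{2a-1}\overline{\gamma}/a$, equivalently $\gamma=-a\overline{\alpha}/c^{2a-1}$. Since $\gamma$ is now determined by $\alpha$ and $c$, the domain $\Omega=\varphi(\D\IntComp)$ is uniquely determined once we fix the conformal radius, establishing uniqueness modulo conformal radius.

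It remains to determine for which $\gamma$ the map (\ref{eqn:BasicPQDNoZeroRiemannMap}) is univalent on $\D\IntComp$. Apply Lemma \ref{lemma:PolynomialPowerUnivalence} with $k=1$ and $z_1=\gamma$. When $k=1\leq 2a$, i.e.\ $a\geq\tfrac{1}{2}$, univalence is equivalent to $|\gamma|\leq 1$; specializing to $a=\tfrac{1}{2}$ gives $\gamma=-\overline{\alpha}/2$, hence $|\alpha|\leq 2$, producing cases (2) and (3). When $k=1>2a$, i.e.\ $0<a<\tfrac{1}{2}$, univalence additionally requires
\begin{equation*}
a+\Re\!\left(\frac{\gamma}{z-\gamma}\right)>0\quad\text{for all }z\in\D\IntComp.
\end{equation*}
By the maximum principle the extremum is attained on $\partial\D$; after rotating so that $\gamma>0$ and parametrizing $z=e^{i\theta}$, a short calculation shows that the infimum of $\Re(\gamma/(z-\gamma))$ over $\partial\D$ equals $-|\gamma|/(1+|\gamma|)$ (attained at $\theta=\pi$), so the inequality collapses to $|\gamma|\leq a/(1-a)$. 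Since $a/(1-a)<1$ for $a<\tfrac{1}{2}$, this is the binding constraint, yielding case (1).

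The main potential obstacle is the rigorous reading of Corollary \ref{corr:GenUPQDFTFormula} in the degenerate case $n=0$ with $h(\infty)\neq 0$, together with the need to rule out the spurious solution $\delta=0$; once this bookkeeping is settled, Steps 2 and 3 reduce to calculations already performed in the paper.
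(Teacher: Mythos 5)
Your proposal is correct and follows essentially the same route as the paper: cite Example \ref{ex:15_PQD_Nested_Family_PostCrit} for the converse, use the Faber product machinery (Theorem \ref{thm:GenUPQDInvProbFormula}/Corollary \ref{corr:GenUPQDFTFormula}) to force $r$ to be affine and hence $\varphi(z)=cz(1-\gamma/z)^{1/a}$, then apply Lemma \ref{lemma:PolynomialPowerUnivalence} with the same boundary minimization $\inf_{\partial\D}\Re\bigl(\gamma/(z-\gamma)\bigr)=-|\gamma|/(1+|\gamma|)$ to get the three cases. The only cosmetic difference is that the paper computes the Faber transform of $\AnalyticIn{\alpha w(\psi(w)/w)^a}{\Omega\IntComp}$ directly rather than reading the degenerate $n=0$ case off the corollary, which sidesteps the bookkeeping you flag at the end.
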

\begin{proof}[Proof of Theorem \ref{thm:ConstMonomialPQDNoZeroClass}]
We proved in Example \ref{ex:15_PQD_Nested_Family_PostCrit} that if $\varphi$ (Equation \ref{eqn:BasicPQDNoZeroRiemannMap}) is a Riemann map for $\Omega$, then $\Omega\in\QD_a(\alpha)$, where $\alpha=-\frac{c^{2a-1}}{a}\overline{\gamma}$. 

Conversely, suppose $\Omega\not\ni0$ is a simply connected domain in $\QD_a(\alpha)$ for some $a>0$ and $\alpha\in\C\setminus\{0\}$ (necessarily unbounded because $\infty$ is a quadrature node). Then by Theorem \ref{thm:GenUPQDInvProbFormula} there exists a rational function $r$ for which $\varphi(z)=zr^{\#}(z)^{\frac{1}{a}}$ is a Riemann map for $\Omega$ and
$$r=a\Phi_{\varphi}^{-1}\left(\AnalyticIn{\alpha w\left(\dfrac{\psi(w)}{w}\right)^{a}}{\Omega\IntComp}\right)+\dfrac{a}{c^{a}}t.$$
Also note that the argument of the Faber transform grows linearly about $\infty$, so there exists $c_0\in\C$ s.t. $w\left(\frac{\psi(w)}{w}\right)^{a}-\frac{w+c_0}{c^a}\in\A_0(\Omega)$, which implies
$$\AnalyticIn{\alpha w\left(\dfrac{\psi(w)}{w}\right)^{a}}{\Omega\IntComp}=\dfrac{\alpha}{c^a}\AnalyticIn{w+c_0}{\Omega\IntComp}+\alpha\AnalyticIn{w\left(\dfrac{\psi(w)}{c^{-1}w}\right)^{a}-\dfrac{w+c_0}{c^a}}{\Omega\IntComp}=\dfrac{\alpha}{c^a}(w+c_0).$$
Substituting this into the above expression and computing the Faber transform, we obtain
\begin{align*}
r^{\#}(z)&=\dfrac{a\overline{\alpha}}{c^{a}}\Phi_{\varphi}^{-1}\left(w+c_0\right)^{\#}(z)+\dfrac{a}{c^{a}}t=a\overline{\alpha}\dfrac{cz^{-1}+\overline{f_0}}{c^{a}}+\dfrac{a}{c^{a}}t=\dfrac{a\overline{\alpha}}{c^{a-1}}z^{-1}+c_1.
\end{align*}
Considering the asymptotics, $\varphi(z)=cz+O(1)$, we find that $c_1=c^a$, so
$$\varphi(z)=cz\left(1-\dfrac{\gamma}{z}\right)^{\frac{1}{a}}.$$

Now all there is to do is demonstrate that $\varphi$ is univalent if and only if one of conditions 1-3 in the theorem is satisfied. By Lemma \ref{lemma:PolynomialPowerUnivalence}, when $a\geq\frac{1}{2}$, $\varphi$ is univalent in $\D\IntComp$ if and only if $|\gamma|\leq1$. Furthermore, when $a=\frac{1}{2}$, $\gamma$ is independent of $c$ with $-2\gamma=\overline{\alpha}$, so $\varphi$ is univalent iff $|\alpha|\leq2$.

On the other hand, if $a<\frac{1}{2}$ then Lemma \ref{lemma:PolynomialPowerUnivalence} tells us that $\varphi$ is univalent in $\D\IntComp$ if and only if $|\gamma|\leq1$ and $a+\Re\left(\frac{\gamma}{z-\gamma}\right)>0$ for all $z\in\D\IntComp$. Rotating $\D\IntComp$ by taking $z\mapsto-\frac{\gamma}{|\gamma|}z$ yields $a-|\gamma|\Re\left(\frac{1}{z+|\gamma|}\right)$ which is minimized as $z\to1$. Thus we obtain the condition $|\gamma|\leq\frac{a}{1-a}$. Or, equivalently $0<c\leq c_{\ast}$, where $c_\ast=\sqrt[2a-1]{|\alpha|(1-a)}$. Thus when $0<a<\frac{1}{2}$, $\varphi$ is univalent iff $|\gamma|\leq\frac{a}{1-a}$.
\end{proof}

\begin{figure}[ht]
  \centering    \vcenteredhbox{\includegraphics[height=.15\textwidth,width=.32\textwidth]{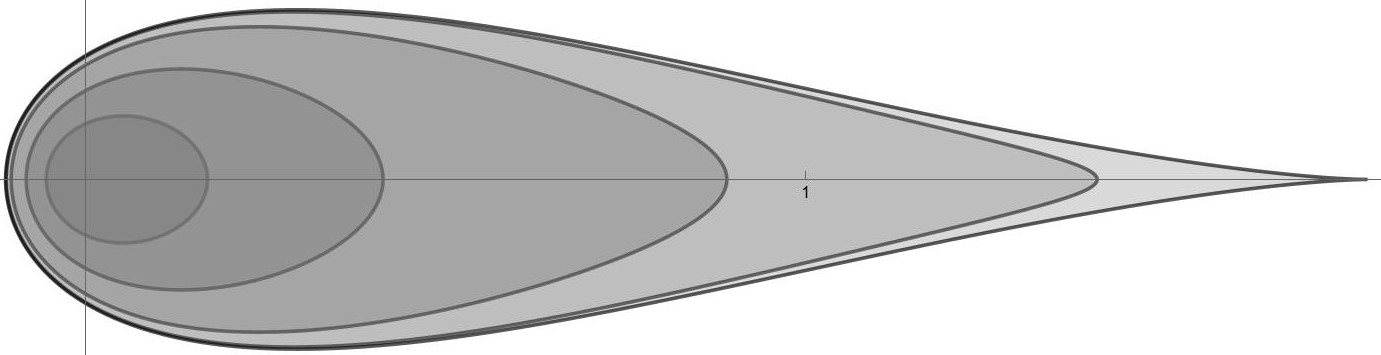}}\;\;\;\;\vcenteredhbox{\includegraphics[height=.15\textwidth,width=.32\textwidth]{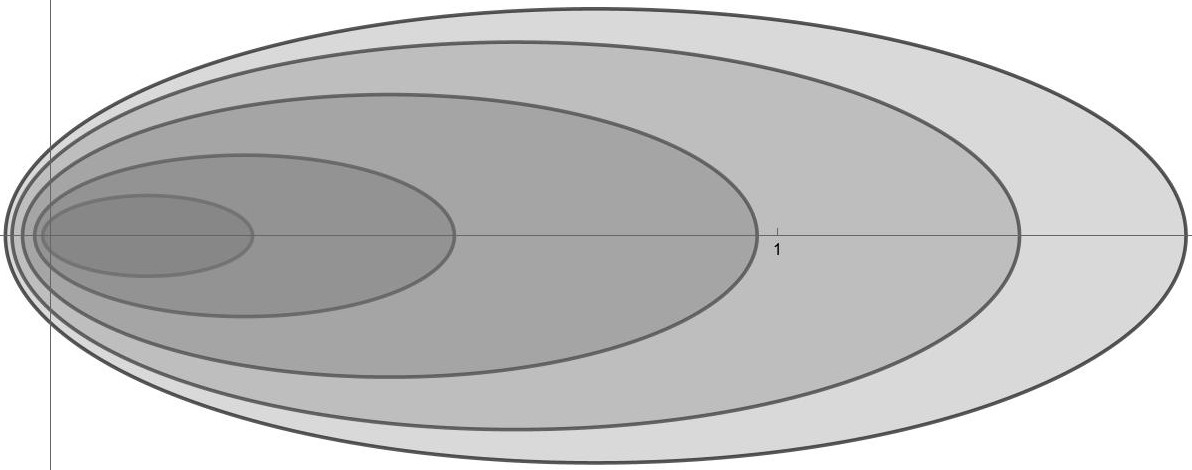}}\;\;\;\;\vcenteredhbox{\includegraphics[height=.3\textwidth]{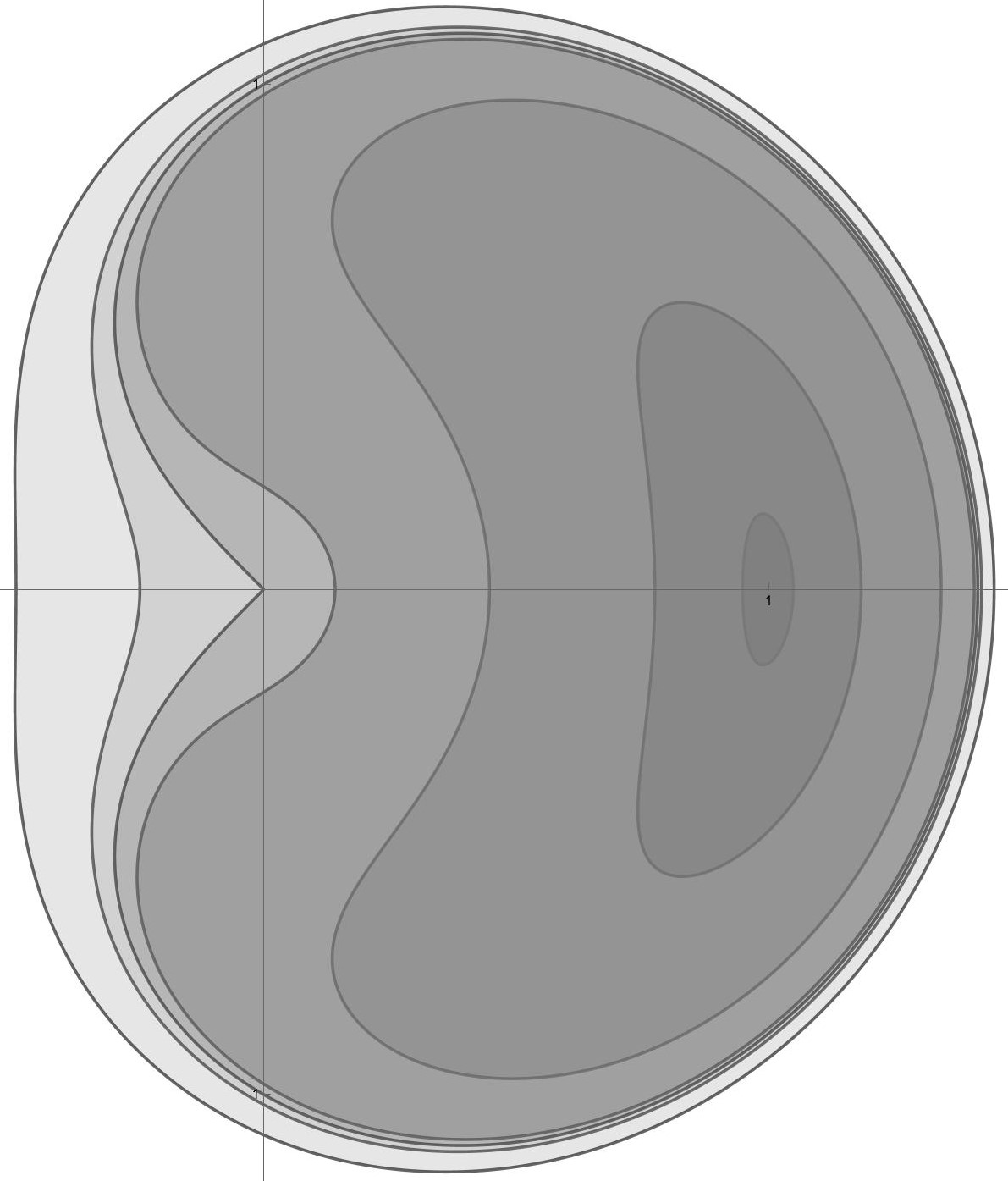}}
    \caption{Example families of $\Omega\in\QD_{a}(\alpha)$ (complements of the shaded regions). Finite family: $\alpha=\frac{25}{12}$ and $a=.4<\frac{1}{2}$ with $c\in\{.03,.1,.2,.25,.3,.32768\}$ (left); traveling wave: $\alpha=\frac{4}{3}$ and $a=\frac{1}{2}$, with $c\in\{.05,.1,.2,.3,.4,.5\}$ (center); two phase: $\alpha=\frac{1}{2}$ and $a=2>\frac{1}{2}$, with $c\in\{.1, .4, .8, .98, 1, 1.02, 1.08\}$ (right).}\label{fig:.4-1-1o3c_Droplets_Finite_All}\vspace{1.5em}
\end{figure}

\subsubsection{Basic Monomial PQDs Containing Zero}
The additional complexity in this case is owed to the fact that when $0\in\Omega$, the formula for the associated Riemann map picks up a Blaschke factor with parameter $z_0$, the unique root of $\varphi$ in $\D\IntComp$. $\varphi$ nevertheless admits a relatively simple representation.
\begin{theorem}\label{thm:ConstMonomialPQDZeroClass}
Take $a>0$ and $\alpha\in\C\setminus\{0\}$. There exists a simply connected domain $\Omega$ of conformal radius $c>0$ containing $0$ for which $\Omega\in\QD_a(\alpha)$ if and only if $|\gamma|\geq1$, with $\gamma$ as in Theorem \ref{thm:ConstMonomialPQDNoZeroClass}. In this case, $\Omega$ is unique modulo conformal radius, and $\Omega=\varphi(\D\IntComp)$ with $\varphi$ given by
\begin{equation}\label{eqn:PQDk1MonomialRiemannMapZero}
\varphi(z)=cz\dfrac{z-\overline{\gamma}^{\frac{1}{2a-1}}}{z-\gamma^{-\frac{1}{2a-1}}}\left(1-\dfrac{\gamma^{\frac{1}{2a-1}}}{z}\right)^{\frac{1}{a}},\;\;\;\;\;\gamma^{\frac{1}{2a-1}}=-\left(\frac{a\overline{\alpha}}{c^{2a-1}}\right)^{\frac{1}{2a-1}}.
\end{equation}
\end{theorem}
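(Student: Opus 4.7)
The plan is to mirror the proof of Theorem~\ref{thm:ConstMonomialPQDNoZeroClass}, adapted to the case $0\in\Omega$. Assume $\Omega\in\QD_a(\alpha)$ is simply connected and (necessarily) unbounded with $0\in\Omega$ and conformal radius $c>0$; let $\varphi\colon\D\IntComp\to\Omega$ be the Riemann map. By the Nevanlinna factorization (Equation~\ref{eqn:PQDInnerFactors}), $\varphi=\varphi_{\rm in}\varphi_{\rm out}$ with $\varphi_{\rm in}(z)=zb_{z_0}(z)$, where $z_0\in\D\IntComp$ is the unique zero of $\varphi$. Theorem~\ref{thm:SCPQDCharacterization} gives $\varphi_{\rm out}^a=r^{\#}$ for some rational $r$, and Corollary~\ref{corr:GenUPQDFTFormula} applied to the constant quadrature function $h(w)=\alpha$ (no finite poles and a degree-zero polynomial at infinity) forces $r$ to be linear, $r(z)=C+p_0 z$, with $C=(c|z_0|)^a$ prescribed by Theorem~\ref{thm:GenUPQDInvProbFormula}.

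Next I would substitute this $r$ into Theorem~\ref{thm:UPQDDirectProblemSol}. A short computation yields $\AnalyticIn{rr^{\#}}{\D}(z)=(C^2+|p_0|^2)+Cp_0 z$, and the Faber polynomial formulas in Equation~\ref{eqn:FaberPolyFormulae} then express the candidate $h$ as a linear combination of $1$ and $1/w$. Matching with $h(w)=\alpha$ gives $p_0=\alpha ac/C$ from the constant term, while the vanishing of the $1/w$-coefficient, combined with the leading-order identification $c=C^{1/a}/|z_0|$ coming from the Laurent expansion of $\varphi$ at $\infty$, determines $z_0$ as a function of $\alpha$, $c$, $a$. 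Substituting back into $\varphi(z)=zb_{z_0}(z)(C+\overline{p_0}/z)^{1/a}$ and rewriting everything with $\gamma=-a\overline{\alpha}/c^{2a-1}$ and $\beta=\gamma^{1/(2a-1)}$ reproduces the closed form of the theorem and simultaneously shows that $\Omega$ is unique once $\alpha$ and $c$ are fixed. For the reverse implication I would then take this explicit $\varphi$ as a candidate Riemann map and verify univalence on $\D\IntComp$ precisely when $|\gamma|\geq 1$, using the starlikeness criterion of Lemma~\ref{lemma:PolynomialPowerUnivalence} (adapted to accommodate the Blaschke factor $b_{\overline{\beta}}$) or, equivalently, by a direct injectivity check on $\partial\D$ together with Darboux's theorem (Lemma~\ref{thm:DarbouxUnivalence}); the same threshold $|\gamma|\geq 1$ pushes the zero of $\varphi$ out into $\D\IntComp$ rather than into $\D$, so that $0\in\Omega$, while $|\gamma|<1$ falls back into the no-zero regime already covered by Theorem~\ref{thm:ConstMonomialPQDNoZeroClass}. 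A final direct application of Theorem~\ref{thm:UPQDDirectProblemSol} to the candidate $\varphi$ recovers its quadrature function as $\alpha$ and closes the sufficiency.

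The hardest step will be the univalence analysis in the reverse direction: one has to single out an analytic branch of $(1-\beta/z)^{1/a}$ in the regime $|\beta|\geq 1$ so that, combined with $b_{\overline{\beta}}$, the product $\varphi$ is single-valued and injective on $\D\IntComp$ and sends $\D\IntComp$ onto a domain whose boundary exhibits the expected corner at $0$. This is the direct analog of the univalence/starlikeness step in Theorem~\ref{thm:ConstMonomialPQDNoZeroClass}, complicated by the extra Blaschke factor; the critical case $|\gamma|=1$ is precisely the shared boundary along which this theorem glues onto Theorem~\ref{thm:ConstMonomialPQDNoZeroClass}, corresponding to $z_0$ reaching $\partial\D$ and, equivalently, to $0$ first touching $\partial\Omega$.
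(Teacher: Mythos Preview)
Your proposal is correct and follows essentially the paper's route: derive the form of $\varphi$ from the product factorization (the paper packages this via Lemma~\ref{lemma:PolynomialUPQDClass}, which already encodes the root-at-$z_0$ constraint you recover from the vanishing $1/w$ coefficient), solve for $z_0$ by applying Theorem~\ref{thm:UPQDDirectProblemSol}, and then verify univalence. For the univalence step the paper takes exactly your Darboux alternative rather than adapting the starlikeness lemma, using the trick of first comparing $|\varphi(z)|=|\varphi(w)|$ on $\partial\D$ to force $w=\bar z$, and then showing $\varphi(z^{-1})/\varphi(z)=1$ only at $z=\pm1$.
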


\begin{proof}[Proof of Theorem \ref{thm:ConstMonomialPQDZeroClass}]
Let $0\in\Omega\in\QD_a(h)$ be a simply connected PQD of conformal radius $c>0$ with quadrature function $\alpha\in\C\setminus\{0\}$ for some $a>0$. Then by Lemma \ref{lemma:PolynomialUPQDClass}, 
$$\varphi(z)=zb_{z_0}(z)\left(|cz_0|^a\left(1-\dfrac{\overline{z_0}^{-1}}{z}\right)\right)^{\frac{1}{a}}$$
is a Riemann map for $\Omega$ for some $z_0\in\D\IntComp$. Supposing that $\varphi$ is univalent in $\D\IntComp$, Equation \ref{eqn:UPQDDirectProblemSol} implies
\begin{align*}
\alpha&=h(w)=\dfrac{1}{a w}\Phi_{\varphi}\left(\AnalyticIn{|cz_0|^{a}\left(1-\dfrac{\overline{z_0}^{-1}}{z}\right)|cz_0|^{a}\left(1-zz_0^{-1}\right)}{\D}\right)(w)-\dfrac{t}{w}\\
&=\dfrac{|cz_0|^{2a}}{a w}\Phi_{\varphi}\left(\AnalyticIn{1+|z_0|^{-2}-zz_0^{-1}-\dfrac{\overline{z_0}^{-1}}{z}}{\D}\right)(w)-\dfrac{t}{w}\\
&=-\dfrac{z_0^{-1}|cz_0|^{2a}}{a w}c^{-1}w-\dfrac{\widetilde{t}}{w}.
\end{align*}
hence $z_0^{-1}|z_0|^{2a}=-\frac{a \alpha}{c^{2a-1}}=-\overline{\gamma}$. This implies that $z_0\alpha<0$, so we may conclude that $z_0=\overline{\gamma}^{\frac{1}{2a-1}}$, and $\varphi$ takes the form of Equation \ref{eqn:PQDk1MonomialRiemannMapZero}. In particular, whenever $\Omega\ni0$ is a simply connected unbounded domain in $\QD_a(\alpha)$ for $a>0$ and (wlog by change of variables) $\alpha>0$, then $\Omega=\varphi(\D\IntComp)$ with $\varphi$ as above. 

Regarding the univalence of \ref{eqn:PQDk1MonomialRiemannMapZero}, we apply Lemma \ref{thm:DarbouxUnivalence}, which tells us it is sufficient to check injectivity on the boundary. Suppose there existed $z\neq w\in\partial\D$ for which $\varphi(z)=\varphi(w)$. Then if we take $\eta=-\gamma^{\frac{1}{2a-1}}$,
\begin{align*}
|\varphi(z)|&=c\left|\frac{z+\eta}{z+\eta^{-1}}\left(1+\frac{\eta^{-1}}{z}\right)^{\frac{1}{a}}\right|=c\eta\left|\left(1+\frac{\eta^{-1}}{z}\right)^{\frac{1}{a}}\right|=c\eta\left|z+\eta^{-1}\right|^{\frac{1}{a}},
\end{align*}
so $\left|z-\eta^{-1}\right|^2=\left|w-\eta^{-1}\right|^2$. Setting $z=e^{i\theta_0}$ and $w=e^{i\theta_2}$ and expanding, we find that $\cos(\theta_1)=\cos(\theta_2)$, so $w=\overline{z}=z^{-1}$. Thus
\begin{align*}
1&=\dfrac{\varphi(z^{-1})}{\varphi(z)}=cz^{-1}\frac{z^{-1}+\eta}{z^{-1}+\eta^{-1}}\left(1+z\eta^{-1}\right)^{\frac{1}{a}}\left(cz\frac{z+\eta}{z+\eta^{-1}}\left(1+\frac{\eta^{-1}}{z}\right)^{\frac{1}{a}}\right)^{-1}=\frac{\left(1+z^{-1}\eta^{-1}\right)^{2-\frac{1}{a}}}{\left(1+z\eta^{-1}\right)^{2-\frac{1}{a}}}.
\end{align*}
Thus $(1+z^{-1}\eta^{-1})^{2-\frac{1}{a}}=(1+z\eta^{-1})^{2-\frac{1}{a}}$.

$a>\frac{1}{2}$ implies $2-\frac{1}{a}\in(0,2)$, in which case $z\mapsto z^{2-\frac{1}{a}}$ is injective in the closed right half-plane. Moreover, as long as $\eta\geq1$, the images of $\partial\D$ under $z\mapsto 1+z\eta^{-1}$ and $z\mapsto 1+z^{-1}\eta^{-1}$ are in the closed right half plane. Thus $1+z\eta^{-1}=1+z^{-1}\eta^{-1}$, so $z=\pm1$. However recall that $w=z^{-1}$ which, in both cases, implies that $z=w$. Thus $\varphi$ is univalent in $\D\IntComp$ when $|\gamma|\geq1$. On the other hand if $|\gamma|<1$ then $\varphi$ isn't even analytic in $\D\IntComp$. In particular, $\varphi$ is univalent in $\D\IntComp$ if and only if $|\gamma|\geq1$.
\end{proof}

We saw in Theorems \ref{thm:ConstMonomialPQDNoZeroClass} and \ref{thm:ConstMonomialPQDZeroClass} that the family of simply connected $\Omega$ in $\QD_a(\alpha)$ depends crucially on the value of $a$. Another way of understanding this is through Lemmas \ref{lemma:GenComplementDropletQuad} and \ref{lemma:GenQuadComplementDroplet} which tell us $\Omega\in\QD_a(\alpha)$ iff $K:=\Omega^{c}$ is a local droplet of the potential $Q(w)=\frac{|w|^{2a}}{a^2}-2\Re(\alpha w)$. So if $K$ is such a local droplet then, by Lemmas \ref{lemma:HSChains} and \ref{lemma:HSChainZeroCoincidence}, there exists an associated Hele-Shaw chain $\{K_{t}\}_{0<t\leq t_0}$, where $K_{t_0}=K$ such that each element of this chain contains a local minimum of $Q$. Thus $\QD_a(\alpha)$ is nonempty only when $Q$ has a local minimum. For example, in Figure \ref{fig:.4-1-1o3c_Droplets_Finite_All}, we observe that the families of solutions each ``nucleate'' at the unique local minimum of $Q$, located at $0$, $0$, and $\left(a \alpha\right)^{\frac{1}{2a-1}}$ respectively.

\subsection{Classification of Monomial PQDs}
In this section, we are interested in characterizing PQDs with a monomial quadrature function - that is, $\Omega\in\QD_a(\alpha k w^{k-1})$ for some $a>0$, $\alpha\in\C\setminus\{0\}$, and $k\in\Z_{+}$. Firstly note that we can wlog assume that $\alpha>0$ because if $S_a$ is the associated generalized Schwarz function then
$$\widetilde{S_a}(w):=\left(\frac{|\alpha|}{\alpha}\right)^{\frac{1}{k}}S_a\left(w\left(\frac{|\alpha|}{\alpha}\right)^{\frac{1}{k}}\right)\dEquals\left(\frac{|\alpha|}{\alpha}\right)^{\frac{1}{k}}\dfrac{1}{a}\overline{w\left(\frac{|\alpha|}{\alpha}\right)^{\frac{1}{k}}}\left|w\left(\frac{|\alpha|}{\alpha}\right)^{\frac{1}{k}}\right|^{2(a-1)}=\dfrac{1}{a}\overline{w}|w|^{2(a-1)}$$
is a generalized Schwarz function for $\widetilde{\Omega}:=\left(\frac{\alpha}{|\alpha|}\right)^{\frac{1}{k}}\Omega$, so it is a PQD. It's then straightforward to verify that $\widetilde{\Omega}\in\QD_a(|\alpha|kw^{k-1})$.

\begin{theorem}\label{thm:GenMonomialPQDNoZero}
Fix $a>0$, $\alpha\in\C\setminus\{0\}$, $k\in\Z_{+}$. If $0\notin\Omega$ is a $\Z_k-$rotationally symmetric domain $(e^{\frac{2\pi i}{k}}\Omega=\Omega)$, then $\Omega\in\QD_a(\alpha kw^{k-1})$ iff $\Omega^{k}\in\QD_{\frac{a}{k}}(\alpha k^2)$. In this case, $\Omega=\widetilde{\varphi}(\D\IntComp)$, where
\begin{equation}\label{eqn:MonomialPQDNoZeroRiemannMap}
\widetilde{\varphi}(z)=cz\left(1-\frac{\gamma_k}{z^{k}}\right)^{\frac{1}{a}},\;\;\;\;\gamma_k=-\frac{a \overline{\alpha}k}{c^{2a-k}}.
\end{equation}
is univalent in $\D\IntComp$.
\end{theorem}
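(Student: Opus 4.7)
The strategy is to use the power map $w\mapsto w^k$ to reduce Theorem \ref{thm:GenMonomialPQDNoZero} to the basic monomial case (Theorem \ref{thm:ConstMonomialPQDNoZeroClass}). Because $0\notin\Omega$ and $\Omega$ is $\Z_k$-rotationally symmetric, $p(w)=w^k$ is an unramified $k$-to-$1$ covering $\Omega\to\Omega^k$, and $0\notin\Omega^k$.

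For the biconditional I would argue via the generalized Schwarz function (Theorem \ref{theorem:EquivPQDChars}). If $S$ is the generalized Schwarz function for $\Omega$ at exponent $a$, then the $\Z_k$-symmetry of $\Omega$, together with uniqueness of meromorphic extension, yields $S(\zeta w)=\zeta^{-1}S(w)$ for every $k$-th root of unity $\zeta$. Consequently the expression $\widetilde{S}(u):=k\,w^{1-k}S(w)$ with $w^k=u$ is single-valued and meromorphic on $\Omega^k$, and a direct boundary manipulation confirms $\widetilde S(u)\dEquals\tfrac{k}{a}\overline{u}|u|^{2(a/k-1)}$, so $\widetilde{S}$ is the generalized Schwarz function for $\Omega^k$ at exponent $a/k$. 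The converse is symmetric: set $S(w):=\tfrac{w^{k-1}}{k}\widetilde{S}(w^k)$, which is automatically meromorphic on $\Omega$. To pin down the quadrature constants, substitute $f(w)=f_0(w^k)$ with $f_0\in\A_0(\Omega^k)$ into the quadrature identity for $\Omega$; the Jacobian gives $|w|^{2(a-1)}dA(w)=k^{-2}|u|^{2(a/k-1)}dA(u)$, the differential identity $d(w^k)=kw^{k-1}dw$, and the $k$-fold multiplicity of the covering together yield the constant $\alpha k^2$ on the $\Omega^k$ side.

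Next I would recover the Riemann map. By uniqueness of the normalized Riemann map $\widetilde\varphi:\D\IntComp\to\Omega$ (with $\widetilde\varphi(\infty)=\infty$ and $\widetilde\varphi'(\infty)=c>0$), the $\Z_k$-symmetry of $\Omega$ forces the equivariance $\widetilde\varphi(\zeta z)=\zeta\widetilde\varphi(z)$ for $\zeta^k=1$. Hence $\widetilde\varphi(z)^k$ is $\Z_k$-invariant and descends through $z\mapsto z^k$: there exists a Riemann map $\widehat\varphi:\D\IntComp\to\Omega^k$ satisfying $\widehat\varphi(z^k)=\widetilde\varphi(z)^k$, with conformal radius $c^k$. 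Applying Theorem \ref{thm:ConstMonomialPQDNoZeroClass} to $\Omega^k\in\QD_{a/k}(\alpha k^2)$ yields
\[
\widehat\varphi(u)=c^k u\left(1-\frac{\gamma_k}{u}\right)^{k/a},\qquad \gamma_k=-\frac{(a/k)\,\overline{\alpha k^2}}{(c^k)^{2a/k-1}}=-\frac{ak\overline\alpha}{c^{2a-k}}.
\]
Substituting $u=z^k$ and extracting the $k$-th root consistent with the normalization $\widetilde\varphi'(\infty)=c>0$ produces Equation \ref{eqn:MonomialPQDNoZeroRiemannMap}. Univalence of $\widetilde\varphi$ on $\D\IntComp$ is inherited from that of $\widehat\varphi$: if $\widetilde\varphi(z_1)=\widetilde\varphi(z_2)$, then $\widehat\varphi(z_1^k)=\widehat\varphi(z_2^k)$ forces $z_2=\zeta z_1$ for some $\zeta^k=1$, and the equivariance relation together with $\widetilde\varphi(z_1)\ne 0$ forces $\zeta=1$.

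The main technical subtlety is verifying that $\widetilde\varphi$---defined as a $k$-th root of the explicit rational expression $c^k z^k(1-\gamma_k/z^k)^{k/a}$---is single-valued and holomorphic on $\D\IntComp$. This reduces to the rational function $\widehat\varphi(z^k)$ being non-vanishing on $\D\IntComp$, which is precisely the hypothesis $0\notin\Omega^k$, guaranteed by $0\notin\Omega$.
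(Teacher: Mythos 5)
Your proof is correct and follows essentially the same route as the paper: both directions of the equivalence are obtained by transporting the generalized Schwarz function / quadrature identity through $w\mapsto w^k$ (the paper's Lemmas \ref{lemma:MonomialPQDReduction} and \ref{lemma:MonomialPQDReductionConverse}), and the explicit Riemann map comes from applying Theorem \ref{thm:ConstMonomialPQDNoZeroClass} to $\Omega^k$ and pulling back. The only difference is cosmetic: the paper gets the general form of $\widetilde\varphi$ from Lemma \ref{lemma:PolynomialUPQDClass} and matches it against $\varphi(z^k)$, whereas you justify the identity $\widetilde\varphi(z)^k=\widehat\varphi(z^k)$ directly via the equivariance $\widetilde\varphi(\zeta z)=\zeta\widetilde\varphi(z)$ --- a step the paper leaves implicit.
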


Toward proving this, we note the following relationship between the $k=1$ case and the general case:
\begin{lemma}\label{lemma:MonomialPQDReduction}
Fix $a>0$, $\alpha\in\C\setminus\{0\}$, and $k\in\Z_+$. If $\Omega\in \QD_{\frac{a}{k}}(\alpha k^2)$ then $\{w:w^k\in\Omega\}\in\QD_a\left(\alpha kw^{k-1}\right)$.
\end{lemma}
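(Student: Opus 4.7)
My plan is to sidestep a direct verification of the quadrature identity on $\widetilde{\Omega}:=\{w:w^k\in\Omega\}$ (which would require a $k$-to-$1$ change of variables and careful bookkeeping of which test functions on $\widetilde{\Omega}$ arise by lifting from $\Omega$) by instead using the generalized Schwarz function characterization of Theorem \ref{theorem:EquivPQDChars}. The strategy is to produce a function $\widetilde{S}_a\in\M(\widetilde{\Omega})$ satisfying $\widetilde{S}_a(w)\dEquals\tfrac{1}{a}\overline{w}|w|^{2(a-1)}$ on $\partial\widetilde{\Omega}$, built from $S_{a/k}$ via the $k$-th power map, and then read off the quadrature function from its $\Rat\oplus\A_0$ decomposition.

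\textbf{Key steps.} First observe that, since the nonzero constant $\alpha k^2$ cannot lie in $\Rat_0(\Omega)$, the hypothesis forces $\Omega$ (and hence $\widetilde{\Omega}$) to be unbounded. Theorem \ref{theorem:EquivPQDChars} supplies a generalized Schwarz function $S_{a/k}\in\M(\Omega)$ with $S_{a/k}(u)\dEquals\tfrac{k}{a}\overline{u}|u|^{2(a/k-1)}$ and a decomposition $S_{a/k}=\alpha k^2-G$ for some $G\in\A_0(\Omega)$. I then define
$$\widetilde{S}_a(w):=\tfrac{1}{k}w^{k-1}S_{a/k}(w^k),$$
which is meromorphic in $\widetilde{\Omega}$ because $w\mapsto w^k$ maps $\widetilde{\Omega}$ holomorphically into $\Omega$. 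A short algebraic check using $\overline{w^k}=\overline{w}^k$ and $|w^k|^{2(a/k-1)}=|w|^{2(a-k)}$, together with $w^{k-1}\overline{w}^k=\overline{w}|w|^{2(k-1)}$, shows that on $\partial\widetilde{\Omega}$
$$\widetilde{S}_a(w)\dEquals\tfrac{1}{a}\overline{w}|w|^{2(a-1)},$$
so $\widetilde{\Omega}\in\QD_a$ by Theorem \ref{theorem:EquivPQDChars}. Substituting $S_{a/k}=\alpha k^2-G$ into the definition gives
$$\widetilde{S}_a(w)=\alpha kw^{k-1}-\tfrac{1}{k}w^{k-1}G(w^k),$$
and the correction term $\widetilde{G}(w):=\tfrac{1}{k}w^{k-1}G(w^k)$ lies in $\A_0(\widetilde{\Omega})$ since $G(w^k)=O(w^{-k})$ at infinity. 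Uniqueness of the splitting $\widetilde{S}_a=\widetilde{h}-\widetilde{G}$ with $\widetilde{h}\in\Rat(\widetilde{\Omega})$ and $\widetilde{G}\in\A_0(\widetilde{\Omega})$ (difference of two such splittings is in $\Rat(\widetilde{\Omega})\cap\A_0(\widetilde{\Omega})=\{0\}$) then identifies the quadrature function as $\widetilde{h}(w)=\alpha kw^{k-1}$, a polynomial whose only pole is at $\infty\in\widetilde{\Omega}$.

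\textbf{Main obstacle.} Once one guesses the form $\tfrac{1}{k}w^{k-1}S_{a/k}(w^k)$, both the meromorphy in $\widetilde{\Omega}$ and the boundary identity are immediate. The one subtlety worth flagging is that $\widetilde{\Omega}$ need not be connected: when $\Omega$ is not $\Z_k$-symmetric, the preimage breaks into several rotated copies. Because the Schwarz function argument is purely local at $\partial\widetilde{\Omega}$, it applies componentwise and produces the same polynomial $\widetilde{h}$ on each component, so the conclusion is unaffected. The only other point requiring vigilance is the decay $\widetilde{G}\in\A_0$ at $\infty$, which is exactly what pins down $\widetilde{h}=\alpha kw^{k-1}$ rather than $\widetilde{h}$ plus some unwanted entire polynomial.
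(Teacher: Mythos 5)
Your proposal is correct and follows essentially the same route as the paper: both construct the generalized Schwarz function $\widetilde{S}_a(w)=\tfrac{1}{k}w^{k-1}S_{a/k}(w^k)$, verify the boundary identity, and invoke Theorem \ref{theorem:EquivPQDChars}. The only cosmetic difference is that you read off $\widetilde{h}=\alpha k w^{k-1}$ from the uniqueness of the $\Rat(\widetilde{\Omega})\oplus\A_0(\widetilde{\Omega})$ splitting, while the paper applies the projection formula $\widetilde{h}=\AnalyticIn{\widetilde{S}_a}{\widetilde{\Omega}\IntComp}$ — these amount to the same computation.
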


\begin{proof}[Proof of lemma \ref{lemma:MonomialPQDReduction}]
Suppose that $\Omega\in\QD_{\frac{a}{k}}(\alpha)$ and define $\widetilde{\Omega}:=\{w:w^k\in\Omega\}$. By Theorem \ref{theorem:EquivPQDChars}, $\Omega$ admits a generalized Schwarz function $S_{\frac{a}{k}}(w)\dEquals \frac{k}{a}\overline{w}|w|^{2\left(\frac{a}{k}-1\right)}$, so
$$S_a(w):=\dfrac{1}{k}w^{k-1}S_{\frac{a}{k}}(w^k)\dEquals \dfrac{1}{k}w^{k-1}\dfrac{k}{a}\overline{w}^k|w|^{2\left(a-k\right)}=\dfrac{1}{a}\overline{w}|w|^{2\left(a-1\right)}$$
is a generalized Schwarz function for $\widetilde{\Omega}$. Thus, by Theorem \ref{theorem:EquivPQDChars}, $\widetilde{\Omega}\in\QD_a$ and
$$\widetilde{h}(w)=\AnalyticIn{S_a(w)}{\widetilde{\Omega}\IntComp}=\AnalyticIn{\dfrac{1}{k}w^{k-1}S_{\frac{a}{k}}(w^k)}{\widetilde{\Omega}\IntComp}=\AnalyticIn{\frac{\alpha}{k}w^{k-1}+\frac{1}{k}w^{k-1}G(w^k)}{\widetilde{\Omega}\IntComp}=\frac{\alpha}{k}w^{k-1}.$$
The result follows from the substitution $\alpha\mapsto \alpha k^2$.
\end{proof}

It is not true however that every element of $\QD_a(\alpha kw^{k-1})$ can be represented this way. To wit,
\begin{example}\label{ex:PQDSymhNoSymK}
Consider the map $\varphi:\D\IntComp\rightarrow\Ch$ given by
$$\varphi(z):=cz\dfrac{z+u}{z+u^{-1}}\sqrt{\left(1-\frac{2\gamma}{u^{3}}z^{-1}\right)\left(1+\frac{1}{u}z^{-1}\right)}=c\left(z+u\right)\sqrt{\dfrac{z-2\gamma u^{-3}}{z+u^{-1}}}$$
where $\gamma=-\frac{2\alpha}{c^2}$, $u=\sqrt{\sqrt{\gamma\left(\gamma+4\right)}-\gamma}$, and $c,\alpha>0$. One can show that this map is univalent in $\D\IntComp$ precisely when $\frac{2\alpha}{c^2}=-\gamma\geq\frac{5\sqrt{13}-1}{4}\approx4.257$ via Lemma \ref{thm:DarbouxUnivalence} and solving the equation $\varphi^2(z)=\varphi^2(\xi)$ over $|z|=|\xi|=1$ (taking its conjugate, we obtain $\varphi^2(z^{-1})=\varphi^2(\xi^{-1})$, so we may eliminate $\xi$ to obtain a polynomial equation in $z$ with coefficients depending only on $\gamma$).

Then note that $\varphi_{\rm out}^2(z)=r^{\#}(z)=u^2c^2\left(1-\frac{2\gamma}{u^{3}}z^{-1}\right)\left(1+\frac{1}{u}z^{-1}\right)$ has the form required by Theorem \ref{thm:SCPQDCharacterization}, so $\Omega:=\varphi(\D\IntComp)$ is a PQD when $\varphi$ is univalent. Applying Equation \ref{eqn:PQDHFormulaOld} with $r(z)=u^2c^2(1-2\gamma u^{-3}z)(1+u^{-1}z)$ we obtain

\begin{align*}
    h(w)&=\dfrac{1}{2}\AnalyticIn{\dfrac{\left(rr^{\#}\right)\circ\psi(w)}{w}}{\Omega\IntComp}=\dfrac{c^4}{2}\AnalyticIn{\dfrac{-2\gamma\psi^2(w)+u^{-3}(u^2-2\gamma)(u^4-2\gamma)\psi(w)+O(1)}{w}}{\Omega\IntComp}\\
    &=\AnalyticIn{-c^2\gamma w+\dfrac{c^3}{2u}\left(u^4+2u^2\gamma-4\gamma\right)+O(w^{-1})}{\Omega\IntComp}\\
    &=2\alpha w+\dfrac{c^3}{2u}\left(u^4+2u^2\gamma-4\gamma\right)
\end{align*}

And $u$, as defined above, is a root of $u^4+2\gamma u^2-4\gamma$, so $h(w)=2\alpha w$. That is, $\Omega\in\Omega_2\left(2\alpha w\right)$. See Figure \ref{fig:PQDSymhNoSymK}. It is of note that $-\Omega$ is also in $\QD_2\left(2\alpha w\right)$ (consider taking $c\mapsto-c$).

\begin{figure}[ht]
  \centering
    \includegraphics[width=.45\textwidth]{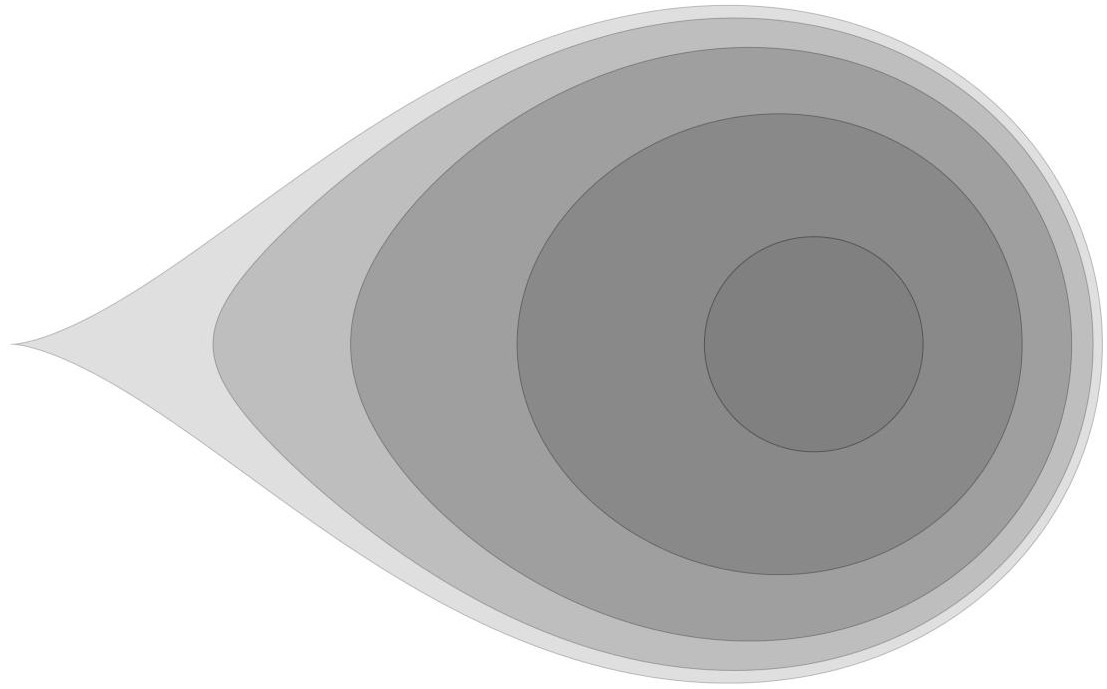}
    \caption{An asymmetric family (complements of the shaded regions) in $\QD_{2}\left(\frac{w}{2}\right)$ for $c\in\{.09, .2, .27, .31, 0.34272\}$ (Example \ref{ex:PQDSymhNoSymK})}\label{fig:PQDSymhNoSymK}
\end{figure}
\end{example}
\;\\
Despite the failure of the converse of Lemma \ref{lemma:MonomialPQDReduction} in the general case, it can nevertheless be recovered with the additional requirement of $\Z_k-$rotational symmetry.
\begin{lemma}[Converse to Lemma \ref{lemma:MonomialPQDReduction}]\label{lemma:MonomialPQDReductionConverse}
Fix $a>0$, $\alpha\in\C\setminus\{0\}$, and $k\in\Z_+$. If $\Omega\in\QD_a(\alpha kw^{k-1})$ and is $\Z_k-$rotationally symmetric ($e^{\frac{2\pi i}{k}}\Omega=\Omega$), then $\Omega^k\in\QD_{\frac{a}{k}}(\alpha k^2)$.
\end{lemma}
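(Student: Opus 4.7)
The plan is to build a generalized Schwarz function for $\Omega^k$ out of $S_a\in\M(\Omega)$, the generalized Schwarz function of $\Omega$ provided by Theorem \ref{theorem:EquivPQDChars}, and then read the quadrature function off its singular part. Set $\zeta=e^{2\pi i/k}$, and introduce
$$T(w):=\frac{k\,S_a(w)}{w^{k-1}}\in\M(\Omega).$$
Using $S_a(w)\dEquals\frac{1}{a}\overline{w}|w|^{2(a-1)}$ and $|w|^2=w\overline{w}$, a direct computation gives $T(w)\dEquals\frac{k}{a}\overline{w}^{k}|w|^{2(a-k)}$ on $\partial\Omega$, whose right-hand side is manifestly invariant under $w\mapsto\zeta w$.

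Next I would promote this boundary $\Z_k$-invariance to all of $\Omega$. The polynomial quadrature function satisfies $h(\zeta w)=\overline{\zeta}\,h(w)$, and Theorem \ref{theorem:EquivPQDChars} writes $S_a=h-G$ for some $G\in\A_0(\Omega)$. The auxiliary function $F(w):=G(\zeta w)-\overline{\zeta}\,G(w)$ lies in $\A_0(\Omega)$ and, because the boundary identity for $S_a$ is itself $\overline{\zeta}$-equivariant, vanishes on $\partial\Omega$; hence $F\equiv 0$ by the maximum principle. This yields $S_a(\zeta w)=\overline{\zeta}\,S_a(w)$ in $\Omega$, and a short computation gives $T(\zeta w)=\zeta^{1-k}\overline{\zeta}\,T(w)=\zeta^{-k}T(w)=T(w)$.

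Since $T$ is a $\Z_k$-invariant meromorphic function on the $\Z_k$-invariant domain $\Omega$, expanding its Laurent series at each point of $\Omega$ (including at $0$ and at $\infty$, if present) shows that only exponents divisible by $k$ can occur, so there is a unique $\widetilde{T}\in\M(\Omega^k)$ with $T(w)=\widetilde{T}(w^k)$. The boundary identity for $T$ transports to
$$\widetilde{T}(W)\dEquals\frac{k}{a}\overline{W}|W|^{2(\frac{a}{k}-1)}\quad\text{on }\partial\Omega^k,$$
so by condition (3) of Theorem \ref{theorem:EquivPQDChars}, $\widetilde{T}$ is a generalized Schwarz function for $\Omega^k$ at exponent $a/k$, and in particular $\Omega^k\in\QD_{a/k}(\widetilde{h})$ for a unique $\widetilde{h}\in\Rat(\Omega^k)$.

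To identify $\widetilde{h}$, I would substitute $S_a=\alpha kw^{k-1}-G$ into $T$ to obtain $T(w)=k^2\alpha-kG(w)/w^{k-1}$. The $\overline{\zeta}$-equivariance of $G$ makes $G(w)/w^{k-1}$ a $\Z_k$-invariant function; the same equivariance forces the Taylor coefficients of $G$ at $0$ (if $0\in\Omega$) to vanish except in degrees congruent to $k-1\pmod{k}$, so the descended function $\widetilde{G}$ is analytic at $W=0$, while $G(w)=O(w^{-1})$ at infinity gives $\widetilde{G}(W)=O(W^{-1})$; hence $\widetilde{G}\in\A_0(\Omega^k)$. Therefore $\widetilde{T}=k^2\alpha-k\widetilde{G}$ with $k^2\alpha\in\Rat(\Omega^k)$ and $k\widetilde{G}\in\A_0(\Omega^k)$, and the uniqueness of the Schwarz decomposition yields $\widetilde{h}=k^2\alpha$, as required. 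The main delicacy will be the $\overline{\zeta}$-equivariance step together with the analyticity of $\widetilde{G}$ at $W=0$; both come down to careful matching of Laurent coefficients under the $\Z_k$-action.
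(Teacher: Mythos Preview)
Your argument is correct and goes through, but it takes a noticeably longer route than the paper. The paper's proof is a one-line change of variables in the quadrature identity itself: for each $f\in\A_0(\Omega^k)$ one has $f(w^k)\in\A_0(\Omega)$, and the substitution $W=w^k$ (using $|W|^{2(a/k-1)}dA(W)=k^2|w|^{2(a-1)}dA(w)$ and $dW=kw^{k-1}dw$) turns the $\QD_a$ identity for $\Omega$ directly into the $\QD_{a/k}$ identity for $\Omega^k$ with quadrature function $\alpha k^2$. No Schwarz function, no equivariance, no descent.

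Your approach instead works through the generalized Schwarz function: you build $T=kS_a/w^{k-1}$, establish its $\Z_k$-invariance via the $\overline{\zeta}$-equivariance of $G$, descend it to $\widetilde{T}\in\M(\Omega^k)$, and read off $\widetilde{h}=k^2\alpha$ from the splitting $\widetilde{T}=k^2\alpha-k\widetilde{G}$. This is structurally appealing --- it makes the Schwarz-function picture explicit and mirrors the forward direction in Lemma~\ref{lemma:MonomialPQDReduction} --- but it is considerably more work, and the delicacies you flag (equivariance of $G$, analyticity of $\widetilde{G}$ at $W=0$) are real steps that the paper's integral argument simply bypasses. One small omission: you invoke $G\in\A_0(\Omega)$ without noting that $\Omega$ is necessarily unbounded here (a nonzero polynomial quadrature function is never in $\Rat_0(\Omega)$), which is what justifies that choice of function class.
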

\begin{proof}[Proof of Lemma \ref{lemma:MonomialPQDReductionConverse}]

Let $\Omega$ be an unbounded domain as above so that for each $f\in\A_0(\Omega^k)$, $f(w^k)\in\A_0(\Omega)$, and
$$\int_{\Omega^k}f(w)|w|^{2\left(\frac{a}{k}-1\right)}dA(w)=\int_{\Omega}f(w^k)k^2|w|^{2(a-1)}dA(w)=\oint_{\partial\Omega}f(w^k)\alpha k^3w^{k-1}dw=\oint_{\partial\Omega^k}f(w)\alpha k^2dw.$$
Thus $\Omega^k\in\QD_{\frac{a}{k}}(\alpha)$. The bounded case is completely analogous. 
\end{proof}

We're now equipped to prove Theorem \ref{thm:GenMonomialPQDNoZero}.
\begin{proof}[Proof of Theorem \ref{thm:GenMonomialPQDNoZero}]
Let $\Omega$ be a $\Z_k-$rotationally symmetric domain. If $\Omega\in\QD_a(\alpha kw^{k-1})$ then by Lemma \ref{lemma:MonomialPQDReductionConverse}, $\Omega^k\in\QD_{\frac{a}{k}}(\alpha k^2)$. On the other hand, if $\Omega^k\in\QD_{\frac{a}{k}}(\alpha k^2)$ then by Lemma \ref{lemma:MonomialPQDReduction}, $\Omega=\{w:w^k\in\Omega^k\}$ is in $\QD_a(\alpha kw^{k-1})$.

Now, if $\varphi$ is the Riemann map associated to $\Omega^k$ and $0\notin\Omega$ (and thus also $0\notin\Omega^k$) then by Equation \ref{eqn:BasicPQDNoZeroRiemannMap}, there exists a $c>0$ for which $\varphi(z)=c^kz\left(1-\gamma_kz^{-1}\right)^{\frac{k}{a}}$, where $\gamma_k=-\frac{\left(\frac{a}{k}\right) \overline{(\alpha k^2)}}{(c^k)^{2\frac{a}{k}-1}}=-\frac{a \overline{\alpha}k}{c^{2a-k}}$. Moreover, by Lemma \ref{lemma:PolynomialUPQDClass}, there exists a polynomial $p$ of degree $k$ with $p(0)=1$ for which $\widetilde{\varphi}(z)=c'zp^{\#}(z)^{\frac{1}{a}}$. Thus,
\begin{align*}
    (c')^kz^kp^{\#}(z)^{\frac{k}{a}}&=\left(c'zp^{\#}(z)^{\frac{1}{a}}\right)^k=\widetilde{\varphi}^k(z)=\varphi(z^k)=c^kz^k\left(1-\gamma_kz^{-k}\right)^{\frac{k}{a}}.
\end{align*}
So, $(c')^kp^{\#}(z)^{\frac{k}{a}}=c^k\left(1-\gamma_kz^{-k}\right)^{\frac{k}{a}}$. Taking $z\to\infty$, we find that $c'=c$, and so $p^{\#}(z)^{\frac{1}{a}}=\left(1-\gamma_kz^{-k}\right)^{\frac{1}{a}}$. The result follows.
\end{proof}

Figure \ref{fig:3-3Droplets} exhibits each of the cases/phenomena described in Theorems \ref{thm:ConstMonomialPQDNoZeroClass} and \ref{thm:ConstMonomialPQDZeroClass}. Furthermore, by exploiting Lemma \ref{lemma:MonomialPQDReduction} and Equation \ref{eqn:MonomialPQDNoZeroRiemannMap}, we obtain explicit families $\{\Omega_t\}_t\subseteq\QD_a(\alpha kw^{k-1})$, exhibited in Figures \ref{fig:2-6-1o3c_Droplets_Finite_And_Travelling_Wave} and \ref{fig:234-2Droplets}.

\FloatBarrier

\begin{figure}[ht]
  \centering
    \includegraphics[height=.27\textwidth]{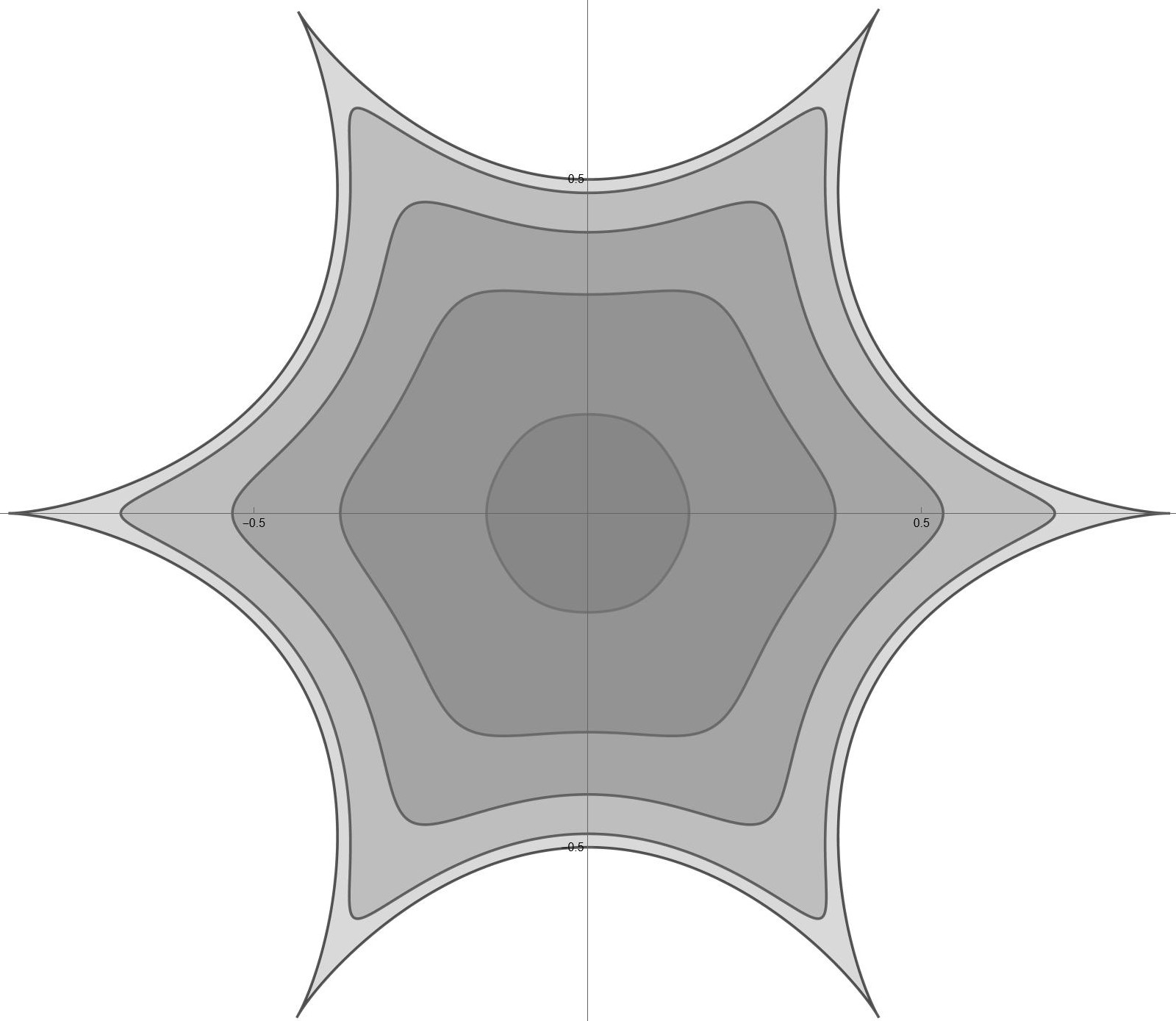}\;\;\;\;\includegraphics[height=.27\textwidth]{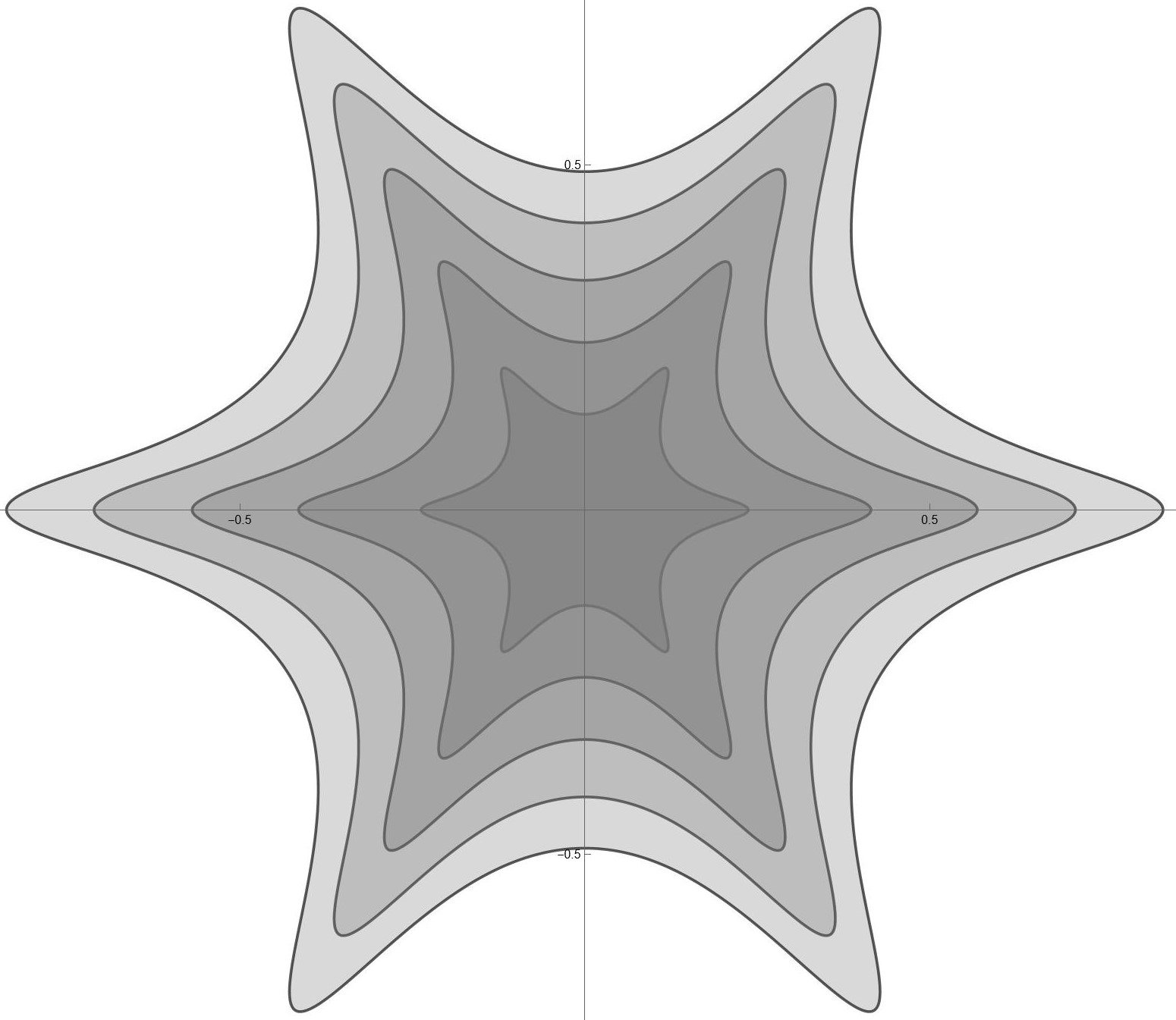}\;\;\;\;\includegraphics[height=.27\textwidth]{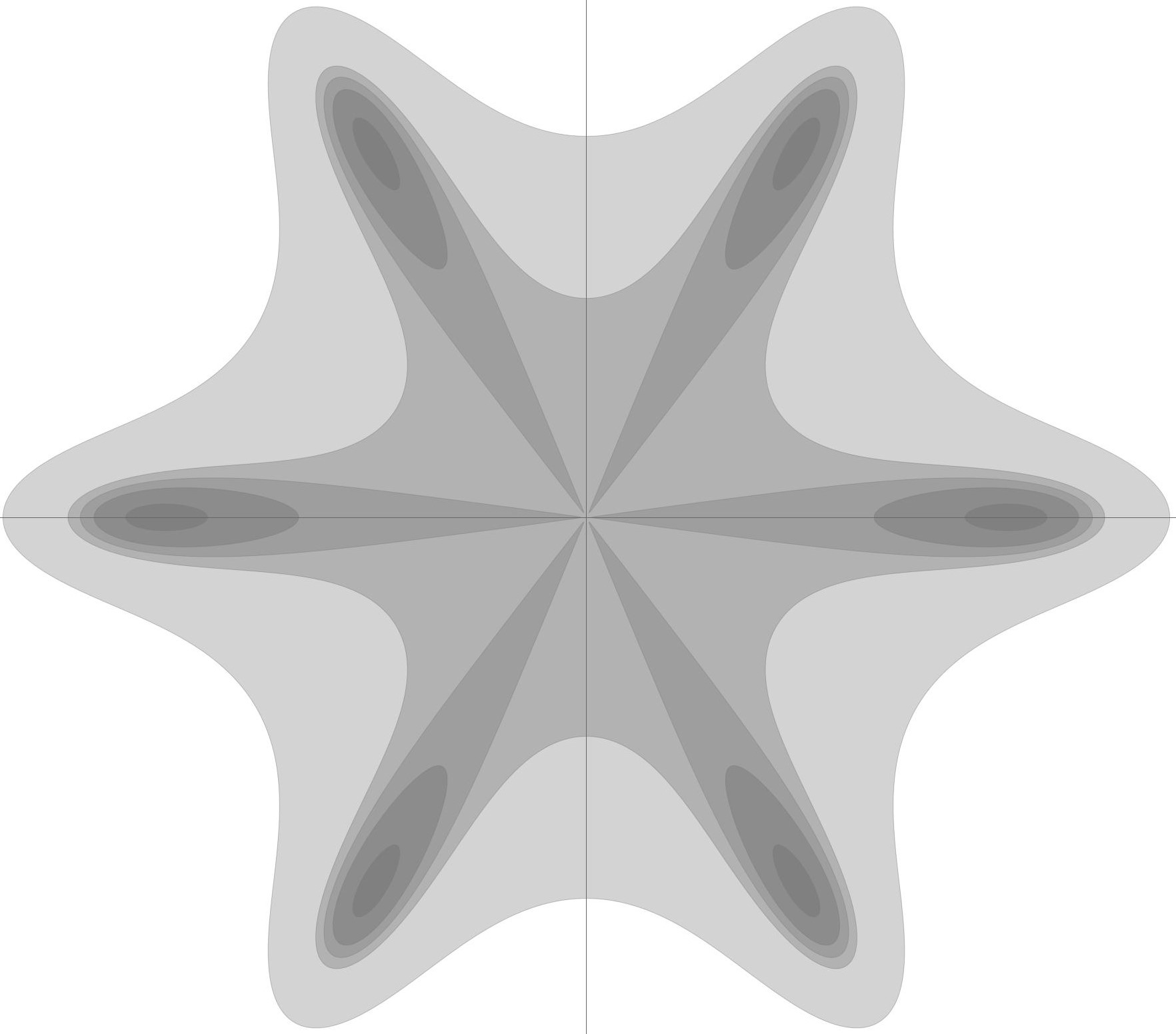}
    \caption{Families $\{\Omega_t\}_t$ of PQDs (complements of the shaded regions) in $\QD_{2}\left(\frac{w^{k-1}}{2}\right)$, $\QD_{3}\left(\frac{2}{9}w^{k-1}\right)$, and $\QD_{4}\left(\frac{w^{k-1}}{8}\right)$ with $k=6$ (left to right). When $a=2$, $k>2a$, so the family has a maximal element. When $a=3$, $k=2a$, so this is the ``traveling wave'' case. Finally, when $a=4$, $k<2a$, so we obtain a two-phase family which is initially multiply connected and is simply connected past the critical time.}\label{fig:2-6-1o3c_Droplets_Finite_And_Travelling_Wave}
\end{figure}

\begin{figure}[ht]
  \centering
    \includegraphics[scale=.6]{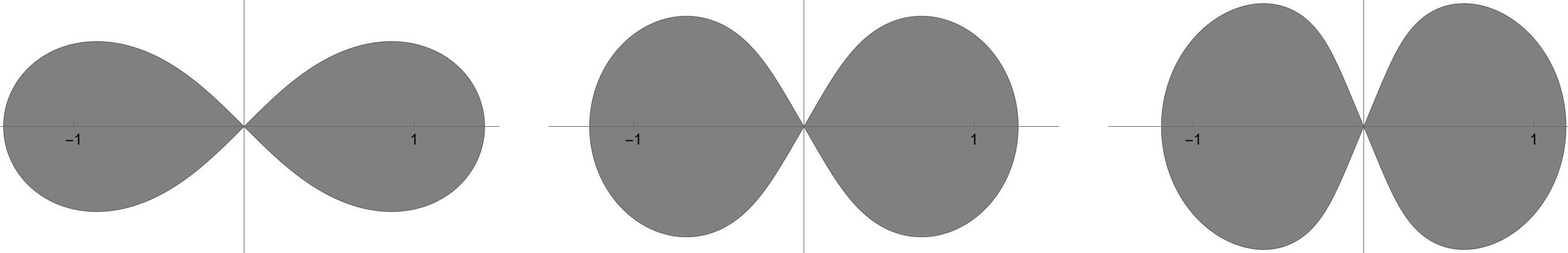}
    \caption{$\Omega\in\QD_2\left(\frac{w}{2}\right)$, $\QD_3\left(\frac{2}{9}w\right)$, and $\QD_\alpha\left(\frac{w}{8}\right)$ (left to right, complements of the shaded regions). The interior angles at $0$ are $\frac{\pi}{a}$ radians.}\label{fig:234-2Droplets}
\end{figure}

\subsection{One Point PQDs of Higher Order}

Let $X_k(w_0)$ be the space of rational functions $=0$ at $\infty$ with a unique pole of order $k\in\Z_{+}$ at $w_0\in\C$.\footnote{i.e. $\sum_{j=1}^{k}\frac{\alpha_j}{(w-w_0)^j}$ for some collection of $\alpha_j\in\C$.} 

\begin{lemma}\label{lemma:PQDOnePtClassHigherOrder}
Fix $a>0$ and let $\Omega$ be a bounded, simply connected domain with Riemann map $\varphi:\D\rightarrow\Omega$ such that $\varphi(0)=w_0$, and $\varphi'(0)>0$.
\begin{enumerate}
\item If $0\notin\Omega$, then there exists $h\in X_k(w_0)$ such that $\Omega\in\QD_a\left(h\right)$ if and only if there exists a polynomial $p$ of degree $k$ such that
\begin{equation}\label{eqn:OnePtUPQDClassMap}
\varphi(z)=p(z)^{\frac{1}{a}}.
\end{equation}
\item If $w_0=0$ (so $0\in\Omega$), then there exists $h\in X_k(0)$ such that $\Omega\in\QD_a\left(h\right)$ if and only if there exists a polynomial $p$ of degree $k-1$ such that 
\begin{equation}\label{eqn:OnePtUPQDClassMapCenterZero}
\varphi(z)=zp(z)^{\frac{1}{a}}.
\end{equation}
\item If $0\in\Omega$ and $w_0\neq0$, then there exists $h\in X_k(w_0)$ such that $\Omega\in\QD_a(h)$ if and only if there exists a polynomial $p$ of degree $k$ with a root at $\overline{z_0}^{-1}$ such that
\begin{equation}\label{eqn:OnePtUPQDClassMapZero}
\varphi(z)=b_{z_0}(z)p(z)^{\frac{1}{a}}.
\end{equation}
\end{enumerate}
\end{lemma}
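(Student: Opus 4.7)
The strategy is uniform across all three parts: combine the multiplicative characterization from Theorem~\ref{thm:SCPQDCharacterization} (so that $\Omega \in \QD_a$ iff $\varphi_{\rm out}^a$ extends to a rational function $r^{\#}$) with the inner-factor formulas in Equation~\ref{eqn:PQDInnerFactors} to write $\varphi = \varphi_{\rm in}\,(r^{\#})^{1/a}$ with $\varphi_{\rm in} \in \{1,\,z,\,b_{z_0}\}$ in parts 1, 2, 3 respectively. Corollary~\ref{corr:GenBPQDFTFormula} will then fix the exact shape of $r$ once $h$ is prescribed.

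For the forward directions, I will assume $\Omega \in \QD_a(h)$ with $h$ having its unique pole of order $k$ at $p_1 = w_0 = \varphi(0)$. Because $\varphi(0)$ coincides with $p_1$, the outer sum in Corollary~\ref{corr:GenBPQDFTFormula} disappears and $r(z) = C + z^{-1}q^{\#}(z)$, where $q$ is a polynomial of degree $k-1$ in parts 1 and 3 (with $w_0 \neq 0$) and of degree $k-2$ in part 2 (with $w_0 = 0$). A direct Schwarz-reflection computation using the convention $f^{\#}(z) = \overline{f(1/\bar z)}$ then yields $r^{\#}(z) = \bar{C} + z\,q(z)$, a polynomial of degree $k$ (parts 1, 3) or $k-1$ (part 2). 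Substituting this polynomial for $p$ in $\varphi = \varphi_{\rm in}(r^{\#})^{1/a}$ produces Equations~\ref{eqn:OnePtUPQDClassMap}, \ref{eqn:OnePtUPQDClassMapCenterZero}, and \ref{eqn:OnePtUPQDClassMapZero} on the nose.

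For the reverse directions, I will start from the prescribed form of $\varphi$, which makes $\varphi_{\rm out}^a = p$ a polynomial --- in particular rational --- so $\Omega \in \QD_a(h)$ for some $h \in \Rat_0(\Omega)$ by Theorem~\ref{thm:SCPQDCharacterization}. To see that $h \in X_k(w_0)$, I will invoke the identity $h = \AnalyticInNoBracket{S_a}{\Omega\IntComp}$ from Theorem~\ref{theorem:EquivPQDChars}, which identifies the poles of $h$ with those of the generalized Schwarz function $S_a(w) = (p^{\#}\cdot p)\circ\psi(w)/(aw)$ in $\Omega$, and then tally: $p^{\#}\circ\psi$ has a pole at $w_0 = \varphi(0)$ of order $\deg p$; $p\circ\psi$ is holomorphic in $\Omega$; and $1/(aw)$ contributes a simple pole at $w = 0$ only when $0 \in \Omega$. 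Summing the contributions in parts 1 and 2 gives the required unique pole of order $k$.

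The main obstacle will be the root condition at $\overline{z_0}^{-1}$ in part 3, which is precisely what balances the otherwise rogue simple pole of $1/(aw)$ at the interior point $w = 0 \in \Omega$. For the forward direction: since $h \in X_k(w_0)$ has no pole at $0$, $S_a$ has no pole at $0$ either, so $(p^{\#}\cdot p)\circ\psi$ must vanish at $w = 0$. Now $\psi(0) = z_0$, and since the zero of $\varphi$ at $z_0$ is entirely absorbed by the Blaschke factor $b_{z_0}$, the outer value $\varphi_{\rm out}(z_0)$ and hence $p(z_0)$ are nonzero; therefore the vanishing lands on $p^{\#}(z_0) = \overline{p(\overline{z_0}^{-1})}$, giving $p(\overline{z_0}^{-1}) = 0$. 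Conversely, in the reverse direction this same root forces $p^{\#}\circ\psi$ to vanish at $w = 0$ and so cancel the simple pole of $1/(aw)$, leaving only the desired pole of order $k$ at $w_0$.
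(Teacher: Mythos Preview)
Your proposal is correct and follows essentially the same route as the paper. The paper only writes out case~(1), invoking Theorem~\ref{thm:GenBPQDInvProbFormula}/Corollary~\ref{corr:GenBPQDFTFormula} for the forward direction and Theorem~\ref{thm:BPQDDirectProblemSol} for the reverse; your reverse argument via $h=\AnalyticInNoBracket{S_a}{\Omega\IntComp}$ with $S_a=(p^{\#}p)\circ\psi/(aw)$ is the same computation one step upstream (it is Equation~\ref{eqn:PQDHFormulaOld}), and your explicit derivation of the root condition in case~(3) --- forcing $p^{\#}(z_0)=0$ to cancel the rogue pole of $1/(aw)$ at $0\in\Omega$ --- is exactly the ``similar argument'' the paper leaves to the reader.
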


\begin{proof}[Proof of lemma \ref{lemma:PQDOnePtClassHigherOrder}]\label{proof:PQDOnePtClassHigherOrder}\;\\
We will only carry out the proof of case (1) because cases (2) and (3) follow by similar arguments. Let $\Omega\not\ni0$ be a bounded domain and $\varphi:\D\rightarrow\Omega$ a Riemann map for $\Omega$ for which $\varphi(0)=w_0$ and $\varphi'(0)>0$.

Suppose there exists $h\in X_k(w_0)$ such that $\Omega\in\QD_a\left(h\right)$. Then by Theorem \ref{thm:GenBPQDInvProbFormula} and Corollary \ref{corr:GenBPQDFTFormula}, there exists a polynomial $q$ of degree $k-1$ for which
$\varphi(z)=\left(w_0^a+zq(z)\right)^{\frac{1}{a}}$. Thus there exists a polynomial $p$ of degree $k$ for which $\varphi(z)=p(z)^{\frac{1}{a}}$.

Conversely, suppose there exists a polynomial $p$ of degree $k$ such that $\varphi(z)=p(z)^{\frac{1}{a}}$. Then by Theorem \ref{thm:SCPQDCharacterization}, $\Omega\in\QD_a(h)$ for some rational $h$. Moreover by Theorem \ref{thm:BPQDDirectProblemSol},
\begin{align*}
h(w)&=\dfrac{1}{a w}\Phi_{\varphi}\left(\AnalyticIn{pp^{\#}}{\D\IntComp}\right)(w)+\dfrac{t}{w}=\dfrac{1}{a w}\Phi_{\varphi}\left(\AnalyticIn{\sum_{|j|\leq k}\beta_jz^j}{\D\IntComp}\right)(w)+\dfrac{t}{w}\\
&=\dfrac{1}{a w}\Phi_{\varphi}\left(\sum_{j=1}^{k}\beta_{-j}z^{-j}\right)(w)+\dfrac{t}{w}\\
&=\dfrac{1}{w}\sum_{j=1}^{k}\dfrac{\gamma_j}{(w-\varphi(0))^j}+\dfrac{t}{w}=\sum_{j=1}^{k}\dfrac{\widetilde{\gamma_j}}{(w-w_0)^j}+\dfrac{\widetilde{t}}{w}.
\end{align*}
But then because $h\in\A_0(\Omega\IntComp)$, we find that $\widetilde{t}=0$, so $h\in X_k(w_0)$.
\end{proof}

\begin{lemma}\label{lemma:PQDOnePtInftyClassHigherOrder}
Take $a>0$ and $\Omega$ an unbounded, simply connected domain with conformal radius $c$ admitting a Riemann map $\varphi:\D\IntComp\rightarrow\Omega$ with $\varphi'(\infty)=c$, and $X_k(w_0)$ the space of rational functions $=0$ at $\infty$ with a unique pole of order $k\in\Z_{+}$ at $w_0\in\Omega$. Then
\begin{enumerate}
\item If $0\notin\Omega$, then there exists $h\in X_k(w_0)$ such that $\Omega\in\QD_a\left(h\right)$ if and only if there exists $z_0\in\D\IntComp$ with $\varphi\left(z_0\right)=w_0$, and a polynomial $p$ of degree $k$ with $p(0)=c^a$ and without a root at $z_0$ such that
\begin{equation}
\varphi(z)=z\left(\dfrac{p^{\#}(z)}{\left(1-\frac{\overline{z_0}^{-1}}{z}\right)^{k}}\right)^{\frac{1}{a}}.
\end{equation}
\item If $0\in\Omega$ then there exists $h\in X_k(w_0)$ such that $\Omega\in\QD_a(h)$ if and only if there exist $z_0,z_1\in\D\IntComp$ with $\varphi(z_0)=w_0$, $\varphi(z_1)=0$, and a polynomial $p$ of degree $k$ with $p(0)=|cz_1|^a$ a root at $z_1$ and without a root at $z_0$ such that
\begin{equation}
\varphi(z)=zb_{z_1}(z)\left(\dfrac{p^{\#}(z)}{\left(1-\frac{\overline{z_0}^{-1}}{z}\right)^{k}}\right)^{\frac{1}{a}}.
\end{equation}
\end{enumerate}
\end{lemma}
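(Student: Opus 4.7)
The plan is to specialize the product-factorization machinery of Section \ref{subsec:PQDMultFTMethod}---namely Theorem \ref{thm:SCPQDCharacterization}, the factorization of Equation \ref{eqn:PQDRiemannMapProdFactorization}, the explicit form of $\varphi_{\rm in}$ from Equation \ref{eqn:PQDInnerFactors}, and the pole dictionary of Corollary \ref{corr:GenUPQDFTFormula}---to the situation in which $h$ has a single finite pole and vanishes at $\infty$. Both implications in both cases reduce to a careful analysis of the rational function $r$ whose reflection $r^{\#}$ equals $\varphi_{\rm out}^a$.

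For the forward direction of case (1), I would first invoke Theorem \ref{thm:SCPQDCharacterization} together with Equation \ref{eqn:PQDRiemannMapProdFactorization} to obtain $\varphi(z) = \varphi_{\rm in}(z)\, r^{\#}(z)^{1/a}$ with $r$ rational, then read off $\varphi_{\rm in}(z) = z$ from Equation \ref{eqn:PQDInnerFactors} (since $0 \notin \Omega$ and $\infty \in \Omega$). Corollary \ref{corr:GenUPQDFTFormula}, applied with $h \in X_k(w_0)$, kills the polynomial piece (since $h(\infty)=0$) and pins $r$ down to a single pole of order exactly $k$ at $z_0 := \psi(w_0)$, no other singularities, and a finite limit at infinity. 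Writing $r(z) = p(z)/(1-z_0^{-1}z)^k$ then uniquely determines a polynomial $p$ of degree $k$ with $p(z_0) \neq 0$; reflecting through $\#$ yields the claimed representation, and matching $\varphi(z)/z \to r^{\#}(\infty)^{1/a} = \overline{p(0)}^{1/a}$ against the conformal radius fixes $p(0) = c^a$.

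The reverse direction runs the chain backwards: rationality of $p^{\#}/(1-\overline{z_0}^{-1}/z)^k$ places us back under Theorem \ref{thm:SCPQDCharacterization}, so $\Omega \in \QD_a(h)$ for some rational $h$; Corollary \ref{corr:GenUPQDFTFormula} then translates the isolated pole of $r$ at $z_0$ into a pole of $h$ of order exactly $k$ at $\varphi(z_0) = w_0$ (the nonvanishing condition $p(z_0) \neq 0$ precluding any order drop), and the boundedness of $r$ at infinity delivers $h(\infty) = 0$. Thus $h \in X_k(w_0)$. Case (2) proceeds identically but with $\varphi_{\rm in}(z) = z b_{z_1}(z)$ in place of $z$, the adjusted asymptotic $z b_{z_1}(z)/z \to 1/|z_1|$ at infinity yielding the normalization $p(0) = |c z_1|^a$.

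The main obstacle I anticipate is the case (2) bookkeeping around the Blaschke zero. Even though $b_{z_1}$ alone produces a simple zero of $\varphi$ at $z_1$, one must show that the canonical $p$ constructed above nevertheless satisfies $p(z_1) = 0$. I expect to extract this by combining the value $C = at/|c z_1|^a$ from Theorem \ref{thm:GenUPQDInvProbFormula} (with $t$ the weighted area of $\Omega^c$) with the degree constraint on $p$ and the prescribed normalization $p(0) = |c z_1|^a$, exploiting the compatibility of the inner--outer factorization under the reflection $\#$ to force the extra vanishing. The remaining work---degree counts, pole cancellations, and asymptotic matching at $\infty$---is routine once this single identification is in place.
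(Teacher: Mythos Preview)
Your treatment of case (1) matches the paper's. One small difference: for the backward direction you propose to read Corollary \ref{corr:GenUPQDFTFormula} in reverse, whereas the paper computes $h$ directly from Theorem \ref{thm:UPQDDirectProblemSol}, expands $\frac{1}{aw}\Phi_\varphi\bigl(\AnalyticIn{\,\cdot\,}{\D}\bigr)$ by partial fractions into $\sum_j \widetilde\beta_j/(w-w_0)^j + \widetilde t/w$, and then eliminates the spurious $\widetilde t/w$ term using $0\in\Omega\IntComp$ together with $h\in\A(\Omega\IntComp)$. Both routes work; the paper's is marginally more self-contained because it does not require justifying the converse of the pole dictionary.

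For case (2) your diagnosis of the obstacle is correct but the proposed fix is not. The constant $C = at/|cz_1|^a$ in Theorem \ref{thm:GenUPQDInvProbFormula} is the value $r(0)$, not $r(z_1)$, and no combination of the normalization $p(0)=|cz_1|^a$ with degree counting will produce $p(z_1)=0$: a rational function with a single order-$k$ pole at $z_0$, a prescribed value at $0$, and a prescribed value at $\infty$ still has $k-1$ free parameters, so there is no algebraic constraint forcing a zero at the unrelated point $z_1$. The paper does not write out case (2) here, but the parallel argument in Lemma \ref{lemma:PolynomialUPQDClass}(2) shows how it goes: use Equation \ref{eqn:PQDHFormulaOld}, which gives $h(w)=a^{-1}\AnalyticIn{(rr^{\#})\circ\psi(w)/w}{\Omega\IntComp}$. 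Since $0\in\Omega$ and $\psi(0)=z_1$, the integrand has a genuine pole at $w=0$ unless $(rr^{\#})(z_1)=0$. But $r^{\#}(z_1)=\varphi_{\rm out}^a(z_1)\neq0$ because the outer factor is nonvanishing on $\D\IntComp$; hence the only way to avoid the pole is $r(z_1)=0$, equivalently $p(z_1)=0$. If $p(z_1)\neq0$ then $h$ would acquire a pole at $0\neq w_0$, contradicting $h\in X_k(w_0)$. That pole-at-zero contradiction is the missing step.
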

The proof \ref{proof:PQDOnePtInftyClassHigherOrder} is given below.

\begin{proof}[Proof of lemma \ref{lemma:PQDOnePtInftyClassHigherOrder}]\label{proof:PQDOnePtInftyClassHigherOrder}\;\\
Take $a>0$ and $\Omega$ an unbounded, simply connected domain with conformal radius $c$ admitting a Riemann map $\varphi:\D\IntComp\rightarrow\Omega$ with $\varphi'(\infty)=c$.\\
$\mathbf{(1)}$ \noindent{\bf Forward direction:}\\
Suppose $0\notin\Omega$ and there exists $h\in X_k(w_0)$ such that $\Omega\in\QD_a\left(h\right)$. Then by Theorem \ref{thm:GenUPQDInvProbFormula} and corollary \ref{corr:GenUPQDFTFormula},
\begin{align*}
\varphi(z)&=z\left(c^a+\dfrac{1}{z}\sum_{j=1}^{k}\frac{\beta_j}{(z^{-1}-\overline{z_0})^j}\right)^{\frac{1}{a}}=z\left(\dfrac{c^aq(z)}{\left(z-\overline{z_0}^{-1}\right)^k}\right)^{\frac{1}{a}},
\end{align*}
where $\varphi(z_0)=w_0$, and $q$ is a monic polynomial of degree $k$ with $q(\overline{z_0}^{-1})\neq0$. Thus there exists a polynomial $p$ of degree $k$ with $p(z_0)\neq0$, and $p(0)=c^a$, for which $\varphi(z)=z\left(\frac{p^{\#}(z)}{\left(1-\frac{\overline{z_0}^{-1}}{z}\right)^k}\right)^{\frac{1}{a}}$.\\
$\mathbf{(1)}$ \noindent{\bf Backward direction:}\\
Conversely, suppose there exists $z_0\in\D\IntComp$ with $\varphi\left(z_0\right)=w_0$, and a polynomial $p$ of degree $k$ with $p(0)=c^a$ and without a root at $z_0$ such that $\varphi(z)=z\left(\frac{p^{\#}(z)}{\left(1-\frac{\overline{z_0}^{-1}}{z}\right)^k}\right)^{\frac{1}{a}}$. Then, by Theorem \ref{thm:UPQDDirectProblemSol},
\begin{align*}
h(w)&=\dfrac{1}{a w}\Phi_{\varphi}\left(\AnalyticIn{\dfrac{p^{\#}(z)}{\left(1-\frac{\overline{z_0}^{-1}}{z}\right)^k}\dfrac{p(z)}{\left(1-zz_0^{-1}\right)^k}}{\D}\right)(w)-\dfrac{t}{w}\\
&=\dfrac{1}{a w}\Phi_{\varphi}\left(\AnalyticIn{\dfrac{z_0^kz^kp^{\#}(z)}{\left(z-\overline{z_0}^{-1}\right)^k}\dfrac{p(z)}{\left(z_0-z\right)^k}}{\D}\right)(w)-\dfrac{t}{w}.
\end{align*}
Neither $p$ nor $p^{\#}$ have a root at $z_0$, so the argument of $\Phi_{\varphi}$ is meromorphic in $\D\IntComp$ and has a unique pole of order $k$ at $z_0$ in $\D\IntComp$. Hence, by partial fraction decomposition,
\begin{align*}
h(w)&=\dfrac{1}{a w}\sum_{j=1}^{k}\dfrac{\beta_j}{(w-\psi(z_0))^j}+\dfrac{t}{w}=\sum_{j=1}^{k}\dfrac{\widetilde{\beta_j}}{(w-w_0)^j}+\dfrac{\widetilde{t}}{w}.
\end{align*}
Finally $0\in\Omega\IntComp$ but $h\in\A(\Omega\IntComp)$, so $\widetilde{t}=0$, and we conclude that $h\in X_k(w_0)$.\\\\
The argument is similar for case $\mathbf{(2)}$.
\end{proof}

\subsection{Polynomial PQDs}

Let $\Omega$ be a simply connected unbounded domain not containing zero with conformal radius $c>0$. Now take $a>0$, $k\in\Z_{\geq0}$ and suppose $\Omega\in\QD_a(h)$ for some polynomial $h$ of degree $k$. Then by Theorem \ref{thm:UPQDDirectProblemSol} and corollary \ref{corr:GenUPQDFTFormula}, there exists a polynomial $p$ of degree $k$ for which $\varphi(z)=z\left(c^a+z^{-1}p^{\#}(z)\right)^{\frac{1}{a}}$.

Conversely, suppose $0\notin\Omega$ and there exists a polynomial $p$ of degree $k$ for which $\varphi(z)=z\left(c^a+z^{-1}p^{\#}(z)\right)^{\frac{1}{a}}$. Then by Equation \ref{eqn:UPQDDirectProblemSol},
$$awh(w)=\Phi_{\varphi}\left(\AnalyticIn{rr^{\#}}{\D}\right)(w)-at=\Phi_{\varphi}\left(\AnalyticIn{\left(c^a+z^{-1}p^{\#}(z)\right)\left(c^a+zp(z)\right)}{\D}\right)(w)-at$$
The argument of the Faber transform is a Laurent polynomial of the form $\sum_{|j|\leq k+1}\beta_jz^j$, so its analytic part in $\D$ is a polynomial of degree $k+1$. The Faber transform of a polynomial of degree $k+1$ is a polynomial of degree $k+1$, so we find that $h(w)=\frac{\beta}{w}+q(w)$ for some polynomial $q$ of degree $k$. Finally, $h\in\A(\Omega\IntComp)$ implies $\beta=0$ when $0\notin\Omega$. Thus $h$ is a polynomial of degree $k$. This proves the first case of the following lemma.

\begin{lemma}\label{lemma:PolynomialUPQDClass}
Let $\Omega$ be an unbounded, simply connected domain with conformal radius $c$, admitting a Riemann map $\varphi:\D\IntComp\rightarrow\Omega$. Then for each $a>0$ and $k\in\Z_{\geq0}$,
\begin{enumerate}
\item If $0\notin\Omega$, then there exists a polynomial $h$ of degree $k$ such that $\Omega\in\QD_a(h)$ if and only if there exists a polynomial $p$ of degree $k+1$ with $p(0)=c^a$ such that
\begin{equation}\label{eqn:PolynomialUPQDClassMap}
\varphi(z)=zp^{\#}(z)^{\frac{1}{a}}.
\end{equation}
\item If $0\in\Omega$, then there exists a polynomial $h$ of degree $k$ such that $\Omega\in\QD_a(h)$ if and only if there exists a polynomial $p$ of degree $k+1$ with $p(0)=|cz_0|^a$ and a root at $z_0$ for which
\begin{equation}\label{eqn:PolynomialUPQDClassMapZero}
\varphi(z)=zb_{z_0}(z)p^{\#}(z)^{\frac{1}{a}}.
\end{equation}
\end{enumerate}
\end{lemma}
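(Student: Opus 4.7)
The plan is to mirror the case~(1) argument given in the paragraph immediately preceding the lemma, substituting the inner factor $zb_{z_0}(z)$ for the plain $z$ as dictated by Equation \ref{eqn:PQDInnerFactors} in the situation $\infty,0 \in \Omega$. Both directions then reduce to applying Theorem \ref{thm:SCPQDCharacterization}, Theorem \ref{thm:GenUPQDInvProbFormula} together with Corollary \ref{corr:GenUPQDFTFormula}, and Theorem \ref{thm:UPQDDirectProblemSol}, in exactly the same pattern as case~(1).

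For the forward direction, assume $\Omega \in \QD_a(h)$ with $h$ a polynomial of degree $k$. Since $h$ has no finite poles and a single pole at $\infty$ of order $k$, the sum in Corollary \ref{corr:GenUPQDFTFormula} collapses, leaving $r(z) = C + zq(z)$ for some polynomial $q$ of degree $k$ with $C = at/|cz_0|^a$; thus $r$ is itself a polynomial of degree $k+1$. Setting $p := r$ and invoking Equation \ref{eqn:PQDRiemannMapProdFactorization} gives the claimed representation $\varphi(z) = zb_{z_0}(z)p^{\#}(z)^{1/a}$. The value $p(0) = |cz_0|^a$ comes from matching asymptotics at $\infty$: $\varphi(z)/z \to c$ and $b_{z_0}(z) \to 1/|z_0|$ force $\varphi_{\rm out}(\infty) = c|z_0|$, hence $p^{\#}(\infty) = \overline{p(0)} = |cz_0|^a$. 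Comparing this with $p(0) = C$ produces the identity $|cz_0|^{2a} = at$, which is then exactly what yields the vanishing $p(z_0) = 0$ after evaluating the Faber-transform formula for $r$ at $z_0$ and using $\varphi(z_0) = 0$.

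For the reverse direction, assume $\varphi(z) = zb_{z_0}(z)p^{\#}(z)^{1/a}$ with $p$ as described. Since $p^{\#}$ is rational, Theorem \ref{thm:SCPQDCharacterization} gives $\Omega \in \QD_a(h)$ for some rational $h$, and Theorem \ref{thm:UPQDDirectProblemSol} yields
\begin{equation*}
h(w) \;=\; \frac{1}{aw}\,\Phi_{\varphi}\bigl(\AnalyticIn{pp^{\#}}{\D}\bigr)(w) \;-\; \frac{t}{w}.
\end{equation*}
A degree count shows $pp^{\#}$ is a Laurent polynomial supported in degrees $-(k+1),\ldots,k+1$, so $\AnalyticIn{pp^{\#}}{\D}$ is a polynomial of degree $k+1$ in $z$, and its exterior Faber transform is a polynomial of degree $k+1$ in $w$; division by $w$ and subtraction of $t/w$ leave a polynomial of degree $k$ in $w$ plus a residual $1/w$ term. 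The main obstacle, and the central calculation, is to verify that the two conditions $p(0) = |cz_0|^a$ and $p(z_0) = 0$ together force $\Phi_{\varphi}(\AnalyticIn{pp^{\#}}{\D})(0) = at$, killing the residual pole; I would carry this out by evaluating the Faber transform at $w = 0 = \varphi(z_0)$ and invoking the relation $|cz_0|^{2a} = at$ that emerged in the forward direction.
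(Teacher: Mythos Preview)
Your overall architecture is right, but the paper's proof of case~(2) takes a shorter path by using Equation~\ref{eqn:PQDHFormulaOld} rather than Theorem~\ref{thm:UPQDDirectProblemSol}. With
\[
h(w)=\dfrac{1}{a}\AnalyticIn{\dfrac{p\circ\psi(w)}{w}\,p^{\#}\circ\psi(w)}{\Omega\IntComp},
\]
both directions become almost immediate. For the forward direction, the paper simply observes that $p^{\#}$ has no roots in $\D\IntComp$ (else $\varphi$ would fail to be analytic), so if $p(z_0)\neq0$ the integrand has an uncancelled simple pole at $w=0=\varphi(z_0)\in\Omega$, forcing $h$ to have a pole at $0$ --- contradicting that $h$ is a polynomial. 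For the backward direction, the same formula shows directly that the hypothesis $p(z_0)=0$ kills the only possible finite pole of the integrand, leaving a function meromorphic in $\Omega$ with a single pole of order $k$ at $\infty$; hence $h$ is a degree-$k$ polynomial. No $t/w$ correction term ever appears, and the auxiliary identity $|cz_0|^{2a}=at$ is never needed.

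Your route via Theorem~\ref{thm:UPQDDirectProblemSol} can be made to work, but the step where you deduce $p(z_0)=0$ has a gap as written. You propose ``evaluating the Faber-transform formula for $r$ at $z_0$ and using $\varphi(z_0)=0$,'' but $z_0\in\D\IntComp$ lies outside the natural domain of $\Phi_\varphi^{-1}$, and it is not clear how the identity $|cz_0|^{2a}=at$ alone pins down the value $r(z_0)$. What you would actually need is the implication $\Phi_\varphi(\AnalyticIn{pp^{\#}}{\D})(0)=at \Rightarrow p(z_0)=0$, which is not obvious and would require unwinding the Faber transform at $w=0$ in terms of $\psi(0)=z_0$ --- at which point you are essentially reproducing the paper's contradiction argument anyway. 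The paper's choice of formula~\ref{eqn:PQDHFormulaOld} makes the role of the root at $z_0$ transparent and avoids this detour entirely.
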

(recall that $b_{\lambda}(z)=\frac{\overline{\lambda}}{|\lambda|}\frac{z-\lambda}{z\overline{\lambda}-1}$ is a Blaschke factor). Note that, by applying Lemma \ref{lemma:PolynomialPowerUnivalence} to case (1) for $k\leq 2a$, we find that $\varphi$ is univalent in $\D\IntComp$ if and only if each of the roots of $p$ are contained in $\D^{c}$. In particular, the set of unbounded PQDs not containing zero with polynomial quadrature function $h$ of degree $k\leq2a$ is parameterized by the set of polynomials $p$ of degree $k+1$ for which $z\mapsto zp^{\#}(z)^{\frac{1}{a}}$ is univalent in $\D\IntComp$. The proof of case 2 is given below.

\begin{proof}[Proof of lemma \ref{lemma:PolynomialUPQDClass}]\label{proof:PolynomialUPQDClass}\;\\
$\mathbf{(2)}$ \noindent{\bf Forward direction:} Let $0\in\Omega\in\QD_a(h)$ be a simply connected and unbounded domain of conformal radius $c>0$ for some polynomial $h$ of degree $k\in\Z_{\geq0}$ and $a>0$. Then by Theorem \ref{eqn:UPQDDirectProblemSol} and corollary \ref{corr:GenUPQDFTFormula}, there exists a polynomial $p$ of degree $k+1$ with $p(0)=|cz_0|^a$ for which $\varphi(z)=zb_{z_0}(z)p^{\#}(z)^{\frac{1}{a}}$ is a Riemann map for $\Omega$. Then by Equation \ref{eqn:PQDHFormulaOld}, we have that
\begin{align*}
h(w)&=\dfrac{1}{a}\AnalyticIn{\dfrac{\left(pp^{\#}\right)\circ\psi(w)}{w}}{\Omega\IntComp}=\dfrac{1}{a}\AnalyticIn{\dfrac{p\circ\psi(w)}{w}p^{\#}\circ\psi(w)}{\Omega\IntComp}
\end{align*}
Note that $p^{\#}$ has no roots in $\D\IntComp$ (otherwise $\varphi$ wouldn't be analytic in $\D\IntComp$), so if $p(z_0)\neq0$, the argument of the RHS must have a pole at zero, which would imply $h$ has a pole at $0$, a contradiction. Thus $p(z_0)=0$.

\noindent $\mathbf{(2)}$ \noindent{\bf Backward direction:} On the other hand, if there exists a polynomial $p$ of degree $k+1$ with a root at $z_0$ for which $\varphi(z)=zb_{z_0}(z)p^{\#}(z)^{\frac{1}{a}}$ then by Theorem \ref{thm:SCPQDCharacterization}, $\Omega\in\QD_a(h)$ for some rational $h$. We saw above that Equation \ref{eqn:PQDHFormulaOld} tells us
\begin{align*}
h(w)&=\dfrac{1}{a}\AnalyticIn{\dfrac{p\circ\psi(w)}{w}p^{\#}\circ\psi(w)}{\Omega\IntComp},
\end{align*}
the argument of which is meromorphic in $\Omega$ and has no poles in $\Omega$, except a pole of order $k$ at $\infty$. Thus $h$ is a polynomial of degree $k$.
\end{proof}

\begin{example}
$\varphi(z)=z\left(1+\frac{1}{2}z^{-1}+\frac{3}{4}z^{-2}\right)^{\frac{1}{2}}$, is univalent in $\D\IntComp$ by lemma \ref{lemma:PolynomialPowerUnivalence} because its roots, $\frac{-1\pm\sqrt{11}i}{4}$ are contained in $\D$. Then $\Omega:=\varphi(\D\IntComp)$ is an unbounded PQD in $\QD_2\left(\frac{3}{8}w+\frac{1}{4}\right)$.
To see this, first note that if $\varphi(z)=0$ then $|z|=\frac{\sqrt{3}}{2}<1$, so $0\notin\Omega$. Then by Lemma \ref{lemma:PolynomialUPQDClass}, $\Omega\in\QD_2(aw+b)$ and Equation \ref{eqn:PQDHFormulaOld} implies
$$aw+b=\AnalyticIn{\dfrac{\left(pp^{\#}\right)\circ\psi(w)}{2w}}{\Omega\IntComp}=\AnalyticIn{\dfrac{\left(\frac{3}{4}z^2+\frac{7}{8}z+O(1)\right)\circ\psi(w)}{2w}}{\Omega\IntComp}=\AnalyticIn{\dfrac{\frac{3}{4}\psi^2(w)+\frac{7}{8}\psi(w)}{2w}}{\Omega\IntComp}=\dfrac{3}{8}w+\dfrac{1}{4}.$$

\begin{figure}[ht]
  \centering
    \includegraphics[scale=.3]{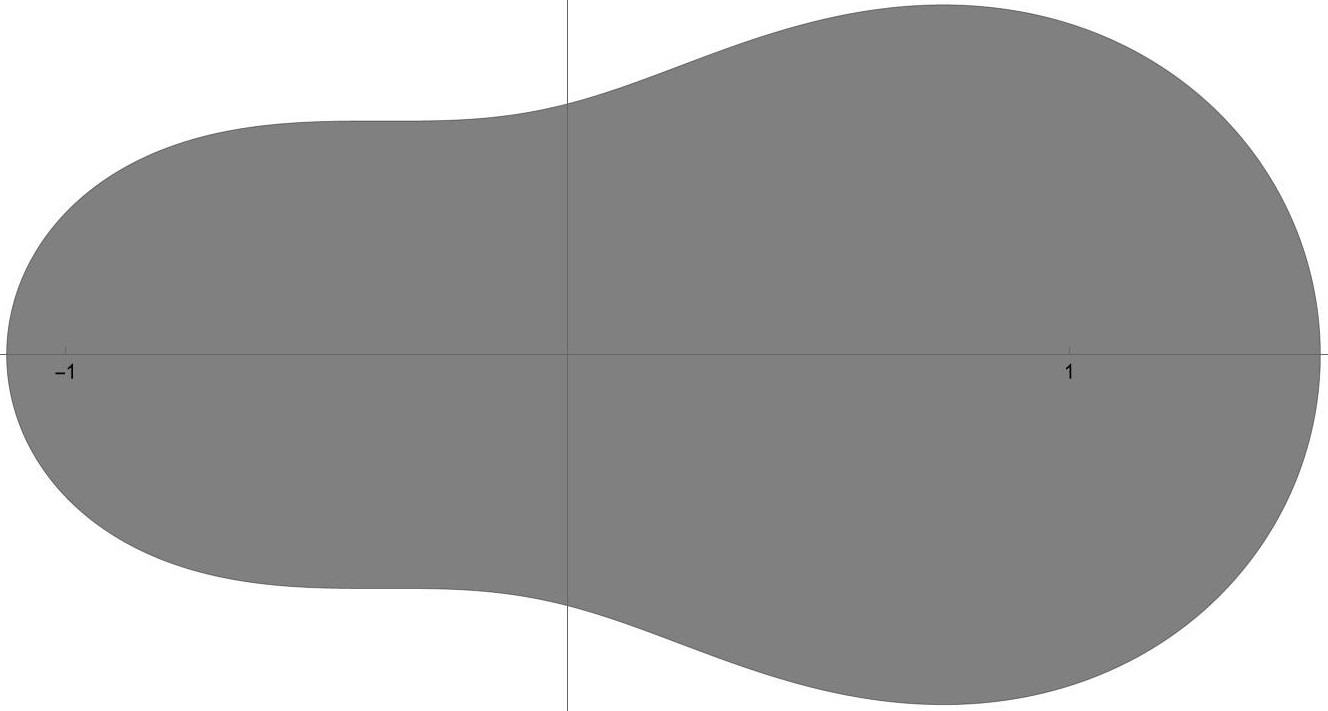}\vspace{.5em}
    \caption{$\Omega\in\QD_2\left(\frac{3}{8}w+\frac{1}{4}\right)$ (complement of shaded region).}\label{fig:PQDExample1}
\end{figure}
\end{example}

\subsection{Linear PQDs}\label{subsec:LinearPQDs}
By Lemma \ref{lemma:PolynomialUPQDClass}, if $\Omega\in\QD_a(\alpha_0+\alpha_1w)$ is simply connected and does not contain $0$, then there exists a polynomial $p$ of degree $2$ with $p(0)=c^a$ for which the associated Riemann map $\varphi:\D\IntComp\rightarrow\Omega$ is given by $\varphi(z)=zp^{\#}(z)^{\frac{1}{a}}$. That is, there exist constants $c_1,c_2\in\C$ for which $p(z)=c^a(1+c_1z+c_2z^2)$. By change of variables we can wlog assume $\alpha_1=1$. Applying Theorem \ref{thm:UPQDDirectProblemSol}, we have that
\begin{align*}
    \alpha_0+w&=\dfrac{1}{aw}\Phi_{\varphi}\left(\AnalyticIn{pp^{\#}}{\D}\right)(w)-\dfrac{t}{w}\\
    &=\dfrac{c^{2a}}{aw}\Phi_{\varphi}\left(u_0+u_1z+c_2z^2\right)(w)-\dfrac{t}{w}\\
    &=\dfrac{c^{2a}}{aw}\left(u_0+u_1F_1(w)+c_2F_2(w)\right)-\dfrac{t}{w},
\end{align*}
where $u_0=1+|c_1|^2+|c_2|^2$ and $u_1=c_1+\overline{c_1}c_2$. Using Equation \ref{eqn:FaberPolyFormulae} we compute $F_1(w)=\frac{w}{c}-\frac{\overline{c_1}}{a}$ and $F_2(w)=\frac{w^2}{c^2}-w\frac{2\overline{c_1}}{ac}+\frac{\overline{c_1}^2-2\overline{c_2}}{a}$. Substituting and equating polynomial coefficients, we obtain $1=\frac{c^{2(a-1)}}{a}c_2$, $\alpha_0=c_1\frac{c^{2a-1}}{a}+\frac{a-2}{a}c\overline{c_1}$. Solving for $c_1$ and $c_2$ yields
\begin{align*}
    c_1&=\frac{a}{c}\dfrac{(a-2)\overline{\alpha_0}-c^{2(a-1)}\alpha_0}{(a-2)^2-c^{4(a-1)}},\\
    c_2&=\dfrac{a}{c^{2(a-1)}}.
\end{align*}
That is,
$$\varphi(z)=cz\left(1+\frac{a}{c}\dfrac{(a-2)\alpha_0-c^{2(a-1)}\overline{\alpha_0}}{(a-2)^2-c^{4(a-1)}}z^{-1}+\dfrac{a}{c^{2(a-1)}}z^{-2}\right)^{\frac{1}{a}}.$$
Note that if we set $a=2$ and $\alpha_0=2\sqrt{2}$, then this simplifies to
$$\varphi(z)=cz\left(1+\dfrac{4\sqrt{2}}{c^3}z^{-1}+\dfrac{2}{c^2}z^{-2}\right)^{\frac{1}{2}}.$$
Furthermore, $0\notin\Omega=\varphi(\D\IntComp)$ for all $c>\sqrt{2}$ and in the limit as $c\rightarrow\sqrt{2}$, the expression simplifies further to $\varphi(z)=\sqrt{2}(z+1)$. In particular, this suggests that $\D_{\sqrt{2}}(\sqrt{2})\IntComp\in\QD_2\left(2\sqrt{2}+w\right)$. However, $0\in\D_{\sqrt{2}}(\sqrt{2})$, so this isn't fully rigorous. It also breaks the observed pattern of $\partial\Omega$ forming angles of $\frac{\pi}{a}$ radians. To confirm that $\D_{\sqrt{2}}(\sqrt{2})\IntComp\in\QD_2\left(2\sqrt{2}+w\right)$, we will verify the quadrature identity directly.  Take $f\in\A_0(\D_{\sqrt{2}}(\sqrt{2}))$, so that
\begin{align*}
    \int_{\D_{\sqrt{2}}(\sqrt{2})\IntComp}f(w)|w|^{2}dA(w)&=\dfrac{1}{2}\oint_{\partial\D_{\sqrt{2}}(\sqrt{2})\IntComp}f(w)w\overline{w}^2dw\\
    &=\dfrac{1}{2}\oint_{\partial\D_{\sqrt{2}}(\sqrt{2})\IntComp}f(w)w\left(\dfrac{2}{w-\sqrt{2}}+\sqrt{2}\right)^2dw\\
    &=\oint_{\partial\D_{\sqrt{2}}(\sqrt{2})\IntComp}f(w)\left(\dfrac{2\sqrt{2}}{(w-\sqrt{2})^2}+\dfrac{6}{w-\sqrt{2}}+2\sqrt{2}+w\right)dw\\
    &=\oint_{\partial\D_{\sqrt{2}}(\sqrt{2})\IntComp}f(w)\left(2\sqrt{2}+w\right)dw,
\end{align*}
confirming the identity.

\subsection{Quadratic PQDs}\label{subsec:QuadraticPQDs}

By Lemma \ref{lemma:PolynomialUPQDClass}, if $\Omega\in\QD_a(\alpha_0+\alpha_1w+\alpha_2w^2)$ is simply connected and does not contain $0$, then there exists a polynomial $p$ of degree $3$ with $p(0)=c^a$ for which the associated Riemann map $\varphi:\D\IntComp\rightarrow\Omega$ is given by $\varphi(z)=zp^{\#}(z)^{\frac{1}{a}}$. That is, there exist constants $c_1,c_2,c_3\in\C$ for which $p(z)=c^a(1+c_1z+c_2z^2+c_3z^3)$. By change of variables we can wlog assume $\alpha_2=1$. Applying Theorem \ref{thm:UPQDDirectProblemSol}, we have that
\begin{align*}
    \alpha_0+\alpha_1w+w^2&=\dfrac{1}{aw}\Phi_{\varphi}\left(\AnalyticIn{pp^{\#}}{\D}\right)(w)-\dfrac{t}{w}\\
    &=\dfrac{c^{2a}}{aw}\Phi_{\varphi}\left(u_0+u_1z+u_2z^2+c_3z^3\right)(w)-\dfrac{t}{w}\\
    &=\dfrac{c^{2a}}{aw}\left(u_0+u_1F_1(w)+u_2F_2(w)+c_3F_3(w)\right)-\dfrac{t}{w},
\end{align*}
Where $u_0=1+|c_1|^2+|c_2|^2+|c_3|^2$, $u_1=c_1+\overline{c_1}c_2+\overline{c_2}c_3$, $u_2=c_2+\overline{c_1}c_3$. Computing $F_1$, $F_2$, and $F_3$ explicitly (using e.g. Equation \ref{eqn:FaberPolyFormulae}) and equating coefficients, we find
\begin{align*}
    \alpha_0&=\dfrac{c^{2a-1}}{2a^3}\left(2a^2c_1+2a(a-2)\overline{c_1}c_2+(a-3)(2a\overline{c_2}-\overline{c_1}^2)c_3\right)\\
    \alpha_1&=\dfrac{c^{2a-2}}{a^2}\left(ac_2+(a-3)\overline{c_1}c_3\right)\\
    1&=\dfrac{c^{2a-3}}{a}c_3
\end{align*}
We can then write $c_3$ in terms of $a$ and $c$:
$$c_3=ac^{3-2a}$$
and we can write $c_2$ in terms of $a$, $c$, and $c_1$:
$$c_2=a \alpha_1 c^{2(1-a)}-(a-3)c^{3-2a}\overline{c_1}.$$
Substituting these into the $\alpha_0$ equation, the problem reduces to that of solving an algebraic equation in $c_1$ (linear in $c_1$ and quadratic in $\overline{c_1}$) with coefficients depending only on $a$, $c$, $\alpha_0$, and $\alpha_1$:
$$0=\left((a-3)(2a-3)c^{2a+3}\right)\overline{c_1}^2-\left(2(a-2)ac^{2a+2}\alpha_1\right)\overline{c_1}+2a\left(((a-3)^2c^6-c^{4a})c_1+ac^{2a+1}\alpha_0-(a-3)ac^5\overline{\alpha_1}\right).$$

For the remainder of the section, we restrict our focus to the special case of $a=2$.

\subsubsection{\texorpdfstring{$\QD_2(\alpha_0+\alpha_1w+w^2)$}{QD2(α0+α1w+w2)}}
When $a=2$, the logarithmic potential associated to a quadratic PQD $\Omega$ (Lemma \ref{lemma:GenQuadComplementDroplet}) is of the form
\begin{align*}
    Q(w)&=\dfrac{|w|^4}{4}-2\Re\left(\alpha_0w+\dfrac{\alpha_1}{2}w^2+\dfrac{w^3}{3}\right),
\end{align*}
plus some additive constant on each connected component of the droplet $K=\Omega^c$. Note that $Q$ is globally admissible because $\Re(w^3)\lesssim|w|^4$ $\implies$ $Q(w)=O(|w|^4)$. Thus by Frostman's theorem (\ref{lemma:Frostman}), for each $t>0$ there exists a unique droplet $K_t$ associated to $Q$, the complement of which is a quadratic PQD. Combining this with Lemma \ref{lemma:HSChains} we find that, for each choice of constants $\alpha_0$ and $\alpha_1$, there is an associated $1-$parameter family of quadratic PQDs with $a=2$.

When $a=2$, the above relations reduce to $c_2=2\alpha_1 c^{-2}+c^{-1} \overline{c_1}$, and $c_3=2c^{-1}$, so that
\begin{align*}
    \varphi(z)&=cz\left(1+\dfrac{\overline{c_1}}{z}+\dfrac{2\overline{\alpha_1}+c_1c}{c^2z^2}+\dfrac{2}{cz^3}\right)^{\frac{1}{2}}
\end{align*}
Moreover, taking the aforementioned algebraic equation in $c_1$, along with its conjugate equation, we may eliminate $\overline{c_1}$ to obtain a quartic in $c_1$,
\begin{align*}
    0&=64(\alpha_1+\overline{\alpha_0})^2 - 128 (c^2-1)^2 (\overline{\alpha_1}+\alpha_0)+64 c (c^2-1)^3c_1 - 16c^2 (\alpha_1+\overline{\alpha_0})c_1^2 + c^4c_1^4.
\end{align*}
By numerically solving the quartic for $c_1$ for various values of $c$, $\alpha_0$ and $\alpha_1$, we obtain six families of quadratic PQDs with $a=2$ which appear to exhibit distinct critical and topological behavior (Figures \ref{fig:QuadraticPQDs12} and \ref{fig:QuadraticPQDs3}).

\begin{figure}[ht]
  \centering
    \includegraphics[height=.33\linewidth,valign=c]{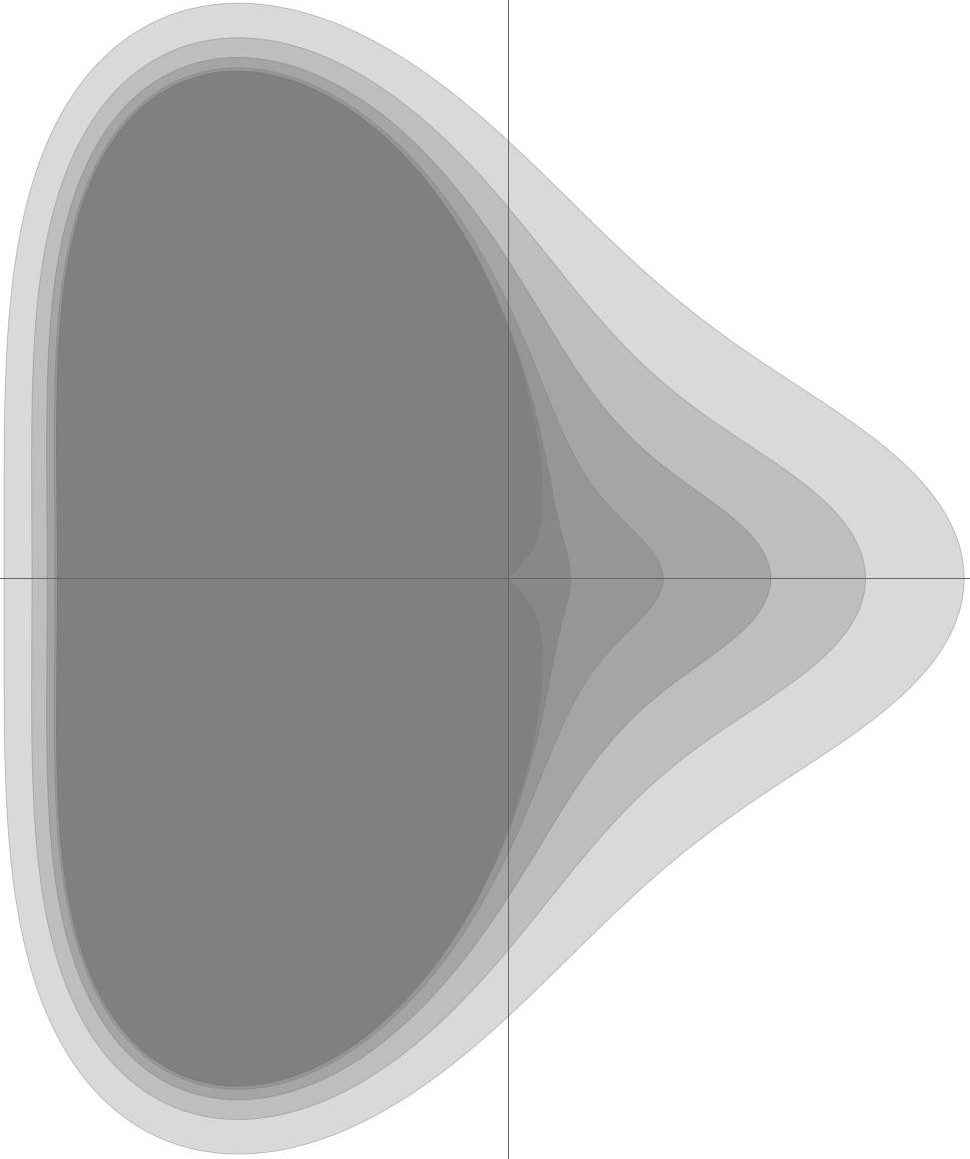}\;\;\;\;\;\;\;\;\;\;\;\;
    \includegraphics[height=.33\linewidth,valign=c]{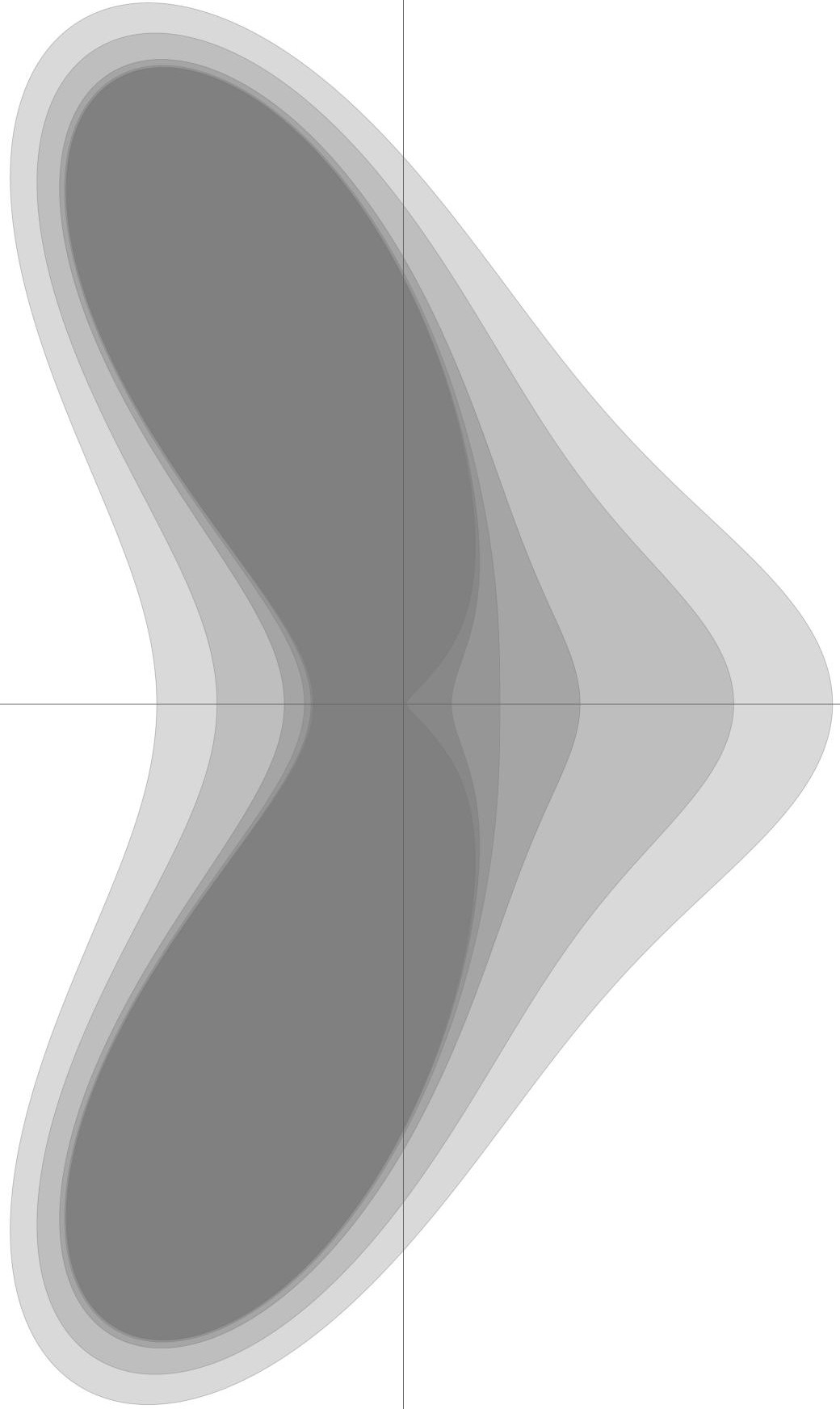}\;\;\;\;\;\;\;\;\;\;\;\;
    \includegraphics[height=.33\linewidth,valign=c]{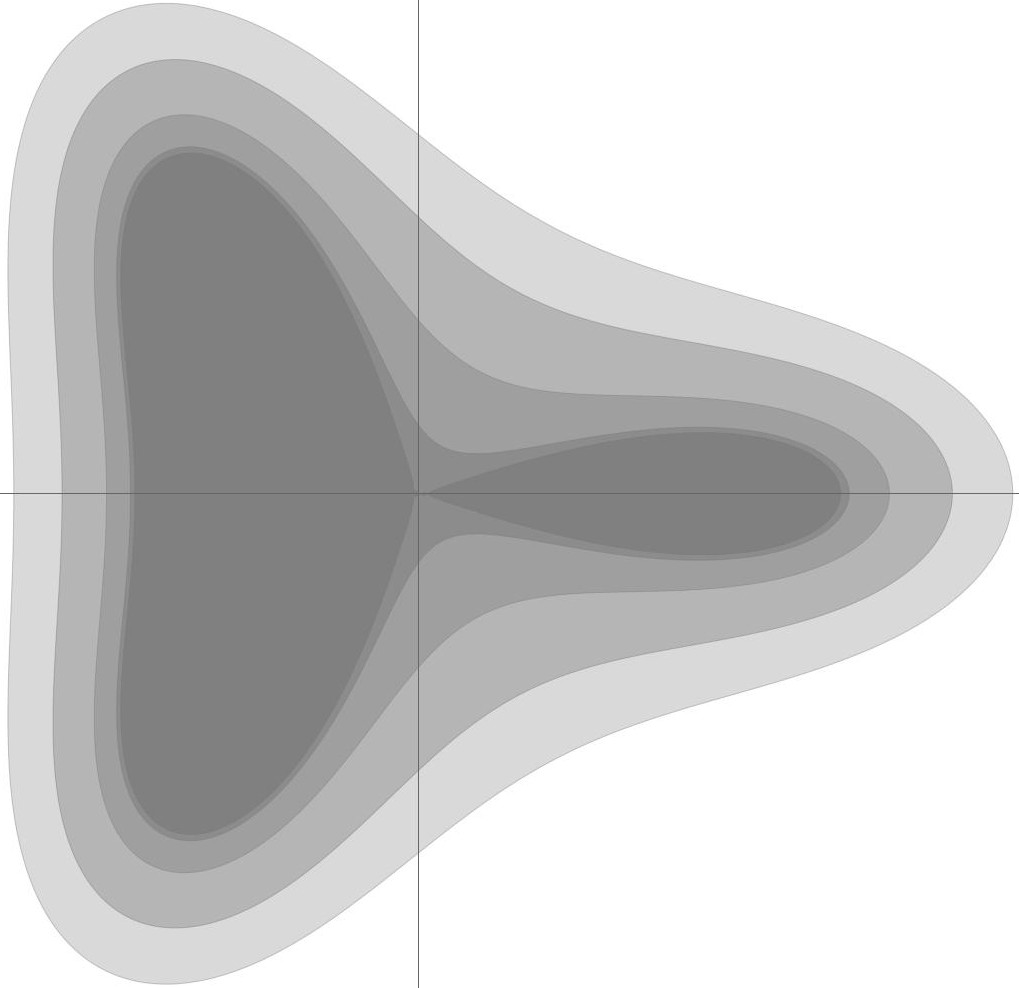}
    \caption{Families of PQDs (complements of the shaded regions) in $\QD_2(\alpha_0+\alpha_1w+w^2)$ for $\alpha_0=-30$, $\alpha_1=0$, and $c_{\min}\approx3.875$ (left); $\alpha_0=-4$, $\alpha_1=-3.04$, and $c_{\min}\approx2.887$ (center); $\alpha_0=-3.52$, $\alpha_1=1.43$, and $c_{\min}\approx2.741$ (right).}\label{fig:QuadraticPQDs12}\vspace{1.5em}
\end{figure}

\begin{figure}[ht]
  \centering
    \includegraphics[width=.31\linewidth,valign=c]{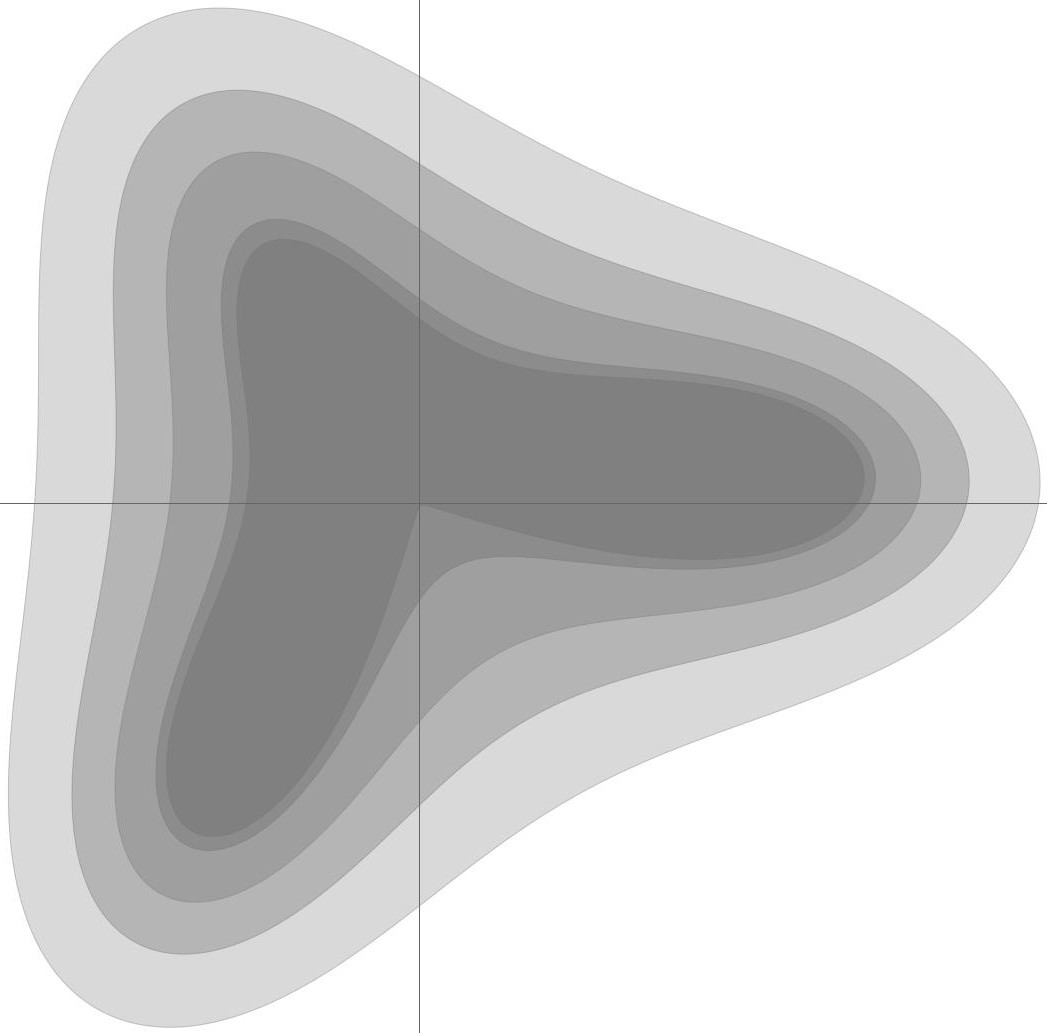}\;\;
    \includegraphics[width=.31\linewidth,valign=c]{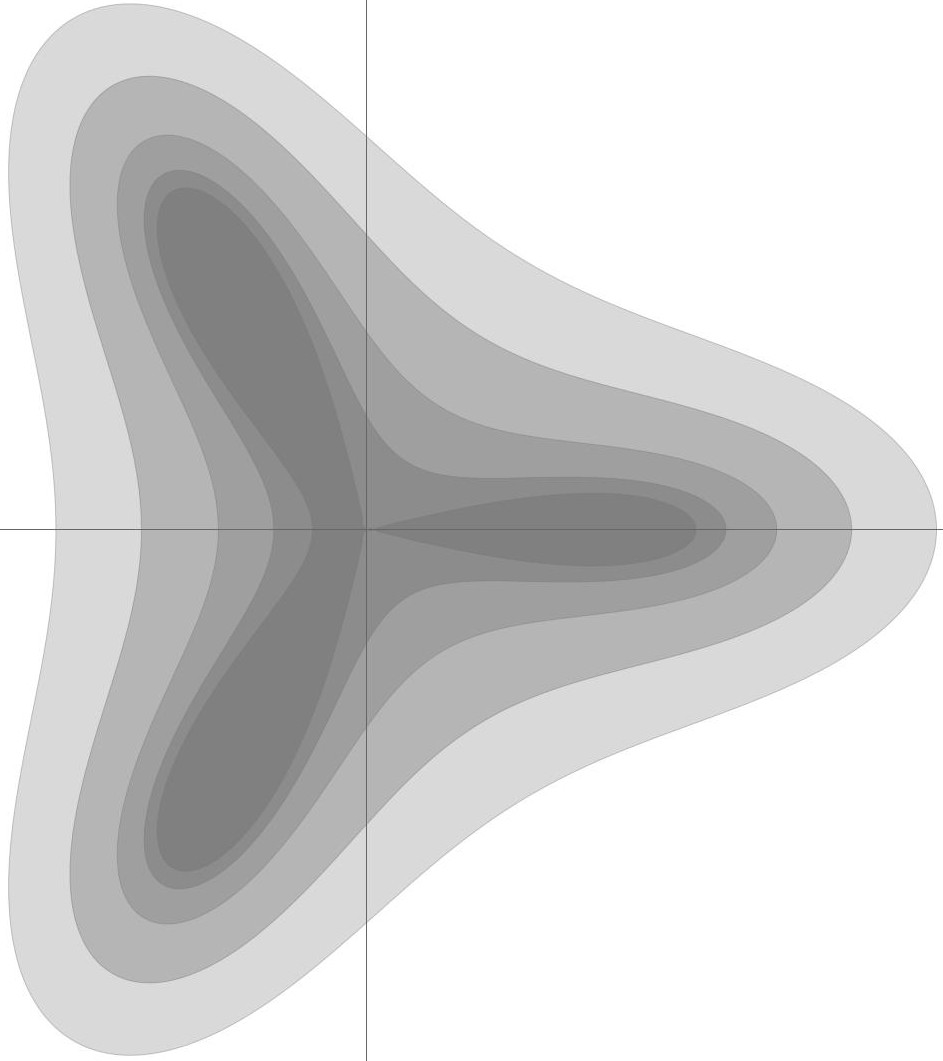}\;\;
    \includegraphics[width=.31\linewidth,valign=c]{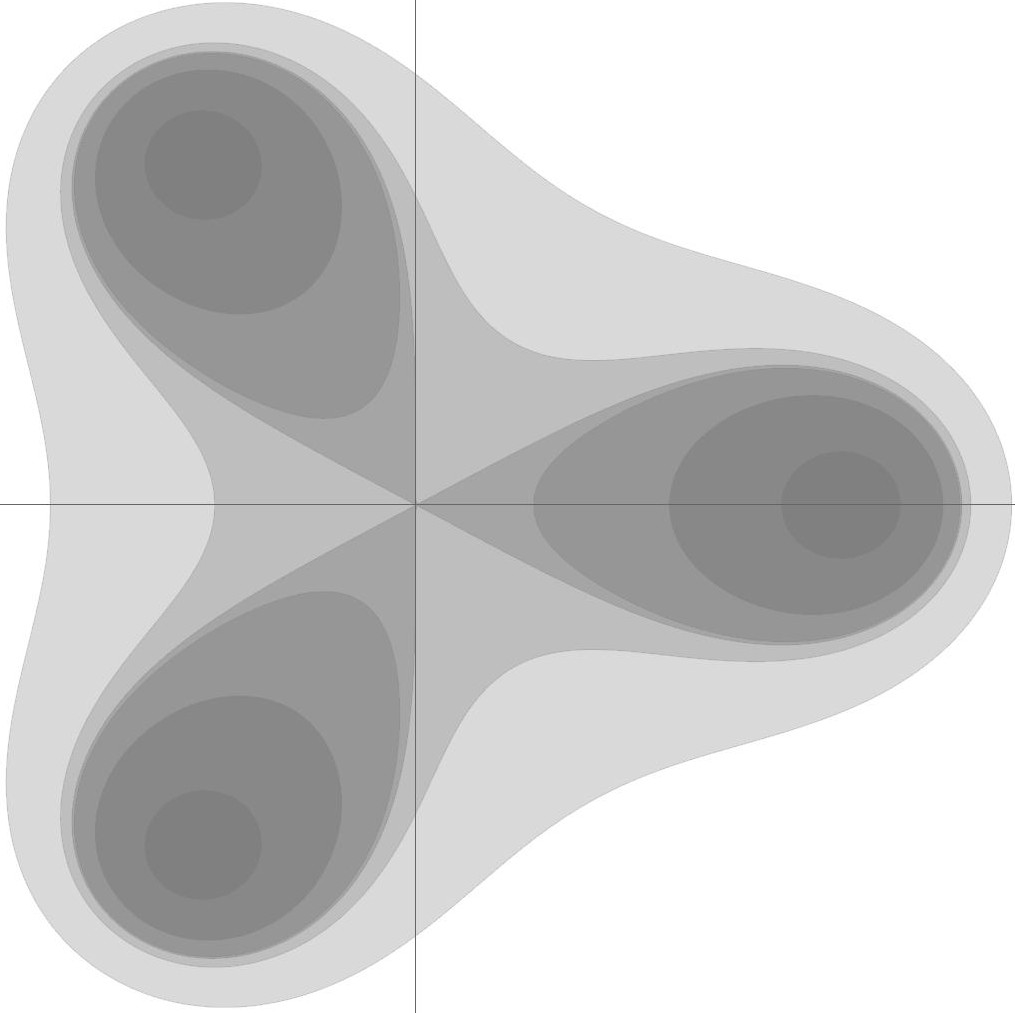}
    \caption{Families of PQDs (complements of the shaded regions) in $\QD_2(\alpha_0+\alpha_1w+w^2)$ for $\alpha_0=.86$, $\alpha_1=.96-i$, and $c_{\min}\approx2.6649$ (left); $\alpha_0=-.5$, $\alpha_1=-.17$, and $c_{\min}\approx2.026$ (center); $\alpha_0=0$, $\alpha_1=0$, and $c_{\min}=2$ (right).}\label{fig:QuadraticPQDs3}\vspace{1.5em}
\end{figure}

\section{Log-Weighted Quadrature Domains}\label{sec:LQDs}
We now shift our discussion to {\it log-weighted quadrature domains} (LQDs). These are WQDs with respect to the ``logarithmic'' weight $\rho_0(w):=|w|^{-2}=\frac{1}{4}\Delta\frac{\ln^2|w|^2}{2}$, and as we will see in Lemma \ref{lemma:LQDasPQDLimit}, they can be understood in certain cases as a limit of PQDs ($\rho_a(w)=|w|^{2(a-1)}$) as $a\to 0^{+}$.

Unlike $\rho_{a}$ for $a>0$, $\rho_0$ has a non-integrable singularity at the origin, so $\A(\Omega)$ and $\A_0(\Omega)$ aren't $\rho_0-$integrable function spaces when $0\in\Omega$. For the purposes of this article, we restrict our consideration to the non-singular ($0\notin\text{Cl}(\Omega)$) case in which $\A(\Omega)$ and $\A_0(\Omega)$ are integrable. {\it We shall implicitly assume that $0\notin\text{Cl}(\Omega)$ for the remainder of this article.} A manuscript covering the singular case is under preparation. 

The central result of this section is Theorem \ref{theorem:logweightedAQDRiemannMaps}, which generalizes the Faber transform formulae of Theorems \ref{thm:ClassicBQDRationalRiemannIffQD} and \ref{thm:ClassicUQDRationalRiemannIffQD} to LQDs. We also obtain classification results for null LQDs (Theorem \ref{thm:NullLQDClass}), one point LQDs (Theorem \ref{thm:LOGWQDBoundedOnePt}), and monomial LQDs (\S \ref{subsec:LOGWQDBasicMonomial}).

\begin{definition}[Log-weighted quadrature domain]
Let $\Omega\subset\Ch$ be a bounded (resp. unbounded) domain, equal to the interior of its closure, for which $0\notin\Cl(\Omega)$. We say that $\Omega$ is a log-weighted quadrature domain iff there exists an $h\in\Rat_0(\Omega)$ (resp. $\Rat(\Omega)$) such that
\begin{equation}
\int_{\Omega}\dfrac{f(w)}{|w|^{2}}dA(w)=\oint_{\partial\Omega}f(w)h(w)dw,
\end{equation}
for each $f\in \A(\Omega)$ (resp. $\A_0(\Omega)$). This is denoted by $\Omega\in\QD_0(h)$.
\end{definition}

Equivalently, we say that $\Omega\in\QD_0(h)$ iff $\Omega\in\QD_{\rho_0}(h;\A(\Omega))$ when $\Omega$ is bounded or $\Omega\in\QD_{\rho_0}(h;\A_0(\Omega))$ when $\Omega$ is unbounded. We also denote by $\Omega\in\QD_0$ that there exists an $h$ for which $\Omega\in\QD_0(h)$.


\subsection{Basic Properties of LQDs}\label{subsec:LQDProps}
We begin by deriving an analog of the coincidence equation (\ref{eqn:QuadCoincidence}) for LQDs. Fix a bounded domain $\Omega\in\QD_0(h)$. Then, applying Lemma \ref{lemma:WQDGCE} with $F(w)=\frac{1}{2}\ln^2|w|^2$, we find that there exists $G\in\A(\Omega)$ for which
\begin{equation}\label{eqn:LogWeightedSFE}
        h(w)\dEquals\dfrac{\ln|w|^2}{w}+G(w).
\end{equation}
The same argument and conclusion holds for unbounded LQDs by simply exchanging $\A(\Omega)$ and $\A_0(\Omega)$.

Equation \ref{eqn:LogWeightedSFE} provides us with a straightforward method of proof that $h$ is unique. Suppose $\Omega$ is an LQD with quadrature functions $h_1$ and $h_2$ then
$$h_1(w)-h_2(w)\dEquals\left(\dfrac{\ln|w|^2}{w}+G_1(w)\right)-\left(\dfrac{\ln|w|^2}{w}+G_2(w)\right)=G_1(w)-G_2(w).$$
Therefore $h_1-h_2$ continues analytically into $\Omega$. So when $\Omega$ is bounded, $h_1-h_2\in\Rat_0(\Omega)\cap\A(\Omega)=\{0\}$, and when $\Omega$ is unbounded, $h_1-h_2\in\Rat(\Omega)\cap\A_0(\Omega)=\{0\}$, so $h_1=h_2$. Hence, under this definition, $h$ is uniquely associated to $\Omega$.

Equation \ref{eqn:LogWeightedSFE} moreover captures one of several equivalent characterizations of LQDs in the spirit of Theorem \ref{theorem:EquivPQDChars}:
\begin{theorem}\label{theorem:EquivLQDChars}
If $\Omega\subset\Ch$ is a domain then the following are equivalent
\begin{enumerate}
    \item There exists a rational $h$ for which $\Omega\in\QD_0(h)$.
    \item There exists a $G\in\A(\Omega)$ ($\A_0(\Omega)$ when $\Omega$ is unbounded) and a rational $h$ for which Equation \ref{eqn:LogWeightedSFE} holds.
    \item There exists a function $S_0\in\M(\Omega)$ for which $S_0(w)\dEquals\frac{\ln|w|^2}{w}$. This is the {\it generalized Schwarz function} associated to $\Omega$.
\end{enumerate}
Moreover in this case $h=\AnalyticIn{S_0}{\Omega\IntComp}$.
\end{theorem}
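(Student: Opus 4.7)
The plan is to follow the template of the proof of Theorem~\ref{theorem:EquivPQDChars}, establishing the cycle $(1)\Rightarrow(2)\Rightarrow(3)\Rightarrow(1)$ and verifying the formula $h = \AnalyticIn{S_0}{\Omega\IntComp}$ along the way. The standing assumption $0\notin\Cl(\Omega)$ will be used exactly where it is needed, namely to keep $\ln|w|^2$ smooth in a neighbourhood of $\Cl(\Omega)$.

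For $(1)\Rightarrow(2)$, I would invoke Lemma~\ref{lemma:WQDGCE} applied to $F(w) := \frac{1}{2}\ln^{2}|w|^{2}$. This $F$ is admissible because $0\notin\Cl(\Omega)$ forces $F\in C^{2}(\Cl(\Omega))$, and a direct computation in polar coordinates confirms $\tfrac{1}{4}\Delta F = |w|^{-2} = \rho_{0}$. The conclusion of the lemma is precisely Equation~\ref{eqn:LogWeightedSFE}, as already sketched in the preceding discussion. The implication $(2)\Rightarrow(3)$ is purely algebraic: set $S_{0} := h - G$, so that $S_{0}\in\M(\Omega)$ and Equation~\ref{eqn:LogWeightedSFE} rearranges to $S_{0}(w)\dEquals\frac{\ln|w|^{2}}{w}$. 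Since $G$ is analytic in $\Omega$ (or vanishes at $\infty$ when $\Omega$ is unbounded) while $h$ is rational with poles only in $\Omega$, applying the Cauchy projection $\AnalyticInNoBracket{\cdot}{\Omega\IntComp}$ annihilates $G$ and fixes $h$, yielding $h = \AnalyticIn{S_{0}}{\Omega\IntComp}$.

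The main step is $(3)\Rightarrow(1)$. Given $S_{0}$, I would \emph{define} $h := \AnalyticIn{S_{0}}{\Omega\IntComp}$; since $S_{0}$ has finitely many poles in $\Omega$ and extends continuously to $\partial\Omega$, the residue-theorem observation noted just after Equation~\ref{eqn:analyticprojection} shows that $h$ is rational with poles only where $S_{0}$ has them, placing $h\in\Rat_{0}(\Omega)$ in the bounded case and $h\in\Rat(\Omega)$ in the unbounded case. The quadrature identity is then verified via Green's theorem, using the key local identity
\[
\frac{\partial}{\partial\bar{w}}\left(\frac{\ln|w|^{2}}{w}\right) = \frac{1}{|w|^{2}},
\]
valid on a neighbourhood of $\Cl(\Omega)$ because $0\notin\Cl(\Omega)$. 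For a test function $f$ in the appropriate class one obtains
\[
\int_{\Omega} f\rho_{0}\,dA = \oint_{\partial\Omega} f(w)\,\frac{\ln|w|^{2}}{w}\,dw = \oint_{\partial\Omega} f(w)\,S_{0}(w)\,dw = \oint_{\partial\Omega} f(w)\,h(w)\,dw,
\]
where the middle equality uses (3) and the last uses that $S_{0} - h$ extends analytically into $\Omega$, so its contour integral against $f$ vanishes by Cauchy's theorem.

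The principal technical obstacle lies in the unbounded case. There, Green's theorem must be applied on $\Omega\cap\D_{R}$ and the contribution along $\partial\D_{R}$ must be shown to disappear as $R\to\infty$. This is controlled by $f = O(w^{-1})$ for $f\in\A_{0}(\Omega)$ together with $\frac{\ln|w|^{2}}{w} = O\!\bigl(\tfrac{\log|w|}{|w|}\bigr)$, giving an integrand of size $O(\log R/R^{2})$ on $\partial\D_{R}$ and a normalised contour integral of size $O(\log R/R)\to 0$. The same asymptotic bookkeeping ensures that $h$ lies in $\Rat(\Omega)$ with the correct behaviour at $\infty$, completing the cycle.
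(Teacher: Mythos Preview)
Your proposal is correct and follows essentially the same approach as the paper: the cycle $(1)\Rightarrow(2)\Rightarrow(3)\Rightarrow(1)$ via Lemma~\ref{lemma:WQDGCE}, the definition $S_0:=h-G$, and the chain $\int_\Omega f\rho_0\,dA=\oint f\cdot\frac{\ln|w|^2}{w}=\oint fS_0=\oint fh$ are exactly what the paper does. Your treatment is in fact more careful than the paper's, which dispatches the unbounded case as ``completely analogous'' without the explicit $O(\log R/R)$ estimate you supply.
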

The generalized Schwarz function $S_0\in\M(\Omega)$ is unique when it exists as a consequence of its equality with $\frac{\ln|w|^2}{w}$ on $\partial\Omega$.
\begin{proof}[Proof of Theorem \ref{theorem:EquivLQDChars}]\;\\
$\mathbf{(1)\implies(2)}$: This follows from the application of Lemma \ref{lemma:WQDGCE} above.\\
$\mathbf{(2)\implies(3)}$: Rearranging, we find that $\frac{\ln|w|^2}{w}\dEquals h(w)-G(w)=:S_0(w)$, which is the generalized Schwarz function for $\Omega$ because $h-G\in\M(\Omega)$. Also, $\AnalyticInNoBracket{S_0}{\Omega\IntComp}=\AnalyticIn{h-G}{\Omega\IntComp}=h$.\\
$\mathbf{(3)\implies(1)}$: Finally, if $\Omega$ is a bounded domain for which such an $S_0$ exists, and $f\in\A(\Omega)$, then
\begin{align*}
\int_{\Omega}\dfrac{f(w)}{|w|^{2}}dA(w)&=\oint_{\partial\Omega}f(w)\frac{\log|w|^2}{w}dw=\oint_{\partial\Omega}f(w)S_0(w)dw\\
&=\oint_{\partial\Omega}f(w)\AnalyticIn{S_0(w)}{\Omega\IntComp}dw=\oint_{\partial\Omega}f(w)h(w)dw,
\end{align*}
($h:=\AnalyticIn{S_0}{\Omega\IntComp}$ is rational because $S_0\in\M(\Omega)$). So $\Omega\in\QD_0(h)$. The argument in the unbounded case is completely analogous.
\end{proof}

\subsubsection{Boundary Regularity of LQDs}
Fix $\Omega\in\QD_0(h)$ so, by Theorem \ref{theorem:EquivLQDChars}, Equation \ref{eqn:LogWeightedSFE} is satisfied on $\partial\Omega$. Rearranging, we obtain 
\begin{equation}\label{eqn:LQDTranscendentalSFcn}
    \overline{w}\dEquals g(w)e^{wh(w)}.
\end{equation}
where $g(w)=w^{-1}e^{-wG(w)}\in\A(\Omega)$ ($\A_0$ when $\Omega$ is unbounded). Hence $S(w):=g(w)e^{wh(w)}$ is (almost) a Schwarz function for $\Omega$, the only obstruction to this being the fact that $S$ has essential singularities at the poles of $h$ so it is not meromorphic in $\Omega$. However, this is more than enough to establish the boundary regularity of LQDs.

In particular, Sakai's regularity theorem \ref{theorem:SakaiRegularity} requires only the existence of a {\it local Schwarz function}. Thus, by applying the same reasoning as in the classical case (\S\ref{subsubsec:QDBoundaryRegularity}), we obtain a regularity theorem for LQDs along the same lines as that for QDs,
\begin{theorem}\label{theorem:LQDBoundaryRegularity}
    If $\Omega\in\QD_0$ then $\partial\Omega$ has finitely many singular points, each of which is either a cusp or a double point.
\end{theorem}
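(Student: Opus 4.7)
The plan is to follow the same pattern as the proof of Theorem \ref{thm:QDBoundaryRegularity} for classical QDs, once we properly manage the essential singularities of the ``almost Schwarz function'' $S(w)=g(w)e^{wh(w)}$ appearing in Equation \ref{eqn:LQDTranscendentalSFcn}. The key observation is that these essential singularities are confined to the poles of $h$, which by definition lie in the interior of $\Omega$ (since $h\in\Rat_0(\Omega)$ in the bounded case, or $h\in\Rat(\Omega)$ in the unbounded case, and in both cases $h$ is analytic on a neighborhood of $\partial\Omega$). Away from these finitely many interior points, $S$ is analytic in a neighborhood of $\mathrm{Cl}(\Omega)$.

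Concretely, I would fix $w_0\in\partial\Omega$ and choose $\epsilon>0$ small enough that $\D_\epsilon(w_0)\cap\mathrm{Cl}(\Omega)$ contains no pole of $h$. Under the standing assumption $0\notin\mathrm{Cl}(\Omega)$, the factor $g(w)=w^{-1}e^{-wG(w)}$ is analytic on this neighborhood, so $S=g\,e^{wh}$ is analytic on $\D_\epsilon(w_0)\cap\Omega$, extends continuously to $\partial\Omega\cap\D_\epsilon(w_0)$, and equals $\overline{w}$ there by Equation \ref{eqn:LQDTranscendentalSFcn}. This realizes $S$ as a local Schwarz function at $w_0$ in the precise sense required by Theorem \ref{theorem:SakaiRegularity}.

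Applying Sakai's regularity theorem, $w_0$ is therefore either regular, a cusp, a double point, or a degenerate point. Because $\Omega$ equals the interior of its closure by hypothesis, the degenerate alternative is ruled out exactly as in the classical argument of \S\ref{subsubsec:QDBoundaryRegularity}. Finally, $\partial\Omega$ is compact in $\Ch$ (as $\Omega^c$ is closed in the sphere), and cusps and double points are isolated singularities along a curve admitting a local Schwarz function; compactness then forces the set of singular points to be finite.

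The main obstacle, if one arises, will be verifying that the local Schwarz construction survives uniformly in the unbounded case, including at boundary points near $\infty$. I anticipate that the clean way to handle this is to pick $w_1\notin\mathrm{Cl}(\Omega)$ and apply a Möbius change of variable $w\mapsto(w-w_1)^{-1}$ to reduce to a bounded domain, mirroring the maneuver used in the proof of Lemma \ref{lemma:PQDBoundaryRegularityv1}; the exponential factor $e^{wh(w)}$ is untouched by the reduction (up to a change in $h$ and $G$), so the local Schwarz function property transports across without issue.
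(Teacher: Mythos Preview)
Your proposal is correct and follows essentially the same approach as the paper: construct the local Schwarz function $S(w)=g(w)e^{wh(w)}$ from Equation~\ref{eqn:LQDTranscendentalSFcn}, note that the essential singularities of $S$ occur only at the interior poles of $h$, and then invoke Sakai's regularity theorem at each boundary point exactly as in \S\ref{subsubsec:QDBoundaryRegularity}. Your additional remarks on compactness for finiteness and the M\"obius reduction in the unbounded case are reasonable elaborations, though the latter is not strictly needed since $\partial\Omega$ is already a compact subset of $\C$ when $\infty\in\Omega$.
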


\subsubsection{Invariance Under Inversion}

LQDs are particularly well-behaved under inversion, which is essentially a consequence of the fact that $(w^{-1})^{\ast}\frac{dA(w)}{|w|^2}=\frac{|-w^{-2}|^2dA(w)}{|w^{-1}|^2}=\frac{dA(w)}{|w|^2}$, where $(w^{-1})^{\ast}$ is the pullback by $w\mapsto w^{-1}$. In particular,
\begin{lemma}\label{lemma:LQDInversion}
If $\Omega$ is a bounded domain, then $\Omega\in\QD_0(h)$ iff $\Omega^{-1}\in\QD_0(-h(w^{-1})w^{-2})$.
\end{lemma}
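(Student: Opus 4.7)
The strategy is a direct change of variables $u=w^{-1}$, which exploits two clean facts. First, the area measure $dA(w)/|w|^2$ is preserved under inversion, since $dA(w) = |u|^{-4}dA(u)$ while $|w|^2 = |u|^{-2}$, so the Jacobian factor and weight factor cancel. Second, $w\mapsto w^{-1}$ is a holomorphic diffeomorphism of $\C\setminus\{0\}$, hence orientation-preserving, so it carries $\partial\Omega$ to $\partial\Omega^{-1}$ with positive orientation preserved (both boundaries are compact and bounded away from $0$ since $0\notin\Cl(\Omega)$).

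For the forward direction, suppose $\Omega\in\QD_0(h)$ and fix any $f\in\A(\Omega^{-1})$. Since $0\notin\Cl(\Omega)$, the pullback $w\mapsto f(w^{-1})$ lies in $\A(\Omega)$, so the quadrature identity for $\Omega$ applied to this test function reads
\begin{equation*}
\int_\Omega \frac{f(w^{-1})}{|w|^2}\,dA(w) = \oint_{\partial\Omega} f(w^{-1})\,h(w)\,dw.
\end{equation*}
Substituting $u=w^{-1}$ on both sides, with $dw=-u^{-2}\,du$ and the measure invariance above, produces
\begin{equation*}
\int_{\Omega^{-1}} \frac{f(u)}{|u|^2}\,dA(u) = \oint_{\partial\Omega^{-1}} f(u)\!\left(-\frac{h(u^{-1})}{u^2}\right)du,
\end{equation*}
which is exactly the log-weighted quadrature identity for $\Omega^{-1}$ with candidate quadrature function $\widetilde h(u) := -h(u^{-1})/u^2$.

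It remains to verify that $\widetilde h$ is a legitimate quadrature function. It is plainly rational; it vanishes at $u=\infty$ because $h$ is analytic at $0\notin\Omega$; and every pole $p$ of $h$ in $\Omega$ corresponds to a pole at $1/p\in\Omega^{-1}$ of $\widetilde h$. The main technical subtlety is a potential extra pole of $\widetilde h$ at $u=0$ produced by the factor $u^{-2}$ and the leading $c_1/w$ term of $h$ at infinity (with $c_1=\int_\Omega|w|^{-2}\,dA>0$ by the QI with $f\equiv 1$). Since $0\notin\Cl(\Omega^{-1})$ this pole sits outside $\Omega^{-1}$; it contributes nothing to the contour integral against $f\in\A(\Omega^{-1})$ when $0$ lies in the unbounded component of $(\Omega^{-1})^c$, and its principal part may be subtracted to recover the canonical representative in $\Rat_0(\Omega^{-1})$. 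The cleanest way to sidestep this issue entirely is via Theorem \ref{theorem:EquivLQDChars}: given the generalized Schwarz function $S_0\in\M(\Omega)$ with $S_0(w)\dEquals\ln|w|^2/w$, the function $\widetilde S_0(u):=-u^{-2}S_0(u^{-1})$ is meromorphic on $\Omega^{-1}$ and, using $\ln|u|^2=-\ln|w|^2$, satisfies $\widetilde S_0(u)\dEquals\ln|u|^2/u$ on $\partial\Omega^{-1}$, certifying $\Omega^{-1}\in\QD_0$ directly.

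Finally, the reverse direction is automatic by symmetry: applying the forward direction to $\Omega^{-1}\in\QD_0(\widetilde h)$ and computing $-\widetilde h(w^{-1})/w^2 = h(w)$ recovers $\Omega=(\Omega^{-1})^{-1}\in\QD_0(h)$, closing the equivalence.
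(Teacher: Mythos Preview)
Your proof is correct and follows exactly the same change-of-variables argument as the paper's proof, using the invariance of $|w|^{-2}dA(w)$ under inversion and pulling back test functions. You go further than the paper by flagging and resolving a subtlety it omits: the candidate $-h(w^{-1})w^{-2}$ generically acquires a pole at $0\notin\Omega^{-1}$ (from the leading term $c_1/w$ of $h$ at infinity), so strictly speaking it need not lie in $\Rat_0(\Omega^{-1})$; your two fixes---subtracting the principal part at $0$, or invoking the generalized Schwarz function characterization via $\widetilde S_0(u)=-u^{-2}S_0(u^{-1})$---both cleanly dispose of this.
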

\begin{proof}[Proof of Lemma \ref{lemma:LQDInversion}]
Let $\Omega$ be a bounded domain. If $\Omega\in\QD_0(h)$ then for each $f\in \A(\Omega^{-1})$, $f(w^{-1})\in \A(\Omega)$, so
\begin{align*}
\int_{\Omega^{-1}}\dfrac{f(w)}{|w|^2}dA(w)&=\int_{\Omega}\dfrac{f(w^{-1})}{|w|^2}dA(w)=\oint_{\partial\Omega}f(w^{-1})h(w)dw=\oint_{\partial\Omega^{-1}}f(w)h(w^{-1})(-w^{-2})dw
\end{align*}
This implies the backwards direction as well because $\widetilde{h}(w)=-h(w^{-1})w^{-2}$ iff $-\widetilde{h}(w^{-1})w^{-2}=h(w)$.
\end{proof}

\subsubsection{Null LQDs}

We begin with a classification of null LQDs (domains $\Omega\in\QD_0(0)$), summarized by the following theorem.
\begin{theorem}\label{thm:NullLQDClass}
A domain $\Omega$ is a null LQD iff it is an exterior disk centered at the origin.
\end{theorem}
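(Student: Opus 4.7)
The plan is to exploit Theorem \ref{theorem:EquivLQDChars} to convert the vanishing of $h$ into a real-boundary-value condition on an analytic function, and then apply the maximum principle.

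First I would pick $\Omega \in \QD_0(0)$. By part (2) of Theorem \ref{theorem:EquivLQDChars}, there exists $G \in \A(\Omega)$ (or $\A_0(\Omega)$ if $\Omega$ is unbounded) with
\[
 0 \;=\; h(w) \;\dEquals\; \frac{\ln|w|^2}{w} + G(w) \quad \text{on } \partial\Omega.
\]
Since $0 \notin \Cl(\Omega)$, multiplying by $w$ is harmless on $\partial\Omega$ and gives $\phi(w) := wG(w) \dEquals -\ln|w|^2$ on $\partial\Omega$. The key observation is that $\phi$ is analytic in $\Omega$ while $-\ln|w|^2$ is real; hence $\phi$ takes real boundary values.

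The core step is to conclude that $\phi$ is constant. When $\Omega$ is bounded, $\phi$ is continuous on the compact set $\Cl(\Omega)$, and $\Im \phi$ is harmonic with zero boundary values, so the maximum principle gives $\Im \phi \equiv 0$, making $\phi$ a real-valued analytic function and thus constant. When $\Omega$ is unbounded, $G \in \A_0(\Omega)$ means $G(w) = g_1 w^{-1} + g_2 w^{-2} + \cdots$ near $\infty$, so $\phi(w) = g_1 + g_2 w^{-1} + \cdots$ extends holomorphically to $\Omega \cup \{\infty\}$, viewed as a domain in $\Ch$. Applying the maximum principle on this domain (whose topological boundary in $\Ch$ is still $\partial\Omega$), the vanishing of $\Im \phi$ on $\partial\Omega$ forces $\Im \phi \equiv 0$, and again $\phi$ is constant. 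This handling of the point at infinity is the only subtle part of the argument; everything else is straightforward.

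Once $\phi \equiv c \in \R$, the relation $c = -\ln|w|^2$ on $\partial\Omega$ gives $\partial\Omega \subseteq \partial\D_r$ with $r = e^{-c/2}$. Using that $\Omega$ equals the interior of its closure together with connectedness: if $\partial\Omega$ were a proper subset of $\partial\D_r$, then $\Ch \setminus \partial\Omega$ would be connected and $\Omega$ would contain $0$, contradicting $0 \notin \Cl(\Omega)$; hence $\partial\Omega = \partial\D_r$ and $\Omega$ is one of the two components $\D_r$ or $\D_r^c$. The assumption $0 \notin \Cl(\Omega)$ eliminates $\Omega = \D_r$ and shows, as a byproduct, that no bounded null LQDs exist.

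For the converse, I would verify directly that $\D_r^c \in \QD_0(0)$: if $f \in \A_0(\D_r^c)$ has Laurent expansion $f(w) = \sum_{n \geq 1} c_n w^{-n}$ at infinity, then in polar coordinates the angular integral $\int_0^{2\pi} f(\rho e^{i\theta})\, d\theta$ vanishes for every $\rho \geq r$, so
\[
 \int_{\D_r^c} \frac{f(w)}{|w|^2}\, dA(w) \;=\; \int_r^\infty \frac{1}{\rho}\left(\int_0^{2\pi} f(\rho e^{i\theta})\, \frac{d\theta}{\pi}\right) d\rho \;=\; 0,
\]
which is the quadrature identity with $h = 0$, completing the classification.
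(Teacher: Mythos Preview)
Your proof is correct and takes a genuinely different route from the paper's. The paper first excludes the bounded case by testing with $f=1$, then in the unbounded case exponentiates the relation $\ln|w|^2\dEquals -wG(w)$ to produce a classical Schwarz-function identity $|w|^2\dEquals U(w)+A$ with $U\in\A_0(\Omega)$, concludes that $\Omega$ is a \emph{classical} null quadrature domain, and invokes the Aharonov--Shapiro classification of those. Your argument instead applies the maximum principle to $\Im(wG)$ directly (extending $wG$ across $\infty$ in the unbounded case), obtaining constancy of $wG$ without any exponentiation or reduction to the classical theory; the bounded case falls out at the end rather than being treated separately. Your approach is more self-contained, since it avoids citing an external classification result, and the only real cost is the small topological argument needed to pass from $\partial\Omega\subseteq\partial\D_r$ to $\partial\Omega=\partial\D_r$. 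The paper's approach, on the other hand, makes the connection to the classical null-QD problem explicit, which fits its broader narrative of relating weighted QDs to their unweighted counterparts.
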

\begin{proof}[Proof of Theorem \ref{thm:NullLQDClass}]\label{proof:NullLQDClass}
Let $\Omega\in\QD_0(0)$. We first note that $\Omega$ must be unbounded because otherwise we may take $1$ as a test function which implies
$$0<\int_{\Omega}|w|^{-2}dA(w)=\oint_{\partial\Omega}1\cdot0dw=0,$$
a contradiction.

Then $\Omega$ must be unbounded, so Equation \ref{eqn:LogWeightedSFE} yields $\ln|w|^2\dEquals -wG(w)$ for some $G\in\A_0(\Omega)$. If $q=\Res{\infty}G$, then $\widetilde{G}(w):=-wG(w)-q\in\A_0(\Omega)$, and
$$\ln\left|w\right|^2\dEquals -wG(w)\;\;\;\;\implies\;\;\;\;\left|w\right|^2e^{-q}\dEquals e^{\widetilde{G}(w)},$$
where $e^{\widetilde{G}(w)}-1\in\A_0(\Omega)$. Thus there exists $U\in\A_0(\Omega)$ and $A\in\C$ for which $|w|^2\dEquals U(w)+A$. Thus if $f\in\A_0(\Omega)$,
$$\int_{\Omega}f(w)dA(w)=\oint_{\partial\Omega}f(w)\overline{w}dw=\oint_{\partial\Omega}f(w)\dfrac{U(w)+A}{w}dw.$$
As $0\notin\Omega$ this equals $0$. In particular, $\Omega$ is a (classical) null quadrature domain centered at $0$. Thus $\Omega$ is an exterior disk centered at the origin (see e.g. \cite{AharonovShapiro}). Conversely, it is straightforward to show that $(\D_{r})\IntComp\in\QD_0(0)$ for each $r>0$.
\end{proof}

\subsubsection{Potential Theory for LQDs}

As in the discussion of classical one point QDs (\ref{sec:OnePtQDClass}), it is instructive to consider the situation from the perspective of potential theory. In particular, we have the following lemmas relating the two.
\begin{lemma}\label{lemma:GenLogQuadComplementDroplet}
If $K\subset\C$ is a compact subset such that $K^c$ is a finite disjoint union of LQDs, $\Omega_j\in\QD_0(h_j)$ and a disk centered at the origin $\Omega_0=\D_r$ for some $r>0$, then $K$ is a local droplet. Furthermore, the external potential associated to the droplet is given by
$$Q(w)=\dfrac{1}{2}\ln^2|w|^2-2\Re(H(w)),\;H(w)=2\Re\left(\int_{w_l}^{w}h(\xi)d\xi\right)+\text{const}(w_l)\text{ in some open nbhd of }K_l$$
(where $h=\sum h_j+\frac{\ln|r|^2}{w}$ and $w_l$ is any point in the $l$th connected component of $K$, $K_l$). In particular, $Q$ is unique up to several additive constants.
\end{lemma}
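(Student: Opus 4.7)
The plan is to adapt the proof of the PQD analog (Lemma \ref{lemma:GenQuadComplementDroplet}, proof \ref{proof:GenQuadComplementDroplet}) almost line-by-line, replacing the potential $\frac{|w|^{2a}}{a^2}$ with $\frac{1}{2}\ln^2|w|^2$ and the PQD Schwarz identity $\frac{1}{a}\overline{w}|w|^{2(a-1)}\dEquals h(w)-G(w)$ with its LQD analogue $\frac{\ln|w|^2}{w}\dEquals h_j(w)-G_j(w)$ (Equation \ref{eqn:LogWeightedSFE}). The extra disk component $\Omega_0=\D_r$ plays an essential role: it contributes exactly $\frac{\ln|r|^2}{w}$ to the total $h$, which is what makes the identities below close up. By Lemma \ref{lemma:FrostmanPt2}, it suffices to construct an open nbhd $\mathcal{O}\supset K$ on which $h$ is holomorphic and a real harmonic $H:\mathcal{O}\rightarrow\R$ with $\frac{\partial H}{\partial w}=h$ such that $\frac{1}{2}\ln^2|w|^2-H(w)+U^\mu(w)=0$ on $K$, where $d\mu=\1_K\frac{1}{|w|^2}dA$. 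I would take $\mathcal{O}$ to be a small $\epsilon$-nbhd of $K$, invoking Theorem \ref{theorem:LQDBoundaryRegularity} to control the singular points of $\partial K$, and small enough that $0\notin\mathcal{O}$ (possible because $0\in\D_r\subset K^c$), each component of $\mathcal{O}$ contains exactly one component of $K$, and loops in $\mathcal{O}$ are homotopic to loops in $K$.

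The crux is the identity $C_Q^K(w)+h(w)=\frac{\ln|w|^2}{w}$ on $K$. In the PQD proof this is obtained by subtracting $C_Q^{K^c}$ from the explicit global transform $C_Q^{\C}$; here $C_Q^{\C}$ is ill-defined since $|\xi|^{-2}$ is non-integrable at both $0$ and $\infty$. Instead, I would apply the Cauchy--Pompeiu formula directly to the bounded domain $K$ with the smooth function $f(\xi)=\ln|\xi|^2/\xi$, which is defined on a nbhd of $\Cl(K)$ because $0\notin\Cl(K)$ and satisfies $\frac{\partial f}{\partial\overline{\xi}}=1/|\xi|^2$. This gives for $w\in K^\circ$
\begin{equation*}
\frac{\ln|w|^2}{w}=\oint_{\partial K}\frac{\ln|\xi|^2/\xi}{\xi-w}d\xi+C_Q^K(w).
\end{equation*}
Decomposing $\partial K$ into the outer boundary piece from $\partial\Omega_\infty$ together with the inner ``hole'' boundaries $\partial\D_r$ and the $\partial\Omega_j$'s (bounded LQDs), I would evaluate each contribution separately: on each $\partial\Omega_j$ Equation \ref{eqn:LogWeightedSFE} lets me write $\ln|\xi|^2/\xi\dEquals h_j(\xi)-G_j(\xi)$, the $h_j$ term contributes $h_j(w)$ by the same residue/Cauchy integral computation used in the PQD proof (applied to the Cauchy kernel $\xi\mapsto(\xi-w)^{-1}$), while the analytic $G_j$ piece vanishes by Cauchy's theorem; the disk $\partial\D_r$ contributes $\ln|r|^2/w$ by a direct partial-fractions computation (using $\ln|\xi|^2=\ln r^2$ on $\partial\D_r$); and $\partial\Omega_\infty$ contributes $h_\infty(w)$ analogously. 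Summing with the correct orientations recovers $h(w)=\sum_jh_j(w)+h_\infty(w)+\ln|r|^2/w$, establishing the identity.

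With the key identity in hand, I would mimic the end of proof \ref{proof:GenQuadComplementDroplet}: define $H(w):=2\Re\int_{w_l}^{w}h(\xi)d\xi+\frac{1}{2}\ln^2|w_l|^2+U^\mu(w_l)$ in each component $\mathcal{O}_l$. Well-definedness requires $2\Re\oint_\gamma h\,dw=0$ for loops $\gamma\subset\mathcal{O}$, which follows from the identity via
\begin{equation*}
2\Re\oint_\gamma h\,dw=\oint_\gamma h\,dw+\oint_\gamma\overline{h}\,d\overline{w}=\oint_\gamma d\!\left(\tfrac{1}{2}\ln^2|w|^2+U^\mu\right)=0,
\end{equation*}
noting that $\tfrac{1}{2}\ln^2|w|^2$ is single-valued on $\mathcal{O}$ (because $0\notin\mathcal{O}$) and $U^\mu$ is always single-valued. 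Then $\frac{\partial}{\partial w}\!\left(\tfrac{1}{2}\ln^2|w|^2-H+U^\mu\right)=\frac{\ln|w|^2}{w}-h-C_Q^K=0$ on $K$ (and the conjugate $\frac{\partial}{\partial\overline{w}}$ vanishes as well by reality), so $\tfrac{1}{2}\ln^2|w|^2-H+U^\mu$ is locally constant on $K$, and the additive constants in $H$ are chosen precisely to make it vanish on each component. The main obstacle I anticipate is careful bookkeeping of orientations in the decomposition of $\partial K$ (outer versus inner components, and handling the unbounded $\Omega_\infty$ correctly); once those sign conventions are pinned down, the rest is a direct translation of the PQD argument.
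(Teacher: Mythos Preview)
Your proposal is correct and follows the same overall blueprint as the paper: apply Frostman (Lemma \ref{lemma:FrostmanPt2}) after establishing the key identity $C_Q^K(w)+h(w)=\frac{\ln|w|^2}{w}$ on $K$, then define $H$ componentwise and verify well-definedness via the loop integral exactly as you describe.

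The one genuine difference is in how the key identity is obtained. The paper does \emph{not} apply Cauchy--Pompeiu to $K$; instead it computes $C_Q^{\D_r^c}(w)$ directly as a principal value $\lim_{R\to\infty}\int_{\D_R\setminus\D_r}\frac{|\xi|^{-2}}{w-\xi}\,dA(\xi)=\frac{\ln|w|^2}{w}-\frac{\ln|r|^2}{w}$, uses the quadrature identity on each $\Omega_j$ (with the Cauchy kernel as test function) to get $C_Q^{\Omega_j}(w)=h_j(w)$, and then combines via $C_Q^K+C_Q^{K^c\setminus\D_r}=C_Q^{\D_r^c}$. Your route sidesteps the principal-value limit entirely by working on the bounded set $K$ (where $\ln|\xi|^2/\xi$ is smooth since $0\notin\Cl(K)$) and pushing all the work into the boundary decomposition; this is arguably cleaner, at the cost of the orientation bookkeeping you anticipate. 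The paper's route is a more literal transcription of the PQD argument. After the identity, the two proofs are identical.
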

\begin{proof}[Proof of Lemma \ref{lemma:GenLogQuadComplementDroplet}]\label{proof:GenLogQuadComplementDroplet}
Let $K^c=\Omega_\infty\cup\bigcup_{j=0}^{N}\Omega_j$ be a disjoint union where $\Omega_j\in\QD_0(h_j)$ for $j>1$ and $\Omega_0=\D_r$ for some $r>0$. Also let $h=\sum_{j}h_j+\frac{\ln|r|^2}{w}$. Applying Frostman's theorem (lemma \ref{lemma:FrostmanPt2}), it is sufficient to construct an open nbhd $\mathcal{O}\supset K$ and harmonic $H:\mathcal{O}\rightarrow\R$ such that $\frac{\partial H}{\partial w}=h$ and
\begin{equation}\label{eqn:GenLogQuadCompDropIdPf}
    \dfrac{1}{2}\ln^2|w|^2-H(w)+U^\mu(w)=0,\;\forall w\in K,
\end{equation}
where $d\mu=\1_K\frac{\Delta Q}{2}dA$. We shall choose $\mathcal{O}$ as an $\epsilon-$nbhd of $K$ for a to be determined value of $\epsilon$. We require that $\mathcal{O}$ satisfies each of the following
\begin{enumerate}
    \item $h$ is holomorphic in $\mathcal{O}$,
    \item each connected component of $\mathcal{O}$ contains precisely one connected component of $K$, 
    \item every loop in $\mathcal{O}$ is homotopic to a loop in $K$.
\end{enumerate}
By assumption, $\partial K$ has finitely many singular points. Thus there exists $\epsilon>0$ s.t. (2) and (3) hold.
Note that, for each $\Omega_j$ and each $w\in (\Omega_j)\IntComp$, the quadrature identity tells us that for each $j>0$
$$C_Q^{\Omega_j}(w)=\int_{\Omega_j}\dfrac{|\xi|^{-2}}{w-\xi}dA(\xi)=\oint_{\partial\Omega_j}\dfrac{h_j(\xi)}{w-\xi}d\xi=\oint_{\partial(\Omega_j)\IntComp}\dfrac{h_j(\xi)}{\xi-w}d\xi=h_j(w),$$
(see Equation \ref{sec:PotentialTheoryHeleShaw} for the initial discussion of $C_Q$.)
and this extends to $\partial\Omega_j$. Thus, for each $w\in\bigcap_{j}\Omega_j^{c}=K$, 
$$C_Q^{\Omega\setminus\D_r}(w)=\int_{\Omega}\dfrac{|\xi|^{-2}}{w-\xi}dA(\xi)=\sum_{j>1}\int_{\Omega_j}\dfrac{|\xi|^{-2}}{w-\xi}dA(\xi)=\sum_{j>1}C_Q^{\Omega_j}(w)=\sum_jh_j(w)=h(w)-\dfrac{\ln|r|^2}{w}.$$
But then note that
\begin{align*}
C_Q^{\D_r^c}(w)=\int_{\D_r^c}\dfrac{|\xi|^{-2}}{w-\xi}dA(\xi)&=\lim_{R\to\infty}\int_{\D_R\setminus\D_r}\dfrac{|\xi|^{-2}}{w-\xi}dA(\xi)=\dfrac{\ln|w|^2}{w}-\lim_{R\to\infty}\oint_{\partial(\D_R\setminus\D_r)}\dfrac{\ln|\xi|^2}{\xi(\xi-w)}d\xi\\
&=\dfrac{\ln|w|^2}{w}-\lim_{R\to\infty}\left(\ln|R|^2\oint_{\partial\D_R}\dfrac{1}{\xi(\xi-w)}d\xi-\ln|r|^2\oint_{\partial\D_r}\dfrac{1}{\xi(\xi-w)}d\xi\right)\\
&=\dfrac{\ln|w|^2}{w}-\lim_{R\to\infty}\left(0-\ln|r|^2\left(-\dfrac{\1_{\D_r^c}(w)}{w}\right)\right)\\
&=\dfrac{\ln|w|^2}{w}-\dfrac{\ln|r|^2}{w}\1_{\D_r^c}(w)=\dfrac{\ln|w|^2}{w}-\dfrac{\ln|r|^2}{w}
\end{align*}
So
\begin{align*}
    C_Q^K(w)+C_Q^{\Omega\setminus\D_r}(w)&=C_Q^{\D_r^c}(w)=\dfrac{\ln|w|^2}{w}-\dfrac{\ln|r|^2}{w}
\end{align*}
on $K$. Thus $C_{Q}^K(w)+h(w)=\frac{\ln|w|^2}{w}$ on $K$. Then note that if $\gamma$ is a loop in $K$,
\begin{align*}
&2\Re\left(\oint_{\gamma}h(w)dw\right)=\oint_{\gamma}h(w)dw+\overline{\oint_{\gamma}h(w)dw}\\
&=\oint_{\gamma}\left(\dfrac{\ln|w|^2}{w}-C_{Q}^{K}(w)\right)dw+\oint_{\gamma}\left(\overline{\dfrac{\ln|w|^2}{w}-C_{Q}^{K}(w)}\right)d\overline{w}\\
&=\oint_{\gamma}\partial\left(\dfrac{1}{2}\ln^2|w|^2+U^{K}(w)\right)+\oint_{\gamma}\overline{\partial}\left(\overline{\dfrac{1}{2}\ln^2|w|^2+U^{\mu}(w)}\right)\\
&=\oint_{\gamma}d\left(\dfrac{1}{2}\ln^2|w|^2+U^{\mu}(w)\right)=0.
\end{align*}
So, by Morera's theorem, $h$ is holomorphic in $K$ and thus in $\mathcal{O}$ because $h$ is a sum of quadrature functions on the components of $K^c$, so, by (3), this result extends to $\mathcal{O}$.

As a consequence, if we pick a point $w_l$ in each connected component $\mathcal{O}_l$ of $\mathcal{O}$ and set
$$H(w):=2\Re\left(\int_{w_l}^{w}h(\xi)d\xi\right)+\dfrac{1}{2}\ln^2|w_l|^2+U^\mu(w_l),\;\forall w\in\mathcal{O}_l,$$
is a real, well-defined harmonic function in $\mathcal{O}$ and $\frac{\partial H}{\partial w}=h$ on $K$. Finally, we need to verify Equation \ref{eqn:GenLogQuadCompDropIdPf}. By construction, the identity holds at each $w_l$. Furthermore, we have
$$\dfrac{\partial}{\partial w}\left(\dfrac{1}{2}\ln^2|w|^2-H+U^\mu\right)=\dfrac{\ln|w|^2}{w}-h-C^K=0\;\text{ on }K,$$
and
$$\dfrac{\partial}{\partial \overline{w}}\left(\dfrac{1}{2}\ln^2|w|^2-H+U^\mu\right)=0\;\text{ on }K.$$
Thus Equation \ref{eqn:GenQuadCompDropIdPf} holds.

An additional consequence of this (from Frostman's theorem \ref{lemma:FrostmanPt2} and Equation \ref{eqn:GenLogQuadCompDropIdPf}) is that $K$ is a local droplet of the potential
$$Q(w)=\dfrac{1}{2}\ln^2|w|^2-H(w).$$
\end{proof}

\noindent Conversely,

\begin{lemma}\label{lemma:GenComplementDropletLogQuad}
If $K$ is a local droplet of a potential $Q(w)=\dfrac{1}{2}\ln^2|w|^2-2\Re(H(w))$ and $H$ with rational derivative, then $K^c$ is a disjoint union of LQDs and a disk $\D_r$ for some $r>0$ for which $h(w):=H'$ is the sum of their quadrature functions and $\frac{\ln|r|^2}{w}$. 
\end{lemma}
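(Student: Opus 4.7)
The plan is to adapt the proof of Lemma \ref{lemma:GenComplementDropletQuad} (the PQD analog) to the log-weighted setting, with the coincidence equation \ref{eqn:ceq} applied to the droplet $K$ and the potential $Q(w) = \frac{1}{2}\ln^2|w|^2 - 2\Re(H(w))$ serving as the central bridge. A direct calculation yields $\frac{\partial Q}{\partial w} = \frac{\ln|w|^2}{w} - h(w)$ (with $h = H'$) and $\frac{\Delta Q}{4} = |w|^{-2}$, so \ref{eqn:ceq} specializes to
\begin{equation*}
h(w) + C^K(w) \dEquals \frac{\ln|w|^2}{w} \quad \text{on } \partial K,
\end{equation*}
where $C^K(w) := \int_K \frac{dA(\xi)}{|\xi|^2(w-\xi)}$ is the $\rho_0$-weighted Cauchy transform of $K$. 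The key preliminary observation is that $Q(w)\to +\infty$ as $w\to 0$, so $K$ is bounded away from $0$, $C^K$ extends analytically across the origin, and $0$ lies in a (bounded) connected component $\Omega_0$ of $K^c$; let $\Omega_1,\ldots,\Omega_N$ denote the remaining components.

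For each $j\geq 1$, since $0\notin\Omega_j$ and $\Omega_j\cap K=\emptyset$, the function $h+C^K$ extends to a meromorphic function on $\Omega_j$ that agrees with $\frac{\ln|w|^2}{w}$ on $\partial\Omega_j$; by Theorem \ref{theorem:EquivLQDChars}, this exhibits $\Omega_j\in\QD_0(h_j)$ with $h_j := \AnalyticIn{h+C^K}{\Omega_j\IntComp}$. For the component $\Omega_0$, I would multiply the coincidence equation by $w$ and exponentiate:
\begin{equation*}
|w|^2 \dEquals e^{w(h(w)+C^K(w))} \quad\text{on } \partial\Omega_0.
\end{equation*}
Provided the only pole of $h$ in $\Omega_0$ is a simple pole at the origin, the product $w(h+C^K)$ is analytic in $\Omega_0$, so $\widetilde{S}(w) := e^{w(h+C^K)}/w$ is meromorphic in $\Omega_0$ with a single simple pole at $0$, and the boundary identity $\overline{w}\dEquals\widetilde{S}(w)$ makes $\Omega_0$ a classical bounded one-point quadrature domain with node at the origin. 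By the Aharonov-Shapiro classification \cite{AharonovShapiro}, $\Omega_0$ is then a disk $\D_r$ centered at $0$ for some $r>0$.

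To close, set $\widetilde{h} := \sum_{j\geq 1} h_j + \frac{\ln r^2}{w}$. Using the individual generalized Schwarz equations on each $\partial\Omega_j$ and the fact that $\frac{\ln|w|^2}{w} = \frac{\ln r^2}{w}$ on $\partial\D_r$, one verifies that $\widetilde{h}+C^K\dEquals\frac{\ln|w|^2}{w}$ on $\partial K$; subtracting from the original coincidence equation shows that $h-\widetilde{h}$ is a rational function vanishing on $\partial K$, hence identically zero. The principal obstacle is the disk step: rigorously excluding extra poles of $h$ inside $\Omega_0$, and, more subtly, handling the case where $K$ fails to enclose $0$ (so $\Omega_0$ would be unbounded, preventing direct invocation of the Aharonov-Shapiro result). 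The first issue is essentially encoded in the target sum formula and can likely be handled by pole-bookkeeping, while the second may require an additional hypothesis on the placement of the poles of $h$ relative to the droplet, or a separate treatment of the unbounded case.
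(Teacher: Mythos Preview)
Your proposal is correct and follows exactly the paper's approach, which is a one-line reference to the coincidence equation \ref{eqn:ceq}. Your treatment of the disk component via exponentiation and Aharonov--Shapiro, together with the concerns you raise about extra poles of $h$ in $\Omega_0$ or $K$ failing to enclose $0$, actually goes beyond what the paper provides; those concerns reflect imprecision in the lemma statement rather than any defect in your method.
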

This follows straightforwardly from the coincidence equation for local droplets (Equation \ref{eqn:ceq}).

\subsection{LQDs as Limits of PQDs}

Theorem \ref{theorem:EquivLQDChars} gives an analogue of the Schwarz function equation (\ref{eqn:QuadCoincidence}) \`a la that for PQDs (\ref{eqn:PQDCoincidence}). There are other results for PQDs which extend nicely to this ``limiting'' situation. This understanding of LQDs as the limit of a sequence of PQDs as $a\to0^{+}$ is formalized in the following lemma
\begin{lemma}\label{lemma:LQDasPQDLimit}
Let $\{\Omega_a\}_{0<a<a_\ast}$ be a family of domains and $h\in\Rat_0\left(\text{int}\left(\cap_{a}\Omega_a\right)\right)$ such that $\Omega_a\in\QD_a(h)$ and the lengths of $\{\partial\Omega_a\}_a$ are uniformly bounded. If $\Omega_a\xrightarrow{a\to0^{+}}\Omega_0$ in the Hausdorff metric and $0\notin\Cl(\Omega_0)$, then $\Omega_0\in\QD_0(h)$.
\end{lemma}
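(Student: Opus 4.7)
The strategy is to pass to the limit $a \to 0^{+}$ in the PQD quadrature identity
\[
    \int_{\Omega_a} f(w)\,|w|^{2(a-1)}\,dA(w) = \oint_{\partial \Omega_a} f(w)\,h(w)\,dw,
\]
valid for every $f \in \A(\Omega_a)$ and $0 < a < a_\ast$, showing that the two sides converge respectively to $\int_{\Omega_0} f(w)\,|w|^{-2}\,dA(w)$ and $\oint_{\partial \Omega_0} f(w) h(w)\,dw$. The right-hand side is handled essentially for free: since the poles of $h$ are fixed and contained in $\text{int}(\cap_a \Omega_a)$, the residue theorem yields $\oint_{\partial \Omega_a} f h\,dw = \sum_{p} \operatorname{Res}_{w=p}(f h)$ independently of $a$, whenever $f$ is analytic near the poles of $h$. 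Each such pole lies in $\Cl(\Omega_0)$ by Hausdorff convergence, and the assumption that it lies in the \emph{interior} of $\cap_a\Omega_a$ together with $\Omega_0 = \text{int}(\Cl(\Omega_0))$ places it in $\Omega_0$ itself, so the same residue sum computes $\oint_{\partial \Omega_0} f h\,dw$; in particular $h \in \Rat_0(\Omega_0)$.

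The bulk of the work is the left-hand side. Because $0 \notin \Cl(\Omega_0)$, fix $\delta > 0$ with $|w| \geq \delta$ on $\Cl(\Omega_0)$; Hausdorff convergence then places $\Omega_a \subset \{|w| \geq \delta/2\} \cap U$ for some fixed bounded neighborhood $U$ of $\Cl(\Omega_0)$ and all $a$ small enough. Write
\[
    \int_{\Omega_a} f\,|w|^{2(a-1)}\,dA - \int_{\Omega_0} f\,|w|^{-2}\,dA = \int_{\Omega_a} f \bigl(|w|^{2(a-1)} - |w|^{-2}\bigr)\,dA + \int_{\Omega_a} f\,|w|^{-2}\,dA - \int_{\Omega_0} f\,|w|^{-2}\,dA.
\]
The first integral vanishes as $a\to 0^{+}$ because $|w|^{2(a-1)} \to |w|^{-2}$ uniformly on the compact set $\{w\in U: |w|\geq \delta/2\}$, and $|f|$ is bounded there. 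The remaining difference is bounded by $\bigl\|f/|w|^2\bigr\|_{L^\infty(U)}\cdot\operatorname{Area}(\Omega_a \triangle \Omega_0)$, and the area of the symmetric difference tends to $0$: it lies in the $d_H$-tubular neighborhood of $\partial\Omega_a\cup\partial\Omega_0$, where $d_H := d_H(\Cl(\Omega_a),\Cl(\Omega_0))\to 0$, whose area is at most $C\,d_H$ thanks to the uniform perimeter bound (the perimeter of $\Omega_0$ being finite by lower semi-continuity of perimeter under Hausdorff convergence with uniformly bounded perimeters).

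For $f$ analytic on an open neighborhood of $\Cl(\Omega_0)$ (so $f \in \A(\Omega_a)$ for $a$ small), the above computation delivers the target identity. A general $f \in \A(\Omega_0)$ can then be approximated uniformly on $\Cl(\Omega_0)$ by rational functions with poles off $\Cl(\Omega_0)$ via Runge's theorem, one connected component of the complement at a time; both sides of the target identity are continuous under uniform convergence of $f$ on $\Cl(\Omega_0)$, the right by its residue representation and the left because $|w|^{-2}$ is bounded on $\Cl(\Omega_0)$. The step I expect to be most delicate is the symmetric-difference area estimate: without the uniform perimeter bound, Hausdorff convergence alone does not force $L^1$ convergence of indicators, which is precisely why that hypothesis is built into the statement.
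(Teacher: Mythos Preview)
Your convergence argument for the left-hand side is essentially the same as the paper's: both split
\[
\int_{\Omega_a} f\,|w|^{2(a-1)}\,dA - \int_{\Omega_0} f\,|w|^{-2}\,dA
\]
into a weight-difference term (handled by uniform convergence of $|w|^{2(a-1)}\to|w|^{-2}$ on compacta bounded away from $0$) and a symmetric-difference term (controlled by the uniform perimeter bound). Your residue treatment of the right-hand side is correct and is what the paper uses implicitly when it writes $\lim_{\epsilon\to0}\oint_{\partial\Omega_a}fh\,dw=\oint_{\partial\Omega_0}fh\,dw$ without further comment.

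The approaches diverge in how the identity is extended from functions analytic on a neighborhood of $\Cl(\Omega_0)$ to all of $\A(\Omega_0)$. You use rational approximation; the paper instead applies the limit identity only to Cauchy kernels $\xi\mapsto(\xi-w)^{-1}$, deduces that $\AnalyticIn{h(w)-\tfrac{\ln|w|^2}{w}}{(\Omega_0)\IntComp}=0$, and hence obtains the generalized Schwarz function equation \eqref{eqn:LogWeightedSFE}, concluding $\Omega_0\in\QD_0(h)$ via Theorem~\ref{theorem:EquivLQDChars}. Your route is more direct and avoids invoking that equivalence, but note that the approximation step really needs \emph{Mergelyan's} theorem rather than Runge's (your $f$ is only continuous on $\partial\Omega_0$, not analytic on a neighborhood), and in the multiply-connected case you must also justify that the approximants' poles can be kept at a fixed positive distance from $\Cl(\Omega_0)$---e.g.\ by checking that $\Cl(\Omega_0)^c$ has finitely many components and fixing one pole in each. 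The paper's Cauchy-kernel route sidesteps this density question entirely, which is arguably why it was chosen.
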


\begin{example}\label{ex:LOGWQDUnboundedNoZeroMonomialkLimit}
Recall that if $0\notin\Omega_a$ is an unbounded domain for which $\Omega_a\in\QD_{a}(\alpha kw^{k-1})$ for some $a,\alpha>0$ and $k\in\Z_{+}$, then $\varphi_a(z)=cz\left(1-\frac{\gamma_k}{z^{k}}\right)^{\frac{1}{a}}$, where $\gamma_k=-\frac{a \alpha k}{c^{2a-k}}$ is a Riemann map for $\Omega_a$ (Theorem \ref{thm:GenMonomialPQDNoZero}). Then, leaving the details to the reader, the lemma implies that
$$\varphi_0(z):=\lim_{a\to0^{+}}\varphi_a(z)=\lim_{a\to0^{+}}cz\left(1+\frac{a}{c^{2a}}\left(\alpha kc^kz^{-k}\right)\right)^{\frac{1}{a}}=cze^{\alpha kc^kz^{-k}}$$
is the Riemann map associated to $\lim_{a\to0^{+}}\Omega_a=:\Omega_0\in\QD_0(\alpha kw^{k-1})$. This result is confirmed by its agreement with the outcome from direct computation (\S\ref{subsec:LOGWQDBasicMonomial}).
\begin{figure}[ht]
  \centering
    \includegraphics[height=0.2\linewidth,width=.38\linewidth]{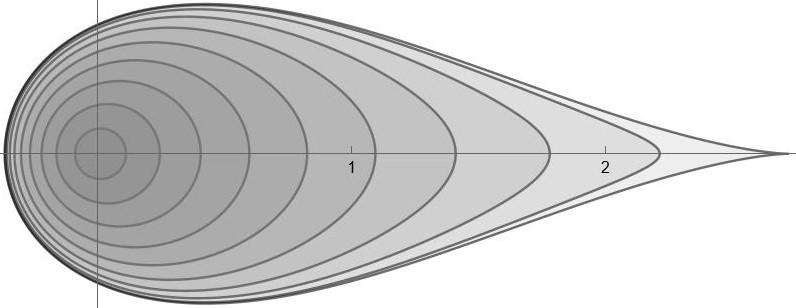}
    \includegraphics[height=0.2\linewidth,width=.38\linewidth]{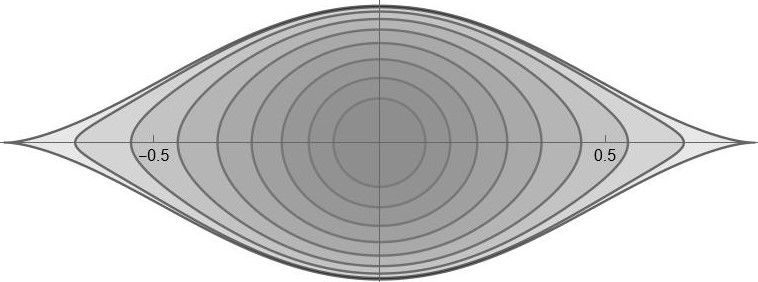}
    \includegraphics[height=0.21\linewidth]{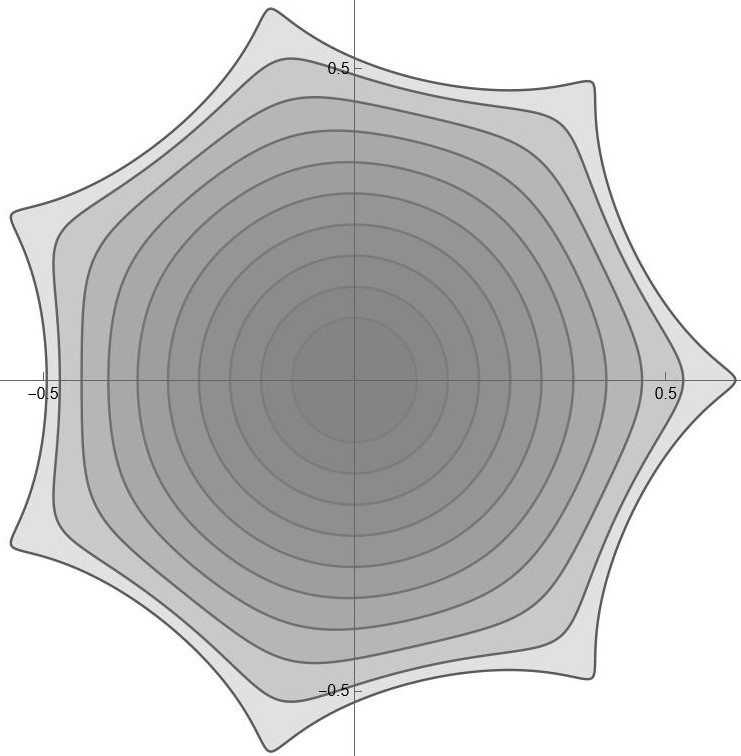}
    \caption{Family of domains associated to an unbounded simply connected $\Omega\in\QD_0\left(kw^{k-1}\right)$, $k\in\{1,2,7\}$. (Example \ref{ex:LOGWQDUnboundedNoZeroMonomialkLimit} and \S\ref{subsec:LOGWQDBasicMonomial})}\label{fig:LOGWQDUnboundedNoZeroMonomialk}\vspace{1.5em}
\end{figure}
\end{example}

\begin{proof}[Proof of Lemma \ref{lemma:LQDasPQDLimit}]
As $\Omega_a\xrightarrow{a\to0^{+}}\Omega_0$ in the Hausdorff metric, for each $\epsilon>0$ there exists $0<a_\epsilon<\min\{a_\ast,\epsilon\}$ for which $d_H(\Omega_a,\Omega_0)<\epsilon$ for all $0<a\leq a_\epsilon$. Take $\epsilon_0$ sufficiently small that $0$ is bounded away from the $\Omega_a$ by distance $\delta=\delta(\epsilon_0)>0$ for all $0<a<a_\epsilon$, $0<\epsilon\leq\epsilon_0$ then, by the uniform boundedness of the boundary lengths, we have that $\mu(\Omega_a\setminus\Omega_0),\mu(\Omega_0\setminus\Omega_a)<\frac{L\pi\epsilon^2}{\delta}$ for all $0<a<a_\epsilon$. If $U_{\epsilon_0}=\cup_{0<a\leq a_{\epsilon_0}}\Omega_a$ is unbounded and $f\in \A_0\left(U_{\epsilon_0}\right)$ is given by $f(w)=f_1w^{-1}+f_2w^{-2}+\hdots$ then
\begin{align*}
&\left|\int_{\Omega_0}\dfrac{f(w)}{|w|^2}dA(w)-\int_{\Omega_a}\dfrac{f(w)}{|w|^2}|w|^{2a}dA(w)\right|=\left|\int_{\Omega_0\cup\Omega_a}\dfrac{f(w)}{|w|^2}\left(\1_{\Omega_0}-|w|^{2a}\1_{\Omega_a}\right)dA(w)\right|\\
&=\left|\int_{\Omega_0\cup\Omega_a}\dfrac{\frac{f(w)}{w}}{\overline{w}}\left(\1_{\Omega_0\setminus\Omega_a}-\1_{\Omega_a\setminus\Omega_0}+(1-|w|^{2a})\1_{\Omega_a}\right)dA(w)\right|\\
&\leq \left|\int_{\Omega_0\setminus\Omega_a}\dfrac{f(w)}{|w|^2}dA(w)\right|+\left|\int_{\Omega_a\setminus\Omega_0}\dfrac{f(w)}{|w|^2}dA(w)\right|+\left|\int_{\Omega_a}f(w)\dfrac{|w|^{2a}-1}{|w|^2}dA(w)\right|\\
&\leq 2\dfrac{L\pi\epsilon^2}{\delta}\sup_{U_{\epsilon_0}}|f|+\left|\int_{\Omega_a}\dfrac{1}{|w|^2}\sum_{k=1}^{\infty}\dfrac{f_k}{w^k}\sum_{n=1}^{\infty}\dfrac{2^na^n}{n!}\log^n|w|dA(w)\right|\\
&\leq M(\epsilon_0)\epsilon^2+\sum_{n,k=1}^{\infty}\dfrac{2^na^n}{n!}|f_k|\int_{\Omega_a}\dfrac{|\log^n|w||}{|w|^{k+2}}dA(w)\xrightarrow{\epsilon\to0}0,
\end{align*}
where the second integral goes to $0$ with $\epsilon$ because $a_\epsilon\xrightarrow{\epsilon\to0}0$ and $0\notin\Cl(\Omega_a)$. The argument in the bounded case is similar. Thus, for each $\epsilon_0>0$ sufficiently small with $0<\epsilon\leq\epsilon_0$, $0<a<a_\epsilon<\min\{a_\ast,\epsilon\}$ and $f\in \A\left(U_{\epsilon_0}\right)$ ($\A_0$ if $U_{\epsilon_0}$ is unbounded),
\begin{align*}
\int_{\Omega_0}\dfrac{f(w)}{|w|^2}dA(w)&=\lim_{\epsilon\to0}\int_{\Omega_a}f(w)|w|^{2(a-1)}dA(w)=\lim_{\epsilon\to0}\oint_{\partial\Omega_a}f(w)h(w)dw=\oint_{\partial\Omega_0}f(w)h(w)dw.
\end{align*}
Now if $w$ is any point in $(U_{\epsilon_0})\IntComp$, $\frac{1}{\xi-w}\in \A\left(U_{\epsilon_0}\right)$ ($\A_0$ if $U_{\epsilon_0}$ is unbounded), so
\begin{align*}
0&=\int_{\Omega_0}\dfrac{dA(\xi)}{|\xi|^2(\xi-w)}-\int_{\Omega_0}\dfrac{dA(\xi)}{|\xi|^2(\xi-w)}=\oint_{\partial\Omega_0}\dfrac{\ln|\xi|^2d\xi}{\xi(\xi-w)}-\oint_{\partial\Omega_0}\dfrac{h(\xi)d\xi}{\xi-w}=-\oint_{\partial\Omega_0}\dfrac{h(\xi)-\frac{\ln|\xi|^2}{\xi}}{\xi-w}d\xi\\
&=\AnalyticIn{h(w)-\frac{\ln|w|^2}{w}}{(\Omega_0)\IntComp}
\end{align*}
But for each $w\in \Omega_0$, there exists $\epsilon_0>0$ for which $w\in (U_{\epsilon_0})\IntComp$, so the above equality holds for all $w\in(\Omega_0)\IntComp$. Moreover, $h(w)-\frac{\ln|w|^2}{w}\in C^0(\partial\Omega_0)$ because $0\notin\partial\Omega_0$, so
$$h(w)-\frac{\ln|w|^2}{w}\dEquals\AnalyticIn{h(w)-\frac{\ln|w|^2}{w}}{\Omega_0}+\AnalyticIn{h(w)-\frac{\ln|w|^2}{w}}{(\Omega_0)\IntComp}=\AnalyticIn{h(w)-\frac{\ln|w|^2}{w}}{\Omega_0}\in\A(\Omega_0).$$
That is, there exists a $\widetilde{G}\in\A(\Omega_0)$ for which $wh(w)-\ln|w|^2\dEquals \widetilde{G}(w)$ on $\partial\Omega_0$. Or, equivalently, there exists $G\in\A(\Omega_0)$ ($\A_0$ if $\Omega_0$ is unbounded) for which Equation \ref{eqn:LogWeightedSFE} is satisfied. Thus $\Omega_0\in\QD_0(h)$ by Theorem \ref{theorem:EquivLQDChars}.
\end{proof}

\subsection{Faber Transform Method for LQDs}\label{subsec:LQDFTMethod}
We would like to obtain Faber transform-based formulae for the Riemann maps associated to LQDs.
As with PQDs, a key step in obtaining such formulae is making an appropriate choice of decomposition of the Riemann map. However, unlike with PQDs, it appears that only the product (inner-outer/Nevanlinna) decomposition and not the sum (Mittag-Leffler) decomposition yields tractable formulae. The inner-outer factorization is, of course, precisely that given in \S\ref{subsec:PQDMultFTMethod}, Equation \ref{eqn:PQDInnerFactors}. As our consideration is restricted to the case in which $0\notin\Omega$, we are only concerned with $\varphi=\varphi_{\rm in}\varphi_{\rm out}$, where $\varphi_{\rm out}:=\frac{\varphi}{\varphi_{\rm in}}$ and
\begin{equation}\label{eqn:LQDInnerFactors}
\begin{alignedat}{2}
    \varphi_{\rm in}(z)&=1,\quad&&\text{when }\infty\notin\Omega,\;0\notin\Omega,\\
    \varphi_{\rm in}(z)&=z,\quad&&\text{when }\infty\in\Omega,\;0\notin\Omega.
\end{alignedat}
\end{equation}

In this case, we obtain the following simple characterization of simply connected PQDs.
\begin{theorem}\label{thm:SCLQDCharacterization}
$\Omega\in\QD_0$ iff $\ln(\varphi_{\rm out})$ extends to a rational function.
\end{theorem}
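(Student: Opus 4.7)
The plan is to mirror the proof of Theorem \ref{thm:SCPQDCharacterization}, replacing the power $\varphi_{\rm out}^a$ by $R := \ln(\varphi_{\rm out})$ throughout. Since $\varphi_{\rm out}$ is nonvanishing and analytic on a simply connected domain, a single-valued branch of $R$ exists, and the identity $|\varphi_{\rm in}|\dEquals 1$ on $\partial\D$ (Equation \ref{eqn:LQDInnerFactors}, valid because $0\notin\Cl(\Omega)$) gives
\[\ln|\varphi|^2 \dEquals \ln|\varphi_{\rm out}|^2 = R + R^{\#}\quad\text{on }\partial\D.\]

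For the reverse direction, assume $R$ extends to a rational function. Then so does $R^{\#}$, and composing with $\psi$ gives $\ln|w|^2 \dEquals (R+R^{\#})\circ\psi(w)$ on $\partial\Omega$. Dividing by $w$, which is bounded away from $0$ on $\Cl(\Omega)$, yields
\[S_0(w) := \frac{(R+R^{\#})\circ\psi(w)}{w}\in\M(\Omega),\]
which equals $\ln|w|^2/w$ on $\partial\Omega$. This is a generalized Schwarz function for $\Omega$, so Theorem \ref{theorem:EquivLQDChars} gives $\Omega\in\QD_0$.

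For the forward direction, assume $\Omega\in\QD_0(h)$. By Equation \ref{eqn:LogWeightedSFE}, there exists $G\in\A(\Omega)$ (resp.\ $\A_0(\Omega)$ when $\Omega$ is unbounded) with $\ln|w|^2 \dEquals wh(w) - wG(w)$ on $\partial\Omega$, which combined with the boundary identity above rearranges to
\[r\circ\psi(w) \dEquals wh(w) - wG(w) - R\circ\psi(w)\quad\text{on }\partial\Omega,\]
where $r := R^{\#}$ is bounded analytic in $\D\IntComp$ (bounded case) or $\D$ (unbounded case). The right-hand side differs from $wh(w)$ by terms in $\A(\Omega)$ or $\A_0(\Omega)$, each of which is annihilated by the Cauchy projection onto $\Omega\IntComp$. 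Applying the Faber projection identity (Equation \ref{eqn:InteriorFaberTransformProjectionExtension} or \ref{eqn:ExteriorFaberTransformProjectionExtension}) together with a short residue computation using the quadrature identity yields an explicit rational formula for $\Phi_{\varphi}$ of $r$ minus its value at $\infty$; in the bounded case this is essentially $wh(w) - t$, where $t=\int_{\Omega}|w|^{-2}dA$. Inverting the Faber transform, $r$ is the inverse Faber transform of a meromorphic function and is therefore rational, and by reflection so is $R = \ln(\varphi_{\rm out})$.

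The main obstacle I anticipate is the bookkeeping needed to distinguish the bounded and unbounded cases: the roles of $\D$ and $\D\IntComp$ interchange, as do the interior and exterior Faber transforms, and the pole structure and growth conditions on $r$, $G$, and $h$ at $\infty$ must be tracked accordingly. The analytic hypotheses (piecewise $C^1$ boundary) required to invoke Equations \ref{eqn:InteriorFaberTransformProjectionExtension} and \ref{eqn:ExteriorFaberTransformProjectionExtension} are supplied by the boundary regularity result, Theorem \ref{theorem:LQDBoundaryRegularity}.
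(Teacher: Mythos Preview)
Your proposal is correct and follows essentially the same route as the paper. The reverse direction is identical. For the forward direction the paper symmetrizes, computing $(r+r^{\#})\circ\psi(w)\dEquals\ln|w|^2-\text{const}$ and then using $r^{\#}\in\A(\D)$ to reduce $\AnalyticIn{r+r^{\#}}{\D\IntComp}$ to $r$, while you work directly with $r\circ\psi$ and subtract $R\circ\psi$; the two computations are equivalent. One small correction on the unbounded bookkeeping you flagged: $wG(w)$ and $R\circ\psi(w)$ are not in $\A_0(\Omega)$ but only bounded at $\infty$, so the projection onto $\Omega\IntComp$ does not annihilate them --- it leaves a constant (as the paper records), which of course does not affect rationality.
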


\begin{proof}[Proof of Theorem \ref{thm:SCLQDCharacterization}]
For the reverse direction, suppose that $\ln(\varphi_{\rm out})$ extends to a rational function. Then, as $|\varphi_{\rm in}|\dEquals1$, we find that $\ln\left|\varphi\right|^{2}=\ln\left|\varphi_{\rm in}\right|^{2}+\ln\left|\varphi_{\rm out}\right|^{2}\dEquals\ln(\varphi_{\rm out})+\ln(\varphi_{\rm out})^{\#}$. Hence,
\begin{align*}
\dfrac{\ln|w|^2}{w}&=\dfrac{\ln\left|\varphi\right|^{2}\circ\psi(w)}{w}\dEquals\dfrac{\left(\ln(\varphi_{\rm out})+\ln(\varphi_{\rm out})^{\#}\right)\circ\psi(w)}{w}=:S_0(w)
\end{align*}
is meromorphic in $\Omega$, and thus a generalized Schwarz function associated to $\Omega$. We conclude by Theorem \ref{theorem:EquivLQDChars} that $\Omega\in\QD_0$.

Now we only need demonstrate that $\Omega\in\QD_0$ implies $\ln(\varphi_{\rm out})$ extends to a rational function. We will begin by considering the bounded case. Note that it is necessary and sufficient to show that
$$r:=\ln(\varphi_{\rm out})^{\#}-\overline{\ln(\varphi(0))}=\ln\left(\frac{\varphi^{\#}}{\varphi_{\rm in}^{\#}}\right)-\overline{\ln(\varphi(0))}$$
extends to a rational function. By construction, $r$ is finite, non-zero, analytic (defined in $\D\IntComp$ if $\Omega$ is bounded and $\D$ if $\Omega$ is unbounded), and extends continuously to the boundary. Thus for each $w\in\partial\Omega$,
\begin{align*}
    (r+r^{\#})\circ\psi(w)&=\ln\left(\dfrac{\varphi\circ\psi(w)}{\varphi_{\rm in}\circ\psi(w)}\right)+\ln\left(\dfrac{\varphi^{\#}\circ\psi(w)}{\varphi_{\rm in}^{\#}\circ\psi(w)}\right)-\ln|\varphi(0)|^2\\
    &\dEquals\ln\left(\dfrac{w}{\varphi_{\rm in}\circ\psi(w)}\cdot\overline{w}\varphi_{\rm in}\circ\psi(w)\right)-\ln|\varphi(0)|^2\\
    &\dEquals w\dfrac{\ln|w|^2}{w}-\ln|\varphi(0)|^2.
\end{align*}
If $\Omega$ is bounded then by Equation \ref{eqn:InteriorFaberTransformProjectionExtension}, $\AnalyticIn{\left(r+r^{\#}\right)\circ\psi(w)}{\Omega\IntComp}=\Phi_{\varphi}\left(\AnalyticIn{r+r^{\#}}{\D\IntComp}\right)$. Moreover by Theorem \ref{theorem:EquivLQDChars}, there exists $G\in\A(\Omega)$ such that $\frac{\ln|w|^2}{w}\dEquals h(w)-G(w)$. Hence,
\begin{align*}
\Phi_{\varphi}\left(\AnalyticIn{r+r^{\#}}{\D\IntComp}\right)(w)&=\AnalyticIn{wh(w)-wG(w)-\ln|\varphi(0)|^2}{\Omega\IntComp}=\AnalyticIn{wh(w)}{\Omega\IntComp}
\end{align*}
Note that, as $\varphi\in\A(\D)$ ($0\notin\varphi(\D)$ by assumption) and $r=\ln(\varphi_{\rm out})^{\#}-\overline{\ln(\varphi(0))}=\left(\ln(\varphi)-\ln(\varphi(0))\right)^{\#}$, we have that $r^{\#}\in\A(\D)$ and $r\in\A_0(\D\IntComp)$. Therefore, $\AnalyticIn{r+r^{\#}}{\D\IntComp}=r$. Applying the inverse Faber transform to both sides yields
$r=\Phi_{\varphi}^{-1}\left(\AnalyticIn{wh(w)}{\Omega\IntComp}\right)$. The argument of the inverse Faber transform here is in $\M(\Omega)$, so its transform is rational, and we may conclude that $r$ is rational.

On the other hand, if $\Omega$ is unbounded and we set $r=\ln(\varphi_{\rm out})^{\#}-\ln(c)$ then by reasoning analogous to the bounded case, we find that
\begin{align*}
    \Phi_{\varphi}\left(r\right)(w)&=\AnalyticIn{wh(w)-wG(w)-\ln(c)^2}{\Omega\IntComp}=\AnalyticIn{wh(w)}{\Omega\IntComp}-\left(\ln(c^2)-\Res{\infty}G\right).
\end{align*}
Applying the inverse Faber transform and the fact that $r^{\#}\in\A_0(\D\IntComp)$, so $r(0)=0$, we obtain
\begin{align*}
r&=\Phi_{\varphi}^{-1}\left(wh(w)\right)-\Phi_{\varphi}^{-1}\left(wh(w)\right)(0)
\end{align*}
The argument of the inverse Faber transform here is in $\M(\Omega)$, so its transform is rational, and we conclude that $r$ is rational.
\end{proof}

In the course of the above proof, we obtained an implicit solution to the inverse problem for LQDs. By inverting some of the above relations, one can also obtain a solution to the direct problem. The following theorem provides a summary of these solutions to the inverse and direct problems.
\begin{theorem}\label{theorem:logweightedAQDRiemannMaps}
Let $\Omega\subset\Ch$ be a simply connected domain with $0\notin\Cl(\Omega)$. Then $\Omega\in\QD_0(h)$ for some rational $h$ if and only if either
\begin{enumerate}
    \item $\Omega$ is bounded and $\exists r\in\Rat_0(\D)$ such that $\varphi(z)=\varphi(0)e^{r^{\#}(z)}$ is a Riemann map associated to $\Omega$. In this case,
    \begin{equation}
    r=\Phi_{\varphi}^{-1}\left(\AnalyticIn{wh(w)}{\Omega\IntComp}\right).
    \end{equation}
    \item $\Omega$ is unbounded and $\exists r\in\Rat(\D\IntComp)$ and $c>0$ (the conformal radius of $\Omega$) such that $\varphi(z)=cze^{r^{\#}(z)}$ is a Riemann map associated to $\Omega$. In this case,
    \begin{equation}
    r=\Phi_{\varphi}^{-1}\left(wh(w)\right)-\Phi_{\varphi}^{-1}\left(wh(w)\right)(0).
    \end{equation}
\end{enumerate}
Furthermore in either case,
\begin{equation}
    h(w)=\dfrac{\Phi_{\varphi}\left(r\right)(w)-\Phi_{\varphi}\left(r\right)(0)}{w}
\end{equation}
\end{theorem}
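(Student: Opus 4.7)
The plan is to derive Theorem \ref{theorem:logweightedAQDRiemannMaps} as an explicit sharpening of Theorem \ref{thm:SCLQDCharacterization}, whose proof already contains essentially the full mechanism: it establishes that $\Omega\in\QD_0$ iff $\ln(\varphi_{\rm out})$ extends to a rational function on $\Ch$, and produces that extension via Cauchy projection of the generalized Schwarz function equation. My task reduces to (i) packaging ``$\ln(\varphi_{\rm out})$ is rational'' into the two multiplicative normal forms $\varphi(0)e^{r^{\#}}$ and $cze^{r^{\#}}$, and (ii) reading off and inverting the Faber-transform expression for $r$ to recover $h$.

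First I would convert the inner-outer factorization into the stated product form. In the bounded case Equation \ref{eqn:LQDInnerFactors} gives $\varphi_{\rm in}\equiv1$, so $\varphi=\varphi_{\rm out}$; if $\ln(\varphi_{\rm out})$ extends to a rational function $R$, define $r^{\#}(z):=R(z)-\ln\varphi(0)$, giving $\varphi(z)=\varphi(0)e^{r^{\#}(z)}$. Then $r\in\Rat_0(\D)$: subtracting $R(0)=\ln\varphi(0)$ forces $r^{\#}(0)=0$, i.e.\ $r$ vanishes at $\infty$, and $R$ is finite on $\Cl(\D)$ since $\varphi_{\rm out}$ is, so poles of $r$ lie in $\D$. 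In the unbounded case $\varphi_{\rm in}(z)=z$ and $\varphi_{\rm out}(\infty)=c$, so setting $r^{\#}:=\ln(\varphi_{\rm out})-\ln c$ yields $\varphi(z)=cze^{r^{\#}(z)}$ with $r\in\Rat(\D\IntComp)$ and $r(0)=0$.

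Next I would derive the explicit formula for $r$. Theorem \ref{theorem:EquivLQDChars} gives $wh(w)-wG(w)\dEquals\ln|w|^2$ on $\partial\Omega$; combining this with $\ln|\varphi|^2\dEquals(r+r^{\#})\circ\psi+\text{const}$ (which follows from $|\varphi_{\rm in}|\dEquals1$ together with the definitions of $r$ above) and applying the appropriate projection and Faber-transform identity produces $\Phi_\varphi(r)=\AnalyticIn{wh(w)}{\Omega\IntComp}$ in the bounded case: the $wG$ term is analytic in $\Omega$ and falls into the ``wrong'' analytic part under Equation \ref{eqn:InteriorFaberTransformProjectionExtension}, while the additive constants are absorbed by the projection. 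Inverting the Faber transform gives the claimed formula. In the unbounded case one uses Equation \ref{eqn:ExteriorFaberTransformProjectionExtension} instead and tracks an additive constant arising from $\Res{\infty}G$; the normalization $r(0)=0$ pins that constant down and yields $r=\Phi_\varphi^{-1}(wh(w))-\Phi_\varphi^{-1}(wh(w))(0)$.

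Finally, to recover $h$: the hypothesis $0\notin\Cl(\Omega)$ places $0$ inside $\Omega\IntComp$, and $wh(w)$ is analytic there (the poles of $h$ lie in $\Omega$), so applying $\Phi_\varphi$ to the formula for $r$ shows $\Phi_\varphi(r)(w)=wh(w)+\text{const}$ on $\Omega\IntComp$ in both cases. Evaluating at $w=0\in\Omega\IntComp$ fixes the constant, producing $h(w)=(\Phi_\varphi(r)(w)-\Phi_\varphi(r)(0))/w$. The reverse direction is immediate: if $\varphi$ has the stated product form then $\ln(\varphi_{\rm out})$ is rational, so Theorem \ref{thm:SCLQDCharacterization} supplies some $h\in\Rat(\Omega)$ with $\Omega\in\QD_0(h)$, which must then coincide with the value read off from the formula. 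The main technical obstacle will be the bookkeeping of additive constants arising from the non-decay of $wh$ at $\infty$ (bounded case) and from $\Res{\infty}G$ (unbounded case); these are precisely what force the $-\Phi_\varphi^{-1}(wh)(0)$ subtraction in the unbounded formula, and care is required to verify that the constants assemble correctly and consistently in both settings.
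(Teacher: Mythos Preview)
Your proposal is correct and follows essentially the same route as the paper: the paper presents Theorem \ref{theorem:logweightedAQDRiemannMaps} explicitly as a ``summary'' extracted from the proof of Theorem \ref{thm:SCLQDCharacterization}, and the formulas for $r$ (bounded and unbounded) are obtained there exactly as you describe, via $\AnalyticIn{r+r^{\#}}{\D\IntComp}=r$ together with the projected Schwarz relation $\Phi_\varphi(r)=\AnalyticIn{wh(w)}{\Omega\IntComp}$ (plus the $r(0)=0$ normalization in the unbounded case). Your recovery of $h$ by evaluating $\Phi_\varphi(r)$ at $w=0\in\Omega\IntComp$ is the natural inversion step the paper alludes to but does not spell out.
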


We begin with a discussion of one point LQDs, along the lines of the discussion for PQDs in \S \ref{subsec:OnePtPQDs}, after which we will briefly cover the case of LQDs with a monomial quadrature function.
\subsection{One Point LQDs}
Let $\Omega$ be a simply connected LQD with quadrature function $\frac{\alpha}{w-w_0}$ for some $\alpha\in\C\setminus\{0\}$\footnote{The $\alpha=0$ (null) case is covered by Theorem \ref{thm:NullLQDClass}.} and $w_0\in\C\setminus\{0\}$; that is, $\Omega\in\QD_0\left(\frac{\alpha}{w-w_0}\right)$. Moreover by a change of variables, $w\mapsto \frac{w_0}{|w_0|}$, we may assume wlog that $w_0>0$.

It is well-known that the only one point bounded quadrature domains are disks. The solution space isn't so simple for bounded LQDs. Suppose that $\Omega\in\QD_0\left(\frac{\alpha}{w-w_0}\right)$ with $\alpha>0$ and $w_0>0$ ($w_0>0$ wlog by rotational symmetry) is bounded and simply connected with Riemann map $\varphi:\D\rightarrow\Omega$, taking $0$ to $w_0$. Then by Theorem \ref{theorem:logweightedAQDRiemannMaps}, $\varphi(z)=w_0e^{r^{\#}(z)}$, where
$$r(z)=\Phi_{\varphi}^{-1}\left(\AnalyticIn{\frac{\alpha w}{w-w_0}}{\Omega\IntComp}\right)(z)=\alpha\Phi_{\varphi}^{-1}\left(\frac{w}{w-w_0}\right)(z)=\alpha w_0\frac{\psi'(w_0)}{z-\psi(w_0)}=z^{-1}\frac{\alpha w_0}{\varphi'(0)}.$$
Applying the $\varphi'(0)$ relation tells us that $\varphi'(0)=\sqrt{\alpha}w_0$ so $\varphi(z)=w_0e^{z\sqrt{\alpha}}$. It is a standard result that the map $z\mapsto e^{\nu z}$ is univalent in $\D$ iff $0<\nu<\pi$. Thus $\varphi$ is univalent iff $0<\alpha<\pi^2$. In particular, such a simply connected domain $\Omega$ exists iff $0<\alpha<\pi^2$. (see Figure \ref{fig:LOGWQDOnePt}, left) In summary,

\begin{theorem}\label{thm:LOGWQDBoundedOnePt}
Take $\alpha,w_0>0$. There exists a bounded simply connected domain $\Omega$ for which $\Omega\in\QD_0\left(\frac{\alpha}{w-w_0}\right)$ if and only if $0<\alpha<\pi^2$, in which case $\Omega=\varphi(\D)$, where $\varphi$ is univalent and given by $\varphi(z)=w_0e^{z\sqrt{\alpha}}$.
\end{theorem}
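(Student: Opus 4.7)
The plan is to apply Theorem \ref{theorem:logweightedAQDRiemannMaps}(1) to reduce the problem to an explicit Faber transform computation, then invoke the classical univalence criterion for the exponential map.

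Assume first that such an $\Omega$ exists, and let $\varphi:\D\to\Omega$ be the Riemann map normalized so that $\varphi(0)=w_0$ (admissible since $w_0\in\Omega$ is the unique pole of $h$). Theorem \ref{theorem:logweightedAQDRiemannMaps}(1) then produces $r\in\Rat_0(\D)$ with $\varphi(z)=w_0\,e^{r^{\#}(z)}$ and
\[
r(z)=\Phi_\varphi^{-1}\!\left(\AnalyticIn{\tfrac{\alpha w}{w-w_0}}{\Omega\IntComp}\right)(z).
\]
Since $\tfrac{\alpha w}{w-w_0}=\alpha+\tfrac{\alpha w_0}{w-w_0}$ has its only pole inside $\Omega$ at $w_0$, the Cauchy projection onto $\Omega\IntComp$ collapses to $\tfrac{\alpha w_0}{w-w_0}$ (the additive constant is killed by the requirement that the image lie in $\A_0(\Omega\IntComp)$). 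The first-order inverse Faber formula in Equation \ref{eqn:FaberPolyFormulae}, together with $\psi(w_0)=0$ and $\psi'(w_0)=1/\varphi'(0)$, then yields
\[
r(z)=\frac{\alpha w_0}{\varphi'(0)\,z}.
\]

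Next I would unwind the self-consistency. Because $\alpha,w_0,\varphi'(0)$ are positive real, reflection gives $r^{\#}(z)=(\alpha w_0/\varphi'(0))\,z$, whence $\varphi(z)=w_0\exp\!\left((\alpha w_0/\varphi'(0))\,z\right)$. Differentiating at $z=0$ produces $\varphi'(0)^2=\alpha w_0^2$, and the Riemann-map normalization $\varphi'(0)>0$ selects $\varphi'(0)=w_0\sqrt{\alpha}$. Substituting back gives the claimed formula $\varphi(z)=w_0\,e^{\sqrt{\alpha}\,z}$.

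The main remaining step is univalence. The map $z\mapsto e^{\nu z}$ satisfies $e^{\nu z_1}=e^{\nu z_2}$ iff $z_1-z_2\in(2\pi i/\nu)\Z$; for $\nu<\pi$ the minimal nonzero separation $2\pi/\nu$ exceeds $2$, so univalence holds on the open disk, while for $\nu>\pi$ the points $z_{\pm}=\pm\pi i/\nu$ lie in $\D$ and witness failure. Applied with $\nu=\sqrt{\alpha}$, this gives the criterion $0<\alpha<\pi^2$. The converse is then immediate: reading Theorem \ref{theorem:logweightedAQDRiemannMaps}(1) in reverse with $r(z)=\sqrt{\alpha}\,w_0/z\in\Rat_0(\D)$ certifies $\Omega:=\varphi(\D)\in\QD_0(h)$ for each such $\alpha$. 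The main technical care is in the Cauchy projection step (to ensure $r$ genuinely lies in $\Rat_0(\D)$) and in handling the borderline value $\sqrt{\alpha}=\pi$, where $\varphi$ remains injective on the open disk but $\partial\Omega$ develops a double point at $\varphi(\pm i)=-w_0$; this is what forces the strict inequality in the statement.
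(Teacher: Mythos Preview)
Your proposal is correct and follows essentially the same route as the paper: apply Theorem \ref{theorem:logweightedAQDRiemannMaps}(1), compute $r$ via the first-order Faber formula at $w_0$, solve the self-consistency $\varphi'(0)=w_0\sqrt{\alpha}$, and invoke the standard univalence criterion for $z\mapsto e^{\nu z}$. One small slip: in the converse direction you write $r(z)=\sqrt{\alpha}\,w_0/z$, but from $r^{\#}(z)=\sqrt{\alpha}\,z$ one gets $r(z)=\sqrt{\alpha}/z$; this does not affect the argument.
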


The situation for unbounded one point LQDs isn't quite so straightforward. While we can still obtain an explicit representation of the Riemann map, we could solve for the undetermined constants only numerically.

\begin{example}\label{ex:LOGWQDUnboundedNoZeroOnePt}
Suppose that $0\notin\Omega\in\QD_0\left(\frac{\alpha}{w-w_0}\right)$ ($\alpha\in\C$, $w_0>0$) is unbounded and simply connected with Riemann map $\varphi:\D\IntComp\rightarrow\Omega$, taking $z_0\in\D\IntComp$ to $w_0$. Then by the theorem, $\varphi(z)=cze^{r^{\#}(z)}$, where
\begin{align*}
r(z)&=\alpha\Phi_{\varphi}^{-1}\left(\frac{w}{w-w_0}\right)(z)-\alpha\Phi_{\varphi}^{-1}\left(\frac{w}{w-w_0}\right)(0)=\alpha w_0\Phi_{\varphi}^{-1}\left(\frac{1}{w-w_0}\right)(z)-\alpha w_0\Phi_{\varphi}^{-1}\left(\frac{1}{w-w_0}\right)(0)\\
&=\frac{\alpha w_0\psi'(w_0)}{z-\psi(w_0)}+\frac{\alpha w_0\psi'(w_0)}{\psi(w_0)}=\dfrac{\alpha w_0}{\varphi'(z_0)}\left(\frac{1}{z-z_0}+\dfrac{1}{z_0}\right).
\end{align*}
Combining this with the relations for $\varphi(z_0)$ and $\varphi'(z_0)$ one can numerically solve for the undetermined constants $z_0$ and $\varphi'(z_0)$. (see Figure \ref{fig:LOGWQDOnePt}, right)
\end{example}

\begin{figure}[ht]
  \centering
\includegraphics[height=0.4\textwidth,width=.4\textwidth,valign=c
]{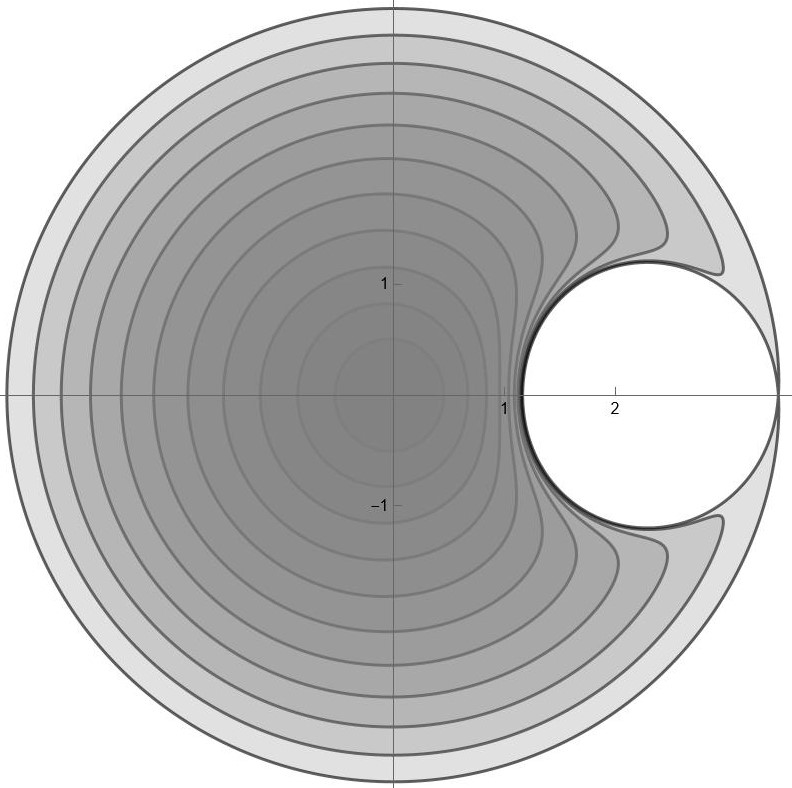}\tab\tab\includegraphics[height=0.4\textwidth,width=.45\textwidth,valign=c]{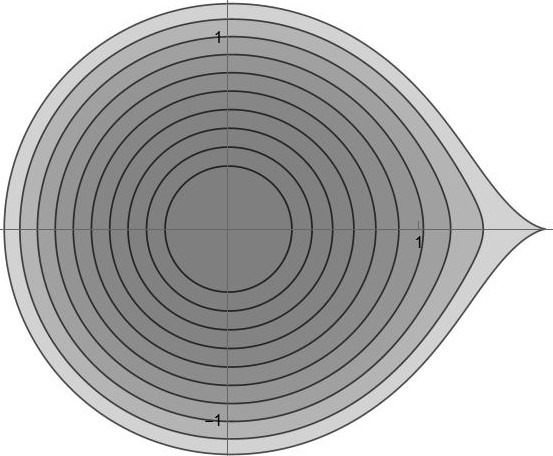}\vspace{.4em}
\caption{Family of unbounded one point LQDs with quadrature function $\frac{\alpha}{w-w_0}$ for $w_0=2$, $\alpha=.3,-.1$ (Example \ref{ex:LOGWQDUnboundedNoZeroOnePt}).}\label{fig:LOGWQDOnePt}\vspace{1.5em}
\end{figure}

\begin{figure}[ht]
  \centering
\includegraphics[height=0.4\textwidth,width=.4\textwidth,valign=c
]{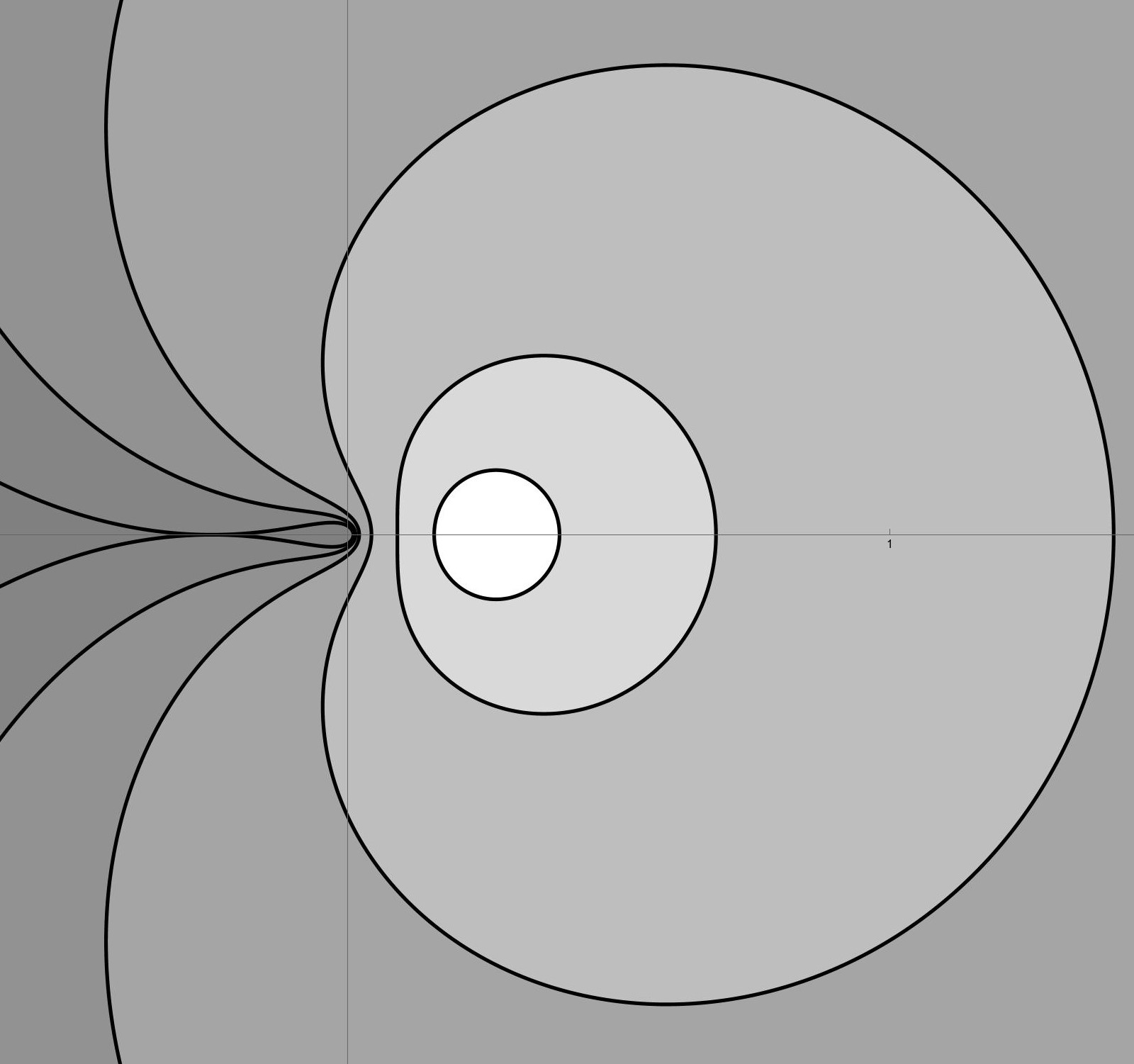}\tab\tab\includegraphics[height=0.4\linewidth,width=.4\linewidth,valign=c]{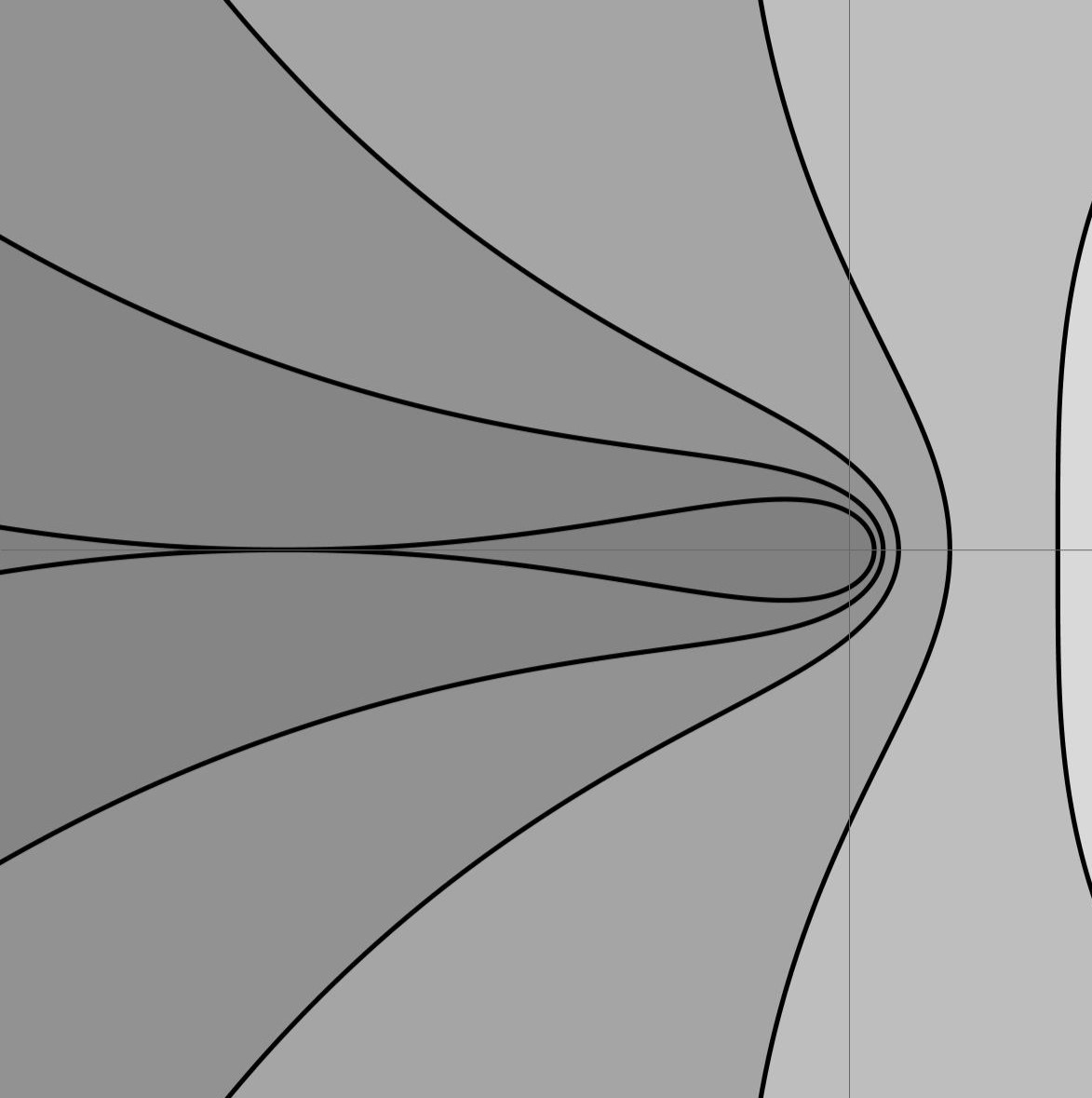}\vspace{.4em}
\caption{Family of bounded one point LQDs with quadrature function $\frac{\alpha}{w-w_0}$ for $w_0=.25$ and $2\leq \alpha\leq\pi^2$ (left) and zoom (right). (Theorem \ref{thm:LOGWQDBoundedOnePt})}\label{fig:LOGWQDBoundedNoZeroOnePtZoom}\vspace{1.5em}
\end{figure}

\subsection{Monomial LQDs}\label{subsec:LOGWQDBasicMonomial}
First we consider the case of a constant quadrature function: Suppose that $0\notin\Omega\in\QD_0\left(\alpha\right)$ ($\alpha>0$ wlog by change of variables) is unbounded and simply connected with Riemann map $\varphi:\D\IntComp\rightarrow\Omega$. Then by the theorem, $\varphi(z)=cze^{r^{\#}(z)}$, where
$$r(z)=\alpha \Phi_{\varphi}^{-1}\left(w\right)(z)-\alpha \Phi_{\varphi}^{-1}\left(w\right)(0)=c\alpha z+\alpha f_0-(c\alpha \cdot0+\alpha f_0)=c\alpha z.$$
Figure \ref{fig:LOGWQDUnboundedNoZeroMonomialk} (left) depicts one such family of these.

We now consider the more general case of $0\notin\Omega\in\QD_0\left(\alpha kw^{k-1}\right)$, $k\in\Z_{+}$ is unbounded and simply connected with Riemann map $\varphi:\D\IntComp\rightarrow\Omega$. Then by the theorem, $\varphi(z)=cze^{r^{\#}(z)}$, where
$$r(z)=\alpha k\Phi_{\varphi}^{-1}\left(w^{k}\right)(z)-\ln(c^2)=\alpha kW_{k}(z)-\ln(c^2)=\alpha kc^kz^{k}+O(z^{k-1}),$$
where $W_k$ is the $k$th inverse Faber polynomial. Then if we assume $\Z_{k}-$rotational symmetry, $\varphi(ze^{\frac{2\pi i}{k}})=e^{\frac{2\pi i}{k}}\varphi(z)$, and we find that the lower order terms in the exponent must $=0$, and
$$\varphi(z)=cze^{\overline{\alpha}kc^kz^{-k}}.$$
This function is univalent in $\D\IntComp$ whenever $|\alpha k^2c^k|<1$ (or $0<c<|\alpha k^2|^{-\frac{1}{k}}$). (Figure \ref{fig:LOGWQDUnboundedNoZeroMonomialk})

\section{Preview: Transcendental Dynamics of The Schwarz Reflection}
Let $\Omega$ be a quadrature domain with Schwarz function $S$ and associated droplet $K=\Omega^c$. By removing the finitely many singular points of $\partial\Omega$ from $K$, we obtain the {\it fundamental tile} $T$. We can then consider an anti-holomorphic dynamical system under the action of the {\it Schwarz reflection} $\sigma:=\overline{S}:\Cl(\Omega)\rightarrow\Ch$. This system admits a partition into a pair of sets invariant under the action of $\sigma$:
\begin{enumerate}
    \item The {\it tiling set} $T^{\infty}$, composed of the set of points which eventually escape to the fundamental tile under the action of $\sigma$;
    \item the {\it non-escaping set} $A(\infty)$, those points which do not escape to the fundamental tile.
\end{enumerate}
The tiling set has its name because it can be alternatively defined as the union of the fundamental tile $T$ and each of its preimages under the iteration of $\sigma$. In certain cases, the dynamics of $\sigma$ on $T^{\infty}$ resemble that of a reflection group and the dynamics on $A(\infty)$ resemble that of an anti-polynomial on its filled Julia set. 

Lee et al \cite{LEE_2018} demonstrate that when, $T$ is the deltoid, this phenomenon manifests with $\sigma$ as the ``mating'' of the anti-polynomial $f_0(z)=\overline{z}^2$ ($f_0:\D^c\rightarrow\D^c$) and the reflection map $\rho:\Cl(\D)\setminus\mathring{\Pi}\rightarrow\Cl(\D)$ of the ideal triangle group ($\Pi$ is the hyperbolic triangle in $\D$). This means that the dynamics of $f_0$ and $\rho$ can be glued together along $\partial\D$, resulting in a topological map $\eta$ on $\Ch$, which is conformally conjugate to $\sigma$ when $\Ch$ is equipped with the appropriate conformal structure (\cite{LEE_2018}, Theorem 1.1).\\

Recall the exterior ``teardrop'' of Figure \ref{fig:LOGWQDUnboundedNoZeroMonomialk} (left), $\Omega\in\QD_0(1)$, which is the image of $\D\IntComp$ under $\varphi(z)=ze^{z^{-1}}$. In this case, the associated transcendental Schwarz reflection $\sigma:\Omega\rightarrow\Ch$ is given by
\begin{equation}
    \sigma(w)=\overline{S(w)}=\overline{\varphi^{\#}\circ\psi(w)}=\overline{w}^{-1}e^{-W\left(-\overline{w}^{-1}\right)-W\left(-\overline{w}^{-1}\right)^{-1}},
\end{equation}
where $W$ is the principal branch of the Lambert W function. We note the striking similarity between the non-escaping set of $\sigma$, and the filled Julia set of the anti-holomorphic exponential map $w\mapsto e^{\overline{w}-1}$ (Figure \ref{fig:AntiholomorphicExpSchwarzComparison}).

\begin{figure}[ht]
  \centering
\includegraphics[height=0.47\textwidth,width=.47\textwidth,valign=c
]{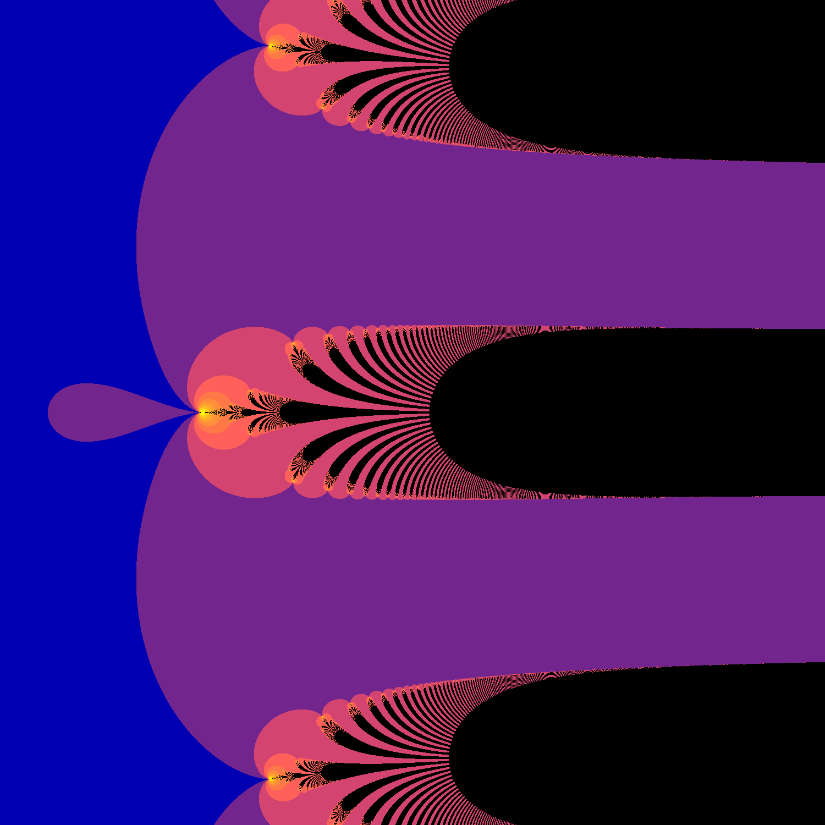}\tab\tab\includegraphics[height=0.47\linewidth,width=.47\linewidth,valign=c]{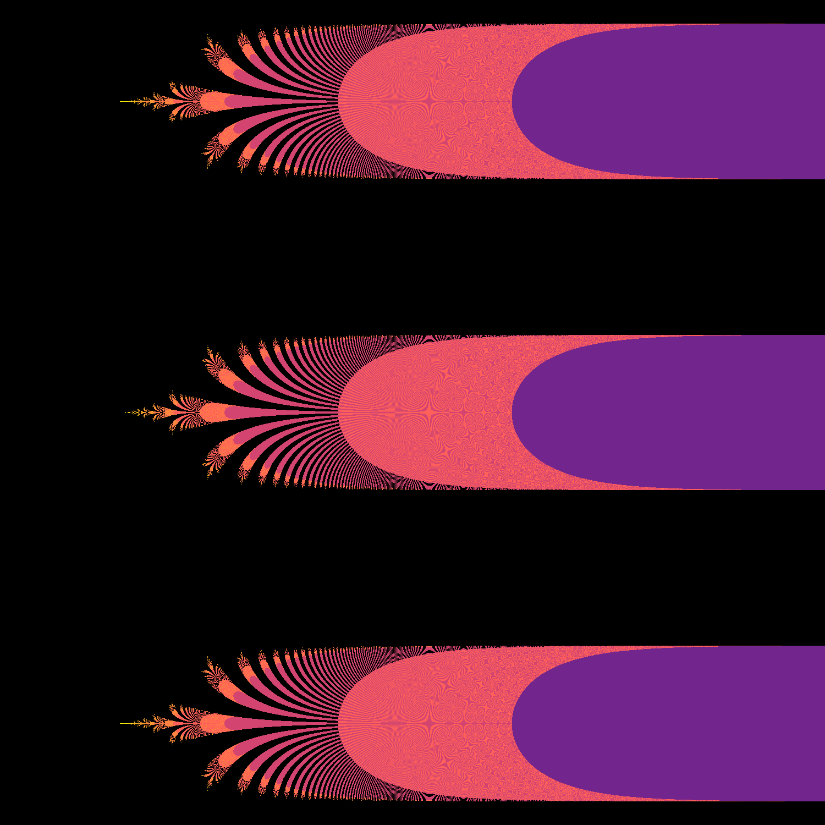}\vspace{.4em}
\caption{Tiling set of the Schwarz reflection $\sigma$ associated to $\Omega\in\QD_0(1)$ (left) and filled Julia set of the anti-holomorphic exponential map $w\mapsto e^{\overline{w}-1}$ (right).}\label{fig:AntiholomorphicExpSchwarzComparison}\vspace{1.5em}
\end{figure}
In an upcoming manuscript, the authors will conduct an analysis along the lines of Lee et al \cite{LEE_2018} in this transcendental setting.

\printbibliography

\end{document}